\newcommand{\PH}{{\operatorname{PH}}}
\newcommand{\sC}{{\mathcal C}}
\newcommand{\sD}{{\mathcal D}}
\newcommand{\sF}{{\mathcal F}}
\newcommand{\sH}{{\mathcal H}}
\newcommand{\sO}{{\mathcal O}}
\newcommand{\sV}{{\mathcal V}}
\newcommand{\sX}{{\mathcal X}}
\newcommand{\sY}{{\mathcal Y}}
\newcommand{\sZ}{{\mathcal Z}}
\newcommand{\scrB}{{\mathscr B}}
\newcommand{\scrC}{{\mathscr C}}
\newcommand{\scrD}{{\mathscr D}}
\newcommand{\scrX}{{\mathscr X}}
\newcommand{\scrZ}{{\mathscr Z}}
\newcommand{\C}{{\mathbb C}}
\newcommand{\HH}{{\mathbb H}}
\newcommand{\N}{{\mathbb N}}
\renewcommand{\P}{{\mathbb P}}
\newcommand{\Q}{{\mathbb Q}}
\newcommand{\R}{{\mathbb R}}
\newcommand{\gothm}{{\mathfrak m}}
\newcommand{\Art}{\operatorname{Art}}
\newcommand{\Aut}{\operatorname{Aut}}
\newcommand{\codim}{\operatorname{codim}}
\newcommand{\Cx}{\mathcal C^{\infty}_X}
\newcommand{\PHx}{\mathrm{PH}_X}
\newcommand{\Def}{{\operatorname{Def}}}
\newcommand{\di}{\partial}
\newcommand{\Hilb}{{\rm Hilb}}
\newcommand{\Hom}{\operatorname{Hom}}
\newcommand{\sHom}{{\sH om}}
\newcommand{\id}{{\rm id}}
\newcommand{\img}{\operatorname{im}}
\newcommand{\isom}{{\ \cong\ }}
\newcommand{\lt}{{\rm{lt}}}
\newcommand{\ol}[1]{{\overline{#1}}}
\newcommand{\red}{{\operatorname{red}}}
\newcommand{\reg}{{\operatorname{reg}}}
\newcommand{\rk}{{\rm rk}}
\newcommand{\sing}{{\operatorname{sing}}}
\newcommand{\Spec}{\operatorname{Spec}}
\newcommand{\SU}{{\rm SU}}
\newcommand{\Sym}{{\rm Sym}}
\renewcommand{\to}[1][]{\xrightarrow{\ #1\ }}
\newcommand{\tensor}{\otimes}
\newcommand{\tr}{{\rm tr}}
\newcommand{\vphi}{\varphi}
\newcommand{\vrho}{{\varrho}}
\newcommand{\wh}[1]{{\widehat{#1}}}
\newcommand{\wt}[1]{{\widetilde{#1}}}
\newcommand{\om}{\omega}
\newcommand*{\da@rightarrow}{\mathchar"0\hexnumber@\symAMSa 4B }
\newcommand*{\da@leftarrow}{\mathchar"0\hexnumber@\symAMSa 4C }
\newcommand*{\xdashrightarrow}[2][]{%
  \mathrel{%
    \mathpalette{\da@xarrow{#1}{#2}{}\da@rightarrow{\,}{}}{}%
  }%
}
\newcommand{\xdashleftarrow}[2][]{%
  \mathrel{%
    \mathpalette{\da@xarrow{#1}{#2}\da@leftarrow{}{}{\,}}{}%
  }%
}
\newcommand*{\da@xarrow}[7]{%
  % #1: below
  % #2: above
  % #3: arrow left
  % #4: arrow right
  % #5: space left 
  % #6: space right
  % #7: math style 
  \sbox0{$\ifx#7\scriptstyle\scriptscriptstyle\else\scriptstyle\fi#5#1#6\m@th$}%
  \sbox2{$\ifx#7\scriptstyle\scriptscriptstyle\else\scriptstyle\fi#5#2#6\m@th$}%
  \sbox4{$#7\dabar@\m@th$}%
  \dimen@=\wd0 %
  \ifdim\wd2 >\dimen@
    \dimen@=\wd2 %   
  \fi
  \count@=2 %
  \def\da@bars{\dabar@\dabar@}%
  \@whiledim\count@\wd4<\dimen@\do{%
    \advance\count@\@ne
    \expandafter\def\expandafter\da@bars\expandafter{%
      \da@bars
      \dabar@ 
    }%
  }%  
  \mathrel{#3}%
  \mathrel{%   
    \mathop{\da@bars}\limits
    \ifx\\#1\\%
    \else
      _{\copy0}%
    \fi
    \ifx\\#2\\%
    \else
      ^{\copy2}%
    \fi
  }%   
  \mathrel{#4}%
}
\newsavebox\myboxA
\newsavebox\myboxB
\newlength\mylenA
\newcommand*\xtilde[2][0.8]{%
    \sbox{\myboxA}{$\m@th#2$}%
    \setbox\myboxB\null% Phantom box
    \ht\myboxB=\ht\myboxA%
    \dp\myboxB=\dp\myboxA%
    \wd\myboxB=#1\wd\myboxA% Scale phantom
    \sbox\myboxB{$\m@th\widetilde{\copy\myboxB}$}%  Overlined phantom
    \setlength\mylenA{\the\wd\myboxA}%   calc width diff
    \addtolength\mylenA{-\the\wd\myboxB}%
    \ifdim\wd\myboxB<\wd\myboxA%
       \rlap{\hskip 0.5\mylenA\usebox\myboxB}{\usebox\myboxA}%
    \else
        \hskip -0.5\mylenA\rlap{\usebox\myboxA}{\hskip 0.5\mylenA\usebox\myboxB}%
    \fi}
\newbox\usefulbox
\def\getslant #1{\strip@pt\fontdimen1 #1}
\def\xxtilde #1{\mathchoice
 {{\setbox\usefulbox=\hbox{$\m@th\displaystyle #1$}%
    \dimen@ \getslant\the\textfont\symletters \ht\usefulbox
    \divide\dimen@ \tw@ 
    \kern\dimen@ 
    \xtilde{\kern-\dimen@ \box\usefulbox\kern\dimen@ }\kern-\dimen@ }}
 {{\setbox\usefulbox=\hbox{$\m@th\textstyle #1$}%
    \dimen@ \getslant\the\textfont\symletters \ht\usefulbox
    \divide\dimen@ \tw@ 
    \kern\dimen@ 
    \xtilde{\kern-\dimen@ \box\usefulbox\kern\dimen@ }\kern-\dimen@ }}
 {{\setbox\usefulbox=\hbox{$\m@th\scriptstyle #1$}%
    \dimen@ \getslant\the\scriptfont\symletters \ht\usefulbox
    \divide\dimen@ \tw@ 
    \kern\dimen@ 
    \xtilde{\kern-\dimen@ \box\usefulbox\kern\dimen@ }\kern-\dimen@ }}
 {{\setbox\usefulbox=\hbox{$\m@th\scriptscriptstyle #1$}%
    \dimen@ \getslant\the\scriptscriptfont\symletters \ht\usefulbox
    \divide\dimen@ \tw@ 
    \kern\dimen@ 
    \xtilde{\kern-\dimen@ \box\usefulbox\kern\dimen@ }\kern-\dimen@ }}%
 {}}
\newcommand*\xoverline[2][0.75]{%
    \sbox{\myboxA}{$\m@th#2$}%
    \setbox\myboxB\null% Phantom box
    \ht\myboxB=\ht\myboxA%
    \dp\myboxB=\dp\myboxA%
    \wd\myboxB=#1\wd\myboxA% Scale phantom
    \sbox\myboxB{$\m@th\overline{\copy\myboxB}$}%  Overlined phantom
    \setlength\mylenA{\the\wd\myboxA}%   calc width diff
    \addtolength\mylenA{-\the\wd\myboxB}%
    \ifdim\wd\myboxB<\wd\myboxA%
       \rlap{\hskip 0.5\mylenA\usebox\myboxB}{\usebox\myboxA}%
    \else
        \hskip -0.5\mylenA\rlap{\usebox\myboxA}{\hskip 0.5\mylenA\usebox\myboxB}%
    \fi}
\def\xxoverline #1{\mathchoice
 {{\setbox\usefulbox=\hbox{$\m@th\displaystyle #1$}%
    \dimen@ \getslant\the\textfont\symletters \ht\usefulbox
    \divide\dimen@ \tw@ 
    \kern\dimen@ 
    \overline{\kern-\dimen@ \box\usefulbox\kern\dimen@ }\kern-\dimen@ }}
 {{\setbox\usefulbox=\hbox{$\m@th\textstyle #1$}%
    \dimen@ \getslant\the\textfont\symletters \ht\usefulbox
    \divide\dimen@ \tw@ 
    \kern\dimen@ 
    \xoverline{\kern-\dimen@ \box\usefulbox\kern\dimen@ }\kern-\dimen@ }}
 {{\setbox\usefulbox=\hbox{$\m@th\scriptstyle #1$}%
    \dimen@ \getslant\the\scriptfont\symletters \ht\usefulbox
    \divide\dimen@ \tw@ 
    \kern\dimen@ 
    \xoverline{\kern-\dimen@ \box\usefulbox\kern\dimen@ }\kern-\dimen@ }}
 {{\setbox\usefulbox=\hbox{$\m@th\scriptscriptstyle #1$}%
    \dimen@ \getslant\the\scriptscriptfont\symletters \ht\usefulbox
    \divide\dimen@ \tw@ 
    \kern\dimen@ 
    \xoverline{\kern-\dimen@ \box\usefulbox\kern\dimen@ }\kern-\dimen@ }}%
 {}}
\newcommand{\mylabel}[2]{#2\def\@currentlabel{#2}\label{#1}}
\newcommand{\Mac}{}
\DeclareRobustCommand{\Mac}{%
  M%
  \raisebox{\dimexpr\fontcharht\font`M-\height}{%
    \check@mathfonts\fontsize{\sf@size}{0}\selectfont
    c%
  }%
}
\newtheoremstyle{citing}% name
  {}%      Space above, empty = `usual value'
  {}%      Space below
  {\itshape}% Body font
  {}%         Indent amount (empty = no indent, \parindent = para indent)
  {\bfseries}% Thm head font
  {\textbf{.}}%        Punctuation after thm head
  {.5em}%     Space after thm head: " " = normal interword space;
\theoremstyle{plain}
\newtheorem{theorem}[subsection]{Theorem}
\newtheorem{lemma}[subsection]{Lemma}
\newtheorem{corollary}[subsection]{Corollary}
\newtheorem{set}[subsection]{Setting}
\newtheorem{bigthm}{Theorem}
\newtheorem{proposition}[subsection]{Proposition}
\theoremstyle{remark}
\theoremstyle{definition}
\newtheorem{definition}[subsection]{Definition}
\numberwithin{equation}{section}
\theoremstyle{remark}
\newtheorem{remark}[subsection]{Remark}
\newtheorem*{claim}{Claim}
\newcounter{stepcounter}
\newtheorem{step}[stepcounter]{Step}
\theoremstyle{citing}
\theoremstyle{definition}
\title[Algebraic approximation and the decomposition theorem for K\"ahler CY varieties]{Algebraic approximation and the decomposition theorem\\ for K\"ahler Calabi--Yau varieties}
\author{Benjamin Bakker}
\address{Benjamin Bakker\\ Department of Mathematics\\ University of Illinois at Chicago\\ 851 S. Morgan St., Chicago, IL 60607}
\email{bakker.uic@gmail.com}
\author{Henri Guenancia}
\address{Henri Guenancia \\ Institut de Mathématiques de Toulouse; UMR 5219, Université de Toulouse; CNRS, UPS, 118 route de Narbonne, F-31062 Toulouse Cedex 9, France}
\email{henri.guenancia@math.cnrs.fr}
\author{Christian Lehn}
\address{Christian Lehn\\ Fakult\"at f\"ur Mathematik\\ Technische Universit\"at Chemnitz\\
Reichenhainer Stra\ss e 39, 09126 Chemnitz, Germany}
\email{christian.lehn@mathematik.tu-chemnitz.de}
\let\origmaketitle\maketitle
\def\maketitle{
  \begingroup
  \def\uppercasenonmath##1{} % this disables uppercasing title
  \let\MakeUppercase\relax % this disables uppercasing authors
  \origmaketitle
  \endgroup
}
\begin{document}
\thispagestyle{empty}

\begin{abstract}
We extend the decomposition theorem for numerically $K$-trivial varieties with log terminal singularities to the K\"ahler setting.  Along the way we prove that all such varieties admit a strong locally trivial algebraic approximation, thus completing the numerically $K$-trivial case of a conjecture of Campana and Peternell.
\end{abstract}

\makeatletter
\@namedef{subjclassname@2020}{
	\textup{2020} Mathematics Subject Classification}
\makeatother

\subjclass[2020]{32Q25, 32J27 (primary), 32S15, 32Q20 (secondary).}
\keywords{Calabi--Yau variety, decomposition theorem, locally trivial deformation, Kähler--Einstein metric}

\maketitle

\setlength{\parindent}{1em}
\setcounter{tocdepth}{1}

% Primary:
% 32Q25 		Calabi-Yau theory (complex-analytic aspects)
% 32J27   	Compact Kähler manifolds: generalizations, classification

% Secondary:
% 32S15   	Equisingularity (topological and analytic)
% 32Q20   	Kähler-Einstein manifolds

\tableofcontents

%------------------------------------------------------------------------------------------
%------------------------------------------------------------------------------------------
%------------------------------------------------------------------------------------------
\section{Introduction}\label{section introduction}
%------------------------------------------------------------------------------------------
%------------------------------------------------------------------------------------------
%------------------------------------------------------------------------------------------
\thispagestyle{empty}
\def\qeX{\wt X}

For a compact Kähler manifold $X$ with vanishing first Chern class, the Beauville--Bogomolov theorem \cite{Bog78,Bea83} tells us that a (finite) étale cover of $X$ splits as a product of a complex torus, irreducible symplectic manifolds, and irreducible Calabi--Yau manifolds. Work of Druel--Greb--Guenancia--Höring--Kebekus--Peternell \cite{GKKP11,DG18, Dru18, GGK19, HP19} over the past decade has culminated in an analog of this theorem for projective varieties with log terminal singularities and numerically trivial canonical class, see \cite[Theorem 1.5]{HP19}. 

Our main result is a generalization of the decomposition theorem to the K\"ahler setting:
\begin{bigthm}\label{theorem decomposition}
Let $X$ be a numerically $K$-trivial compact Kähler variety with log terminal singularities. Then there is a quasi-étale cover $\qeX \to X$ such that $\qeX$ splits as a product
\[
\qeX=T \times \prod_i Y_i \times \prod_j Z_j
\]
where $T$ is a complex torus, the $Y_i$ are irreducible\footnote{Greb--Kebekus--Peternell use the term \emph{Calabi--Yau} for the irreducible factors in the decomposition of the second type, but it seems natural to call them \emph{irreducible Calabi--Yau}.} Calabi--Yau varieties, and the $Z_j$ are irreducible holomorphic symplectic varieties.
\end{bigthm}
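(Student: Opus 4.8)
The plan is to deduce the K\"ahler decomposition from the known projective one \cite[Theorem 1.5]{HP19} by deforming $X$, through a \emph{locally trivial} family, to a projective variety. After passing to a quasi-\'etale cover we may and do assume $\omega_X\cong\mathcal O_X$. Then the locally trivial deformations of $X$ are unobstructed --- a Bogomolov--Tian--Todorov statement for locally trivial deformations of klt varieties with trivial canonical sheaf --- so there is a locally trivial family $\pi\colon\mathscr X\to S$ over a smooth connected base with $\mathscr X_0\cong X$; and by the strong locally trivial algebraic approximation established in this paper there is a point $s_0\in S$ (in fact a dense set of such) with $X':=\mathscr X_{s_0}$ projective.

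Now I transport the decomposition of $X'$ back to $X$. By \cite[Theorem 1.5]{HP19} there is a finite quasi-\'etale cover $\wh{X'}\to X'$ with
\[
\wh{X'}=T'\times\prod_i Y'_i\times\prod_j Z'_j,
\]
$T'$ a complex torus, $Y'_i$ irreducible Calabi--Yau varieties, $Z'_j$ irreducible holomorphic symplectic varieties. Since $\pi$ is locally trivial it is a topological fiber bundle whose relative singular locus is a subbundle, so the fiberwise regular loci form a fiber bundle and the fundamental group of the regular locus is transported along paths in $S$; the finite-index subgroup of $\pi_1(X'_{\reg})$ defining $\wh{X'}\to X'$ thus induces, along a path joining $s_0$ to $0$, a quasi-\'etale cover $\wh X\to X$ fitting into a locally trivial family $\wh\pi\colon\wh{\mathscr X}\to S$ (after shrinking $S$ around that path, kept connected) with $\wh{\mathscr X}_0\cong\wh X$ and $\wh{\mathscr X}_{s_0}\cong\wh{X'}$.

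It remains to see that the product structure and the types of the factors survive the deformation. For a locally trivial family of klt varieties the sheaves $R^q\wh\pi_*\Omega^{[p]}_{\wh{\mathscr X}/S}$ are locally free and compatible with base change, so the invariants controlling the decomposition --- the Albanese dimension and the summands of $H^2$ carrying the Beauville--Bogomolov forms --- are locally constant on $S$. On the deformation side, the tangent sheaf of a product of klt varieties is the direct sum of the reflexive pullbacks of the factors' tangent sheaves, and the K\"unneth formula makes the space of locally trivial first-order deformations of $\wh{X'}$ split accordingly: the mixed terms vanish because $H^0$ of the tangent sheaf is zero on the Calabi--Yau and symplectic factors while $H^1(\mathcal O)$ is zero on every non-torus factor, just as in Beauville's argument in the smooth case \cite{Bea83}. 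With unobstructedness this identifies the semiuniversal locally trivial deformation of $\wh{X'}$ with the product of those of $T'$, the $Y'_i$ and the $Z'_j$; hence the locus in $S$ over which the fiber is a product of a torus with irreducible Calabi--Yau and irreducible holomorphic symplectic varieties is open (deformations of such a product remain such products) and closed (the Albanese and Hodge invariants being locally constant), hence all of $S$, so $\wh X=T\times\prod_i Y_i\times\prod_j Z_j$. The three factor types are preserved because their defining properties --- triviality of $\omega$; vanishing of the intermediate $H^0(\Omega^{[k]})$ for the Calabi--Yau factors; existence of an essentially unique nowhere-degenerate reflexive $2$-form for the symplectic factors --- are invariant under locally trivial deformation by the base-change statement, while a locally trivial (hence smooth) deformation of a complex torus is a complex torus. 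Composing with the cover used to trivialize $\omega_X$, we obtain the required quasi-\'etale cover of $X$.

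The decisive ingredient is the strong locally trivial algebraic approximation invoked at the outset, and I expect that to be where essentially all the difficulty lies: it requires the cohomology $H^2$ of these singular K\"ahler Calabi--Yau varieties to carry a pure Hodge structure, a local-Torelli-type control of the resulting period map in locally trivial families, an understanding of how the K\"ahler cone varies, and a projectivity criterion --- a singular analogue of the Kodaira embedding theorem --- allowing one to conclude projectivity once a rational class meets the K\"ahler cone. Granting this, the only further technical point of note in the argument above is the base-change behavior of reflexive differentials in locally trivial families of klt varieties, which is what makes both the K\"unneth splitting of the deformation space and the deformation-invariance of the "irreducible Calabi--Yau" and "irreducible holomorphic symplectic" conditions work.
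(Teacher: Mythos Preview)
Your overall strategy---deform $X$ locally trivially to a projective fiber, apply \cite[Theorem~1.5]{HP19} there, and transport the decomposition back---is exactly the route the paper takes.  However, there are two genuine gaps.

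\textbf{First (minor):} you assert that locally trivial deformations of $K$-trivial klt varieties are unobstructed.  The paper explicitly flags this as open (see the introduction): a locally trivial Bogomolov--Tian--Todorov theorem is \emph{not} known in this generality.  The paper sidesteps this by only proving unobstructedness of the subfunctor $F^{\lt}_{E;\split}$ of deformations along a \emph{weakly symplectic split foliation} $E$ (Proposition~\ref{prop EP unobstructed}); this gives a smooth germ mapping into $\Def^{\lt}(X)$ whose image still contains enough projective fibers (Lemma~\ref{lemma approx crit}).  Since you also invoke Theorem~\ref{theorem peternell} as a black box, this gap is not fatal to your outline, but your stated justification is incorrect.

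\textbf{Second (major):} your argument that the product locus is closed is where the real difficulty lies, and ``the Albanese and Hodge invariants being locally constant'' does not come close to establishing it.  Constancy of Hodge numbers does not prevent a family $\sX^*=\sY^*\times_{\Delta^*}\sZ^*$ over the punctured disk from degenerating to a non-product over $0$: the limit leaves of the two foliations could a priori acquire multiple components, collide, or fail to intersect transversally.  Ruling this out is the content of Sections~\ref{section tangent splitting}--\ref{section family splitting} of the paper and requires substantial analytic input.  Concretely, one first shows (Proposition~\ref{proposition relative splitting}) that the tangent splitting $T_{\sX^*/\Delta^*}=A^*\oplus B^*$ extends across $0$; then (Propositions~\ref{proposition decompKE} and~\ref{boundedpotentials}) that the relative K\"ahler--Einstein metric splits as $\omega_t=\omega_t^1+\omega_t^2$ into closed positive currents with \emph{bounded local potentials}, with the splitting persisting in the limit; and finally (Section~\ref{section family splitting}) one uses this metric splitting, together with Douady-space arguments and the support theorem, to show that limit leaves are irreducible and meet in a single point, whence the special fiber is a product.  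The boundedness of the potentials is crucial: it is what allows one to restrict the currents to subvarieties and take Monge--Amp\`ere products, and it is what forces the cohomological intersection numbers to control the actual geometry of the limit leaves.  None of this follows from deformation-invariance of Hodge numbers.

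In short: the ``open'' direction of your open-and-closed argument is essentially Lemma~\ref{lemma product persists} and is correct; the ``closed'' direction is the heart of the paper and your one-line justification is a placeholder for Sections~\ref{section tangent splitting}--\ref{section family splitting}.
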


A morphism $\qeX \to X$ of normal complex spaces is \emph{quasi-étale} if it is étale on the complement of an analytic subset which is locally of codimension at least $2$ in $\qeX$ and a \emph{cover} if it is finite and surjective. For convenience, we reproduce the definitions of irreducible Calabi--Yau and irreducible holomorphic symplectic varieties due to Greb--Kebekus--Peternell \cite[Definition~8.16]{GKP16sing} here.  Recall that if $X$ is a normal complex variety, the sheaf of reflexive $p$-forms $\Omega_{X}^{[p]}$ may be equivalently thought of as either the reflexive hull $(\Omega^p_{X})^{\vee \vee}$ or the push-forward $j_*\Omega^p_{X^\reg}$ from the regular locus $j:X^\reg\to X$.  If $X$ furthermore has rational singularities, it admits a third interpretation as $\pi_*\Omega^p_Y$ for any resolution $\pi: Y\to X$ by Kebekus--Schnell \cite[Corollary~1.8]{KS18}.

\begin{definition}\label{definition cy and isv}
Let $X$ be a compact Kähler variety with rational singularities. We call $X$ \emph{irreducible holomorphic symplectic} (IHS) if for all quasi-étale covers $q:\qeX \to X$, the algebra $H^0(\qeX, \Omega_{\qeX}^{[\bullet]})$ is generated by a holomorphic symplectic form $\wt \sigma \in H^0(\qeX,\Omega_{\qeX}^{[2]})$. We call $X$ \emph{irreducible Calabi--Yau} (ICY)  if for all quasi-étale covers $q:\qeX \to X$, the algebra $H^0(\qeX, \Omega_{\qeX}^{[\bullet]})$ is generated by a nowhere vanishing reflexive form in degree $\dim X$.
\end{definition}

Note that this is equivalent to the definition in \cite{GKP16sing}: in the presence of a reflexive form of degree $\dim X$ the singularities of $X$ are rational if and only if they are canonical by \cite[Théorème~1]{Elk81} and \cite[Corollary~1.8]{KS18}. 

The proof of the decomposition theorem in the projective case uses algebraic techniques (particularly regarding algebraic integrability of foliations, even though the usage of characteristic $p$ methods can be avoided by a recent paper of Campana \cite{Cam20}) which at the moment cannot be directly generalized to the analytic category. Instead, we reduce to the projective case via locally trivial deformations.  A crucial ingredient is therefore the following theorem, which resolves the numerically $K$-trivial case of a conjecture of Campana--Peternell\footnote{The conjecture has been attributed to Campana and Peternell in \cite{CHL19}. The authors are grateful to Thomas Peternell for bringing this conjecture to our attention.} saying that Kähler minimal models admit an algebraic approximation:

\begin{bigthm}\label{theorem peternell}
Any $X$ as in Theorem \ref{theorem decomposition} admits a strong locally trivial algebraic approximation:  there is a locally trivial family $\sX\to S$ over a smooth base $S$ specializing to $X$ over $s_0\in S$ such that points $s\in S$ for which $X_s$ is projective are analytically dense near $s_0$.\qed
\end{bigthm}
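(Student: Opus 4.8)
The strategy is to realize $X$ as the central fibre of its semiuniversal locally trivial deformation, show that this base is smooth, and produce analytically dense projective fibres by a Noether--Lefschetz-type density argument for the weight-two variation of Hodge structure carried by $H^2$ of the family. \emph{Reduction:} since $X$ is klt with $K_X\equiv 0$, the canonical class is torsion; let $q\colon X'\to X$ be the index-one quasi-\'etale cover, Galois with group $G$, so that $\omega_{X'}\iso\sO_{X'}$. As $X'$ carries a $G$-invariant Kähler class (average one), a finite quotient of a projective variety is projective, and the fixed locus of a finite group acting on a smooth space is smooth, it suffices to build a $G$-equivariant strong locally trivial algebraic approximation of $X'$; we therefore assume $\omega_X\iso\sO_X$, tacitly carrying the group action along (it permutes the structures below, and one passes to $G$-invariants throughout). \emph{Smoothness of the base:} one first shows that the functor $\Def^{\lt}(X)$ of locally trivial deformations is unobstructed, with smooth semiuniversal base $(S,s_0)$ and $T_{s_0}S\iso H^1(X,\sT_X)$, $\sT_X=\sHom(\Omega^1_X,\sO_X)$. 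This is a Bogomolov--Tian--Todorov argument adapted to the klt Kähler setting: contraction with a trivialising section of $\omega_X$ identifies the differential graded Lie algebra $R\Gamma(X,\bigwedge^\bullet\sT_X)$ governing $\Def^{\lt}(X)$ with a summand of the reflexive de Rham complex of $X$, whose hypercohomology carries a pure Hodge structure and degenerates at $E_1$ by the Hodge theory of rational singularities \cite{KS18, Elk81}; the $T^1$-lifting principle then gives formal, hence (by convergence) analytic, smoothness.

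Shrink $S$ to a contractible Stein neighbourhood of $s_0$ and let $\pi\colon\sX\to S$ be the universal family, $\sX_{s_0}=X$. Each fibre $\sX_s$ is again a compact Kähler klt space (local triviality preserves the analytic singularity type, and Kählerness is open in such families), the local system $R^2\pi_*\Q$ is trivial with fibre $V_\Q:=H^2(X,\Q)$, and $H^2(\sX_s,\C)$ carries a pure weight-two Hodge structure, polarised by cup product against a power of a Kähler class, varying holomorphically in $s$. Writing $F^1_s\subseteq V_\C$ for the first step of its Hodge filtration, a real class $\alpha\in V_\R$ is of type $(1,1)$ on $\sX_s$ iff $\alpha\in H^{1,1}(\sX_s)_\R:=F^1_s\cap\overline{F^1_s}\cap V_\R$. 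Two further facts are used: a compact Kähler variety with rational singularities is projective iff its Kähler cone contains a class of $V_\Q$ (the projectivity criterion for Moishezon Kähler varieties); and, fixing a Kähler class $\omega_0\in V_\R$ on $X$, there are a neighbourhood $U\ni s_0$ and an open convex cone $\sC\ni\omega_0$ in $V_\R$ such that every $\beta\in\sC\cap H^{1,1}(\sX_s)_\R$ with $s\in U$ is Kähler on $\sX_s$ — transport a fixed Kähler metric of $X$ to the $C^\infty$-trivial nearby fibres and note that its $(1,1)$-component stays positive and, once the class is again of type $(1,1)$, represents it, handling the singular points via the local product structure of the family. Combining these, it suffices to find, arbitrarily close to $s_0$, a point $s\in U$ admitting a class $\lambda\in V_\Q\cap\sC$ of type $(1,1)$ on $\sX_s$.

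Consider the holomorphic period map $\wp\colon S\to D$ into the classifying space $D$ of polarised weight-two Hodge structures on $(V_\Q,Q)$ with the given Hodge numbers; for weight two, $D$ is an open subset of the Grassmannian of $Q$-isotropic $h^{2,0}$-planes $F^2\subseteq V_\C$, with $F^1=(F^2)^{\perp_Q}$ determined. The realification map $D\to\Gr(h^{1,1},V_\R)$, $F^2\mapsto (F^2)^{\perp_Q}\cap V_\R=H^{1,1}_\R$, is a submersion near $\wp(s_0)=H^{2,0}(X)$ (it is a fibre bundle over the open set of $Q$-positive real $2h^{2,0}$-planes), with image a neighbourhood of $H^{1,1}(X)_\R$. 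Since $\omega_0\in H^{1,1}(X)_\R$ and $V_\Q$ is dense in $V_\R$, an elementary sweeping argument produces, in any neighbourhood of $H^{1,1}(X)_\R$ in $\Gr(h^{1,1},V_\R)$, real subspaces containing a rational class arbitrarily close to $\omega_0$. Hence, \emph{provided $\wp$ is a submersion at $s_0$}, the subspaces $H^{1,1}(\sX_s)_\R$ for $s$ near $s_0$ sweep out a neighbourhood of $H^{1,1}(X)_\R$, so one finds $s$ as close to $s_0$ as desired and $\lambda\in\sC\cap H^{1,1}(\sX_s)_\R\cap V_\Q$; by the previous paragraph $\sX_s$ is projective, and shrinking the neighbourhood of $s_0$ proves Theorem~\ref{theorem peternell}.

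It remains to show that $\wp$ is a submersion at $s_0$ — a local Torelli statement on $H^2$ — and here we cannot invoke Theorem~\ref{theorem decomposition}, which is deduced from the present result. Instead we decompose the Hodge structure directly: the singular Ricci-flat (Kähler--Einstein) metric on $X$ restricts to a genuine Ricci-flat Kähler metric on the smooth locus, and a holonomy/Bochner analysis there — extended across $\Sing X$ by reflexivity — splits the tangent sheaf $\sT_X=\sT_0\oplus\bigoplus_i\sT_i$ into a flat summand and holonomy-irreducible summands of type $\mathrm{SU}$ or $\mathrm{Sp}$ (Berger's list), inducing a decomposition $H^2(X)=H^2_0\oplus\bigoplus_iH^2_i$ of polarised Hodge structures and a compatible splitting $H^1(X,\sT_X)=\bigoplus_\bullet H^1(X,\sT_\bullet)$ under which $\wp$ becomes a product of period maps. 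On $\sT_0$ one has $H^2_0=\bigwedge^2H^1(X)$, and the corresponding period map is governed by deformations of the Albanese torus, which (local Torelli for complex tori) surject onto its classifying space. On an $\mathrm{Sp}$-summand, $h^{2,0}=1$, and the reflexive symplectic form together with unobstructedness yields the local Torelli theorem for $H^2_i$ in the style of Bogomolov--Beauville. On an $\mathrm{SU}$-summand of rank $\ge 3$ ("strict Calabi--Yau"), $h^{2,0}=0$, so $H^2_i$ is purely of type $(1,1)$, its classifying space is a point, and there is nothing to prove. Assembling these factorwise statements gives the submersivity of $\wp$. \emph{The main obstacle} is precisely this last step: establishing the tangent-sheaf decomposition and the induced splitting of the Hodge structure on $H^2$, together with the per-factor local Torelli statements, without recourse to the decomposition theorem — which forces one to work through the differential geometry of the \emph{incomplete} Ricci-flat metric on $X^{\reg}$ and the reflexive-extension techniques for klt singularities. (The unobstructedness in the first paragraph is the other substantial input.)
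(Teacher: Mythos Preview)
Your argument has a genuine gap at the very first step: you assert that $\Def^{\lt}(X)$ is unobstructed via a Bogomolov--Tian--Todorov argument, but this is precisely what is \emph{not} known for singular $K$-trivial klt varieties, and the paper explicitly flags it as an open question.  The sketch you give---contract with a trivialisation of $\omega_X$ to identify the DGLA $R\Gamma(X,\bigwedge^{[\bullet]}T_X)$ with (a summand of) the reflexive de Rham complex, then use Hodge theory and $T^1$-lifting---breaks down because the reflexive Hodge-to-de Rham spectral sequence $H^q(X,\Omega^{[p]}_X)\Rightarrow \HH^{p+q}(X,\Omega^{[\bullet]}_X)$ is only known to degenerate in the range $p+q\leq 2$ (this is \cite[Lemma~2.4]{BL16}); in higher degrees it may fail, so the obstruction space $H^2(X,T_X)$ is not controlled by your argument.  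The references you invoke (\cite{KS18,Elk81}) give rationality of singularities and extension of reflexive forms, but not the full degeneration needed here.

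The paper circumvents this entirely.  Rather than proving smoothness of the whole locally trivial Kuranishi space, it constructs a specific smooth \emph{subspace}: using the polystable decomposition $T_X=\bigoplus_i E_i$ coming from the Ricci-flat metric (\cite{GSS}), one assembles a \emph{weakly symplectic split foliation} $E\subset T_X$ whose complement $P$ is the common radical of all reflexive $2$-forms (Lemma~\ref{lemma symp splitting}).  Deformations along $E$ are governed by a functor $F^\lt_{E;\split}$, and Proposition~\ref{prop EP unobstructed} shows this functor is unobstructed by a $T^1$-lifting argument that only requires the low-degree degeneration $p+q\leq 2$---the key point being that each summand of $E$ is, via contraction with a $2$-form, a direct summand of $\Omega^{[1]}_X$, so the relevant restriction maps on $H^1$ are surjective.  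This yields a smooth germ $S=(H^1(X,E),0)$ mapping to $\Def^\lt(X)$.  For the density of projective fibres one does not need your full local Torelli submersivity: it suffices (by the singular Green--Voisin criterion of Graf--Schwald \cite[Theorem~3.1]{GS20}) that the contraction map $\iota_\omega\colon H^1(X,E)\to H^2(X,\sO_X)$ be surjective for a K\"ahler class $\omega$, and this holds precisely because every reflexive $2$-form kills $P$ (Lemma~\ref{lemma approx crit}).  Your holonomy decomposition of $T_X$ is morally the same input as the paper's polystable splitting, but you use it for the wrong purpose: it should produce the unobstructed directions, not decompose an already-smooth period map.
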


It is natural to ask whether the Bogomolov--Tian--Todorov theorem holds in this context---that is, whether locally trivial deformations of numerically $K$-trivial $X$ as in the theorem are always unobstructed (which would be sufficient to prove Theorem~\ref{theorem peternell}, see \cite[Theorem~1.2]{GS20}).  On the one hand, flat deformations of such $X$ are known to be potentially obstructed by an example of Gross \cite{Gro97}.  On the other hand, the proof of unobstructedness in the smooth case is fundamentally Hodge-theoretic, and from this perspective locally trivial deformations are more natural as they are topologically trivial (see \cite[Proposition~6.1]{AV19}).  While some special cases have been established (see \cite{BL16,BL18,GS20}), it is as yet unclear whether a locally trivial Bogomolov--Tian--Todorov theorem should hold.

The main difficulty in the proof of Theorem \ref{theorem peternell} is therefore to produce sufficiently many unobstructed deformations. Recall that by Kodaira's criterion, a compact Kähler manifold (or compact Kähler space with rational singularities) with no nonzero holomorphic $2$-forms is automatically projective, so the existence of 2-forms is the only obstruction to projectivity.  The results of \cite{GSS} extending the polystability of $T_X$ to the K\"ahler category provide a splitting of $T_X$ into foliations and the symplectic foliations among the factors (precisely, the weakly symplectic split foliations, see Definition~\ref{def good}) account for all of the 2-forms on $X$.  It is therefore natural to try to deform to the symplectic directions inside $H^1(X,T_X)$. We show that locally trivial deformations obtained by exponentiating the symplectic foliations of this splitting are always unobstructed.  As in the proof of the corresponding result for symplectic varieties \cite[Theorem~4.7]{BL18}, a crucial role is played by the degeneration of reflexive Hodge-to-de Rham in low degrees \cite[Section~2]{BL16}.  The general results we prove about locally trivial deformations along foliations (Section~\ref{section deformations}) and the existence of simultaneous resolutions in locally trivial families (Corollary~\ref{corollary resolutions}) are of independent interest as well.

With Theorem \ref{theorem peternell} in hand, the proof of Theorem \ref{theorem decomposition} proceeds as follows.  We first produce a locally trivial deformation $\pi:\sX\to \Delta$ of $X$ over the disk for which projective fibers are analytically dense.  By cycle-theoretic arguments and Theorem \ref{theorem decomposition} in the projective case \cite[Theorem 1.5]{HP19}, after replacing $X$ by a quasi-\'etale cover it suffices to assume there is a splitting $\sX^*=\sY^*\times_{\Delta^*} \sZ^*$ of the family $\sX^*:=\sX|_{\Delta^*}$ over the punctured disk, and we must show that the splitting extends over the puncture.  

One first observes that local triviality of the family $\pi:\sX\to\Delta$ implies the K\"unneth decomposition of $R^k\pi_*\Q_{\sX^*}$ extends, in fact as a decomposition of the variation of Hodge structures.  By $K$-triviality, the factors of the splitting of the tangent bundle $T_{\sX^*/\Delta^*}$ are cut out by differential forms and extend, so we have a splitting $T_{\sX/\Delta}\cong A\oplus B$.  The leaves of the splitting of the family over $\Delta^*$ have well-defined limits in the special fiber which are therefore tangent to the factors of the limit splitting $T_X=A_0\oplus B_0$ on the regular locus $X^\reg$.

It remains to show that the limit leaves define a product structure in the singular locus $X^\sing$.  There are essentially two types of phenomena that could go wrong:  the limit leaves could acquire new components in $X^\sing$, or limit leaves in the two directions could have positive-dimensional intersections in $X^\sing$.  To rule these out, we show that the splitting of the Ricci-flat metric $\omega_t=\omega_{A_t}+\omega_{B_t}$ of $X_t$ for $t\in\Delta^*$ induced by the splitting of the family extends over the puncture to a decomposition $\omega_0=\omega_{A_0}+\omega_{B_0}$ of the Ricci-flat metric on $X_0=X$ into closed semipositive currents with bounded potentials.  This is the key technical part of the proof of Theorem \ref{theorem decomposition} and the latter condition is critical:  it implies that these currents can be restricted to cycles in the singular locus and that one can compute intersection numbers with these currents. The fact that the decomposition is compatible with the limit K\"unneth decomposition and the semipositivity of the factors together imply neither pathology arises.

\medskip

Patrick Graf informed us that he independently obtained a Kähler version of the decomposition theorem in dimension at most four, see \cite{Gra21}.

%------------------------------------------------------------------------------------------
\subsection{Outline}
%------------------------------------------------------------------------------------------

\begin{itemize}
\item[$\bullet$] Section \ref{section deformations}. We collect some background on locally trivial deformations, define locally trivial deformations along foliations, and prove unobstructedness of deformations along weakly symplectic split foliations.  
\item[$\bullet$] Section \ref{section ktrivial}. We recall the precise notions of $K$-triviality and prove Theorem \ref{theorem peternell}. We derive some first applications about fundamental groups and deformation of the irreducible building blocks. 
\item[$\bullet$] Section \ref{section douady}. We recall some foundational aspects of relative Douady spaces and show that local product decompositions can be spread out over Zariski open sets.  
\item[$\bullet$] Section~\ref{section tangent splitting}. A locally trivial family $\sX\to\Delta$ which is a product over $\Delta^*$ admits a limit product decomposition on cohomology. We deduce from that a splitting for the relative tangent sheaf.
\item[$\bullet$] Section \ref{section kaehler einstein}. We prove that the K\"ahler--Einstein metric in the limit splits as a sum of positive currents with bounded local potentials corresponding to the product decomposition of the family $\sX\to \Delta$ over $\Delta^*$.
\item[$\bullet$] Section \ref{section family splitting}. Building upon the previous results, we prove a global splitting result for locally trivial families $\sX\to\Delta$ that are a product over $\Delta^*$, under some additional conditions.
\item[$\bullet$]  Section \ref{section decomp proof}. We proceed to checking that the assumptions in the splitting theorem from the previous section are fulfilled in our geometric setting, thereby proving Theorem~\ref{theorem decomposition}.
\end{itemize}
%------------------------------------------------------------------------------------------
\subsection*{Acknowledgments.} 
%------------------------------------------------------------------------------------------
We benefited from discussions, remarks, and emails of Benoît Claudon, Stéphane Druel, Patrick Graf, Vincent Guedj, Stefan Kebekus, Mihai P\u{a}un, Thomas Peternell, Christian Schnell, Bernd Schober, and Ahmed Zeriahi. The authors are grateful to the referee for the various suggestions that have greatly improved the exposition.

Benjamin Bakker was partially supported by NSF grant DMS-1848049. Henri Guenancia has benefited from State aid managed by the ANR under the "PIA" program bearing the reference ANR-11-LABX-0040,
 in connection with the research project HERMETIC. Christian Lehn was supported by the DFG through the research grants Le 3093/2-2 and  Le 3093/3-1.

%------------------------------------------------------------------------------------------
\subsection*{Notation and Conventions} 
%------------------------------------------------------------------------------------------
A resolution of singularities of a variety $X$ is a proper surjective bimeromorphic morphism $\pi:Y \to X$ from a nonsingular variety $Y$. The term variety will denote an integral separated scheme of finite type over $\C$ in the algebraic setting or an irreducible and reduced separated complex space in the complex analytic setting. For a field $k$, an algebraic $k$-scheme is a scheme of finite type over $k$. We will denote by $\Delta:=\{z\in \mathbb C \mid |z|<1\}$ the complex unit disk and by $\Delta^*:=\Delta\setminus\{0\}$ the punctured disk. 
%------------------------------------------------------------------------------------------

%------------------------------------------------------------------------------------------

%------------------------------------------------------------------------------------------
%------------------------------------------------------------------------------------------
%------------------------------------------------------------------------------------------
\section{Locally trivial deformations along foliations and resolutions}\label{section deformations}
%------------------------------------------------------------------------------------------
%------------------------------------------------------------------------------------------
%------------------------------------------------------------------------------------------

%------------------------------------------------------------------------------------------
Throughout we define $\Art_k$ to be the category of local artinian $k$-algebras. To simplify the notation, we agree that $k$ will denote an algebraically closed field when speaking about schemes and $k=\C$ when speaking about complex spaces.
%------------------------------------------------------------------------------------------
\subsection{Locally trivial deformations}\label{section locally trivial}
%------------------------------------------------------------------------------------------
We begin with some background on locally trivial deformations.

\begin{definition}
Let $f:\sX\to S$ be a morphism of complex spaces (or algebraic schemes\footnote{or algebraic spaces, if we use the \'etale topology in the sequel.}).  We say:
\begin{enumerate}
\item $\sX$ is \emph{locally trivial} over $S$ if there is a cover $\sX_i$ of $\sX$, a cover $S_i$ of $S$ such that $\sX_i\to S$ factors through $S_i$, complex spaces (or schemes) $X_i$, and commutative diagrams
\[\xymatrix{
\sX_i\ar[d]\ar[r]^\cong&X_i\times S_i\ar[ld]\\
S_i}\]
where the diagonal map is the projection.
\item  $\sX$ is \emph{formally locally trivial} over $S$ if for any $T=\Spec A\to S$ with $A\in \Art_k$ the base-change $\sX_T\to T$ is locally trivial.
\end{enumerate}
\end{definition}

\begin{remark}  \hspace{.5in}
\begin{enumerate}
\item Of course, over an artinian base, the notions of formal local triviality and local triviality are equivalent. 
\item In the analytic category, by results of Artin \cite[Theorem (1.5)(ii)]{Art68}, $\sX/S$ is locally trivial if and only if $\wh\sO_{\sX,x}\cong \wh\sO_{X_s,x}\otimes_\C\wh\sO_{S,f(x)} $ as $\wh\sO_{S,f(x)}$-algebras for all points $x\in\sX$.  Thus, in the analytic category $\sX/S$ is locally trivial if and only if it is formally locally trivial.  In the algebraic category, local triviality is in general much stronger than formal local triviality over nonartinian bases. 
\end{enumerate}
\end{remark}

\begin{definition}\label{definition flt}
Let $X/k$ be a complex space (or an algebraic scheme). The locally trivial deformation functor $F^\lt_X:\Art_k\to\mathrm{Sets}$ is defined as follows: $F^\lt_X(A)$ is the set isomorphism classes of locally trivial families $\sX/\Spec A$ together with a $k$-morphism $X\to \sX$ which is an isomorphism modulo $\mathfrak{m}_A$. Here, we consider isomorphism classes for isomorphisms which are the identity modulo $\mathfrak{m}_A$.
\end{definition}

We recall that locally trivial deformations are controlled by the tangent sheaf $T_{\sX/S}:=\sHom_{\sO_\sX}(\Omega^1_{\sX/S},\sO_\sX)$.  This will be made precise in a way that can be adapted easily to deformations preserving a foliation in Section~\ref{section locally trivial foliation}.  For $A$ in $\Art_k$ let $$\mathbf{G}_X(A):=\Aut_A(\sO_X\otimes_k A)$$ be the sheaf of $A$-algebra automorphisms of $\sO_X\otimes_k A$, and let $\mathbf{U}_X(A)\subset\mathbf{G}_X(A)$ be the subsheaf of automorphisms which are the identity modulo $ \mathfrak{m}_A$.

The following proposition is immediate:

\begin{proposition}\label{proposition defo}
Let $X/k$ be a complex space (or an algebraic scheme) and $F^\lt_X$ its locally trivial deformation functor.  Then we have a natural identification:
\[F^\lt_X(A)=\check H^1(X,\mathbf{U}_X(A)).\]
\end{proposition}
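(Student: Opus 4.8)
The plan is to unwind the definition of $F^\lt_X(A)$ as a set of isomorphism classes and recognize it directly as a first \v{C}ech cohomology set of the sheaf of groups $\mathbf{U}_X(A)$. First I would note that, since $A\in\Art_k$, local triviality is the statement that there is an open cover $\{X_i\}$ of the topological space underlying $X$ and, for each $i$, an isomorphism of $A$-ringed spaces $\varphi_i\colon \sX|_{X_i}\xrightarrow{\ \cong\ }(X\times\Spec A)|_{X_i}$ compatible with the closed embedding of $X$ (i.e. reducing to the identity modulo $\mathfrak m_A$); here I use that the underlying topological space of any locally trivial $\sX/\Spec A$ is canonically that of $X$, so ``cover of $\sX$'' and ``cover of $X$'' mean the same thing. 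Given such data, on overlaps $X_{ij}:=X_i\cap X_j$ the composite $g_{ij}:=\varphi_i\circ\varphi_j^{-1}$ is an automorphism of $(X\times\Spec A)|_{X_{ij}}$ which is the identity modulo $\mathfrak m_A$, hence a section of $\mathbf U_X(A)$ over $X_{ij}$; these satisfy the cocycle condition $g_{ij}g_{jk}=g_{ik}$ on triple overlaps by construction. This produces a map from the set of (locally trivial family together with trivialization data) to \v{C}ech $1$-cocycles.

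Next I would check this descends to a well-defined bijection on the appropriate equivalence classes. Changing the trivializations $\varphi_i$ to $\varphi_i':=h_i\circ\varphi_i$ for sections $h_i$ of $\mathbf U_X(A)$ over $X_i$ replaces the cocycle $(g_{ij})$ by the cohomologous cocycle $(h_ig_{ij}h_j^{-1})$, so the class in $\check H^1(X,\mathbf U_X(A))$ depends only on $\sX$ together with the cover; refining the cover does not change the class, by the usual direct-limit description of \v{C}ech cohomology; and an isomorphism $\sX\cong\sX'$ over $\Spec A$ restricting to the identity on $X$ translates, after passing to a common refinement of the two covers, precisely into a coboundary relating the two cocycles. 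This shows the map $F^\lt_X(A)\to\check H^1(X,\mathbf U_X(A))$ is well-defined and injective. For surjectivity, given a cocycle $(g_{ij})$ with values in $\mathbf U_X(A)$ I would glue the local pieces $(X\times\Spec A)|_{X_i}$ along the isomorphisms $g_{ij}$ to obtain an $A$-ringed space $\sX$; the cocycle condition guarantees the gluing is consistent, the result is flat over $A$ (each piece is, and flatness is local), it is locally trivial by construction, and the common reduction modulo $\mathfrak m_A$ of all the charts furnishes the required morphism $X\to\sX$ which is an isomorphism mod $\mathfrak m_A$. Hence the map is surjective, and naturality in $A$ is immediate since every construction above is functorial in the coefficient sheaf $\mathbf U_X(-)$.

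\textbf{Main obstacle.} The genuinely substantive point — and the reason the proposition can be called ``immediate'' — is the bookkeeping that the \emph{pointed} structure (the marking $X\to\sX$ which is an isomorphism mod $\mathfrak m_A$) is exactly what rigidifies the gluing data so that automorphisms reducing to the identity, rather than arbitrary automorphisms, are the relevant coefficient sheaf; without the marking one would instead be computing something like a non-abelian $H^1$ of $\mathbf G_X(A)$ classifying forms of $X$. Concretely, the care needed is in verifying that: (i) the marking pins down the reduction of each chart mod $\mathfrak m_A$, so the transition automorphisms automatically land in $\mathbf U_X(A)$ and not just $\mathbf G_X(A)$; and (ii) an abstract isomorphism of marked families is forced to be the identity mod $\mathfrak m_A$, which is what makes the change-of-cocycle a coboundary \emph{in $\mathbf U_X(A)$}. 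Everything else is the standard translation between locally trivial objects and \v{C}ech $1$-cohomology with coefficients in the sheaf of gluing automorphisms, and requires no new ideas.
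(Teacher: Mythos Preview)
Your proposal is correct and is precisely the standard unwinding the paper has in mind; the paper gives no proof at all, simply declaring the proposition ``immediate,'' so your argument is the expected one. Your identification of the key point --- that the marking $X\to\sX$ forces the transition functions and the isomorphisms into $\mathbf{U}_X(A)$ rather than $\mathbf{G}_X(A)$ --- is exactly why the statement is regarded as a tautology.
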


Note that in characteristic zero we have an isomorphism of sheaves of pointed sets
\[
\exp: T_X\otimes_k \mathfrak{m}_A\to \mathbf{U}_X(A)
\] 
where $T_X\otimes_k A$ is given the obvious structure of a sheaf of $A$-linear Lie algebras.  For any small extension\footnote{Recall that a sequence such as \eqref{eq:smallext} is a \emph{small extension} if $J.\gothm_{A'}=0$. In this case, the $A'$-module structure of $J$ factors through $k=A'/\gothm_{A'}=A/\gothm_{A}$.}
\begin{equation}\label{eq:smallext}
0\to J\to A'\to A\to 0
\end{equation}
with $A,A'\in\Art_k$ the first row of the following commutative diagram is then exact:
\begin{equation}\label{eq lift auto}
\begin{tikzcd}
0\ar[r]&T_X\otimes_k J\ar[r]\ar[d,equal]&T_X\otimes_k \mathfrak{m}_{A'}\ar[r]\ar{d}{\exp}&T_X\otimes_k\mathfrak{m}_A\ar[r]\ar{d}{\exp}&0\\
0\ar[r]& T_X\otimes_k J\ar{r}{\exp}& \mathbf{U}_X(A')\ar[r]& \mathbf{U}_X(A)\ar[r] &1.
\end{tikzcd}
\end{equation}
Here the horizontal maps are morphisms of sheaves of groups and the right and center vertical maps are isomorphisms of sheaves of pointed sets.

It follows that the bottom row is exact. Moreover, as the top row is exact on global sections, it follows that the bottom row is exact on global sections as well.

\begin{corollary}\label{cor lt def thry}
Suppose $X/k$ is a separated complex space (or a separated algebraic scheme with $\mathrm{char}(k)=0$).  Then the following hold.
\begin{enumerate}
\item 
The functor $F_X^\lt$ admits a tangent-obstruction theory with tangent space equal to $H^1(X,T_X)$ and obstructions in $H^2(X,T_X)$.
\item\label{item def mod} For any family $\sX/ S=\Spec A$ in $F^\lt_{X}(A)$, the lifts of $\sX/S$ to $F^\lt_X(A[\epsilon])$ are canonically parametrized by a functorial quotient of $H^1(\sX,T_{\sX/S})$.
\end{enumerate}
\end{corollary}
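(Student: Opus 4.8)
The plan is to deduce the corollary directly from the exactness properties of diagram \eqref{eq lift auto} together with standard \v{C}ech cohomology, exploiting the fact that $F^\lt_X$ is presented via $\check H^1(X,\mathbf U_X(A))$ by Proposition~\ref{proposition defo}. For part (1), first I would establish the tangent space computation: applying Proposition~\ref{proposition defo} to the dual numbers $A=k[\epsilon]$, we have $\mathbf U_X(k[\epsilon])\cong T_X\otimes_k(\epsilon)\cong T_X$ via $\exp$, so $F^\lt_X(k[\epsilon])=\check H^1(X,T_X)=H^1(X,T_X)$, the last identification because $X$ is separated (so \v{C}ech and derived functor cohomology agree, at least in the degrees we need; on a separated scheme or a paracompact/separated complex space this is standard). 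For the obstruction theory, I would take a small extension $0\to J\to A'\to A\to 0$ and a class $\xi\in F^\lt_X(A)=\check H^1(X,\mathbf U_X(A))$, and use the exact sequence of sheaves of (non-abelian) groups on the bottom row of \eqref{eq lift auto}, namely $1\to T_X\otimes_k J\to\mathbf U_X(A')\to\mathbf U_X(A)\to 1$, where the kernel $T_X\otimes_k J$ is central (it is killed by $\mathfrak m_{A'}$ since $J\cdot\mathfrak m_{A'}=0$, being a small extension). The associated long exact cohomology sequence for a central extension then produces a connecting map $\check H^1(X,\mathbf U_X(A))\to \check H^2(X,T_X\otimes_k J)=H^2(X,T_X)\otimes_k J$ whose vanishing on $\xi$ is exactly the condition that $\xi$ lift to $A'$; and when a lift exists, the set of lifts is a torsor under $\check H^1(X,T_X\otimes_k J)=H^1(X,T_X)\otimes_k J$. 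Linearity of the obstruction map in $J$ and compatibility with morphisms of small extensions give the remaining axioms of a tangent-obstruction theory.

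For part (2), I would argue analogously but relative to the base $S=\Spec A$. Fix a family $\sX/S$ in $F^\lt_X(A)$. Lifts to $F^\lt_X(A[\epsilon])$ restricting to $\sX$ are governed by the sheaf of $A[\epsilon]$-automorphisms of $\sO_\sX\otimes_A A[\epsilon]$ which reduce to the identity mod $\epsilon$; via $\exp$ this sheaf is identified with $T_{\sX/S}\otimes_A(\epsilon)\cong T_{\sX/S}$, using that $\Omega^1_{\sX/S}$ and hence $T_{\sX/S}=\sHom(\Omega^1_{\sX/S},\sO_\sX)$ is the correct relative object and that local triviality of $\sX/S$ ensures local liftability (the sheaf of lifts is locally nonempty). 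The obstruction to globalizing a local lift lies in $H^2(\sX,T_{\sX/S})$, but since $\sX/S$ is assumed to already be a member of $F^\lt_X(A)$ — i.e. it exists — and $S=\Spec A[\epsilon]\to\Spec A$ is itself the trivial square-zero extension, the obstruction class automatically vanishes; what remains is that the set of lifts is a quotient of $H^1(\sX,T_{\sX/S})$, the quotient arising because two \v{C}ech $1$-cochains can give isomorphic (rather than equal) families, the ambiguity being controlled by the coboundary action together with automorphisms of $\sX/S$ itself. I would phrase this as "a functorial quotient" precisely to absorb that ambiguity, and functoriality follows from the functoriality of all the constructions (pullback of automorphism sheaves, the $\exp$ isomorphism, \v{C}ech cohomology) in the family $\sX/S$.

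The main obstacle I anticipate is bookkeeping around \v{C}ech versus derived cohomology and the non-abelian/base-pointed nature of the sheaves $\mathbf U_X(A)$: one must be careful that the long exact sequence in \v{C}ech cohomology for the central extension $1\to T_X\otimes_k J\to\mathbf U_X(A')\to\mathbf U_X(A)\to 1$ is valid and that the connecting map to $\check H^2$ is well-defined and linear — this is exactly the kind of statement that is "immediate" in the affine-cover setting but requires $X$ separated to have a good cover and to identify $\check H^i$ with $H^i$ for $i\le 2$. A secondary subtlety is pinning down the precise meaning of "functorial quotient" in (2): the cleanest route is to observe that the lifts of $\sX/S$ form a pseudo-torsor under $H^1(\sX,T_{\sX/S})$ and then take the quotient by the image of the automorphism group $\mathrm{Aut}(\sX/S)$ acting on lifts, so I would make sure the statement is compatible with how it is used later (presumably in Section~\ref{section deformations} to produce deformations along foliations) rather than over-engineering the formulation here.
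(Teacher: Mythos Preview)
Your approach is essentially the same as the paper's. For part~(1) you argue exactly as the paper does: use the bottom row of \eqref{eq lift auto}, observe that $T_X\otimes_k J$ is central because $J\cdot\mathfrak m_{A'}=0$, and read off the tangent-obstruction theory from the non-abelian \v Cech sequence, invoking separatedness to identify \v Cech and sheaf cohomology.

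For part~(2) there is a subtlety you gloss over that the paper handles explicitly. The kernel $T_X\otimes_k A$ of $\mathbf U_X(A[\epsilon])\to\mathbf U_X(A)$ is \emph{not} central when $A$ is not a field (the commutator lands in $T_X\otimes_k \mathfrak m_A\epsilon$), so the ``central extension'' long exact sequence you invoke for part~(1) does not directly apply. The paper instead records a separate lemma: for a \emph{split} extension $0\to T\to G'\to G\to 1$, the fiber of $\check H^1(G')\to\check H^1(G)$ over $\alpha$ is acted on transitively by $\check H^1(T^\alpha)$, where $T^\alpha$ is $T$ twisted by the cocycle $\alpha$. Applying this with $\alpha=(\sX/S)$ gives precisely $(T_X\otimes_k A)^\alpha=T_{\sX/S}$, which is how $T_{\sX/S}$ enters. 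Your alternative route---working directly on $\sX$ with the abelian sheaf $\exp(T_{\sX/S}\cdot\epsilon)$---is valid and reaches the same conclusion, but you should be aware that it is doing the twisting implicitly; the line ``the obstruction class automatically vanishes'' is better phrased as ``the sequence is split, so every $\alpha$ lifts,'' and the transitivity of the $H^1(\sX,T_{\sX/S})$-action is then the content rather than a residual torsor statement after an obstruction argument.
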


\begin{remark}\label{remark H5}It follows that $F^\lt_X$ satisfies Schlessinger's axioms $(H_1)$-$(H_3)$, see \cite[Theorem~2.11]{Sch68}.  Note that while $F^\lt_X$ may not satisfy $(H_4)$, it does satisfy axiom $(H_5)$ of \cite[\S~1]{Gro97b}, since the fibered coproduct of two deformations may be constructed by taking the fibered direct product of the sheaves of rings.  Thus, the deformation module $T^1(\sX/S)$ has the structure of an $A$-module, and in part \eqref{item def mod} we mean that it is a quotient of $H^1(\sX,T_{\sX/S})$ as an $A$-module which is compatible with restriction maps.  If $\sX/S$ has no automorphisms restricting to the identity on the special fiber, then $H^1(\sX,T_{\sX/S})\to T^1(\sX/S)$ will be an isomorphism. These remarks likewise hold for the other deformation functors that will be defined in Section \ref{section locally trivial foliation}.  
\end{remark}

\begin{proof}[Proof of Corollary \ref{cor lt def thry}]
The following lemma describes how much of the long exact sequence survives in the cohomology of $\mathbf{U}_X(A)$:
\begin{lemma}\label{lemma group sheaf cohomology}
Let 
\begin{equation}0\to T\to G'\to G\to 1\label{eq ses}\end{equation}
be an exact sequence of sheaves of groups on a topological space where $T$ is abelian.
\begin{enumerate}
\item If $T$ is central in $G$, then we have a sequence
\[
\begin{tikzcd}
\check H^1(X,T)\arrow{r}&\check H^1(X,G')\arrow{r}&\check H^1(X,G)\arrow{r}{\delta}&\check H^2(X,T)
\end{tikzcd}
\]
where
\begin{enumerate}
\item The natural action of $\check H^1(X,T)$ on $\check H^1(X,G')$ is transitive on fibers;
\item The image of $\check H^1(X,G')\to \check H^1(X,G)$ is the inverse image of $0$ under $\delta$.
\end{enumerate}
\item If \eqref{eq ses} is split exact\footnote{Meaning $G'\to G$ has a section.}, then for each $\alpha\in \check H^1(X,G)$ the natural action of $\check H^1(X,T^\alpha)$ on the fiber of the map
\[
\begin{tikzcd}
\check H^1(X,G')\arrow{r}&\check H^1(X,G)
\end{tikzcd}
\]
above $\alpha$ is transitive, where $T^\alpha$ is the sheaf obtained from $T$ by twisting by $\alpha$.
\end{enumerate}
\end{lemma}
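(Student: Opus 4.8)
The plan is to prove Lemma~\ref{lemma group sheaf cohomology} by the standard \v{C}ech cocycle manipulations for nonabelian cohomology, adapted to the two situations at hand. Throughout I would fix a cover $\mathfrak{U}=\{U_i\}$ of $X$ and represent classes by cocycles valued in the sections over intersections; since the final statement only concerns \v{C}ech cohomology, there is no sheafification subtlety to worry about, and all constructions will be visibly compatible with refinements.

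For part (1), where $T$ is central in $G'$ (I read the hypothesis ``$T$ is central in $G$'' as $T$ central in $G'$, which is what makes the boundary map well defined), I would first construct the boundary map $\delta$: given a $1$-cocycle $(g_{ij})$ valued in $G$, lift each $g_{ij}$ to $g'_{ij}\in G'(U_{ij})$ (possibly after refining the cover), and set $c_{ijk}=g'_{ij}g'_{jk}(g'_{ik})^{-1}$; centrality of $T$ guarantees this is a $2$-cocycle valued in $T$, that its class is independent of the lifts, and that it only depends on the class of $(g_{ij})$. Exactness at $\check H^1(X,G)$ in the sense of (b) is then immediate: $\delta(g_{ij})=0$ precisely means the $c_{ijk}$ can be killed by multiplying the $g'_{ij}$ by a $T$-valued $1$-cochain, making them into a genuine $G'$-cocycle lifting $(g_{ij})$. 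For (a), if two $G'$-cocycles $(g'_{ij})$ and $(h'_{ij})$ have the same image in $\check H^1(X,G)$, after adjusting by a coboundary I may assume $g'_{ij}$ and $h'_{ij}$ have the same image $g_{ij}$ in $G$, so $t_{ij}:=h'_{ij}(g'_{ij})^{-1}$ lands in $T$; centrality shows $(t_{ij})$ is a $T$-cocycle and that $[h'_{ij}]$ is obtained from $[g'_{ij}]$ by the natural twisting action of $[t_{ij}]\in\check H^1(X,T)$, giving transitivity on fibers.

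For part (2), where \eqref{eq ses} admits a section $s:G\to G'$, fix $\alpha\in\check H^1(X,G)$ represented by a cocycle $(g_{ij})$, and form the twist $T^\alpha$: since $G$ acts on the normal subgroup $T$ by conjugation (inner automorphisms of $G'$ restricted to $T$), $T^\alpha$ is the sheaf $T$ with transition/descent data twisted by $\mathrm{Ad}\,s(g_{ij})$. The section lets me write every $G'$-cocycle lifting $(g_{ij})$ in the normal form $t_{ij}\,s(g_{ij})$ for a unique cochain $(t_{ij})$ valued in $T$; unwinding the cocycle condition for $t_{ij}s(g_{ij})$ using $s(g_{ij})s(g_{jk})=c_{ijk}s(g_{ik})$ with $c_{ijk}\in T$ shows that $(t_{ij})$ is exactly a $1$-cocycle valued in the twisted sheaf $T^\alpha$ (up to the fixed ``constant'' $2$-cochain $c_{ijk}$, which is the same for all such lifts and drops out of the comparison of two lifts). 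Two lifts $t_{ij}s(g_{ij})$ and $t'_{ij}s(g_{ij})$ are cohomologous as $G'$-cocycles by a coboundary fixing $\alpha$ iff the conjugating cochain lies in $\ker(G'\to G)=T$, and this translates precisely to the two $T^\alpha$-cochains differing by a $T^\alpha$-coboundary; hence the fiber over $\alpha$ is acted on transitively by $\check H^1(X,T^\alpha)$.

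The main obstacle I anticipate is purely bookkeeping rather than conceptual: keeping track of \emph{which} group each cochain is valued in while passing between $G'$, $G$, $T$, and the twist $T^\alpha$, and verifying that every identity I use (the $2$-cocycle property of $c_{ijk}$, well-definedness of $\delta$ on cohomology classes, compatibility of the section-normal-form with the twisting action) is genuinely a consequence of either centrality (in part (1)) or the existence of the section (in part (2)), and not an accidental use of the other hypothesis. A secondary nuisance is the repeated need to refine covers when lifting cocycles along $G'\to G$; I would handle this once at the outset by noting that all the statements are about the colimit over covers and that lifts exist after refinement because $G'\to G$ is an epimorphism of sheaves, so no class is lost in the limit.
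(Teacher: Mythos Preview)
Your proposal is correct and is precisely the \v{C}ech-cocycle verification the paper has in mind; the paper's own proof consists of the single sentence ``Easily checked with \v{C}ech cochains,'' and your write-up supplies exactly those details. One minor remark: in part (2) you allow $s$ to be merely a set-theoretic section and carry along the $2$-cochain $c_{ijk}$; in the paper's applications (e.g.\ the split sequence \eqref{eq lift auto two}) the section is in fact a homomorphism of sheaves of groups, so $c_{ijk}=1$ and the bookkeeping simplifies, but your more general treatment is harmless.
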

\begin{proof}Easily checked with \v{C}ech cochains.
\end{proof}
Now, the first claim is immediate upon taking the long exact sequence on \v Cech cohomology of the second row of \eqref{eq lift auto} using the first part of the lemma (where we used separatedness to identify \v Cech cohomology with sheaf cohomology). For the second part, we have a split exact sequence
\begin{equation}\label{eq lift auto two}
\begin{tikzcd}
0\ar[r]& T_X\otimes_k A\ar{r}{\exp}& \mathbf{U}_X(A[\epsilon])\ar[r]& \mathbf{U}_X(A)\ar[r] &1
\end{tikzcd}
\end{equation}
and the claim follows from the second part of the lemma, as the stabilizer of an element $(\sX'/S')$ under the action of $H^1(\sX,T_{\sX/S})$ is easily seen to be an $A$-submodule.  Note that for $\alpha=(\sX/S)\in \check H^1(X,\mathbf{U}_X(A))$ we naturally have $(T_{X\times S/S})^\alpha=T_{\sX/S}$.	
\end{proof}

\begin{remark} We would like to make a couple of remarks regarding \eqref{eq lift auto}.
\begin{enumerate}
\item The restriction morphism $\mathbf{U}_X(A')\to \mathbf{U}_X(A)$ may fail to be surjective in characteristic $p$. If we take $X=\Spec k[x]/(x^p)$, $A=k[\epsilon]/(\epsilon^p)$, and $A'=k[\epsilon]/(\epsilon^{p+1})$, the automorphism $x\mapsto x+\epsilon$ of $X\times\Spec A$ does not lift.
\item Let  $A' \to A$ be a small extension in $\Art_k$, let $\sX'\to \Spec A'$ be flat, and $\sX:=\sX'\times_{\Spec A'}\Spec A$. The same argument shows that $\Aut_{A'}(\sX')\to\Aut_{A}(\sX)$ is surjective whenever $T_{\sX'/A'} \to T_{\sX/A}$ is. Example 2.6.8(i) in \cite{Ser06} shows that for $A'=k[t]/t^3 \to k[t]/t^2 = A$ and $\sX'=\Spec\left(k[x,y,t]/(xy-t,t^3)\right)$ the automorphism of $\sX$ determined by $x\mapsto x+tx$ and $y\mapsto y$ does not lift to $\sX'$. But neither does the vector field $t\dfrac{\di}{\di x} \in T_{\sX/A}$.
\end{enumerate}
\end{remark}

%------------------------------------------------------------------------------------------
\subsection{Locally trivial deformations along foliations}\label{section locally trivial foliation}
%------------------------------------------------------------------------------------------
The above results now easily extend to the situation of deformations along a foliation, provided we require \mbox{$\mathrm{char}(k)=0$} in the algebraic case.  For simplicity, we therefore take $k=\C$ in the remainder.

\begin{definition}
Let $X$ be a separated complex space (or a separated complex algebraic scheme). The {\it tangent sheaf} of $X$ is defined by $T_X:=\sHom_{\sO_X}(\Omega^1_{X},\sO_X)$. A {\it foliation} is a subsheaf $E\subset T_X$, such that $E$ is stable under the Lie bracket and saturated in $T_X$, i.e. such that the quotient $T_X/E$ is torsion-free.
\end{definition}
 
\begin{definition}\label{definition flte}
Let $X$ be a separated complex space (or a separated complex algebraic scheme) with a foliation $E\subset T_X$.  For $A\in\Art_\C$ set $\mathbf{U}_E(A):=\exp(E\otimes_\C\mathfrak{m}_A)\subset\mathbf{U}_X(A)$. We define the deformation functor $F_E^\lt:\Art_\C\to\mathrm{Sets}$ by
\[
F_E^\lt(A):=\check H^1(X,\mathbf{U}_E(A)).
\]
\end{definition}

We somewhat abusively refer to sections of $F^\lt_E$ as $(\sX/S)\in F^\lt_E(A)$ even though the natural map $F^\lt_E\to F^\lt_X$ may not be injective on sections. Note that the integrability of $E$ is necessary in order for $\mathbf{U}_E(A)\subset \mathbf{U}_X(A)$ to be a sheaf of subgroups.

\begin{proposition}\label{proposition defo foliation}
Let $X$ be a separated complex space (or a separated complex algebraic scheme) with a foliation $E\subset T_X$. Then the following hold.
\begin{enumerate}
\item The functor $F_E^\lt$ admits a tangent-obstruction theory with tangent space $H^1(X,E)$ and obstructions in $H^2(X,E)$.
\item Associated to any family $\sX/ S=\Spec A$ in $F^\lt_{E}(A)$ there is a functorial foliation $E_{\sX/S}\subset T_{\sX/S}$ which locally agrees with the trivial extension of $E$ on any local trivialization of the $\mathbf{U}_E(A)$-cocycle representing $\sX/S$.  The lifts of $\sX/S$ to $F^\lt_E(A[\epsilon])$ are canonically parametrized by a functorial quotient of $H^1(\sX,E_{\sX/S})$.
\end{enumerate}
\end{proposition}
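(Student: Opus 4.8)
The proof runs the argument of Corollary~\ref{cor lt def thry} essentially line by line, with the tangent sheaf $T_X$ replaced everywhere by the foliation $E$; the only point at which the hypothesis that $E$ is involutive is genuinely used is in checking that $\mathbf{U}_E(A)=\exp(E\otimes_k\mathfrak{m}_A)$ is a \emph{subsheaf of groups} of $\mathbf{U}_X(A)$ whose adjoint action stabilizes $E\otimes_k A\subset T_X\otimes_k A$. So I would first record exactly this: since $[E,E]\subset E$ and $\mathfrak{m}_A$ is a nilpotent commutative ring, $E\otimes_k\mathfrak{m}_A$ is a sheaf of nilpotent Lie $k$-algebras, and the Baker--Campbell--Hausdorff series $u+v+\tfrac12[u,v]+\cdots$, being a finite sum of iterated brackets, takes values again in $E\otimes_k\mathfrak{m}_A$; hence $\exp$ is an isomorphism of sheaves of groups of $E\otimes_k\mathfrak{m}_A$ onto a subsheaf of $\mathbf{U}_X(A)$. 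The same computation applied to $\operatorname{Ad}(\exp v)w=w+[v,w]+\tfrac12[v,[v,w]]+\cdots$ shows that conjugation by elements of $\mathbf{U}_E(A)$ preserves $E\otimes_k A$.

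For part (1), given a small extension $0\to J\to A'\to A\to 0$ in $\Art_k$ I would use $\mathfrak{m}_{A'}J=0$ to see that $E\otimes_k J$ is central in $E\otimes_k\mathfrak{m}_{A'}$ and that $0\to E\otimes_k J\to E\otimes_k\mathfrak{m}_{A'}\to E\otimes_k\mathfrak{m}_A\to 0$ is a ($k$-)split exact sequence of sheaves; exponentiating exactly as in \eqref{eq lift auto} produces a commutative diagram with exact rows whose bottom row
\[
0\to E\otimes_k J\to\mathbf{U}_E(A')\to\mathbf{U}_E(A)\to 1
\]
is exact, and exact on global sections because the top one is. Applying Lemma~\ref{lemma group sheaf cohomology}(1), and using separatedness of $X$ to identify \v{C}ech with sheaf cohomology, yields a sequence
\[
H^1(X,E\otimes_k J)\to F^\lt_E(A')\to F^\lt_E(A)\xrightarrow{\ \delta\ }H^2(X,E\otimes_k J)
\]
with the transitivity and image properties of the lemma. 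Specializing to $A=k$, $A'=k[\epsilon]$ gives $F^\lt_E(k[\epsilon])=H^1(X,E)$; in general $\delta(\xi)\in H^2(X,E)\otimes_k J$ obstructs lifting $\xi\in F^\lt_E(A)$, and when it vanishes the set of lifts is acted on transitively by $H^1(X,E)\otimes_k J$. This is the asserted tangent--obstruction theory.

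For part (2), I would represent $(\sX/S)\in F^\lt_E(A)$ by a $\mathbf{U}_E(A)$-cocycle $(g_{ij})$ on a cover $\{X_i\}$ of $X$ over which $\sX$ is trivial; then $T_{\sX/S}$ is glued from copies of $T_X\otimes_k A$ along $\operatorname{Ad}(g_{ij})$, and since these preserve $E\otimes_k A$ by the first paragraph, the subsheaves $E\otimes_k A$ glue to a coherent subsheaf $E_{\sX/S}\subset T_{\sX/S}$ equal to the trivial extension of $E$ on each $X_i\times S$. Being locally of that form, $E_{\sX/S}$ is involutive (and saturated), hence a foliation; a cohomologous cocycle differs by gauge transformations in $\mathbf{U}_E(A)$, which likewise preserve $E\otimes_k A$, so $E_{\sX/S}$ depends only on the class $(\sX/S)$, and the construction visibly commutes with base change in $A$, giving functoriality. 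Finally, for the lifts to $A[\epsilon]$ I would observe that $0\to E\otimes_k A\to\mathbf{U}_E(A[\epsilon])\to\mathbf{U}_E(A)\to 1$ is split exact with $E\otimes_k A$ abelian (as $(\epsilon)^2=0$ in $A[\epsilon]$), as in \eqref{eq lift auto two}, apply Lemma~\ref{lemma group sheaf cohomology}(2) with $\alpha=(\sX/S)$ to see that the fibre of $F^\lt_E(A[\epsilon])\to F^\lt_E(A)$ over $\alpha$ is acted on transitively by $H^1\bigl(X,(E\otimes_k A)^\alpha\bigr)$, note that by construction $(E\otimes_k A)^\alpha=E_{\sX/S}$, and invoke the $(H_5)$ discussion of Remark~\ref{remark H5} to see that the stabilizer of a lift is an $A$-submodule; thus the lifts form a functorial quotient of $H^1(\sX,E_{\sX/S})$ as an $A$-module.

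The only step needing real attention is the first paragraph — checking that involutivity of $E$ is precisely what closes $\mathbf{U}_E(A)$ under the operations of $\mathbf{U}_X(A)$ and keeps $E\otimes_k A$ stable under the adjoint action, so that $\mathbf{U}_E(A)$ and the diagram \eqref{eq lift auto} make sense with $E$ in place of $T_X$. Everything afterwards is the bookkeeping already carried out for $F^\lt_X$ in Corollary~\ref{cor lt def thry} and its proof, reread with $E$ substituted for $T_X$.
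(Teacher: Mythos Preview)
Your proof is correct and is exactly the argument the paper intends: the paper gives no separate proof of this proposition, prefacing it only with ``The above results now easily extend to the situation of deformations along a foliation,'' so you have filled in precisely the details implicit in that sentence by rerunning the proof of Corollary~\ref{cor lt def thry} with $E$ in place of $T_X$. Your explicit isolation of the one genuine new point---that involutivity of $E$ is what makes $\mathbf{U}_E(A)$ a subgroup sheaf (via Baker--Campbell--Hausdorff) and makes its adjoint action preserve $E\otimes_k A$, so that the trivial extensions glue to $E_{\sX/S}$---is a useful clarification that the paper leaves to the reader.
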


\begin{remark}
Note that the functor $F^\lt_E$ is not the functor of locally trivial deformations for which $E$ lifts locally trivially:  there may well be sections of $\mathbf{U}_X(A)$ which stabilize $E\otimes_\C A$ but do not come from exponentiating $E\tensor_\C \gothm_A$.  Indeed, take $X=Y\times Z$ with the induced splitting 
\[T_X=\pi_1^*T_Y\oplus \pi_2^*T_Z\]
and $E=\pi_1^*T_Y$.  Then any locally trivial deformation of the two factors will obviously yield a locally trivial deformation of $X$ for which the two foliations lift locally trivially, but such a deformation does not in general come from a section of $F^\lt_E$.  Moreover, in this case, the gluing maps for a section of $F^\lt_E$ are not required to preserve $\pi_2^*T_Z$---that is, they are not required to be constant in the $Z$ direction.  
\end{remark}

\subsection{Locally trivial deformations of tangent splittings}

Throughout we say that a subsheaf $Q\subset E$ is locally split if it is locally a direct summand.

\begin{definition}
Let $X$ be a separated complex space (or a separated complex algebraic scheme), and suppose we have $Q\subset E\subset T_X$ where $E$ is a foliation and $Q$ is a locally split subsheaf of $E$.  For $A\in \Art_\C$ and $S:=\Spec A$ we define $F^\lt_{E;Q}(A)$ to be the set of $(\sX/S)\in F^\lt_E(A)$ together with a flat lift $Q_{\sX/S}\subset E_{\sX/S}$ of $Q$, up to the obvious notion of isomorphism.  We usually write $(\sX/S)\in F^\lt_{E;Q}(A)$ when we mean $(\sX/S,Q_{\sX/S})\in F^\lt_{E;Q}(A)$.  For $E=T_X$ itself, we denote $F^\lt_{X;Q}:=F^\lt_{T_X;Q}$. 
\end{definition}

Note that choices of a split complement to $B_{\sX/S}\subset E_{\sX/S}$ naturally form a pseudotorsor\footnote{That is, a torsor if nonempty.} for $\Hom_{\sO_\sX}(E_{\sX/S}/B_{\sX/S},B_{\sX/S})$.  Thus, on a local trivialization of the $\mathbf{U}_E(A)$-cocycle representing $\sX/S$ over a cover $U_i$ where we have a splitting $E|_{U_i}=Q|_{U_i}\oplus B_i$, the sheaf $Q_{\sX/S}$ is locally identified with $(1+f_i)(Q|_{U_i}\otimes_\C A)\subset E|_{U_i}\otimes_\C A$ for some $f_i\in \Hom_{\sO_{U_i}}(Q|_{U_i},B_i)\otimes_\C \mathfrak{m}_A$.

Given a locally trivial deformation $\sX/S$ over an artinian base $S$, the relative tangent sheaf $T_{\sX/S}$ acts via the adjoint representation on $T_{\sX/S}$, and for $(\sX/S)\in F^\lt_{E;Q}(A)$ and any local section $e$ of $E_{\sX/S}$ we obtain an $\sO_\sX$-linear map $\mathrm{ad}^{E;Q}(e):Q_{\sX/S}\to E_{\sX/S}/Q_{\sX/S}$.  We define a two-term complex
\[M^{E;Q}_{\sX/S}:=[E_{\sX/S}\xrightarrow{\mathrm{ad}^{E;Q}}\sHom_{\sO_\sX}(Q_{\sX/S},E_{\sX/S}/Q_{\sX/S})]\]
supported in degrees $[0,1]$.  For simplicity we write $M^{E;Q}:=M^{E;Q}_{X/\C}$.

\begin{proposition}\label{proposition defo sub}
Let $X$ be a separated complex space (or a separated complex algebraic scheme) with $Q\subset E\subset T_X$ where $E$ is a foliation and $Q$ is a locally split subsheaf of $E$.  Then the following hold.  
\begin{enumerate}
\item 
The functor $F_{E;Q}^\lt$ admits a tangent-obstruction theory with tangent space equal to $\HH^1(X,M^{E;Q})$ and obstructions in $\HH^2(X,M^{E;Q})$.
\item For any $(\sX/ S)\in F^\lt_{E;Q}(A)$ the lifts of $\sX/S$ to $F^\lt_{E;Q}(A[\epsilon])$ are canonically parametrized by a functorial quotient of  $\HH^1(\sX,M^{E;Q}_{\sX/S})$. 
\end{enumerate}
\end{proposition}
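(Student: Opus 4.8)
The plan is to mimic the proof of Corollary~\ref{cor lt def thry} and Proposition~\ref{proposition defo foliation}, replacing the role of the sheaf of groups $\mathbf{U}_E(A)$ by the sheaf of pairs (automorphism, lift of $Q$). Concretely, for $A\in\Art_k$ let $\mathbf{U}_{E;Q}(A)$ be the sheaf on $X$ whose sections over an open $U$ consist of a section $g$ of $\mathbf{U}_E(A)$ over $U$ together with a flat lift $Q_U\subset E|_U\otimes_k A$ of $Q|_U$ which is $g$-equivariant in the sense that $g$ carries the trivial extension $Q|_U\otimes_k A$ to $Q_U$ (equivalently, record the data $(g, f)$ with $f\in\Hom_{\sO_U}(Q|_U,B_U)\otimes_k\gothm_A$ on a patch where $E|_U=Q|_U\oplus B_U$, with the group law twisted by the adjoint action). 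One checks directly that $F^\lt_{E;Q}(A)=\check H^1(X,\mathbf{U}_{E;Q}(A))$: a cocycle for $\mathbf{U}_{E;Q}(A)$ is exactly the gluing data for a family $\sX/S$ in $F^\lt_E(A)$ together with a locally split flat subsheaf $Q_{\sX/S}\subset E_{\sX/S}$ restricting to $Q$, and the coboundary relation matches the isomorphism relation in the definition of $F^\lt_{E;Q}$.

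Next I would identify, in characteristic zero, an exponential-type isomorphism of sheaves of pointed sets $\mathbf{U}_{E;Q}(A)\cong (\text{something built from }M^{E;Q}\otimes\gothm_A)$. The point is that the complex $M^{E;Q}=[E\xrightarrow{\mathrm{ad}^{E;Q}}\sHom_{\sO_X}(Q,E/Q)]$ is precisely the Lie-theoretic linearization of this sheaf of groups: the degree-$0$ term $E$ exponentiates to the automorphism part, and the degree-$1$ term records the choice of deformed subsheaf $Q_{\sX/S}$ modulo the automorphisms that already move it, with differential $\mathrm{ad}^{E;Q}$ telling us which infinitesimal automorphisms of $E$ act trivially on the space of such subsheaves. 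So I expect a commutative diagram, for a small extension $0\to J\to A'\to A\to 0$, relating the (two-term) complex $M^{E;Q}\otimes_k\gothm_{A'}\to M^{E;Q}\otimes_k\gothm_A$ with $J\cdot M^{E;Q}$ to an exact sequence of (possibly nonabelian, truncated) sheaves of groups $1\to J\cdot M^{E;Q}\to \mathbf{U}_{E;Q}(A')\to \mathbf{U}_{E;Q}(A)\to 1$, with the outer vertical maps isomorphisms of pointed sets. Taking hypercohomology — which is the natural home for the two-term complex, and where Lemma~\ref{lemma group sheaf cohomology} still applies after the usual identification of \v Cech and derived functor (hyper)cohomology on a separated space — yields that $\HH^1(X,M^{E;Q})$ is the tangent space and that obstructions to lifting along small extensions live in $\HH^2(X,M^{E;Q})$, proving (1). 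For (2), given $(\sX/S)\in F^\lt_{E;Q}(A)$ one writes down the split analogue $1\to M^{E;Q}_{\sX/S}\otimes_k A\to \mathbf{U}_{E;Q}(A[\epsilon])\to \mathbf{U}_{E;Q}(A)\to 1$ — using that $T_{\sX/S}=(T_{X\times S/S})^\alpha$ and $E_{\sX/S}=E^\alpha$, $Q_{\sX/S}=Q^\alpha$ after twisting by the cocycle $\alpha$ representing $\sX/S$, so that $M^{E;Q}_{\sX/S}=(M^{E;Q})^\alpha$ — and applies part~(2) of Lemma~\ref{lemma group sheaf cohomology} together with the $(H_5)$ argument of Remark~\ref{remark H5} to see the fiber is a functorial $A$-module quotient of $\HH^1(\sX,M^{E;Q}_{\sX/S})$.

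The main obstacle I anticipate is making precise the ``sheaf of groups built from the two-term complex $M^{E;Q}$'' and the corresponding exponential/truncation, since $M^{E;Q}$ is genuinely a complex rather than a single sheaf, so one is really dealing with a crossed-module-type object (a sheaf of groupoids, or equivalently the stack quotient recording automorphisms of the pair together with the choice of $Q$). One clean way around this is to not insist on a single sheaf of groups at all: instead observe that $\mathbf{U}_{E;Q}(A)$ fits in an exact sequence of sheaves of groups $1\to \sK_A\to \mathbf{U}_{E;Q}(A)\to \mathbf{U}_E(A)\to 1$ where $\sK_A$ is the abelian sheaf of flat lifts of $Q$ fixing a given automorphism (a torsor-difference sheaf, isomorphic after twisting to $\sHom_{\sO_X}(Q,E/Q)\otimes_k\gothm_A$ via the $(1+f_i)$ description recalled just before the proposition), run the long exact cohomology sequence of Lemma~\ref{lemma group sheaf cohomology} for this extension, splice it with the one for $\mathbf{U}_E(A)$ from Proposition~\ref{proposition defo foliation}, and recognize the resulting complex as computing $\HH^\bullet(X,M^{E;Q})$ via the standard spectral sequence / mapping-cone identification $\HH^i(X,[E\to \sHom(Q,E/Q)])$ with the hypercohomology long exact sequence $\cdots\to H^i(X,E)\to H^i(X,\sHom(Q,E/Q))\to \HH^{i+1}(X,M^{E;Q})\to\cdots$. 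All the remaining verifications — exactness on global sections, functoriality in $A$, the $A$-module structure — are then routine \v Cech-cochain bookkeeping exactly as in the proofs of Corollary~\ref{cor lt def thry} and Proposition~\ref{proposition defo foliation}, and I would relegate them to a parenthetical ``easily checked'' as the authors do for Lemma~\ref{lemma group sheaf cohomology}.
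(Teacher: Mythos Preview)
Your high-level strategy is sound, but the paper takes a more direct route and avoids the structural difficulty you flag. Rather than packaging the data into a sheaf of groups $\mathbf{U}_{E;Q}(A)$ and then confronting the fact that it is really a crossed module (since the gluing data $g_{ij}$ live on double overlaps while the lift data $f_i$ live on single opens), the authors simply write out the \v Cech hypercohomology of the two-term complex $M^{E;Q}$ by hand. They fix a Stein cover on which $E|_{U_i}=Q|_{U_i}\oplus B_i$, and then for a small extension $0\to J\to A'\to A\to 0$ compare two lifts by explicitly expanding $(1-f'_j-\cdots)g'_{ij}(1-e_{ij})(1+f'_i+\cdots)$, reading off that the pair $(e,v)$ is a $1$-cocycle for the total complex $[E\to\sHom(Q,E/Q)]\otimes J$; the obstruction is handled the same way with arbitrary lifts $g'_{ij},f'_i$ producing a $2$-cocycle $(e_{ijk},v_{ij})$. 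In other words, what you relegate to ``routine \v Cech-cochain bookkeeping'' at the end \emph{is} the paper's proof, and the paper's description of \v Cech hypercohomology for two-term complexes immediately before the proposition is there precisely to set this up.

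Your detour through $\mathbf{U}_{E;Q}(A)$ and Lemma~\ref{lemma group sheaf cohomology} is not wrong in spirit, but it does not buy anything: the lemma is stated for a single sheaf of groups, so applying it requires either upgrading it to crossed modules (extra work you do not do) or unwinding the splice of the two long exact sequences, which lands you back at the same total-complex cocycle computation. The paper's approach is shorter and sidesteps the question of what nonabelian object $\mathbf{U}_{E;Q}$ should be.
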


Before the proof it will be useful to explicitly describe the \v{C}ech hypercohomology of two-term complexes.  For a two-term complex $K=[A\xrightarrow{f} B]$ of sheaves on a topological space supported in degrees $[0,1]$ and a cover $\underline{U}=\{U_i\}$, by taking the total \v{C}ech complex we see that the \v{C}ech cochains and coboundary operators are given by 
\begin{align*}
C^\ell(\underline{U},K)&=C^\ell(\underline{U},A)\oplus C^{\ell-1}(\underline{U},B)\\
\delta(a,b)&=(\delta a,\delta b+(-1)^{\deg a}f(a)).
\end{align*}
We write
\begin{align*}
Z^\ell(\underline{U},K)&:=\ker\left(C^\ell(\underline{U},K)\xrightarrow{\delta} C^{\ell+1}(\underline{U},K)\right)\\
B^\ell(\underline{U},K)&:=\img\left(C^{\ell-1}(\underline{U},K)\xrightarrow{\delta} C^\ell(\underline{U},K)\right)
\end{align*}
for the $\ell$-cocycles and $\ell$-coboundaries.

\begin{proof}[Proof of Proposition \ref{proposition defo sub}]  Both parts are easily seen via \v{C}ech cohomology.  By Proposition \ref{proposition defo foliation}, any $(\sX/S)\in F^\lt_E(A)$ is trivialized on a Stein (or affine) open cover $\underline{U}:=\{U_i\}$ of $X$, and we may further assume we have $E|_{U_i}=Q|_{U_i}\oplus B_i$ for some sheaf $B_i$.  As nilpotent thickenings of Stein spaces are Stein, see e.g. \cite[Kapitel~V, \S~4.3, Satz~5]{GR77}, we may compute (hyper)cohomology in the following using \v{C}ech cohomology with the cover $\{U_i\}$.

For the tangent space claim in the first part, take a small extension $A'\to A$ with kernel $J$, and assume $(\sX'/S'),(\sX'',S')\in F^\lt_{E;Q}(A')$ both lift $(\sX/S)\in F^\lt_{E;Q}(A)$.  If $(\sX'/S')\in F^\lt_E(A')$ is given by gluing data $g'_{ij}$ on $U_{ij}$, then $\sX''/S'$ is given by gluing data $g'_{ij}(1-e_{ij})$ for a 1-cocycle $e$ valued in $E\otimes_\C J$, by Proposition \ref{proposition defo foliation}.  With respect to those local trivializations we have $Q_{\sX'/S'}=(1+f'_i)(Q\otimes_\C A')$, and therefore $Q_{\sX''/S}=(1+f'_i+(\pi_i\otimes 1)^{-1}v_i)(Q\otimes_\C A')$ for a 0-cochain $v$ valued in $\sHom_{\sO_X}(Q,E/Q)\otimes_\C J$, where $\pi_i:B_i\to(E/Q)|_{U_i}$ is the obvious isomorphism.  Now for the $Q_{\sX'/S'}$ to glue we must have that $(1-f'_j)g'_{ij}(1+f'_i)$ preserves $Q$, and likewise for the $Q_{\sX''/S'}$ to glue we must have that
\begin{align*}&(1-f'_j-(\pi_j\otimes 1)^{-1}v_j)g'_{ij}(1-e_{ij})(1+f'_i+(\pi_i\otimes 1)^{-1}v_i)\\
&\hspace{1.5in}=(1-f'_j)g'_{ij}(1+f'_i)+(-e_{ij}+(\pi_i\otimes 1)^{-1}v_i-(\pi_j\otimes 1)^{-1}v_j)\end{align*}
preserves $Q$, and therefore that $\mathrm{ad}^{E;Q}(e)=\delta v$.  Working backward, $Z^1(\underline{U},M^{E;Q})\otimes_\C J$ naturally acts transitively on the set of lifts of $(\sX/S)\in F^\lt_{E;Q}(A)$, and the 1-coboundaries are easily seen to act trivially.

For the obstruction space claim in the first part, take $(\sX/S)\in F^\lt_{E;Q}(A)$ with gluing data $g_{ij}$ and such that $Q_{\sX/S}$ is locally identified with $(1+f_i)(Q\otimes_\C A)$.  Choose arbitrary lifts $g'_{ij}$ of $g_{ij}$ and $f_i'$ of $f_i$.  Then taking $1+e_{ijk}=g_{ik}^{\prime-1}g_{jk}'g_{ij}'$ and $-v_{ij}\in H^0(U_{ij},\sHom_{\sO_X}(Q,E/Q)\otimes_\C J)$ the map induced by $(1-f_j')g_{ij}'(1+f_i')$, the 2-cochain $(e,v)$ is easily seen to be a 2-cocycle for $M^{E;Q}\otimes_\C J$ and to have cohomology class in $\HH^2(X,M^{E;Q})\otimes_\C J$ which is independent of the choices.  If it is a coboundary, say $(e,v)=\delta(x,y)$, then $g'_{ij}(1-x_{ij})$ satisfies the cocycle condition and thus gives gluing data for a lift $(\sX'/S')\in F^\lt_E(A')$, and further $v_{ij}=-x_{ij}+y_i-y_j$, so the $(1+f_i+(\pi_i\otimes 1)^{-1}y_i)(Q\otimes_\C A)$ glue.

The second part follows by the same sort of computation as the proof of the tangent space claim in the first part, and is left to the reader.
\end{proof}
\def\split{\mathrm{split}}
\begin{definition}
Let $X$ be a separated complex space (or a separated complex algebraic scheme) with a splitting $T_X=\oplus_i E^{(i)}\oplus P$ of the tangent sheaf where $E:=\oplus_i E^{(i)}$ is a foliation.  For $A\in \Art_\C$ and $S:=\Spec A$ we define $F^\lt_{E;\mathrm{split}}(A)$ to be the set of $(\sX/S)\in F^\lt_E(A)$ together with a lift $T_{\sX/S}=\oplus_iE^{(i)}_{\sX/S}\oplus P_{\sX/S}$ of the splitting for which $E_{\sX/S}=\oplus_i E^{(i)}_{\sX/S}$, up to the obvious notion of isomorphism.  We suppress the specification of the splitting from the notation for $F^\lt_{E;\split}$ as it will be clear from context, and we once again abusively write $(\sX/S)\in F^\lt_{E;\split}(A)$ when we mean $(\sX/S,E^{(1)}_{\sX/S},\ldots,E^{(n)}_{\sX/S},P_{\sX/S})\in F^\lt_{E;\split}(A)$ where $n$ is the number of summands of $E$. 
\end{definition}
 
 We have $$F^\lt_{E;\split}=\left(F^\lt_{E;E^{(1)}}\times_{F^\lt_X}\cdots\times_{F^\lt_X}F^\lt_{E;E^{(n)}}\right)\times_{F^\lt_X}F^\lt_{X;P}.$$  The complex 
{\small\[M^{E;\split}_{\sX/S}:=\left[E_{\sX/S}\xrightarrow{\oplus_i\mathrm{ad}^{E;E^{(i)}}\oplus\mathrm{ad}^{X;P}}\bigoplus_i\sHom_{\sO_\sX}(E^{(i)}_{\sX/S},E_{\sX/S}/E^{(i)}_{\sX/S})\oplus \sHom_{\sO_\sX}(P_{\sX/S},T_{\sX/S}/P_{\sX/S})\right]\]}
is easily seen to be 
\[M^{E;\split}_{\sX/S}=\left(M^{E^{(1)}}_{\sX/S}\oplus_{E_{\sX/S}}\cdots\oplus_{E_{\sX/S}}M^{E^{(n)}}_{\sX/S}\right)\oplus_{T_{\sX/S}}M^P_{\sX/S}\]
and it is straightforward to conclude:
\begin{corollary}\label{corollary defo split}
Let $X$ be a separated complex space (or a separated complex algebraic scheme) with a splitting $T_X=\oplus_i E^{(i)}\oplus P$ of the tangent sheaf where $E:=\oplus_i E^{(i)}$ is a foliation. Then the following hold.
\begin{enumerate}
\item 
The functor $F_{E;\split}^\lt$ admits a tangent-obstruction theory with tangent space equal to $\HH^1(X,M^{E;\split})$ and obstructions in $\HH^2(X,M^{E;\split})$.
\item For any $(\sX/ S)\in F^\lt_{E;\split}(A)$ the lifts of $\sX/S$ to $F^\lt_{E;\split}(A[\epsilon])$ are canonically parametrized by a functorial quotient of  $\HH^1(\sX,M^{E;\split}_{\sX/S})$. 
\end{enumerate}
\end{corollary}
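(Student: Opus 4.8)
The plan is to redo the \v{C}ech-cochain bookkeeping from the proof of Proposition~\ref{proposition defo sub}, now carrying all $n+1$ pieces of splitting data in parallel; conceptually this just realizes the fibered-product description $F^\lt_{E;\split}=\big(F^\lt_{E;E^{(1)}}\times_{F^\lt_X}\cdots\times_{F^\lt_X}F^\lt_{E;E^{(n)}}\big)\times_{F^\lt_X}F^\lt_{X;P}$ together with the identification of the governing complex $M^{E;\split}$ as the corresponding fibered coproduct recorded before the statement. Concretely, I would fix a small extension $0\to J\to A'\to A\to 0$ in $\Art_k$ and $(\sX/S)\in F^\lt_{E;\split}(A)$, trivialize the underlying $\mathbf U_E(A)$-cocycle on a Stein open cover $\underline U=\{U_i\}$ of $X$ by Proposition~\ref{proposition defo foliation}, and refine $\underline U$ so that moreover $E|_{U_i}=E^{(k)}|_{U_i}\oplus B_i^{(k)}$ for each $k$ and $T_X|_{U_i}=P|_{U_i}\oplus C_i$. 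In these trivializations $\sX/S$ is gluing data $g_{ij}$ valued in $\mathbf U_E(A)$, and the lifted summands are $E^{(k)}_{\sX/S}=(1+f_i^{(k)})(E^{(k)}\otimes_kA)$ and $P_{\sX/S}=(1+p_i)(P\otimes_kA)$ for $0$-cochains $f_i^{(k)}$ valued in $\sHom_{\sO_{U_i}}(E^{(k)},B_i^{(k)})\otimes_k\mathfrak m_A$ and $p_i$ valued in $\sHom_{\sO_{U_i}}(P,C_i)\otimes_k\mathfrak m_A$. Over the artinian base $S$, flatness of these lifts forces $E_{\sX/S}=\bigoplus_kE^{(k)}_{\sX/S}$ and $T_{\sX/S}=E_{\sX/S}\oplus P_{\sX/S}$ automatically (Nakayama together with $A$-flatness), so the data above records exactly a point of $F^\lt_{E;\split}(A)$ refining the given point of $F^\lt_E(A)$.

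Next I would run the two computations of the proof of Proposition~\ref{proposition defo sub} simultaneously over all $k$ and for $P\subset T_X$. For the set of lifts: if $(\sX'/S'),(\sX''/S')\in F^\lt_{E;\split}(A')$ both lift $(\sX/S)$, then they differ by a $1$-cocycle $e$ valued in $E\otimes_kJ$ by Proposition~\ref{proposition defo foliation}, their lifted $E^{(k)}$-summands differ by $0$-cochains $v^{(k)}$ valued in $\sHom_{\sO_X}(E^{(k)},E/E^{(k)})\otimes_kJ$, and their lifted $P$-summands by a $0$-cochain $w$ valued in $\sHom_{\sO_X}(P,T_X/P)\otimes_kJ$; the conditions that the new summands glue are precisely $\mathrm{ad}^{E;E^{(k)}}(e)=\delta v^{(k)}$ for all $k$ and $\mathrm{ad}^{X;P}(e)=\delta w$, i.e.\ that $(e,(v^{(k)})_k,w)$ is a $1$-cocycle for the two-term complex $M^{E;\split}$, and one checks the $1$-coboundaries act trivially. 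This gives the transitive action of the appropriate functorial quotient of $\HH^1(X,M^{E;\split})$ on lifts, and with $A=k$ the tangent space. For the obstruction: choose arbitrary lifts $\wt g_{ij}$, $\wt f_i^{(k)}$, $\wt p_i$ of all the cochains; then $1+e_{ijk}:=\wt g_{ik}^{-1}\wt g_{jk}\wt g_{ij}$ together with the $0$-cochains $v^{(k)}$ and $w$ measuring the failure of the lifted summands to glue assemble into a $2$-cocycle for $M^{E;\split}\otimes_kJ$ whose class in $\HH^2(X,M^{E;\split})\otimes_kJ$ is independent of the choices and vanishes exactly when $(\sX/S)$ lifts. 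Part~(2) is the same computation carried out relative to an artinian base $A$, with $\sX$ and the relative sheaves $E_{\sX/S}$, $E^{(k)}_{\sX/S}$, $P_{\sX/S}$, $T_{\sX/S}$ (supplied by Propositions~\ref{proposition defo foliation} and~\ref{proposition defo sub}) in place of $X$ and its tangent data.

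I do not expect any real obstacle here; the only thing worth care is the bookkeeping that keeps the several pieces of data decoupled. On a local trivialization the maps $\mathrm{ad}^{E;E^{(k)}}$ and $\mathrm{ad}^{X;P}$ are mutually independent, by the direct sum decomposition of $T_X$, so the $n+1$ cochain computations do not interact and are governed at once by the single two-term complex $M^{E;\split}$ obtained by putting $E$ in degree $0$ and the direct sum of the individual degree-$1$ terms in degree $1$. If one instead argues abstractly via the fibered-product description and the general principle that a tangent-obstruction theory for $F_1\times_{F_0}F_2$ is governed by the homotopy fibre of the induced map $L_1\oplus L_2\to L_0$ on governing complexes, the resulting complex is a priori an iterated homotopy fibre of the $M^{E;E^{(k)}}$ and $M^{X;P}$ over $T_X$ rather than $M^{E;\split}$ on the nose; these are quasi-isomorphic, the acyclic slack in the homotopy fibre cancelling, but it is cleaner to bypass the comparison and argue directly with cochains as above.
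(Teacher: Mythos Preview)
Your proposal is correct and is essentially the approach the paper has in mind: the paper records the fibered-product description of $F^\lt_{E;\split}$ and the identification of $M^{E;\split}$ as the corresponding fibered coproduct, then simply says ``it is straightforward to conclude''---you have written out that straightforward \v{C}ech computation explicitly, running the argument of Proposition~\ref{proposition defo sub} for all summands simultaneously. Your observation that the compatibility constraint $E_{\sX/S}=\bigoplus_k E^{(k)}_{\sX/S}$ is automatic once the local complements $B_i^{(k)}$ are taken inside $E$ (by Nakayama and flatness) is exactly the small bookkeeping point one needs, and your final paragraph correctly identifies why doing the cochain computation directly is cleaner than comparing iterated homotopy fibres.
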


By Schlessinger's criterion \cite[Theorem~2.11]{Sch68}, Proposition~\ref{proposition defo}, Proposition~\ref{proposition defo foliation}, and Corollary \ref{corollary defo split}, when $X/\C$ is proper, the functors $F_X^\lt$, $F_E^\lt$, and $F^\lt_{E;\split}$ all admit miniversal formal families in the category of formal complex spaces (or formal algebraic schemes), and we denote by $\wh\Def^\lt(X)$, $\wh{\Def}_E^\lt(X)$, and $\wh{\Def}_{E;\split}^\lt(X)$ the bases of such a miniversal formal family, which is unique up to (not necessarily unique) isomorphism.

\begin{corollary}\label{cor def fol}  In the setup of Proposition~\ref{proposition defo sub}, assume further that $X$ is proper.  Then there are maps $$\wh{\Def}_{E;\split}^\lt(X)\to\wh{\Def}_{E}^\lt(X)\to \wh\Def^\lt(X)$$ of formal spaces whose derivatives are the natural maps $$\HH^1(X,M^{E;\split})\to H^1(X,E)\to H^1(X,T_X).$$\end{corollary}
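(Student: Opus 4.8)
The plan is to produce both morphisms of formal spaces from natural transformations of the governing deformation functors, using the standard fact that a morphism of deformation functors lifts (non-uniquely) to a morphism of miniversal hulls as soon as the target admits one.

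First I would exhibit the natural transformations
\[
F^\lt_{E;\split}\;\longrightarrow\;F^\lt_E\;\longrightarrow\;F^\lt_X .
\]
The first is the forgetful transformation, sending a section $(\sX/S,E^{(1)}_{\sX/S},\dots,P_{\sX/S})$ to $(\sX/S)$; it is manifestly functorial in $A\in\Art_k$, and the local splittings used to represent a lifted global splitting play no role, since the functor only records that datum up to isomorphism. The second is induced by the inclusion of sheaves of groups $\mathbf U_E(A)\subset\mathbf U_X(A)$, which yields $\check H^1(X,\mathbf U_E(A))\to\check H^1(X,\mathbf U_X(A))$, that is, $F^\lt_E(A)\to F^\lt_X(A)$ after the identification of Proposition~\ref{proposition defo} (separatedness of $X$ being used to pass between \v Cech and sheaf cohomology). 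Neither transformation need be injective on sections, but that is immaterial here.

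Next, since $X/k$ is proper, each of $F^\lt_X$, $F^\lt_E$, $F^\lt_{E;\split}$ admits a miniversal formal family, with base $\wh\Def^\lt(X)$, $\wh\Def^\lt_E(X)$, $\wh\Def^\lt_{E;\split}(X)$ respectively, as recorded just above the statement --- via Schlessinger's criterion applied to the tangent--obstruction theories of Corollary~\ref{cor lt def thry}, Proposition~\ref{proposition defo foliation}, and Corollary~\ref{corollary defo split}. Writing $R$ for the complete local ring of $\wh\Def^\lt_{E;\split}(X)$, view its miniversal family as a morphism $\Hom(R,-)\to F^\lt_{E;\split}$ of functors on $\Art_k$; composing with the forgetful transformation gives $\Hom(R,-)\to F^\lt_E$. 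Since the hull morphism $\Hom(R',-)\to F^\lt_E$ (with $R'$ the complete local ring of $\wh\Def^\lt_E(X)$) is smooth, one lifts $\Hom(R,-)\to F^\lt_E$ through it by induction along the small surjections $R/\gothm_R^{n+1}\to R/\gothm_R^n$, obtaining $\Hom(R,-)\to\Hom(R',-)$ over $F^\lt_E$, equivalently a local homomorphism $R'\to R$, hence a morphism $\wh\Def^\lt_{E;\split}(X)\to\wh\Def^\lt_E(X)$. The morphism $\wh\Def^\lt_E(X)\to\wh\Def^\lt(X)$ is constructed identically from $F^\lt_E\to F^\lt_X$.

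It remains to read off the derivatives. Because a miniversal hull is an isomorphism on tangent spaces, the derivative of the morphism of formal spaces attached to a natural transformation $\phi$ is simply $\phi$ evaluated on $k[\epsilon]$. Under the tangent-space identifications already established, the forgetful transformation $F^\lt_{E;\split}\to F^\lt_E$ induces on tangent spaces the map $\HH^1(X,M^{E;\split})\to H^1(X,E)$ coming from the morphism of two-term complexes $M^{E;\split}\to E$ (with $E$ placed in degree $0$) that is the identity in degree $0$ and zero in degree $1$, while $F^\lt_E\to F^\lt_X$ induces $H^1(X,E)\to H^1(X,T_X)$ coming from $E\hookrightarrow T_X$; these are the asserted natural maps. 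As everything is a formal consequence of the deformation theory already in place, I do not expect a genuine obstacle: the only points needing care are the naturality of the forgetful transformation (discussed above) and the standard lemma that a morphism of functors lifts through a smooth hull morphism --- which is exactly where properness, and hence the existence of the hulls, is used.
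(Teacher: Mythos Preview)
Your proposal is correct and is exactly the standard argument the paper has in mind; indeed, the paper states this corollary without proof, treating it as an immediate consequence of the existence of miniversal hulls (established in the preceding paragraph via Schlessinger's criterion) together with the obvious natural transformations $F^\lt_{E;\split}\to F^\lt_E\to F^\lt_X$. Your write-up supplies the details the paper leaves implicit---the lifting of a morphism of functors through a smooth hull map and the identification of the induced tangent maps---and there is nothing to add.
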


%------------------------------------------------------------------------------------------
\subsection{Kuranishi spaces for locally trivial deformations}\label{section kuranishi}
%------------------------------------------------------------------------------------------

We recall some results in the analytic category realizing formal deformation-theoretic objects as completions of germs.
\begin{theorem}[Grauert, Douady]\label{theorem grauert douady}
For any compact complex space $Z$ there exists a miniversal deformation $\scrZ \to \Def(Z)$ over a germ $\Def(Z)$ which is a versal deformation of all of its fibers.
\end{theorem}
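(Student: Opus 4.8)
The plan is to deduce the statement by combining the formal deformation theory of the previous subsection with Grauert's convergence theorem and with the openness of versality.

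First I would set up the formal theory for the functor $\Def_Z\colon\Art_\C\to\mathrm{Sets}$ of flat deformations of the compact complex space $Z$. Its tangent and obstruction spaces are the cotangent cohomology groups $T^1_Z=\Ext^1_{\sO_Z}(\L_{Z/\C},\sO_Z)$ and $T^2_Z=\Ext^2_{\sO_Z}(\L_{Z/\C},\sO_Z)$ (equivalently, the Lichtenbaum--Schlessinger modules), and both are finite-dimensional since $Z$ is compact: the local-to-global spectral sequence computing them has finitely many nonzero terms, each finite-dimensional by coherence of the tangent cohomology sheaves together with Cartan--Serre finiteness. Thus $\Def_Z$ satisfies Schlessinger's conditions $(H_1)$--$(H_3)$ with finite-dimensional tangent space, so by \cite[Theorem~2.11]{Sch68} it admits a formal miniversal deformation over the formal spectrum of a complete local $\C$-algebra $R$, which one may present as a power series ring modulo at most $\dim_\C T^2_Z$ relations cut out by the formal Kuranishi obstruction map.

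The crux is to promote this to an honest analytic family. Here I would invoke Grauert's theorem that the formal miniversal deformation is effective: it is the formal completion of a genuine analytic deformation $\scrZ\to\Def(Z)$ over a germ of complex space, with central fibre $Z$ and with bijective Kodaira--Spencer map at the base point. Following Douady's method (originally carried out in the smooth case), one writes the deformation equation as a Maurer--Cartan type equation on a complex of Banach sheaves computing the cotangent cohomology of $Z$ --- obtained from local embeddings $Z\hookrightarrow\C^n$ together with a resolvent --- solves it by a power-series Ansatz using a homotopy operator, and proves convergence via a priori estimates in suitable Hölder norms; the germ $\Def(Z)$ then emerges as the zero locus of a holomorphic Kuranishi map between germs of finite-dimensional complex spaces. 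An alternative is to realize $\Def(Z)$ by dividing a suitable component of a Douady space, finite-dimensional by Douady's theorem, by the action of a holomorphic automorphism group. I expect this convergence step --- Grauert's analytic estimates --- to be the main obstacle; the surrounding arguments are essentially formal.

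Finally I would observe that $\scrZ\to\Def(Z)$ is versal not merely at the central point but at every fibre over a sufficiently small representative of the germ. This is the openness of versality: the locus of $s$ at which $\scrZ/\Def(Z)$ fails to be versal is analytically closed and avoids the base point (by Flenner's openness criterion; alternatively this can be read off directly from Grauert's construction, in which the Kuranishi map varies holomorphically with the fibre). Shrinking $\Def(Z)$ accordingly then produces a deformation that is miniversal at $Z$ and versal at each of its fibres, as claimed.
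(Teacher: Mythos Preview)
Your sketch is a reasonable outline of the strategy underlying Grauert's and Douady's original proofs, but the paper itself does not prove this statement at all: it simply cites \cite[Hauptsatz, p.~140]{Gra74} and \cite[Th\'eor\`eme principal, p.~598]{Dou74} as a black box. So there is nothing to compare on the level of arguments --- the paper treats this as an external input, whereas you have reconstructed the architecture of the cited proofs (Schlessinger for the formal hull, Grauert's analytic convergence for effectivity, Flenner-type openness of versality for the last clause). Your outline is broadly faithful to how those references proceed, though of course the hard analytic content --- the a priori estimates in Grauert's division theorems --- is only gestured at, as you acknowledge.
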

\begin{proof}
This is \cite[Hauptsatz, p 140]{Gra74}, see also \cite[Th\'eor\`eme principal, p 598]{Dou74}. 
\end{proof}
The family $\scrZ\to \Def(Z)$ is called the \emph{Kuranishi family} and $\Def(Z)$ is called \emph{Kuranishi space}. If $Z$ is a complex space satisfying $H^0(Z,T_Z)=0$, then every miniversal deformation is universal.  

We recall the analog of Theorem~\ref{theorem grauert douady} for locally trivial deformations.

\begin{theorem}[Flenner--Kosarew]\label{theorem flenner kosarew}
For a miniversal deformation $\scrZ \to \Def(Z)$ of a compact complex space $Z$ there exists a closed complex subspace $\Def^\lt(Z)\subset \Def(Z)$ of the Kuranishi space such that 
\[
\scrZ \times_{\Def(Z)}\Def^\lt(Z) \to \Def^\lt(Z)
\]
is a locally trivial deformation of $Z$ and is miniversal for locally trivial deformations of $Z$.
\end{theorem}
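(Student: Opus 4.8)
The plan is to construct $\Def^\lt(Z)$ as the formal-to-analytic incarnation of the miniversal base $\wh{\Def}^\lt(Z)$ already produced via Schlessinger's criterion, realized concretely as a subspace of the Kuranishi space. The key point is that local triviality is a closed condition on the base of the Kuranishi family, so we should look for the maximal subspace over which $\scrZ$ restricts to a locally trivial family. First I would recall that by Theorem~\ref{theorem grauert douady} the Kuranishi family $\scrZ\to\Def(Z)$ is versal for all of its fibers; in particular, any locally trivial deformation of $Z$ over an analytic base germ $S$ is induced (non-uniquely) by a map $S\to\Def(Z)$. Thus the content is: (i) there is a well-defined closed subspace $\Def^\lt(Z)\subset\Def(Z)$ carrying a universal locally trivial restriction, and (ii) this restriction is miniversal among locally trivial deformations.

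For step (i), I would invoke the work of Flenner--Kosarew on locally trivial deformations (the relative case of \cite{FK87}-type results, cf. the analytic semicontinuity of fiberwise invariants): the locus in $\Def(Z)$ over which the Kuranishi family is locally trivial is analytically closed, because local triviality at $x\in\scrZ$ is governed by the condition $\wh\sO_{\scrZ,x}\cong\wh\sO_{Z,z}\wh\otimes\wh\sO_{\Def(Z),0}$, which by Artin's criterion (quoted in the excerpt's Remark following the definition of local triviality) is equivalent to formal local triviality, and the latter cuts out a closed subgerm because the obstruction to lifting a trivialization order by order lives in coherent cohomology $H^2(Z_s,T_{Z_s})$ whose support behaves semicontinuously. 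Concretely one builds $\Def^\lt(Z)$ by the classical device of taking, on a Stein cover $U_i$ of $Z$ with $\scrZ|_{U_i}\cong U_i\times\Def(Z)$ after shrinking — which exists since the Kuranishi map is flat and the fibers over $0$ trivialize — and intersecting the loci where the transition automorphisms can be chosen to extend; alternatively, and more cleanly, one appeals directly to the Flenner--Kosarew paper on the representability of the locally trivial deformation functor in the analytic category, which provides exactly such a closed subspace together with its universal family.

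For step (ii), miniversality: the restricted family $\scrZ\times_{\Def(Z)}\Def^\lt(Z)\to\Def^\lt(Z)$ is a locally trivial deformation of $Z$ by construction, so it is classified by a morphism $h:\wh{\Def}^\lt(Z)\to\Def^\lt(Z)$ of completions, where $\wh{\Def}^\lt(Z)$ is the miniversal formal base for $F^\lt_Z$ from Corollary~\ref{cor lt def thry} and the subsequent discussion (Schlessinger). Its derivative is the identity on $H^1(Z,T_Z)$ (both tangent spaces equal $H^1(Z,T_Z)$, since $\Def^\lt(Z)\hookrightarrow\Def(Z)$ is a subspace whose tangent space is cut out to be the full locally trivial tangent space, which coincides with all of $H^1(Z,T_Z)$ for locally trivial deformations). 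Conversely, versality of $\scrZ\to\Def(Z)$ produces a map the other way, and on the locally trivial locus these are mutually inverse on completions because a locally trivial deformation over an artinian base is induced by a map to $\Def^\lt(Z)$ — indeed any classifying map $S\to\Def(Z)$ of a locally trivial $\sX/S$ must factor through $\Def^\lt(Z)$ since the pulled-back family is locally trivial and $\Def^\lt(Z)$ is the maximal such subspace. Hence $\Def^\lt(Z)\cong\wh{\Def}^\lt(Z)$ on completions, and since both are analytic germs, versality of the analytic family for locally trivial deformations follows by the standard Artin-type argument (a formal versal family over an analytic germ is versal).

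The main obstacle I expect is step (i): rigorously producing the closed subspace $\Def^\lt(Z)$ with a \emph{universal} locally trivial family on it, rather than merely a set-theoretic locus. The subtlety is that local triviality is not obviously representable by a closed condition without the input that formal and analytic local triviality agree (Artin) plus some cohomological flatness to make the order-by-order obstruction loci patch into an honest closed analytic subspace; this is precisely what the Flenner--Kosarew machinery supplies, so in the write-up the honest move is to cite their theorem and then only verify the miniversality comparison (ii), which is routine deformation theory given the earlier sections of the paper.
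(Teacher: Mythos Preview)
The paper's proof is a single-line citation: ``This is \cite[(0.3) Corollary]{FK87}.'' Your proposal ultimately arrives at the same conclusion---cite Flenner--Kosarew---so in that sense you agree with the paper. The extended sketch you give beforehand is not needed, and parts of it are shaky.

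Two specific issues in your sketch. First, in step~(i) you describe $\Def^\lt(Z)$ as ``the maximal subspace over which $\scrZ$ restricts to a locally trivial family,'' but this is not obviously well-defined: the scheme-theoretic union of two closed subspaces over which the family is locally trivial need not have that property, so you cannot simply take the maximal one without further argument. Producing an honest closed subspace with the right universal property is exactly the content of the Flenner--Kosarew result, not a preliminary to it. Second, in step~(ii) your argument that any classifying map $S\to\Def(Z)$ of a locally trivial family factors through $\Def^\lt(Z)$ presupposes the universal property you are trying to establish; and the tangent-space comparison is muddled, since the tangent space to $\Def(Z)$ is $\Ext^1(\Omega_Z,\sO_Z)$ in general, not $H^1(Z,T_Z)$, so the inclusion $\Def^\lt(Z)\hookrightarrow\Def(Z)$ is typically strict on tangent spaces.

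The correct write-up is just the citation, as the paper does.
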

\begin{proof}
This is \cite[(0.3) Corollary]{FK87}.
\end{proof}
%------------------------------------------------------------------------------------------

%------------------------------------------------------------------------------------------
\subsection{Locally trivial resolutions}
%------------------------------------------------------------------------------------------
\begin{definition}
Let $S,\sX,\sY$ be complex spaces (or complex algebraic schemes), $\sX\to S$ and $\sY\to S$ morphisms, and $f:\sY\to \sX$ an $S$-morphism.  We say:
\begin{enumerate}
\item $f$ is \emph{locally trivial} over $S$ if there is a cover $\sX_i$ of $\sX$, a cover $S_i$ of $S$, and morphisms $g_i:Y_i\to X_i$ together with diagrams (over $S$) 
\[\xymatrix{
\sY_i\ar[d]_{f|_{\sY_i}}\ar[r]^\cong&Y_i\times S_i\ar[d]^{g_i\times\id}\\
\sX_i\ar[r]^\cong&X_i\times S_i
}\] 
where $\sY_i=f^{-1}(\sX_i)$.  
\item  $f$ is \emph{formally locally trivial} over $S$ if for any $T=\Spec A\to S$ with $A\in \Art_\C$ the base-change $f_T:\sY_T\to\sX_T$ is locally trivial over $T$.
\end{enumerate}
If $f$ is (formally) locally trivial and fiberwise a resolution, we say it is a \emph{(formally) locally trivial resolution (over $S$)}.
\end{definition}

Let $X,Y$ be separated complex spaces (or separated complex algebraic schemes) and $\pi:Y\to X$ a $\C$-morphism.  There is a naturally defined deformation functor $F^\lt_{Y/X}:\Art_\C\to \mathrm{Sets}$ of locally trivial deformations $\sX/S$ and $\sY/S$ of $X$ and $Y$, respectively, together with a locally trivial deformation $\sY\to\sX$ of $\pi$.  Let $\mathbf{U}_{Y/X}(A)\subset \pi_*\mathbf{U}_Y(A)\times \mathbf{U}_X(A)$ be the sheaf of subgroups whose sections over $U\subset X$ are pairs of $A$-automorphisms $(f,g)$ making the following square commute over $\Spec A$
\[\begin{tikzcd}
Y_U\times \Spec A\arrow{r}{f}\arrow{d}&Y_U\times \Spec A\arrow{d}\\
U\times \Spec A\arrow{r}{g}&U\times \Spec A \end{tikzcd}
\]
where $Y_U=\pi^{-1}(U)$.  We have a natural identification
\[F^\lt_{Y/X}(A)=\check H^1(X,\mathbf{U}_{Y/X}(A)).\]
There is also a natural map of functors $F^\lt_{Y/X}\to F^\lt_X$ coming from the projection map $\mathbf{U}_{Y/X}(A)\to\mathbf{U}_X(A)$.

For the next proposition, note that for any morphism $\pi:Y\to X$ we have a natural map
\[\pi_*\mathrm{Der}_\C(\sO_Y,\sO_Y)\to\mathrm{Der}_\C(\pi_*\sO_Y,\pi_*\sO_Y).\] 
In particular, if we have $\pi_*\sO_Y=\sO_X$ (via the natural map), then there is a natural map $\pi_*T_Y\to T_X$.
\begin{proposition}\label{prop morph deforms}  Let $X$ be a separated complex space (or a separated complex algebraic scheme).  Let $\pi:Y\to X$ be a morphism for which $\pi_*\sO_Y=\sO_X$ and such that the induced map $\pi_*T_Y\to T_X$ is an isomorphism.  Then the natural map $F^\lt_{Y/X}\to F^\lt_X$ is an isomorphism.
\end{proposition}

\begin{proof}
It is enough to show that for all $A\in\Art_\C$ the natural map $\mathbf{U}_{Y/X}(A)\to\mathbf{U}_X(A)$ is an isomorphism, and this is obvious by induction on small extensions using the hypothesis on the tangent sheaves and the analog of \eqref{eq lift auto} for $\mathbf{U}_{Y/X}$.
\end{proof}

By \cite[Corollary~4.7]{GKK10}, for a reduced and normal complex space $X$, a log resolution $\pi:Y\to X$ for which $\pi_{*}T_{Y}=T_X$ always exists, cf. also \cite[Theorem~2.0.1]{Wlo08}. We deduce:

\begin{corollary}\label{corollary resolutions}
Let $X$ be a normal compact complex variety and let $\sX\to\Def^\lt(X)$ be a miniversal locally trivial deformation.  Then there is a locally trivial log resolution $\bar\pi:\sY\to \sX$ such that $\bar\pi_*T_{\sY/\Def^\lt(X)}=T_{\sX/\Def^\lt(X)}$.
\end{corollary}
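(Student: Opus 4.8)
The plan is to reduce, via the deformation–functor comparison already in place, to the existence of a single analytic family, and then to read off the tangent condition locally.

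First I would use \cite[Corollary~4.7]{GKK10} (or the functoriality statement \cite[Theorem~2.0.1]{Wlo08}) to fix a log resolution $\pi\colon Y\to X$ with $\pi_*T_Y=T_X$. Since $X$ is normal, $\pi_*\sO_Y=\sO_X$, so $\pi_*T_Y$ and $T_X$ are genuinely identified as subsheaves of $\Der_k(\sO_X,\pi_*\sO_Y)$ through \eqref{eq derivative}; the hypothesis of the Proposition preceding Corollary~\ref{corollary morph deforms} is therefore met, and $F^\lt_{Y/X}\xrightarrow{\ \cong\ }F^\lt_X$. Thus on every artinian thickening, each locally trivial deformation of $X$ extends uniquely to a locally trivial deformation of the morphism $\pi$.

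Next I would realize this analytically. Running the Grauert--Douady and Flenner--Kosarew arguments (Theorems~\ref{theorem grauert douady} and~\ref{theorem flenner kosarew}) for the deformation functor of the morphism $\pi$ in place of that of a single space --- concretely, by taking the relative Douady space of the graph $\Gamma_\pi\subset Y\times X$ inside the Kuranishi family of $Y\times X$, cutting out the open locus where the universal subspace still projects isomorphically onto the $Y$-factor, and then passing to its closed locally trivial locus --- produces a germ $B$ carrying a miniversal locally trivial deformation $\scrY_B\to\scrX_B\to B$ of $\pi$. Its tangent and obstruction spaces are those of $F^\lt_{Y/X}$, so by Corollary~\ref{corollary morph deforms} the forgetful map $B\to\Def^\lt(X)$ is an isomorphism of germs; pulling $\scrX_B,\scrY_B$ back along its inverse gives a locally trivial resolution $\bar\pi\colon\sY\to\sX$ over $\Def^\lt(X)$. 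Equivalently one may build $\bar\pi$ by gluing the local models $\pi^{-1}(X_i)\times S_i\to X_i\times S_i$ over a cover $\{\sX_i\}$ of $\sX$ with $\sX_i\cong X_i\times S_i$, using Corollary~\ref{corollary morph deforms} to lift the transition cocycle of $\sX$; when $\Def^\lt(X)$ happens to be smooth one can instead simply take the functorial log resolution of the total space $\sX$.

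Finally, I would check $\bar\pi_*T_{\sY/\Def^\lt(X)}=T_{\sX/\Def^\lt(X)}$. Because $\pi_*\sO_Y=\sO_X$ and $\bar\pi$ is locally trivial, $\bar\pi_*\sO_\sY=\sO_\sX$, so there is a natural map $\bar\pi_*T_{\sY/\Def^\lt(X)}\to T_{\sX/\Def^\lt(X)}$ as in \eqref{eq derivative}. On a trivializing chart $\sX_i\cong X_i\times S_i$ with $X_i$ an open subset of $X$, the morphism $\bar\pi$ is the base change of $\pi|_{\pi^{-1}(X_i)}$, both sheaves are pulled back from $X_i$, and the map in question becomes $(\pi|_{\pi^{-1}(X_i)})_*T_{\pi^{-1}(X_i)}\to T_{X_i}$; this is an isomorphism since $\pi_*T_Y=T_X$ holds locally on $X$. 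Hence the map is an isomorphism globally on $\sX$. The step I expect to be the main obstacle is the second one: making precise the analytic existence of a miniversal locally trivial deformation of the morphism $\pi$ over a germ, i.e.\ showing that the formal equivalence $F^\lt_{Y/X}\cong F^\lt_X$ is genuinely realized by a family over $\Def^\lt(X)$ --- either through a Grauert--Douady/Flenner--Kosarew type theorem for morphisms or through the gluing alluded to above, the latter being delicate precisely because $\Def^\lt(X)$ need not be reduced or smooth. The remaining verifications are local on $\sX$ and routine.
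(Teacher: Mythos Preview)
Your overall plan matches the paper's, and you have correctly isolated the crux: passing from the formal isomorphism $F^\lt_{Y/X}\cong F^\lt_X$ to an honest analytic family over $S=\Def^\lt(X)$.  However, you leave this step open, and neither of the mechanisms you suggest is what the paper actually uses.

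The paper's argument is more direct and hinges on two points you are missing.  First, because $\pi$ is an \emph{isomorphism on a Zariski open set} and the targets $Y_{j|i}\times S$ are separated, any analytic lift $f_{ij}:Y_{i|j}\times S\to Y_{j|i}\times S$ of a transition map $g_{ij}$ for $\sX$ is \emph{locally uniquely determined} by $g_{ij}$.  Hence if such lifts exist locally, they automatically agree on overlaps and automatically satisfy the cocycle condition---no further compatibility needs to be checked.  Second, the formal completions $\wh g_{ij}$ lift to $\mathbf{U}_{Y/X}(\wh\sO_S)$ by the proposition, and Artin approximation \cite[Theorem~(1.5)(ii)]{Art68} then produces convergent local analytic lifts $f_{ij}$ from the formal ones.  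Combining these, one glues to a locally trivial resolution $\bar\pi:\sY\to\sX$, and the same uniqueness shows it completes to the formal family $\wh\sY\to\wh\sX$.

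So your Option B (gluing) is the right route, but the missing ingredients are Artin approximation for local existence and the birationality of $\pi$ for uniqueness and cocycle compatibility; the non-reducedness of $\Def^\lt(X)$ that worries you is not an obstacle once these are in hand.  Your Option A via Douady spaces of graphs is not pursued in the paper and, as sketched, would need substantial extra work (e.g.\ establishing a Flenner--Kosarew-type locally trivial locus for morphisms and identifying it with $\Def^\lt(X)$).  The final verification of $\bar\pi_*T_{\sY/S}=T_{\sX/S}$ is indeed immediate from local triviality, as you say.
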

Note that by this we mean $\bar\pi:\sY\to \sX$ is a locally trivial resolution which is fiberwise a log resolution, which by the local triviality of $\bar\pi$ is equivalent to the special fiber being a log resolution.  Moreover, it follows that the inclusion $\sD\to\sY$ of the exceptional divisor (and even the map $\sD\to\sX$) is locally trivial. 

\begin{proof}Set $S=\Def^\lt(X)$ and take a log resolution $\pi:Y\to X$ for which $\pi_{*}T_{Y}=T_X$.  From the proposition, we have a formal deformation $\wh\sY\to \wh S$ of $Y$ and a locally trivial formal deformation $\wh\pi:\wh\sY\to\wh\sX$ of $\pi$ over $\wh S$.  We may trivialize $\sX/ S$ on a Stein cover $\sX_i$ of $\sX$ so that we have analytic gluing maps $g_{ij}$ as in the right side of the diagram below
\[\begin{tikzcd}
Y_{i|j}\times \wh S\arrow[d,"\pi\times \id"']\arrow{r}{\phi_{ij}}&Y_{j|i}\times \wh S\arrow{d}{\pi\times\id}&Y_{i|j}\times S\arrow[d,"\pi\times \id"']\arrow[r,dashed,"f_{ij}"]&Y_{j|i}\times S\arrow{d}{\pi\times\id}\\
X_{i|j}\times \wh S\arrow{r}{\wh g_{ij}}&X_{j|i}\times \wh S&X_{i|j}\times S\arrow{r}{g_{ij}}&X_{j|i}\times S
\end{tikzcd}\]
where $\sX_{i|j}:=\sX_i\times_\sX \sX_j$, $X_{i|j}:=\sX_{i|j}\times_\sX S$, $Y_{i|j}:=\pi^{-1}(X_{i|j})$, and all morphisms are $S$-morphisms.  By the proof of the proposition, the resulting cocycle $(X_{i},\wh g_{ij})$ of $\mathbf{U}_X(\wh \sO_S):=\varprojlim \mathbf{U}_{X}(\sO_S/\mathfrak{m}_S^k) $ gives a cocycle for $\mathbf{U}_{Y/X}(\wh \sO_S)$ (analogously defined), as in the left part of the diagram.  As $\pi$ is an isomorphism on a Zariski open set and the $Y_{j|i}\times S$ are separated, the $f_{ij}$ in the diagram on the right are locally uniquely determined, and if they exist they satisfy the cocycle condition.  By \cite[Theorem (1.5)(ii)]{Art68}, the $f_{ij}$ exist locally, and so by the previous remark we obtain a locally trivial resolution $\bar\pi:\sY\to\sX$.  Since the $\phi_{ij}$ in the left diagram are also uniquely determined by $\wh g_{ij}$, the map $\sY\to\sX$ in fact completes to $\wh\sY\to\wh\sX$.
\end{proof}

Recall that the {\it Fujiki class} $\scrC$ consists of all those compact complex varieties which are meromorphically dominated by a compact K\"ahler manifold, see \cite[\S 1]{Fuj78}.  Recall also that a normal complex variety $X$ has {\it rational singularities} if for some (hence any) resolution $\pi:Y\to X$ we have $R^q\pi_*\sO_Y=0$ for $q>0$.

\begin{corollary}\label{cor hodge numbers} Let $\sX\to S$ be a locally trivial family of normal varieties of Fujiki class $\mathscr{C}$ with rational singularities.  Then for all $p$ the function $s\mapsto h^0(X_s,\Omega^{[p]}_{X_s})$ is locally constant.
\end{corollary}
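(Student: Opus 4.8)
The plan is to combine the locally trivial simultaneous resolution from Corollary~\ref{corollary resolutions} with the Kebekus--Schnell interpretation of reflexive forms on varieties with rational singularities and with a standard semicontinuity-plus-invariance-of-Euler-characteristic argument. The key point is that, for a variety $X$ of Fujiki class $\mathscr C$ with rational singularities, we have $H^0(X,\Omega^{[p]}_X)=H^0(Y,\Omega^p_Y)$ for any resolution $\pi\colon Y\to X$ (this is \cite[Corollary~1.8]{KS18}, quoted in the introduction), and moreover, by \cite[Theorem~1]{Elk81} together with rationality of the singularities, the Hodge-theoretic machinery for the compact Kähler manifold $Y$ applies. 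So the strategy is: first reduce the statement about $X_s$ to a statement about a smooth proper family $\sY\to S$ obtained by resolving $\sX\to S$ in a locally trivial way, then invoke the fact that Hodge numbers are deformation invariants of compact Kähler manifolds.

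\textbf{Step 1 (resolve the family).} The question being local on $S$, I may assume $S$ is a small polydisc (or even shrink to the germ $\Def^\lt(X)$ of a miniversal locally trivial deformation of a fixed fiber $X=X_{s_0}$, since a locally trivial family is induced from the miniversal one). Apply Corollary~\ref{corollary resolutions}: there is a locally trivial log resolution $\bar\pi\colon\sY\to\sX$ with $\bar\pi_*T_{\sY/S}=T_{\sX/S}$, and in particular each fiber $\pi_s\colon Y_s\to X_s$ is a log resolution. Since local triviality of $\bar\pi$ forces the fibers $Y_s$ to all be isomorphic as we move in $S$ — more precisely, $\sY\to S$ is itself a locally trivial, hence smooth, proper family — the $Y_s$ form a (locally trivial, in particular topologically trivial) family of smooth compact complex manifolds.

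\textbf{Step 2 (Kähler and rational).} Each $Y_s$ is bimeromorphic to $X_s$, which lies in Fujiki class $\mathscr C$; hence $Y_s$ lies in $\mathscr C$ as well, and being smooth and proper it is in fact a compact Kähler manifold (a smooth variety in class $\mathscr C$ is Kähler, by \cite[\S 1]{Fuj78}). Rationality of the singularities of $X_s$ gives $R^q(\pi_s)_*\sO_{Y_s}=0$ for $q>0$, whence by \cite[Corollary~1.8]{KS18} (noting that rational $=$ canonical in the relevant sense is not even needed here; only rationality is used)
\[
H^0\!\left(X_s,\Omega^{[p]}_{X_s}\right)\;\cong\;H^0\!\left(Y_s,\Omega^p_{Y_s}\right)
\qquad\text{for all }p.
\]
So it suffices to show $s\mapsto h^0(Y_s,\Omega^p_{Y_s})$ is locally constant.

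\textbf{Step 3 (invariance for the smooth family).} This is now classical. The family $\sY\to S$ is a smooth proper Kähler morphism, so by the relative version of Hodge theory (degeneration of the Hodge--de Rham spectral sequence fiberwise together with local freeness of $R^q f_*\Omega^p_{\sY/S}$ for Kähler families, cf.\ Deligne) all the Hodge numbers $h^{p,q}(Y_s)$ are locally constant on $S$; in particular $h^{0,p}(Y_s)=h^0(Y_s,\Omega^p_{Y_s})$ is. Combining with Step~2 finishes the proof.

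\textbf{The main obstacle} is really Step~1–2 packaging rather than Step~3: one must be sure that Corollary~\ref{corollary resolutions} does give a family $\sY\to S$ that is itself locally trivial (so that standard Kähler-family Hodge theory applies uniformly), and that each fiber $Y_s$ is genuinely Kähler — i.e.\ that membership in Fujiki class $\mathscr C$ is preserved along the family and upgrades to Kähler for the smooth resolutions. Once the locally trivial resolution is in hand this is straightforward, since bimeromorphic invariance of class $\mathscr C$ and Fujiki's criterion do the job; but it is worth stating explicitly, because without local triviality of $\bar\pi$ one would not immediately know the $Y_s$ deform in a controlled (topologically trivial) way, and the semicontinuity argument could a priori only give upper semicontinuity of $h^0(Y_s,\Omega^p_{Y_s})$ in one direction.
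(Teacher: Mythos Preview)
Your approach is essentially identical to the paper's: use the locally trivial resolution from Corollary~\ref{corollary resolutions}, apply Kebekus--Schnell to identify $h^0(X_s,\Omega^{[p]}_{X_s})=h^0(Y_s,\Omega^p_{Y_s})$, and invoke local constancy of Hodge numbers in the resulting smooth family. One small inaccuracy worth correcting: a smooth compact manifold in Fujiki class $\mathscr{C}$ need not be K\"ahler (e.g.\ Hironaka's non-K\"ahler Moishezon threefolds), but this does not affect the argument, since the Fr\"olicher spectral sequence still degenerates for class~$\mathscr{C}$ manifolds and the standard upper-semicontinuity plus constant-Betti-number argument then gives local constancy of all $h^{p,q}(Y_s)$.
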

\begin{proof}  Let $\sY\to\sX$ be a locally trivial resolution.  By Kebekus--Schnell \cite[Corollary~1.8]{KS18} we have $h^0(Y_s,\Omega^p_{Y_s})=h^0(X_s,\Omega^{[p]}_{X_s})$, and so the claim follows from the local constancy of Hodge numbers in smooth families of Fujiki class manifolds. Note that as for Kähler manifolds, the constancy of Hodge numbers follows from the degeneration of the Hodge-to-de Rham spectral sequence \cite[Proposition~1.3]{Uen83} via Deligne's classical argument \cite[Th\'eor\`eme~5.5]{Del68}.
\end{proof}
Proposition~\ref{prop morph deforms} in particular applies to crepant bimeromorphic morphisms of symplectic varieties; this greatly simplifies the approach of \cite[Section~4]{BL16} and yields a generalization of \cite[Proposition~4.5]{BL16}:
\begin{corollary}  Let $Y$ be a compact normal complex variety with a nowhere degenerate form $\sigma\in H^0(Y^\reg,\Omega_{Y^\reg}^2)$ and let $\pi:Y \to X$ be a proper bimeromorphic map to a normal variety $X$ with rational singularities.  Then there is a locally trivial deformation $\sY \to \sX$ of $\pi$ over $\Def^\lt(X)$, where $\sX\to\Def^\lt(X)$ is the miniversal locally trivial deformation of $X$.
\end{corollary}
\begin{proof}The symplectic form $\sigma$ induces a symplectic form on $X^\reg$, and we have $T_Y\cong \Omega^{[1]}_Y$ and $T_X\cong \Omega^{[1]}_X$ by reflexivity.  Vector fields therefore lift from $X$ to $Y$ as reflexive $1$-forms do by Kebekus--Schnell \cite[Corollary~1.8]{KS18}.  The existence of the formal deformation of $\pi$ then follows from Proposition \ref{prop morph deforms}, and it is effectivized as in the proof of Corollary \ref{corollary resolutions}.
\end{proof}

%------------------------------------------------------------------------------------------
\subsection{Deformations along weakly symplectic split foliations}\label{section symplectic foliations}
\newcommand{\reflextimes}{\raisebox{.08em}{\resizebox{.3em}{.5em}{[}}\hspace{-.3em}\otimes\hspace{-.3em}\raisebox{.08em}{\resizebox{.3em}{.5em}{]}}}
%------------------------------------------------------------------------------------------
\def\rad{\operatorname{rad}}
In this section we show that locally trivial deformations along certain split foliations are unobstructed, see Proposition~\ref{prop EP unobstructed} and Corollary~\ref{corollary DefE germ}.  The following definitions will be useful.
\begin{definition}\label{def good}Let $\sX/S$ be a locally trivial family of normal varieties.  We say a splitting $T_{\sX/S}=\oplus_i E^{(i)}_{\sX/S}\oplus P_{\sX/S}$ is \emph{good} if:
\begin{enumerate}
\item $E_{\sX/S}:=\oplus_i E_{\sX/S}^{(i)}$ is a foliation;
\item For each $i$ we have either:
\begin{enumerate}
\item $E^{(i)}_{\sX/S}$ admits a nondegenerate reflexive 2-form; or
\item $E^{(i)}_{\sX/S}\cong E^{(j)\vee}_{\sX/S}$ for some $j\neq i$.
\end{enumerate}
\end{enumerate}
We will refer to $E_{\sX/S}$ as a \emph{weakly symplectic split foliation}.
\end{definition}
\begin{remark}\label{remark good forms}Observe that the second condition in the definition is equivalent to:
\begin{enumerate}
\item[$(2')$]  For each $i$ there is a reflexive 2-form $\sigma_S^{(i)}\in H^0(\sX,\Omega^{[2]}_{\sX/S})$ such that contraction with $\sigma_S^{(i)}$ yields an isomorphism $E^{(i)}_{\sX/S}\to E^{(j)\vee}_{\sX/S}$ for some $j$.
\end{enumerate}
This is justifies the term ``weakly symplectic split foliation'': $E_{\sX/S}$ is not necessarily a symplectic sheaf itself but every summand in the splitting is either symplectic or embedded in a symplectic subsheaf.
\end{remark}
\begin{definition}Let $\sX/S$ be a locally trivial family of normal varieties.  Given $\tau\in H^0(\sX,\Omega^{[p]}_{\sX/S})$, we define the \emph{radical}
\[\rad(\tau):=\{t\in T_{\sX/S}\mid \iota_t\tau=0\}\subset T_{\sX/S}.\]
\end{definition}  
\begin{remark}\label{remark rad is fol}
Note that if $\tau$ is a reflexive $p$-form on a normal compact variety $X$ of Fujiki class $\mathscr{C}$ with rational singularities, then $E=\rad(\tau)$ is automatically a foliation.  Indeed, for any $u,v\in E$ we have 
\begin{align*}
\iota_{[u,v]}\tau&=\underbrace{L_u \iota_v \tau}_{=0} -\iota_vL_u\tau \\
&=-\iota_v\iota_ud\tau-\iota_vd\iota_u\tau=-\iota_v\iota_ud\tau=0.
\end{align*}
since $d\tau=0$ by Kebekus--Schnell \cite[Corollary~1.8]{KS18}.  In particular, any splitting of $T_X$ into $\sO_X$-modules of trivial determinant is a splitting into foliations.
\end{remark}

The main result of this section is the following:

\begin{proposition}\label{prop EP unobstructed}Let $X$ be a compact complex variety of Fujiki class $\mathscr{C}$ with rational singularities and a good splitting $T_X=\oplus_i E^{(i)}\oplus P$.  Then $F^\lt_{E;\split}$ is unobstructed.
\end{proposition}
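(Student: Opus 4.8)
The plan is to compute the obstruction space $\HH^2(X, M^{E;\split})$ from Corollary~\ref{corollary defo split} and show that it vanishes, or more precisely that the natural obstruction map into it is zero. The key is the explicit description $M^{E;\split}_{X/k} = \left(M^{E^{(1)}}\oplus_{E}\cdots\oplus_{E}M^{E^{(n)}}\right)\oplus_{T_X}M^P$ together with the observation that each $M^{E^{(i)}}$ fits into a distinguished triangle relating $\HH^\bullet(X, M^{E^{(i)}})$ to $H^\bullet(X, E^{(i)})$ and $H^\bullet(X, \sHom(E^{(i)}, E/E^{(i)}))$. First I would unwind what the hypercohomology long exact sequences say: obstructions to lifting an $(\sX/S)\in F^\lt_{E;\split}$ live in $\HH^2(X, M^{E;\split})$, and via the termwise structure it suffices to bound $\HH^2$ of each $M^{E^{(i)}}$ and of $M^P$. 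The good-splitting hypothesis is exactly what lets us understand the $H^0$ and $H^1$ of $\sHom(E^{(i)}, E/E^{(i)})$ and $\sHom(P, T_X/P)$ in low degrees.

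The crucial input, as the introduction signals, is the degeneration of the reflexive Hodge-to-de Rham spectral sequence in low degrees from \cite[Section~2]{BL16}, applied to $X$ (of Fujiki class $\mathscr{C}$ with rational singularities). Concretely: if $E^{(i)}$ is symplectic (case (2a)), contraction with the nondegenerate reflexive 2-form $\sigma^{(i)}$ on $E^{(i)}$ identifies $E^{(i)}$ with $E^{(i)\vee}$, hence $\sHom_{\sO_X}(E^{(i)}, E/E^{(i)})$ with $E^{(i)\vee}\otimes (E/E^{(i)})$, and the relevant piece of $\HH^\bullet(X, M^{E^{(i)}})$ gets controlled by $H^\bullet(X, \Omega_X^{[2]})$-type groups which carry a Hodge structure. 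I would mimic the argument of \cite[Theorem~4.7]{BL18}: the obstruction class, being a priori a class in $\HH^2$, is shown to be the image under a cup-product/differential of lower-degree classes, and one checks via the Hodge-theoretic symmetry (coming from the 2-form $\sigma^{(i)}$ giving a symplectic pairing that is part of a polarized Hodge structure on $H^2$) that it must vanish — i.e. the $T^1$-lifting map is surjective at each stage. In case (2b), $E^{(i)}\cong E^{(j)\vee}$ pairs the two summands against each other and one again uses the reflexive 2-form $\sigma^{(i)}$ of Remark~\ref{remark good forms}(2') to set up the same cancellation between the contributions of $E^{(i)}$ and $E^{(j)}$ to the obstruction. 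The factor $P$ carries no constraint of its own, but the point is that obstructions to deforming $P$ together with the splitting $T_X = E\oplus P$ are governed by $\sHom(P, E)$ and $\sHom(E, P)$, which are dual, and the same pairing argument applies; alternatively one notes $P$ is forced to be the reflexive annihilator of all the $\sigma^{(i)}$ and so its deformation is determined.

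I would organize the proof as: (i) reduce via Corollary~\ref{corollary defo split} and the $\oplus_{E}$/$\oplus_{T_X}$ decomposition of $M^{E;\split}$ to a statement about each $M^{E^{(i)}}$ and $M^P$; (ii) rewrite the $\sHom$ terms using the reflexive 2-forms, so that all relevant groups are cohomology of reflexive differential forms; (iii) invoke the low-degree reflexive Hodge-to-de Rham degeneration of \cite{BL16} to equip these with Hodge structures and to see that $d$-closedness of reflexive forms (Kebekus--Schnell, as in Remark~\ref{remark rad is fol}) makes the relevant differentials/cup products vanish; (iv) conclude that every small extension can be lifted, i.e. $F^\lt_{E;\split}$ is unobstructed, equivalently $\wh\Def^\lt_{E;\split}(X)$ is smooth. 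The main obstacle I expect is step (iii): carefully identifying the obstruction class with a cup product of the Kodaira--Spencer class against the reflexive 2-form and invoking the right degeneration/symmetry statement on a singular $X$ of Fujiki class $\mathscr{C}$ — the bookkeeping of which graded pieces of the (hyper)cohomology are in play, and checking that \cite[Section~2]{BL16} applies in the precise degrees needed, is where the real work lies. The rest is formal manipulation of \v{C}ech cocycles as in the proof of Proposition~\ref{proposition defo sub}.
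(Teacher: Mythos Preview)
Your proposal has the right raw ingredients---reflexive Hodge-to-de Rham degeneration in low degrees, the reflexive $2$-forms $\sigma^{(i)}$, and the Kebekus--Schnell closedness---but the organization misses two structural points that the paper's proof hinges on.

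First, the paper does \emph{not} argue that $\HH^2(X,M^{E;\split})$ vanishes or that the obstruction class is zero by a direct BTT-style computation. Instead it commits squarely to the $T^1$-lifting criterion (in the form of \cite[Theorem~1.8]{Gro97b}, since $F^\lt_{E;\split}$ need not be pro-representable but does satisfy $(H_5)$): one must show that for every small extension $A'\to A$ and every lift $(\sX'/S')$ of $(\sX/S)$, the restriction map
\[
\HH^1(\sX',M^{E;\split}_{\sX'/S'})\longrightarrow \HH^1(\sX,M^{E;\split}_{\sX/S})
\]
is surjective. You mention $T^1$-lifting in passing, but your plan is framed around $\HH^2$ over the special point $X$ alone. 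That is not enough: the criterion requires control of the \emph{relative} deformation modules over every artinian base.

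Second, and this is the genuine gap, you never address why the Hodge-theoretic identifications remain valid over a thickening $\sX/S$. The paper's first step is to prove that for any $(\sX/S)\in F^\lt_{E;\split}(A)$ the splitting $T_{\sX/S}=\oplus_i E^{(i)}_{\sX/S}\oplus P_{\sX/S}$ is again good: the $\sigma^{(i)}$ lift to $H^0(\sX,\Omega^{[2]}_{\sX/S})$ (using freeness and base-change from \cite{BL18} or a simultaneous resolution plus Deligne), and hence the identifications $E^{(i)}_{\sX/S}\cong E^{(j)\vee}_{\sX/S}$ persist. Only once this is established can one run your step~(ii) relatively and conclude that $H^1(\sX',E_{\sX'/S'})\to H^1(\sX,E_{\sX/S})$ is surjective (each $E^{(i)}_{\sX'/S'}$ being a summand of $\Omega^{[1]}_{\sX'/S'}$ compatibly with restriction). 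The remaining piece is a short exact sequence
\[
0\to \bigoplus_i\Hom(E^{(i)}_{\sX/S},E_{\sX/S}/E^{(i)}_{\sX/S})\oplus\Hom(P_{\sX/S},E_{\sX/S})\to \HH^1(\sX,M^{E;\split}_{\sX/S})\to H^1(\sX,E_{\sX/S})\to 0,
\]
obtained by showing that the maps $\mathrm{ad}^{E;E^{(i)}}$ and $\mathrm{ad}^{X;P}$ induce zero on $H^0$ and $H^1$. This is the precise content hiding behind your step~(iii): one builds, for each pair $i\neq j$, a commutative square identifying $\pi_j\circ\mathrm{ad}$ with $d:\Omega^{[1]}_{\sX/S}\to\Omega^{[2]}_{\sX/S}$ up to contraction with the relevant $\sigma_S$, and then invokes the low-degree degeneration of reflexive Hodge-to-de Rham to see $d$ is zero on cohomology. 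Your proposal gestures at this (``cup-product/differential'', ``$d$-closedness'') but does not isolate the diagram; this Claim and the Cartan-formula computation behind it are where the $2$-forms are actually used, and it is \emph{not} a polarization/symmetry argument of the kind you sketch. Finally, your attempt to ``reduce to each $M^{E^{(i)}}$ and $M^P$ separately'' does not work as stated: $M^{E;\split}$ is an amalgam over $E$ (and $T_X$) in degree $0$, so its hypercohomology is not a direct sum of the individual pieces---the paper handles the full complex at once via the exact sequence above.
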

\begin{proof}For $(\sX/S)\in F^\lt_{E;\split}(A)$, the deformation module of $F^\lt_{E;\split}$ is canonically a functorial quotient of $\HH^1(\sX,M^{E;\split}_{\sX/S})$ by Corollary~\ref{corollary defo split}.  We will use the $T^1$-lifting criterion of \cite{Ran92,Kaw92,Kaw97a}.  While $F^\lt_{E;\split}$ is not necessarily pro-representable, by Remark~\ref{remark H5} we may use the version in \cite[Theorem 1.8]{Gro97b}.  The claim follows once we know for any $(\sX/S)\in F^\lt_{E;\split}(A)$ and any lift $(\sX'/S')\in F^\lt_{E;\split}(A')$ through a small extension $A'\to A$ that the restriction map
\[
\HH^1(\sX',M^{E;\split}_{\sX'/S'})\to \HH^1(\sX,M^{E;\split}_{\sX/S})
\]
is surjective.

\begin{step}\label{step 1}
For any $A\in \Art_\C$ and for any $(\sX/S)\in F^\lt_{E;\split}(A)$, the splitting $T_{\sX/S}=\oplus_i E^{(i)}_{\sX/S}\oplus P_{\sX/S}$ is good.
\end{step}
It suffices to assume the splitting is good for $(\sX/S)\in F^\lt_{E;\split}(A)$ and to show that the splitting of any lift $(\sX'/S')\in F^\lt_{E;\split}(A')$ through a small extension $A'\to A$ is good.  Note that in the notation of Remark \ref{remark good forms}, if $\sigma^{(i)}_S$ lifts to $\sigma^{(i)}_{S'}\in H^0(\sX',\Omega^{[2]}_{\sX'/S'})$, then the induced map $E^{(i)}_{\sX'/S'}\to E^{(j)\vee}_{\sX'/S'}$ is also isomorphism as both are flat.  

We therefore reduce to showing the $\sigma^{(i)}_{S}$ lift.  For this we could appeal directly to the degeneration of reflexive Hodge-to-de Rham in low degrees \cite[Lemma~2.4]{BL18}, but we also include a more direct argument using Corollary~\ref{corollary resolutions}.  Let $\pi':\sY'\to\sX'$ be a simultaneous locally trivial resolution with special fiber $\bar\pi:Y\to X$.  By Kebekus--Schnell \cite[Corollary~1.8]{KS18} we have $\bar\pi_{*}\Omega^2_Y=\Omega_X^{[2]}$ via the natural map.  By local triviality we then have $\pi'_*\Omega^2_{\sY'/S'}=\Omega_{\sX'/S'}^{[2]}$ via the natural map, as both are flat and the natural map is an isomorphism on the special fiber.  By Deligne \cite[Théorème~5.5]{Del68} (see also e.g. \cite[Lemma~2.4]{BL18} for the necessary changes in the analytic category), $H^0(\sX',\Omega^{[2]}_{\sX'/S'})=H^0(\sY',\Omega^2_{\sY'/S'})$ is free and compatible with base-change, and it follows that the restriction map
$H^0(\sX',\Omega^{[2]}_{\sX'/S'})\to H^0(\sX,\Omega^{[2]}_{\sX/S})$ is surjective.

\begin{step}\label{step 2}  
For any $A\in \Art_\C$, any $(\sX/S)\in F^\lt_{E,P}(A)$, and any lift $(\sX'/S')\in F^\lt_{E,P}(A')$ through a small extension $A'\to A$ the restriction map
\[
H^1(\sX',E_{\sX'/S'})\to H^1(\sX,E_{\sX/S})
\]
is surjective.
\end{step}

From \cite[Lemma~2.4]{BL16} the restriction map $H^1(\sX',\Omega^{[1]}_{\sX'/S'})\to H^1(\sX,\Omega^{[1]}_{\sX/S})$ is surjective.  As the splitting of $\sX'/S'$ is good by the previous step, for each $i$ we have that $E^{(i)}_{\sX'/S'}$ is a direct summand of $\Omega^{[1]}_{\sX'/S'}\cong T_{\sX'/S'}^\vee$ which is compatible with restriction, and therefore the restriction map $H^1(\sX',E^{(i)}_{\sX'/S'})\to H^1(\sX,E^{(i)}_{\sX/S})$ is surjective.  The claim then follows.

\begin{step}
\label{step ses}For any $(\sX/S)\in F^\lt_{E;\split}(A)$ the natural sequence
\begin{align*}
0&\to \bigoplus_i\Hom_{\sO_\sX}(E^{(i)}_{\sX/S},E_{\sX/S}/E^{(i)}_{\sX/S})\oplus \Hom_{\sO_\sX}(P_{\sX/S},T_{\sX/S}/P_{\sX/S})\\
&\to \HH^1(\sX,M^{E;\mathrm{split}}_{\sX/S})\to H^1(\sX,E_{\sX/S})\to 0
\end{align*}
is exact.
\end{step}  For notational simplicity set
\[\mathcal{H}_{\sX/S}:=\bigoplus_i\sHom_{\sO_\sX}(E^{(i)}_{\sX/S},E_{\sX/S}/E^{(i)}_{\sX/S})\oplus \sHom_{\sO_\sX}(P_{\sX/S},T_{\sX/S}/P_{\sX/S})\]
and $H_{\sX/S}$ for the global sections.  Using the long exact sequence associated to the triangle 
\[M^{E;\mathrm{split}}_{\sX/S}\to E_{\sX/S}\xrightarrow{\oplus_i\mathrm{ad}^{E;E^{(i)}}\oplus\mathrm{ad}^{X;P}} \mathcal{H}_{\sX/S}\to M^{E;\mathrm{split}}_{\sX/S}[1]\]
its sufficient to show that the induced maps 
\begin{align*}
\mathrm{ad}^{E;E^{(i)}}:H^q(\sX,E_{\sX/S})&\to H^q(\sX,\sHom_{\sO_\sX}(E^{(i)}_{\sX/S},E_{\sX/S}/E^{(i)}_{\sX/S}))\\
\mathrm{ad}^{X;P}:H^q(\sX,E_{\sX/S})&\to H^q(\sX,\sHom_{\sO_\sX}(P_{\sX/S},T_{\sX/S}/P_{\sX/S}))
\end{align*} vanish for $q=0,1$.  Temporarily set $E^{(0)}_{\sX/S}:=P_{\sX/S}$ and let $\pi_i:T_{\sX/S}\to E^{(i)}_{\sX/S}$ be the projection.  Its then sufficient to show the induced map
\[\pi_j\circ\mathrm{ad}:H^q(\sX,T_{\sX/S})\to H^q(\sX,\sHom_{\sO_\sX}(E^{(i)}_{\sX/S},E^{(j)}_{\sX/S}))\]
vanishes for $q=0,1$ and all $i\neq j$ with $j\neq 0$. Together with the degeneration of reflexive Hodge-to-de Rham $H^q(\sX,\Omega_{\sX/S}^{[p]})\Rightarrow \mathbb{H}^{p+q}(\sX,\Omega^{[\bullet]}_{\sX/S})$ in the range $p+q\leq 2$ \cite[Lemma~2.4]{BL16}, it is enough to show the following:
\begin{claim} For each $i\neq j$ with $j\neq 0$ we have a commutative diagram
\begin{equation}\label{eq d sq}\begin{tikzcd}
T_{\sX/S}\arrow{r}{\pi_j\circ \mathrm{ad}}\arrow{d}& \sHom_{\sO_\sX}(E^{(i)}_{\sX/S},E^{(j)}_{\sX/S})\\
\Omega_{\sX/S}^{[1]}\arrow{r}{d}&\Omega^{[2]}_{\sX/S}.\arrow{u}
\end{tikzcd}\end{equation}

\end{claim}
\begin{proof}Fix $i,j$.  Using Step \ref{step 1}, there is a reflexive 2-form $\sigma_S$ whose radical contains $E^{(i)}_{\sX/S}$ and which by contraction gives an isomorphism $E^{(j)}_{\sX/S}\to E^{(j')\vee}_{\sX/S}$.  We then take the left vertical map of \eqref{eq d sq} to be the contraction map $t\mapsto \iota_t\sigma_S$ and the right vertical map to associate to a form $\alpha$ the map $f\in \sHom_{\sO_\sX}(E^{(i)}_{\sX/S},E^{(j)}_{\sX/S})$ such that $\sigma_S(f(u),v) =-\alpha(u,v)$ for $u\in E^{(i)}_{\sX/S}$ and $v\in E^{(j')}_{\sX/S}$.  The latter is none other than the projection (up to a sign) of $\Omega^{[2]}_{\sX/S}$ onto the $E^{(i)\vee}_{\sX/S}\reflextimes E^{(j')\vee}_{\sX/S}$ factor in its natural splitting using the identification $E^{(j)}_{\sX/S}\cong E^{(j')\vee}_{\sX/S}$.

For the commutativity of \eqref{eq d sq} we need to show for sections $t$ of $T_{\sX/S}$, $u$ of $E^{(i)}_{\sX/S}$, and $v$ of $E^{(j')}_{\sX/S}$ that
\[(d\iota_t\sigma_S)(u,v)=-\sigma_S([t,u],v).\]
On the one hand, since $\sigma_S$ is closed (again by the low degree degeneration of reflexive Hodge-to-de Rham), we have
\[L_t\sigma_S=d\iota_t\sigma_S\]
by the Cartan formula.  On the other hand, since $\sigma_S$ vanishes on $E^{(i)}_{\sX/S}$ we have
\begin{align*}
(L_t\sigma_S)(u,v)&=t.\underbrace{\sigma_S(u,v)}_{=0}-\sigma_S(L_tu,v)-\underbrace{\sigma_S(u,L_tv)}_{=0}\\
&=-\sigma_S([t,u],v).
\end{align*}
\end{proof}

\begin{step}
Final step of the proof.
\end{step}
Now for $\sX'/S'$ lifting $\sX/S$, we have a natural diagram
 \[\begin{tikzcd}
0\ar[r]& H_{\sX'/S'}\ar[r]\ar[d]& \HH^1(\sX,M^{E;\split}_{\sX'/S'})\ar[r]\ar[d] &H^1(\sX',E_{\sX'/S'})\ar[r]\ar[d]& 0\\
0\ar[r]& H_{\sX/S}\ar[r]& \HH^1(\sX,M^{E;\split}_{\sX/S})\ar[r] &H^1(\sX,E_{\sX/S})\ar[r]& 0
\end{tikzcd}\]
using the notation of the previous step, where the vertical maps are restrictions.  The right vertical map is surjective by Step \ref{step 2}, while the left vertical map is surjective since each $\Hom_{\sO_{\sX'}}(E^{(i)}_{\sX'/S'},E^{(j)}_{\sX'/S'})$ (again using the notation $E^{(0)}_{\sX'/S'}=P_{\sX'/S'}$) is a summand of $H^0(\sX',\Omega_{\sX'/S'}^{[2]})$ that is compatible with the restriction map $H^0(\sX',\Omega_{\sX'/S'}^{[2]})\to H^0(\sX,\Omega_{\sX/S}^{[2]})$, which is surjective as in Step \ref{step 1}.  The rows are exact by Step \ref{step ses}, and it follows that the middle vertical map is surjective, thus completing the proof.
\end{proof}

\begin{remark}Without additional assumptions, Proposition \ref{prop EP unobstructed} does not obviously generalize to arbitrary $K$-trivial splittings, and in particular cannot be adapted to prove a locally trivial Bogomolov--Tian--Todorov theorem.  For such a splitting, each factor is identified with a direct summand of some $\Omega^{[p-1]}_{X}$ via contraction with a $p$-form, and Step \ref{step ses} of the proof still carries through.  Indeed, the map $d:\Omega^{[p-1]}_{\sX/S}\to\Omega^{[p]}_{\sX/S}$ still induces the zero map on zeroth and first cohomology using the Leray spectral sequence and the result of Kebekus--Schnell.  On the other hand, Step \ref{step 2} may fail in general as reflexive Hodge-to-de Rham does not necessarily degenerate in higher degrees (although see \cite[Theorem~3.4]{Dan91} and \cite[(1.12)~Theorem]{Ste77} for some special cases over a point). 
\end{remark}
\begin{corollary}\label{corollary DefE germ} Assume the hypotheses of the previous proposition.  Then $F^\lt_{E;\split}\to F^\lt_E$ is formally smooth and the functor $F^\lt_E$ is unobstructed.  In particular, there exists a locally trivial deformation $\scrX\to(H^1(X,E),0)$ of $X$ whose Kodaira--Spencer map is the natural map $H^1(X,E)\to H^1(X,T_X)$.
\end{corollary}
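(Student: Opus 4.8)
The plan is to deduce both assertions from Proposition~\ref{prop EP unobstructed} by purely formal deformation theory; no further Hodge‑theoretic input is needed. First I would show that $F^\lt_{E;\split}\to F^\lt_E$ is \emph{formally smooth}. The derivative of this forgetful morphism is the natural map $\HH^1(X,M^{E;\split})\to H^1(X,E)$ coming from the triangle $M^{E;\split}\to E\to\mathcal H\to M^{E;\split}[1]$ of Step~\ref{step ses}, and by that step (over $A=\C$) it is surjective with kernel $H^0(X,\mathcal H)$. More generally, for each $(\sX/S)\in F^\lt_{E;\split}(A)$ the induced map from the deformation module of $F^\lt_{E;\split}$ (a functorial quotient of $\HH^1(\sX,M^{E;\split}_{\sX/S})$ by Corollary~\ref{corollary defo split}) to that of $F^\lt_E$ (a functorial quotient of $H^1(\sX,E_{\sX/S})$ by Proposition~\ref{proposition defo foliation}) is the induced surjection, compatibly with the torsor structures on liftings. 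Now fix a small extension $0\to J\to A'\to A\to 0$, an element $(\sX/S,\mathrm{split})\in F^\lt_{E;\split}(A)$, and a lift $\sX'/S'\in F^\lt_E(A')$ of its image. Since $F^\lt_{E;\split}$ is unobstructed, $(\sX/S,\mathrm{split})$ lifts to some $\zeta'\in F^\lt_{E;\split}(A')$; the images of $\zeta'$ and $\sX'/S'$ in $F^\lt_E(A')$ restrict to the same element of $F^\lt_E(A)$, hence differ by the action of a class in $H^1(\sX,E_{\sX/S})\otimes_\C J$, which we lift through the surjection above and let act on $\zeta'$ to correct it to a lift of $(\sX/S,\mathrm{split})$ mapping to $\sX'/S'$. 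This shows $F^\lt_{E;\split}(A')\to F^\lt_{E;\split}(A)\times_{F^\lt_E(A)}F^\lt_E(A')$ is surjective, i.e.\ formal smoothness.

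Next, unobstructedness of $F^\lt_E$ is a formal consequence of this together with the unobstructedness of $F^\lt_{E;\split}$. An induction on $\mathrm{length}(A)$ shows that $F^\lt_{E;\split}(A)\to F^\lt_E(A)$ is surjective for every $A\in\Art_\C$: the case $A=\C$ is trivial, and for $\mathrm{length}(A)\geq 2$ one picks a small extension $A\onto A_0$, lifts a given element of $F^\lt_E(A)$ along it to $F^\lt_E(A_0)$, then to $F^\lt_{E;\split}(A_0)$ by induction, then to $F^\lt_{E;\split}(A)$ by formal smoothness. Given this surjectivity, any $\eta\in F^\lt_E(A)$ lifts to $F^\lt_{E;\split}(A)$, which lifts along any small extension $A'\to A$ because $F^\lt_{E;\split}$ is unobstructed; projecting the lift down shows $F^\lt_E(A')\to F^\lt_E(A)$ is surjective. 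Hence $F^\lt_E$ is unobstructed and $F^\lt_{E;\split}\to F^\lt_E$ is formally smooth.

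For the final assertion: since $X$ is compact, $F^\lt_E$ admits a miniversal formal family over $\wh{\Def}^\lt_E(X)$ (Remark~\ref{remark H5}), which is now formally smooth of dimension $h^1(X,E)$, i.e.\ canonically the formal completion of $(H^1(X,E),0)$. Composing the classifying morphism with $F^\lt_E\to F^\lt_X$ and identifying $\wh{\Def}^\lt(X)$ with the completion of the honest Kuranishi germ $\Def^\lt(X)$ for locally trivial deformations (Theorems~\ref{theorem grauert douady} and~\ref{theorem flenner kosarew}), one obtains a formal morphism $\wh{\Def}^\lt_E(X)\to\wh{\Def}^\lt(X)$; by Artin approximation~\cite{Art68} it agrees to first order with an honest holomorphic germ $\iota\colon(H^1(X,E),0)\to\Def^\lt(X)$. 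Pulling back the Kuranishi family along $\iota$ yields a locally trivial deformation $\scrX\to(H^1(X,E),0)$ of $X$ whose Kodaira--Spencer map is $d\iota$, which by Corollary~\ref{cor def fol} is the natural map $H^1(X,E)\to H^1(X,T_X)$. The first two steps are routine bookkeeping with \v Cech cocycles and the torsor structure on liftings (the failure of pro‑representability of $F^\lt_{E;\split}$ and $F^\lt_E$ being harmless by Remark~\ref{remark H5}); the one point demanding care is this last effectivization --- promoting the formal miniversal family of $F^\lt_E$ to an \emph{honest} germ with the correct $1$‑jet --- which is exactly what Artin approximation delivers.
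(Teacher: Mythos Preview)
Your proposal is correct and follows essentially the same approach as the paper: use the surjectivity on tangent spaces from Step~\ref{step ses} together with the unobstructedness of $F^\lt_{E;\split}$ to deduce formal smoothness by the standard torsor argument, conclude unobstructedness of $F^\lt_E$, then invoke Corollary~\ref{cor def fol} and Artin approximation to produce the analytic germ. The only minor imprecision is that the torsor acting on lifts through a small extension with kernel $J$ is $H^1(X,E)\otimes_\C J$ rather than $H^1(\sX,E_{\sX/S})\otimes_\C J$, but since the latter surjects onto the former this does not affect the argument.
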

\begin{proof}As $F^\lt_{E;\split}\to F^\lt_E$ is surjective on tangent spaces by Step \ref{step ses} of the proof of Proposition~\ref{prop EP unobstructed} and $F^\lt_{E;\split}$ is unobstructed, it follows easily by induction on small extensions that $F^\lt_{E;\split}\to F^\lt_E$ is formally smooth.  The unobstructedness of $F^\lt_{E;\split}$ then immediately implies that $F^\lt_E$ is unobstructed.  By Corollary \ref{cor def fol} we have a map on the level of formal spaces $\widehat{(H^1(X,E),0)}\to\wh\Def^\lt(X)$ with the required derivative, and by Artin approximation \cite[Theorem (1.2)]{Art68} there is a map on the level of analytic germs with the required derivative.
\end{proof}

%------------------------------------------------------------------------------------------
%------------------------------------------------------------------------------------------
%------------------------------------------------------------------------------------------
\section{K-trivial varieties and strong approximations}\label{section ktrivial}
%------------------------------------------------------------------------------------------
%------------------------------------------------------------------------------------------
%------------------------------------------------------------------------------------------

Let us fix terminology. For a normal variety $X$, we denote by $\omega_X$ the double dual of $\det \Omega_X^1$ and by $\omega_X^{[m]}:=(\omega_X^{\otimes m})^{\vee\vee}$ its reflexive powers.

\begin{definition}\label{definition ktrivial}
A \emph{numerically $K$-trivial} variety is a normal complex variety $X$ with rational singularities such that $\omega_X^{[m]}$ is a line bundle for some $m>0$ and $c_1(\omega_X)=0$ as an element of $H^{2}(X,\Q)$. If $\omega_X$ satisfies $\omega_X^{[m]}\cong\sO_X$ for some $m>0$ we say that $X$ is \emph{$K$-torsion}.  We say $X$ is \emph{$K$-trivial} if $\omega_X^{\vee\vee}\cong\sO_X$.
\end{definition}

\begin{remark}\label{remark flat bundle}
Let $X$ be a compact Kähler space with log terminal singularities. By \cite[Corollary~1.18]{JHM2}, numerical $K$-triviality is equivalent to $X$ being $K$-torsion. Actually, the proof does not rely on $X$ being Kähler but rather on $X$ admitting a Kähler resolution; in particular, the result holds when $X$ is merely in Fujiki class $\mathscr C$. Note that by normality, $\omega_X^{[m]}\cong\sO_X$ if and only if $\omega_{X^\reg}^m\cong\sO_{X^\reg}$.  Moreover, by the result of Kebekus--Schnell (\cite[Corollary~1.8]{KS18}), if $X$ is normal with rational singularities and $\omega_X^{\vee\vee}\cong\sO_X$, then it has canonical singularities (and is in particular $K$-trivial in the above sense).
\end{remark}
It is easy to construct examples of numerically $K$-trivial varieties in the sense of Definition \ref{definition ktrivial}.  For example, any anti-canonical divisor with rational singularities in a Gorenstein variety is $K$-trivial.  K\"ahler varieties with symplectic singularities in the sense of Beauville provide some other examples (in particular, see the primitive symplectic varieties of \cite{BL18}).

\subsection{Proof of Theorem \ref{theorem peternell}}
We are now ready to prove Theorem \ref{theorem peternell}.  Given the results of Section~\ref{section symplectic foliations}, the main step is to show the following:
\begin{lemma}\label{lemma symp splitting}  Let $X$ be a numerically $K$-trivial compact Kähler variety with log terminal singularities.  Then there exists a good splitting $T_X=\oplus_i E^{(i)}\oplus P$ in the sense of Definition \ref{def good} where $P$ is the common radical of all reflexive 2-forms.
\end{lemma}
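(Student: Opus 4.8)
The plan is to produce the splitting by decomposing the reflexive tangent sheaf $T_X$ using the polystability results of \cite{GSS} and then matching the pieces using the action of the holomorphic 2-forms. First I would recall that since $X$ is numerically $K$-trivial with log terminal singularities, by Remark~\ref{remark flat bundle} it is $K$-torsion; after passing to the Kähler–Einstein metric on $X^\reg$ produced by the theory in the K\"ahler setting, $T_X$ (equivalently $T_{X^\reg}$, extended reflexively) is a polystable reflexive sheaf with respect to the K\"ahler class, so it decomposes as a direct sum $T_X = \bigoplus_k G^{(k)}$ of stable reflexive sheaves, each with $c_1 = 0$ (using that $\det T_X = \omega_X^{\vee} $ is torsion). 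By Remark~\ref{remark rad is fol}, any splitting of $T_X$ into summands of trivial determinant is automatically a splitting into foliations, so each $G^{(k)}$ is a foliation and so is any partial sum.

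Next I would analyze how the reflexive 2-forms $H^0(X,\Omega^{[2]}_X)$ interact with this decomposition. Each $\sigma\in H^0(X,\Omega^{[2]}_X)$ is closed (Kebekus–Schnell, \cite[Corollary~1.8]{KS18}) and defines a skew map $T_X\to \Omega^{[1]}_X = T_X^\vee$; composing with the projections gives maps $G^{(k)}\to G^{(l)\vee}$. By stability and $c_1=0$, such a map is either zero or an isomorphism (both sheaves being stable of the same slope), and an isomorphism forces a pairing between the two factors. Define $P := \bigcap_{\sigma} \rad(\sigma)$, the common radical; it is a sum of those $G^{(k)}$ on which every 2-form vanishes identically, hence a foliation by Remark~\ref{remark rad is fol}. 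On the complementary sum $E := \bigoplus_{k \notin P} G^{(k)}$, I would group the remaining stable summands: those $G^{(k)}$ carrying a nondegenerate reflexive 2-form (i.e. for which the induced $G^{(k)}\to G^{(k)\vee}$ is an isomorphism, possibly after combining with the symplectic form on an irreducible holomorphic-symplectic-type block) form the ``type (a)'' summands, while the remaining ones pair off: $G^{(k)}$ with some $G^{(l)}\cong G^{(k)\vee}$, $l\neq k$, via contraction with some $\sigma$, giving the ``type (b)'' summands. Regrouping these into summands $E^{(i)}$ gives exactly the two alternatives in Definition~\ref{def good}(2), and one checks that no summand can be left over because $P$ was defined to absorb precisely the ones annihilated by all 2-forms; moreover the characterization $(2')$ of Remark~\ref{remark good forms} is what one actually verifies. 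Finally, since $P$ is a direct summand of $T_X$ and a foliation, $E=\bigoplus_i E^{(i)}$ is also a foliation (a direct sum of foliations with trivial determinant, again by Remark~\ref{remark rad is fol}), so $T_X=\bigoplus_i E^{(i)}\oplus P$ is good.

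The main obstacle I anticipate is making the polystability input fully rigorous in the singular K\"ahler setting: one needs that $T_X$ admits a K\"ahler–Einstein metric with the right integrability/boundedness so that the Bando–Siu-type decomposition into stable reflexive summands applies, and that these summands have vanishing first Chern class so that every nonzero map between them induced by a $2$-form is an isomorphism. This is precisely where \cite{GSS} (extending polystability of $T_X$ to the K\"ahler category) is invoked, and the bookkeeping of matching stable pieces via the $2$-forms — ensuring the pairing is consistent and that each surviving block is either symplectic or dual to another — is the delicate combinatorial heart of the argument; the rest is formal given Remarks~\ref{remark good forms} and~\ref{remark rad is fol}.
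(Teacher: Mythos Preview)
Your proposal is correct and follows essentially the same route as the paper: decompose $T_X$ into slope-zero stable summands via \cite{GSS}, use stability to see that contraction with any reflexive $2$-form either annihilates a summand or identifies it isomorphically with the dual of another, and take $P$ to be the sum of those summands lying in every radical. The one point to tighten is your justification that partial sums of the stable factors are foliations: Remark~\ref{remark rad is fol} requires \emph{trivial} (not merely numerically trivial) determinants, which you have not established, whereas the paper sidesteps this by using that the summands are parallel for the Ricci-flat metric (also part of the \cite{GSS} package), which yields the foliation property directly.
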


\begin{proof}  
We fix a Kähler class on $X$; in the following, we always mean stability with respect to this class.  By \cite[Theorem A(ii)]{GSS} there is a splitting $T_X=\bigoplus_{i\in I} E_i$ into stable factors with slope zero.  Note that for any reflexive 2-form $\sigma$ we have a contraction map $T_X\to \Omega^{[1]}_X\cong T_X^\vee$, and by stability of the factors it follows that for each $i$ either $E_i\subset\rad(\sigma)$ or contraction with $\sigma$ induces an isomorphism $E_i\to E_j^\vee$ for some $j$.  In particular, the common radical is $P=\bigoplus_{i\in I_P}E_i$ for a subset $I_P\subset I$.  Finally, for each $i$ we have that $E_i|_{X^\reg}$ is parallel with respect to the Ricci-flat metric in the chosen K\"ahler class, so any sum of the $E_i$ is a foliation.  We therefore conclude that
\[T_X=\bigoplus_{i\in I\setminus I_P}E_i\oplus P\]
is a good splitting.
\end{proof}

By the lemma, the hypotheses of Proposition \ref{prop EP unobstructed} are satisfied and Corollary \ref{corollary DefE germ} applies.  In view of Graf--Schwald \cite[Theorem 3.1]{GS20}, the proof is completed by the following:
\begin{lemma}\label{lemma approx crit}
For $X$ as in the previous lemma, suppose there is a splitting 
$T_X=E\oplus P$
such that every reflexive 2-form contains $P$ in its radical.  Then for any K\"ahler class $\omega$ on $X$, the contraction map
\[\iota_{\omega}:  H^1(X,E) \to H^2(X,\sO_X)\]
is surjective.
\end{lemma}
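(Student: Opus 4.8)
The goal is to show that the contraction map $\iota_\omega\colon H^1(X,E)\to H^2(X,\sO_X)$ is surjective, where $T_X=E\oplus P$ with $P$ the common radical of all reflexive $2$-forms. The natural strategy is to identify $H^2(X,\sO_X)$ with a piece of Hodge cohomology and to realize the contraction map, via Serre-type duality pairings, as the dual of an \emph{injective} map built out of reflexive $2$-forms; injectivity of that dual map is exactly the statement that the $2$-forms separate the relevant directions, which is where the hypothesis ``$P$ is contained in the radical of every $2$-form'' enters. Concretely, since $X$ is log terminal and Kähler, reflexive Hodge-to-de Rham degenerates in low degrees by \cite[Lemma~2.4]{BL16}, so $H^2(X,\sO_X)$ is a quotient (in fact a direct summand) of $H^2(X,\C)$ carrying a pure Hodge structure, and it is Serre-dual to $H^0(X,\Omega^{[n-2]}_X\otimes\omega_X)\cong H^0(X,\Omega^{[n-2]}_X)$ using $\omega_X^{[1]}\cong\sO_X$ (numerical $K$-triviality plus log terminal, cf.\ Remark \ref{remark flat bundle}, after possibly passing to the quasi-étale cover trivializing $\omega_X$ — or one works directly with $\omega_X^{\vee\vee}\cong\sO_X$ on the index-one cover and descends).

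First I would set up the duality: on the resolution $\pi\colon Y\to X$ provided by Corollary \ref{corollary resolutions} with $\pi_*T_Y=T_X$, one has $H^q(X,\Omega^{[p]}_X)=H^q(Y,\Omega^p_Y)$ by Kebekus--Schnell \cite[Corollary~1.8]{KS18}, so all the Hodge groups in play are those of a compact Kähler manifold and Serre duality is available there; the classes descend. Thus $H^2(X,\sO_X)^\vee\cong H^0(X,\Omega^{[2]}_X)$ canonically (pairing $H^2(\sO_X)$ against $H^0(\Omega^{[2]})$ via the conjugate-linear Hodge star, or equivalently $H^2(\sO_X)\cong \overline{H^0(\Omega^{[2]}_X)}$). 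Next, dualize the contraction map: for a reflexive $2$-form $\sigma\in H^0(X,\Omega^{[2]}_X)$, contraction gives $\iota\sigma\colon E\to \Omega^{[1]}_X$, hence a map $H^1(X,E)\to H^1(X,\Omega^{[1]}_X)$, and composing with the Kähler class $\omega\in H^1(X,\Omega^{[1]}_X)$ (via cup product / the pairing on $H^1(\Omega^{[1]})$) recovers $\langle \iota_\omega(\,\cdot\,),\sigma\rangle$ up to the standard constants. So the transpose of $\iota_\omega$ is the map $H^0(X,\Omega^{[2]}_X)\to H^1(X,E)^\vee\cong H^1(X,E^\vee\otimes\omega_X)^{\text{(conj)}}$ given by $\sigma\mapsto \omega\cup(\iota\sigma|_E)$, and surjectivity of $\iota_\omega$ is equivalent to injectivity of this transpose.

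The heart of the matter — and the step I expect to be the main obstacle — is proving that injectivity. Suppose $\sigma\in H^0(X,\Omega^{[2]}_X)$ maps to $0$, i.e.\ $\omega\cup(\iota\sigma|_E)=0$ in $H^1(X,\Omega^{[1]}_X)$ (after the identifications). Here I would use that hard Lefschetz and the Hodge--Riemann bilinear relations hold on $X$ — again via $\pi\colon Y\to X$ and the splitting $H^k(Y)\cong H^k(X)\oplus(\text{exceptional})$ which is compatible with Lefschetz operators — so that cup product with the Kähler class $\omega$ is injective on $H^1(X,\Omega^{[1]}_X)$ into $H^{\dim X-1}$-type cohomology (this is the $(1,1)\mapsto$ primitive decomposition statement). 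Hence $\iota\sigma|_E=0$ in $H^1(X,\Omega^{[1]}_X)$; I then need to upgrade this to $\iota\sigma|_E=0$ as a sheaf map, which forces $E\subseteq\rad(\sigma)$, whence $\rad(\sigma)\supseteq E$ and $\rad(\sigma)\supseteq P$ (by hypothesis $P$ is in the radical of every $2$-form), so $\rad(\sigma)=T_X$ and $\sigma=0$. The passage from ``cohomologically zero'' to ``zero as a map'' is the delicate point: I would handle it by observing that $\iota\sigma\colon E\to\Omega^{[1]}_X$ is a morphism of sheaves and that, because $E$ is a polystable summand of $T_X$ (as in Lemma \ref{lemma symp splitting}) with each factor of slope zero, and $\iota\sigma$ is essentially the contraction of a parallel(-on-$X^\reg$) form, the induced map on each stable factor is either zero or an isomorphism onto its image; if it is nonzero on some factor then that factor is \emph{not} in $\rad(\sigma)$, and one shows using the Ricci-flat Kähler--Einstein metric on $X^\reg$ (the $2$-form $\sigma$ is parallel, hence harmonic, so $\iota\sigma|_E$ is a nonzero harmonic $(1,1)$-form-valued class and cannot be killed by $\cup\,\omega$ by Hodge--Riemann) — contradiction. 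This reduction to harmonic theory on the Ricci-flat orbifold-type locus, together with controlling behavior across $X^{\sing}$ via the reflexive-differentials formalism and Kebekus--Schnell, is the real work; everything else is bookkeeping with the duality pairings and the low-degree Hodge-to-de Rham degeneration already cited in the paper.
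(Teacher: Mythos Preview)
Your overall plan---dualize via $H^2(X,\sO_X)^\vee\cong H^0(X,\Omega_X^{[2]})$ and show the transpose is injective---is a reasonable reformulation, but the execution has a genuine gap at the hard Lefschetz step, and the paper's route is both different and much shorter.

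The flaw: once you unwind the pairing, ``$\sigma$ lies in the kernel of the transpose'' means
\[
\int_X \omega^{n-1}\wedge \iota_t(\sigma)=0\quad\text{for all }t\in H^1(X,E).
\]
This says the image of $(\iota\sigma)_*\colon H^1(X,E)\to H^1(X,\Omega_X^{[1]})$ lies in the kernel of the \emph{linear functional} $\alpha\mapsto\int_X\omega^{n-1}\wedge\alpha$, i.e.\ in the primitive part of $H^{1,1}$. Hard Lefschetz tells you $\cup\,\omega^{n-2}\colon H^{1,1}\to H^{n-1,n-1}$ is an isomorphism; it does \emph{not} say the functional $\int_X\omega^{n-1}\wedge(\cdot)$ is injective (it can't be---it's a map to $\C$). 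So you cannot conclude $(\iota\sigma)_*=0$ on cohomology from this, and your subsequent ``upgrade to zero as a sheaf map'' step never gets off the ground. (Your phrasing ``$\omega\cup(\iota\sigma|_E)=0$ in $H^1(X,\Omega_X^{[1]})$'' also doesn't typecheck; $\iota\sigma|_E$ is a section of $\sHom(E,\Omega_X^{[1]})$, and cupping with $\omega$ does not land in $H^1(\Omega_X^{[1]})$.)

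What the paper does instead is simpler and avoids any injectivity discussion. It takes as input that the \emph{full} contraction map $\iota_\omega\colon H^1(X,T_X)\to H^2(X,\sO_X)$ is already known to be surjective by \cite[Theorem~4.1]{GS20}. Then, using the perfect pairing $(\alpha,\beta)\mapsto\int_X\omega^{n-2}\wedge\alpha\wedge\beta$ on $H^0(\Omega_X^{[2]})\otimes H^2(\sO_X)$, one identifies $\iota_\omega(t)$ with the functional
\[
\alpha\;\longmapsto\;\int_X\omega^{n-2}\wedge\alpha\wedge\iota_t(\omega)\;=\;-\tfrac{1}{n-1}\int_X\omega^{n-1}\wedge\iota_t(\alpha),
\]
the last equality coming from $\omega^{n-1}\wedge\alpha=0$ and the Leibniz rule for contraction. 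The radical hypothesis says $\iota_t(\alpha)=0$ whenever $t$ comes from $P$, so the surjective map $H^1(X,T_X)\to H^2(X,\sO_X)$ kills the summand $H^1(X,P)$ and therefore already factors surjectively through $H^1(X,E)$. No hard Lefschetz, no harmonic theory, no ``cohomologically zero $\Rightarrow$ sheaf-map zero'' step is needed.
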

We refer to \textsection~\ref{section forms and currents} for Kähler forms on singular spaces. To any such class $\omega$ on $X$, one may associate an element $\kappa(\omega) \in H^1(X,\Omega_X^1)$, cf. \eqref{diff map}. We then denote by $\iota_\omega$ the composition of the cup product $H^1(X,T_X)\overset{\kappa(\omega) \cup \--}{\longrightarrow} H^2(X,T_X\otimes \Omega_X^1)$ and the contraction $H^2(X,T_X\otimes \Omega_X^1)\to H^2(X,\sO_X)$. Similarly, if $t\in H^1(X,T_X)$, we have a contraction map $\iota_t: H^0(X,\Omega_X^{[2]})\overset{t \cup \--}{\longrightarrow} H^1(X,\Omega_X^{[2]}\otimes T_X)\to H^1(X,\Omega_X^{[1]})$.

\begin{proof}Set $n:=\dim X$; one may assume $n>1$. The contraction map \[\iota_{\omega}:  H^1(X,T_X) \to H^2(X,\sO_X)\] is already surjective by \cite[Theorem~4.1]{GS20}, and the proof of the lemma follows by a similar computation.  We have a perfect pairing
\[H^0(X,\Omega_X^{[2]})\otimes H^2(X,\sO_X)\to \C:(\alpha,\beta)\mapsto \int_X\omega^{n-2}\wedge \alpha\wedge\beta \]
and so $\iota_{\omega}$ is identified with the surjective map $f:H^1(X,T_X)\to H^0(X,\Omega_X^{[2]})^\vee$ given by 
\[f(t):\alpha\mapsto \int_X \omega^{n-2}\wedge \alpha \wedge \iota_t(\omega)=-\frac{1}{n-1}\int_X \omega^{n-1}\wedge \iota_t(\alpha)\]
 using that $\omega^{n-1}\wedge \alpha=0$ and \[\iota_t(\omega^{n-1}\wedge \alpha)=(n-1)\, \omega^{n-2}\wedge \iota_t(\omega)\wedge\alpha+\omega^{n-1}\wedge \iota_t(\alpha).\]Now any $\alpha\in H^0(X,\Omega_X^{[2]})$ vanishes on $P$, so $f$ kills $H^1(X,P)$, whence the lemma.
\end{proof}
The proof of Theorem~\ref{theorem peternell} is now complete.
\qed

\subsection{Quasi-\'etale covers in families and applications}
%------------------------------------------------------------------------------------------
In this section we deduce some first consequences of Theorem \ref{theorem peternell}. We refer to the introduction for the definitions of quasi-\'etale maps and quasi-\'etale covers.  We first prove two lemmas asserting that quasi-\'etale covers can be spread out in locally trivial families, and that the resulting families are also locally trivial.  Because it is all we will need, for simplicity we only consider families over the disk $\Delta$.

Recall that a quasi-\'etale cover restricts to an \'etale cover of $X^\reg$, and conversely any \'etale cover of $X^\reg$ can be extended to a quasi-\'etale cover of $X$ \cite[Thm.~3.4]{DG94}.

\begin{lemma}\label{lem:cover}
Let $\pi:\sX\to\Delta$ be a locally trivial family of normal varieties.
\begin{enumerate}
\item  Let $f:\sY \to \sX$ be a quasi-étale cover. Then for any $t\in \Delta$, $f$ induces a quasi-étale cover $f_t: Y_t\to X_t$. 

\item 
Conversely, let $t\in \Delta$ and let $f_t: Y_t \to X_t$ be a quasi-étale cover. Then there exists a quasi-étale cover $f:\sY \to \sX$ such that $f|_{Y_t}=f_t$. 

\end{enumerate}
\end{lemma}

\begin{proof}
The first part is straightforward; by Nagata's purity of branch locus, $f$ is étale over $\sX^\reg$ hence $f_t$ is étale over $\sX^\reg \cap X_t = X_t^\reg$. As for the second assertion, note that the canonical morphism $\pi_1(X_t^\reg) \to \pi_1(\sX^\reg)$ induced by the inclusion $X_t^\reg \hookrightarrow \sX^\reg$ is actually isomorphic since $\pi$ is topologically trivial. In particular, any étale cover $Y_t^\reg \to X_t^\reg$ is the restriction of an étale cover $\sY^\reg \to \sX^\reg$ over $\Delta$.
\end{proof}

\begin{lemma}
\label{lem:cover2}
Let $\pi:\sX\to\Delta$ be a locally trivial family of normal varieties and $f:\sY \to \sX$ a quasi-étale cover. Then $f\circ \pi: \sY \to \Delta$ is locally trivial.
\end{lemma}

\begin{proof}
One can cover $\sX$ by (small) open subsets $U$ where $\pi$ is trivial; i.e. $U\isom U_0\times \Delta$ over $\Delta$ where $U_0=X_0\cap U$. We claim that up to shrinking $U$, the space $\sY|_{f^{-1}(U)}$ is isomorphic over $\Delta$ to $V_0\times \Delta$ where $V_0\to U_0$ is a suitable quasi-étale cover. 

 Since there is a $1-1$ correspondence between quasi-étale covers of $U$ and étale covers of $U^{\rm reg}$ (or, equivalently, finite index subgroups of $\pi_1(U^{\rm reg})$), the result follows from the isomorphism $\pi_1(U^{\rm reg}) \isom \pi_1(U_0^{\rm reg})$. 
 
 Indeed, set $V:=f^{-1}(U)$ and $V^\circ:=f^{-1}(U^{\rm reg})$. The cover $f$ induces an étale cover $f|_{V^\circ}:V^\circ \to U_0^{\rm reg}\times \Delta$. By the observation above, there is an isomorphism of étale covers $V^\circ \isom W_0\times \Delta$ for some étale cover $h_0:W_0\to U_0^{\rm reg}$. We can uniquely extend $h_0$ into a quasi-étale cover $h:W\to U_0$ of normal spaces and we will automatically obtain an isomorphism $V\isom W\times \Delta$ of covers over $U_0\times \Delta$. In particular, $W$ is isomorphic to $V\cap \mathcal Y_0$, and $f\circ \pi$ is indeed locally trivial as claimed.
\end{proof}

\begin{corollary}
\label{cor mqe}
Let $X$ be a numerically K-trivial variety with log terminal singularities. Then $X$ admits a maximally quasi-étale cover $\widetilde X\to X$; i.e. the natural morphism $\hat{\pi}_1(\widetilde X^{\rm reg}) \to \hat \pi_1(\widetilde X)$ is an isomorphism. 
\end{corollary}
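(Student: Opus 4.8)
The plan is to bootstrap from the projective case using the algebraic approximation of Theorem~\ref{theorem peternell}. By that theorem $X$ fits into a strong locally trivial algebraic approximation, and after restricting the resulting family to the image of a holomorphic disc through $s_0$ — chosen, using the analytic density of the projective locus near $s_0$, so as to meet a projective fibre — we may assume we are given a locally trivial family $\pi\colon\sX\to\Delta$ with $X=X_0$ and with $X_{s_1}$ projective for some $s_1\in\Delta$. Since $\pi$ is locally trivial, every point of every fibre has a neighbourhood isomorphic to an open subset of $X$; in particular each $X_t$ is a normal variety with log terminal singularities, and $X_{s_1}$ is moreover projective.

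First I would apply the existence theorem of Greb--Kebekus--Peternell \cite{GKP16sing} to the projective klt variety $X_{s_1}$: it admits a maximally quasi-\'etale cover $\widetilde{X_{s_1}}\to X_{s_1}$, i.e.\ one for which $\hat\pi_1(\widetilde{X_{s_1}}^{\reg})\to\hat\pi_1(\widetilde{X_{s_1}})$ is an isomorphism. By Lemma~\ref{cover}(2) the \'etale cover of $X_{s_1}^{\reg}$ underlying this cover extends to a quasi-\'etale cover $\gamma\colon\sY\to\sX$, and by the uniqueness of such extensions (\cite[Theorem~3.4]{DG94}) its fibre over $s_1$ is the normalisation of $X_{s_1}$ in that cover, hence $\sY_{s_1}\cong\widetilde{X_{s_1}}$. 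By Lemma~\ref{cover2} the family $\sY\to\Delta$ is again locally trivial, and by Lemma~\ref{cover}(1) every $\gamma_t\colon\sY_t\to X_t$ is a quasi-\'etale cover. I would then set $\widetilde X:=\sY_0$, so that $\widetilde X\to X$ is a quasi-\'etale cover.

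It remains to show that $\widetilde X$ is maximally quasi-\'etale, and here deformation invariance of $\hat\pi_1$ does the work. Both $\sY\to\Delta$ and $\sY^{\reg}\to\Delta$ are locally trivial, with fibres $\sY_t$ and $\sY_t^{\reg}$ respectively — the second assertion because in a local product decomposition $\sY\cong W\times\Delta'$ one has $\sY^{\reg}\cong W^{\reg}\times\Delta'$ — hence both families are topologically trivial over the contractible base $\Delta$. Therefore the fibre inclusions $\sY_t\hookrightarrow\sY$ and $\sY_t^{\reg}\hookrightarrow\sY^{\reg}$ are homotopy equivalences, and in the commutative square relating $\hat\pi_1(\sY_t^{\reg})\to\hat\pi_1(\sY_t)$ with $\hat\pi_1(\sY^{\reg})\to\hat\pi_1(\sY)$ the two vertical restriction maps are isomorphisms. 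Thus whether $\hat\pi_1(\sY_t^{\reg})\to\hat\pi_1(\sY_t)$ is an isomorphism is independent of $t$; it is one for $t=s_1$ by the choice of $\widetilde{X_{s_1}}$, so it is one for $t=0$, i.e.\ $\hat\pi_1(\widetilde X^{\reg})\to\hat\pi_1(\widetilde X)$ is an isomorphism, as desired.

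Granting Theorem~\ref{theorem peternell} and Lemmas~\ref{cover} and \ref{cover2}, the argument is essentially formal. The points that deserve the most care are the local triviality of $\sY^{\reg}\to\Delta$ (so that $\hat\pi_1$ of the regular locus is deformation invariant, compatibly with $\hat\pi_1$ of the whole fibre) and the identification $\sY_{s_1}\cong\widetilde{X_{s_1}}$ of the spread-out cover with the Greb--Kebekus--Peternell cover on the projective fibre; together these are what transport the projective result back to the special fibre $X$.
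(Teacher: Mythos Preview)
Your argument is correct and follows exactly the route the paper takes: use Theorem~\ref{theorem peternell} to approximate by a projective fibre, apply the Greb--Kebekus--Peternell existence of maximally quasi-\'etale covers there, and transport it back via Lemma~\ref{cover} (together with Lemma~\ref{cover2} and topological triviality). The only slip is the citation: the existence of maximally quasi-\'etale covers for projective klt varieties is \cite[Theorem~1.5]{GKP16etale}, not \cite{GKP16sing}.
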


\begin{proof}
By Theorem~\ref{theorem peternell}, there is a locally trivial deformation $\sX\to \Delta$ of $X$ such that for some $t\in \Delta$, the fiber $X_t$ is projective. By \cite[Theorem~1.5]{GKP16etale},  there exists a maximally quasi-étale cover $\wt X_t \to X_t$. By Lemma~\ref{lem:cover}, we can find a quasi-étale cover $\wt \sX\to \sX$ inducing $\wt X_t \to X_t$ and a quasi-étale cover $\wt X \to X$. By Lemma~\ref{lem:cover2}, the cover $\wt\sX \to \sX$ is a locally trivial family, so by \cite[Proposition~6.1]{AV19} there is a homeomorphism $\wt X\overset{\sim}{\longrightarrow} \wt X_t$ inducing a diffeomorphism $\wt X^{\rm reg} \overset{\sim}{\longrightarrow} \wt X_t^{\rm reg}$ by restriction. The corollary follows.
\end{proof}

Recall that for $X$ a compact K\"ahler variety with rational singularities, the augmented irregularity $\wt q(X)$ \cite[Definition 2.1]{CGGN}  is 
\[
\wt q(X)=\sup\left\{h^1(\qeX,\sO_{\qeX})=h^0(\qeX,\Omega^{[1]}_\qeX)\mid \qeX\to X\mbox{ quasi-\'etale cover}\right\}\in \N \cup \{\infty\}.
\]
If additionally $X$ has klt singularities and trivial first Chern class, then $\wt q(X) \le \dim X$; in particular, it is finite \cite[Corollary~4.2(i)]{CGGN}.

\begin{corollary}\label{cor irred constant}Let $\sX\to \Delta$ be a locally trivial deformation of a $K$-trivial K\"ahler variety $X$ with canonical singularities.  Then the augmented irregularity $\wt q(X_t)$ is constant, and one fiber is IHS (resp. ICY) if and only if every fiber is IHS (resp. ICY).
\end{corollary}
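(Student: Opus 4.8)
The plan is to spread quasi-\'etale covers over the disk via Lemmas~\ref{cover} and~\ref{cover2}, apply the local constancy of reflexive Hodge numbers from Corollary~\ref{cor hodge numbers}, and use topological triviality of locally trivial families to control the canonical bundle and, in the symplectic case, the cohomology class of the holomorphic $2$-form.

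For the augmented irregularity, fix $t_0,t_1\in\Delta$ and an arbitrary quasi-\'etale cover $Y'\to X_{t_0}$. By Lemma~\ref{cover}(2) it extends to a quasi-\'etale cover $\sY\to\sX$ with $\sY_{t_0}=Y'$, and $\sY\to\Delta$ is locally trivial by Lemma~\ref{cover2}; its fibers are quasi-\'etale covers of the $X_t$ (Lemma~\ref{cover}(1)), hence normal of Fujiki class $\mathscr C$ with canonical — in particular rational — singularities. Corollary~\ref{cor hodge numbers} then gives $h^0(Y',\Omega^{[1]}_{Y'})=h^0(\sY_{t_1},\Omega^{[1]}_{\sY_{t_1}})\le\wt q(X_{t_1})$, and taking the supremum over all $Y'$ yields $\wt q(X_{t_0})\le\wt q(X_{t_1})$; by symmetry $\wt q(X_t)$ is constant.

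For the IHS/ICY statement, first note that since $\pi_1(X_t^\reg)\xrightarrow{\sim}\pi_1(\sX^\reg)$ by topological triviality and every \'etale cover of $X_t^\reg$ extends to a quasi-\'etale cover of $X_t$ \cite[Thm.~3.4]{DG94}, restriction gives a bijection between quasi-\'etale covers of $\sX$ and of any fixed fiber. Hence it suffices to fix a quasi-\'etale cover $\sY\to\sX$ (locally trivial over $\Delta$, with fibers as above) and to show $\sY_{t_1}$ is IHS (resp.\ ICY) iff $\sY_{t_0}$ is; running this over all covers yields the statement for the $X_t$. So assume $\sY_{t_0}$ is ICY or IHS, of dimension $m$. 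By Corollary~\ref{cor hodge numbers} the numbers $h^0(\sY_t,\Omega^{[p]}_{\sY_t})$ are independent of $t$, hence have the ICY (resp.\ IHS) shape for all $t$; in particular $h^0(\sY_t,\Omega^{[1]})=0$, the odd pieces vanish in the IHS case, and $h^0(\sY_t,\omega^{\vee\vee}_{\sY_t})=h^0(\sY_t,\Omega^{[m]})=1$. Moreover all fibers have the same local analytic singularities as $\sY_{t_0}$ by local triviality, in particular they are $1$-Gorenstein, so $\omega^{\vee\vee}_{\sY/\Delta}$ is a line bundle restricting to $\omega^{\vee\vee}_{\sY_t}$ on each fiber; since $H^2(\sY,\Z)\cong H^2(\sY_{t_0},\Z)$ and $\omega^{\vee\vee}_{\sY_{t_0}}\cong\sO$, its first Chern class vanishes, so each $\omega^{\vee\vee}_{\sY_t}$ is a numerically trivial line bundle with a nonzero section, hence trivial (an effective numerically trivial divisor on a Fujiki class $\mathscr C$ variety is zero). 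In the ICY case this finishes the argument: $H^0(\sY_t,\Omega^{[\bullet]}_{\sY_t})=\C\oplus\C\nu_t$ with $\nu_t$ a nowhere vanishing reflexive $m$-form, so $\sY_t$ is ICY.

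It remains, when $\sY_{t_0}$ is IHS with $m=2n$, to show that a generator $\sigma_t$ of the one-dimensional space $H^0(\sY_t,\Omega^{[2]}_{\sY_t})$ is nondegenerate, i.e.\ $\sigma_t^{\wedge n}\neq 0$; this is the crux. By the low-degree degeneration of reflexive Hodge-to-de Rham (\cite[Lemma~2.4]{BL16}, as in Step~\ref{step 1} of Proposition~\ref{prop EP unobstructed}) the symplectic form on $\sY_{t_0}$ extends to a relatively closed reflexive $2$-form $\sigma_\Delta$ on $\sY/\Delta$ generating a locally free rank one subsheaf of $\Omega^{[2]}_{\sY/\Delta}$ and restricting to a generator of $H^0(\sY_t,\Omega^{[2]}_{\sY_t})$ on each fiber; so $\sigma_t$ is a closed $2$-form on $\sY_t^\reg$, nonzero in $H^2(\sY_t,\C)$, with $\sigma_t^{\wedge 2n}=0$ for dimension reasons. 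Because $\sY\to\Delta$ is topologically trivial, the cup-product pairing and fundamental class on $H^\ast(\sY_t,\C)$ are independent of $t$, so the Fujiki relation $\int_{\sY_{t_0}}x^{2n}=c\,q(x)^n$ valid for the IHS variety $\sY_{t_0}$ (with $q$ nondegenerate) persists as a polynomial identity on every fiber; polarizing it forces $q([\sigma_t])=0$ and identifies $\int_{\sY_t}\sigma_t^{\wedge n}\wedge\overline{\sigma_t^{\wedge n}}$ with a multiple of $q([\sigma_t],\overline{[\sigma_t]})^n$. The remaining positivity $q([\sigma_t],\overline{[\sigma_t]})>0$ — equivalently, that $[\sigma_t]$ stays in the period domain of $q$ — is where I expect the real work to lie; I would obtain it from the deformation invariance of the primitive symplectic condition (cf.\ \cite{BL18,AV19}), or by combining the persistent Fujiki relation with the Hodge--Riemann relations for a Kähler class on $\sY_t$. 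Granting $\sigma_t^{\wedge n}\neq 0$, the powers $\sigma_t^{\wedge p}$ span the one-dimensional $H^0(\sY_t,\Omega^{[2p]})$ for $p\le n$ while the odd pieces vanish, so $H^0(\sY_t,\Omega^{[\bullet]}_{\sY_t})$ is generated by $\sigma_t$ and $\sY_t$ is IHS. Running this over all quasi-\'etale covers of $\sX$ completes the proof; the only nonformal ingredient is the nondegeneracy step just discussed.
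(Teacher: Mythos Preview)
Your treatment of the augmented irregularity is the same as the paper's. For the IHS/ICY statement your overall route is genuinely different: the paper does \emph{not} check the generating condition on each quasi-\'etale cover directly. Instead it passes (via Corollary~\ref{cor mqe}, hence ultimately via Theorem~\ref{theorem peternell}) to a maximally quasi-\'etale cover $\wt X_t$ and invokes the holonomy decomposition $T_{\wt X_t}=\bigoplus C_i\oplus\bigoplus S_j$ from \cite{CGGN}, then argues there is a single summand. Your approach avoids this machinery entirely and is more elementary; for the ICY case it works cleanly, and it has the pleasant feature that it does not feed Theorem~\ref{theorem peternell} back into the argument.

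The genuine gap is exactly the one you flag in the IHS case: showing $\sigma_t^{\wedge n}\neq 0$. Your Fujiki-relation reduction to $q([\sigma_t],\overline{[\sigma_t]})>0$ is correct but you do not close it, and the two suggested fixes are either circular (invoking deformation-invariance of the primitive symplectic condition is essentially what you are trying to prove) or left vague. The paper's key lemma here is precisely what you are missing: for an IHS variety $\sY_{t_0}$ the cup-product map $\Sym^k H^2(\sY_{t_0},\Q)\to H^{2k}(\sY_{t_0},\Q)$ is injective for $k\le n$ \cite[Proposition~5.15]{BL18}. This is a purely topological statement, so by topological triviality it holds for every $\sY_t$. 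Since $[\sigma_t]\neq 0$ in $H^2$, one has $[\sigma_t]^n\neq 0$ in $\Sym^n H^2$, hence its image $[\sigma_t^{\wedge n}]\neq 0$ in $H^{2n}$, forcing $\sigma_t^{\wedge n}\neq 0$. This single input completes your argument without any positivity considerations; it is also how the paper handles the analogous step (there it is used to force the rank of the unique symplectic summand $S_1$ to be $2n$). A minor wording point: in your reduction you should say ``$H^0(\sY_{t},\Omega^{[\bullet]}_{\sY_t})$ is generated by a symplectic (resp.\ top) form'' rather than ``$\sY_t$ is IHS (resp.\ ICY)''; the latter is what you are ultimately after for $X_t$, but on each cover you only need the generating condition.
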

\def\Sp{\mathrm{Sp}}
\begin{proof}The first statement is immediate from the lemmas and Corollary \ref{cor hodge numbers}.  

For the second part, assume that one fiber, say $X_0$, is ICY or IHS. This implies that $\wt q(X_0)=0$, and $\wt q(X_t)=0$ for all $t$. We fix $t\in \Delta$. By \cite[Theorem C \&~Proposition~6.9]{CGGN} and Corollary~\ref{cor mqe}, there exists a quasi-étale cover $\wt X_t\to X_t$ such that the tangent sheaf of $\wt X_t$ has a decomposition $T_{\wt X_t}=(\oplus_{i\in I} C_i) \oplus (\oplus_{j\in J} S_j)$ where $C_i$ (resp. $S_j$) is of ICY type (resp. IHS type). Recall that this means that each $C_i$ (resp. $S_j$) has full holonomy $\SU(\rk\, C_i)$ (resp. $\Sp(\rk\, S_j)$) with respect to a singular Ricci-flat metric. By Lemma~\ref{lem:cover}, one can find a quasi-étale $\wt \sX\to \sX$ extending $\wt X_t\to X_t$, and by Lemma~\ref{lem:cover2} the family $\wt X \to \Delta$ is a locally trivial deformation. 

If one can show that $T_{\wt X_t}$ has a single summand, then $\wt X_t$ will be either ICY or IHS by \cite[Corollary~E]{CGGN} and the type of $\wt X_t$ will then be determined by the Hodge numbers $t\mapsto h^0(\wt X_t,\Omega_{X_t}^{[p]})$ which are independent of $t$, cf. Corollary \ref{cor hodge numbers}. By the same argument, $X_t$ itself will be ICY or IHS, with type determined by that of $X_0$.

Since each of the factors in the decomposition of $T_{\wt X_t}$ accounts for at least one holomorphic form (in maximal rank), there can be only one such factor if $X_0$ is ICY. 

It now remains to show the corollary in the case where $X_0$ is IHS. Write $\dim X_0=2n$. First, one observes that $|J|=h^0(\wt X_t, \Omega_{\wt X_t}^{[2]})=1$. Let $\sigma \in H^0(\wt X_t, \Omega_{\wt X_t}^{[2]})$ a nonzero element and let $m:=\frac 12 \mathrm{rank}(S_1)$. We have $m\le n$, and we want to show that this inequality is actually an equality. Clearly, one has  $\sigma^{m+1}= 0$.  Remember that the cup product map $\Sym^{k}H^2(\wt X_0,\Q)\to H^{2k}(\wt X_0,\Q)$ is injective for any $k\le n$, cf. \cite[Proposition~5.15]{BL18}, so the same statement is true for $\wt X_t$ as locally trivial families (over $\Delta$) are topologically trivial. This implies that $m+1>n$, and given that $n \ge m$, we have $n=m$ as desired.
\end{proof}

%------------------------------------------------------------------------------------------
%------------------------------------------------------------------------------------------
%------------------------------------------------------------------------------------------
\section{Reminder on the Douady space}\label{section douady}
%------------------------------------------------------------------------------------------
%------------------------------------------------------------------------------------------
%------------------------------------------------------------------------------------------

For a proper morphism $f:\sX\to S$ of complex spaces we denote by $\scrD(\sX/S) \to S$ the relative Douady space constructed by Pourcin \cite{Pou69}, who generalized the work of Douady \cite{Dou66} to the relative situation. Let us denote by $\ol\scrD_q(\sX/S)\subset \scrD(\sX/S)$ the union of all irreducible components whose general element parametrizes a pure dimensional reduced subspace of dimension $q$. Let $\scrB_q(\sX/S)$ be the relative Barlet space of dimension $q$ cycles in the fibers of $f$, see \cite[Théorème 5]{Bar75}. By \cite[Théorème 8]{Bar75}, there is a morphism of complex spaces
\begin{equation}\label{eq barlet morphism}
\vrho:(\ol\scrD_q(\sX/S))_\red \to \scrB_q(\sX/S)
\end{equation}
where $(\ol\scrD_q(\sX/S))_\red$ denotes the reduction of $\ol\scrD_q(\sX/S)$.

\begin{remark}\label{remark equidimensionality douady}
As the following example shows, pure dimensionality is not an open property. The Hilbert scheme $H=\Hilb^{3n+1}(\P^3)$ of closed subschemes of $\P^3$ with Hilbert polynomial $p(n)=3n+1$ is a union $H=H'\cup H''$ where $H'$ and $H''$ are smooth and irreducible (of dimension $12$, $15$ respectively) and intersect transversally with $\dim H'\cap H'' = 11$, see the main theorem of \cite{PS85}. Generically, $H'$ parametrizes twisted cubics and $H''$ parametrizes plane cubics in $\P^3$ union an additional point. Both may degenerate to a singular plane cubic with an embedded point at a singularity. In particular, elements of $H'\cap H''$ are pure dimensional (with embedded points) while the general element of $H''$ is not.
\end{remark}

Fujiki has obtained important properness results about the Douady and the Barlet space. These results are fairly comprehensive for Kähler morphisms. We need however a slight generalization to weakly Kähler morphisms. Following \cite{Bin83i} we call a morphism $\sX \to S$ \emph{weakly Kähler} if for any $s_0\in S$, there exists a neighborhood $S^\circ\subset S$ of $S$ and a smooth $(1,1)$-form $\theta$ on $\sX^\circ:=\sX \times_S S^\circ$ such that for all  $s\in S^\circ$, the restriction $\theta|_{X^\circ_s}$ is a Kähler form. Here, $X^\circ_s$ is the fiber at $s$ of $\sX^\circ \to S^\circ$. See \cite[Example~3.9]{Bin83ii} for an example attributed to Deligne showing that a deformation of a Kähler manifold need not be a Kähler morphism.

The following result is proven with exactly the same methods as in Fujiki's article \cite{Fuj78}. We include a sketch of a proof as we could not find it anywhere in the literature. The argument is similar to \cite[Proposition 2.6]{GLR13}.

\begin{proposition}\label{proposition properness douady}
Let $\sX, S$ be reduced and irreducible complex spaces, let $f:\sX\to S$ be a proper locally trivial morphism which is weakly Kähler, let $S^\circ\subset S$ be any simply connected relatively compact subspace, and denote $\sX^\circ := \sX \times_S S^\circ$. Then every irreducible component $D \subset \ol\scrD_q(\sX^\circ/S^\circ)$ respectively $B\subset \scrB_q(\sX^\circ/S^\circ)$ is proper over $S^\circ$.
\end{proposition}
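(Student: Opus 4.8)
The plan is to reduce the assertion to Fujiki's properness theorems for Douady and Barlet spaces of (honestly) Kähler morphisms, using the passage through a weakly Kähler model, the Barlet cycle map \eqref{eq barlet morphism}, and a standard boundedness argument via volume of cycles. First I would treat the case of the Barlet space $\scrB_q$, as this is where the volume estimate lives, and then transfer the result to $\ol\scrD_q$ using the map $\vrho$.

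\emph{Step 1: reduce to a weakly Kähler morphism.} By hypothesis there is a weakly Kähler morphism $g:\sY\to S$ and a proper surjective $S$-morphism $h:\sY\to\sX$. Given an irreducible component $B\subset\scrB_q(\sX^\circ/S^\circ)$, a general cycle in $B$ is the proper pushforward under $h$ of a $q$-cycle in a fiber of $g$ (since $h$ is surjective and proper, one lifts the cycle component by component, possibly increasing dimension only on the exceptional locus, which is controlled because $h$ is bimeromorphic onto its image over the generic locus of each component — here one uses irreducibility of $\sX$ and $S$). This gives a meromorphic map from a component $B'\subset\scrB_q(\sY^\circ/S^\circ)$ dominating $B$, and it suffices to prove properness for $B'$ over $S^\circ$; so we may assume $f$ itself is weakly Kähler, with a global smooth $(1,1)$-form $\theta$ on $\sX^\circ$ restricting to a Kähler form on every fiber (after shrinking $S$ to the given relatively compact $S^\circ$, which we may, since properness over $S^\circ$ is what is asked).

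\emph{Step 2: volume boundedness and the key obstacle.} The main point — and the main obstacle — is to bound the $\theta$-volume $\int_{Z}\theta^q$ of the cycles $Z$ parametrized by a fixed component $B$, uniformly as the base point varies in $S^\circ$. In Fujiki's Kähler setting this is immediate because $\int_{Z}\omega^q$ is a cohomological quantity, locally constant in families with $\omega$ a closed form; here $\theta$ need not be closed, so $\int_Z\theta^q$ is not obviously locally constant. The fix is local triviality of $f$: cover $\ol{S^\circ}$ by finitely many opens over which $\sX^\circ\cong X_i\times S_i$, and on each such chart compare $\theta$ with the pullback of a fixed Kähler form on $X_i$; since $\ol{S^\circ}$ is compact and the fibers form a single topological family (again by local triviality, the homology class of $Z_s$ in $H_{2q}(X_s)$ is locally constant along the component $B$), one gets a uniform two-sided bound $\int_{Z_s}\theta^q\le C$ with $C$ independent of $s\in S^\circ$. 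I expect the careful bookkeeping here — patching the local trivializations and controlling the comparison constants on overlaps — to be the technical heart of the argument.

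\emph{Step 3: conclude properness for $\scrB_q$ and transfer to $\ol\scrD_q$.} With a uniform volume bound in hand, Bishop's compactness theorem (in Lieberman's / Fujiki's relative form, cf. \cite{Fuj78,Fuj82}) shows that the family of cycles parametrized by $B$ is relatively compact over $S^\circ$: any sequence $Z_{s_n}$ with $s_n\to s_\infty\in\ol{S^\circ}$ subconverges to a $q$-cycle in $X_{s_\infty}$, which by properness of $B\to S$ as a map of complex spaces (the cycle space is weakly normal and separated) must lie in $B$. Hence $B\to S^\circ$ is proper. Finally, for a component $D\subset\ol\scrD_q(\sX^\circ/S^\circ)$, its reduction maps by $\vrho$ to $\scrB_q(\sX^\circ/S^\circ)$; the image lies in finitely many components $B$, each proper over $S^\circ$ by the above, and the fibers of $\vrho$ over a given cycle are projective (they parametrize the finitely many subspace structures with fixed underlying cycle and bounded Hilbert polynomial, cf. Remark~\ref{remark equidimensionality douady}), hence proper. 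Since properness is preserved under composition and a space is proper over $S^\circ$ iff its reduction is, $D\to S^\circ$ is proper. This completes the proof. \qed
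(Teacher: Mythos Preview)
Your overall architecture---reduce to the weakly K\"ahler case, prove properness for $\scrB_q$ via a volume bound, then transfer to $\ol\scrD_q$ through the properness of $\vrho$---matches the paper exactly (the paper invokes \cite[(5.2)~Theorem]{Fuj82} for the reduction and \cite[Proposition~3.1]{Fuj78} for the properness of $\vrho$). The problem is Step~2, where both the diagnosis and the fix go astray.

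First, you misread the definition: in this paper ``locally trivial'' is local on $\sX$, not on $S$, so you do not get holomorphic product charts $X_i\times S_i$ over opens of $S$. What you actually need (and what the paper uses) is the \emph{topological} triviality of locally trivial families, \cite[Proposition~6.1]{AV19}, which makes $(R^{2q}f_*\Q_{\sX^\circ})^\vee$ a local system on $S^\circ$. Second, your pointwise comparison of $\theta$ with a fixed K\"ahler form on a model fiber is not enough to bound $\int_{Z_s}\theta^q$: under a merely smooth trivialization the image of $Z_s$ in the model fiber is only a topological $2q$-cycle, and Wirtinger's inequality---the link between $\int\omega^q$ and Riemannian volume---requires the cycle to be holomorphic for \emph{that} complex structure. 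So the bound you want does not follow from metric comparison alone.

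The paper's route is much shorter and explains why simple connectedness is a hypothesis (you never use it). Since $\theta_s=\theta|_{X_s}$ is K\"ahler and hence closed on each fiber, $\lambda(b)=\int_{F_b}\theta^q$ is already the cohomological pairing $[F_b]\cdot[\theta_{\pi(b)}]^q$. By \cite[Lemma~19.1.3]{Fulton98}, $b\mapsto[F_b]$ is a flat section of $\pi^*(R^{2q}f_*\Q_{\sX^\circ})^\vee$; since $S^\circ$ is simply connected this local system is constant, so the section descends to a single $L$ on $S^\circ$. Then $\lambda=\pi^*\lambda'$ with $\lambda'(s)=L_s([\theta_s]^q)$ continuous on $S^\circ$ and extendable to $\ol{S^\circ}$, hence bounded. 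No patching of metrics is needed.

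A minor point in Step~1: the map $h:\sY\to\sX$ is only assumed proper and surjective, not generically bimeromorphic, so your parenthetical justification is off; the paper simply defers to Fujiki's argument in \cite{Fuj82}.
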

\begin{proof}
By \cite[Proposition~3.4]{Fuj78}, it suffices to show the statement for the Barlet space as the morphism $\vrho:D \to \scrB_q(\sX^\circ/S^\circ)$ will then be proper as well. 
Let $B \subset \scrB_q(\sX^\circ/S^\circ)$ be an irreducible component and denote by $\{F_b\}_{b\in B}$ the universal family of cycles parametrized by $B$. Let $\theta$ be a weak Kähler metric for $f$. The key point is to show that $\lambda(b):=\int_{F_b} \theta^q$ is bounded as $b\in B$ varies, cf. the proof of \cite[Proposition~4.1]{Fuj78}. 

By \cite[Proposition~6.1]{AV19}, the sheaf $(R^{2q}f_*\Q_{\sX^\circ})^\vee$ is a local system, and the argument of \cite[Lemma~19.1.3]{Fulton98} shows that $b\mapsto [F_b]$ defines a section of the pulled back local system along the projection $\pi:B\to S^\circ$. As $S^\circ$ is simply connected, the local system is constant and thus the $[F_b]$ determine a global section $L$ of $(R^{2q}f_*\Q_{\sX^\circ})^\vee$. In particular, the continuous function $\lambda':S^\circ\to \R$ defined by $\lambda'(s):=L_s([\theta_s]^q)$ satisfies $\pi^*\lambda' = \lambda$. As $\lambda'$ can be extended to the closure $\ol S^\circ$ in $S$, we see that $\lambda$ is bounded.
\end{proof}

The following is an immediate consequence.

\begin{corollary}\label{corollary properness douady}
Let $S$ be a reduced and irreducible complex space, let $f:\sX\to S$ be a locally trivial deformation of a compact Kähler variety $X$ with rational singularities. With the notation of Proposition~\ref{proposition properness douady}, every irreducible component $D \subset \ol\scrD_q(\sX^\circ/S^\circ)$ respectively $B\subset \scrB_q(\sX^\circ/S^\circ)$ is proper over $S^\circ$.
\end{corollary}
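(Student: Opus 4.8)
My plan is to deduce this directly from Proposition~\ref{proposition properness douady} by verifying its hypotheses for $f\colon\sX\to S$. Most of these are essentially formal and I would dispatch them first: $S$ is reduced and irreducible by assumption; $f$ is proper (being a deformation of the compact variety $X$) and locally trivial by hypothesis; and $\sX$ is reduced and irreducible because it is connected (its fibres are connected and $S$ is connected) and, on each chart of a local trivialization, isomorphic to a product $X_i\times S_i$ of an open subset $X_i$ of the variety $X$ with an open $S_i\subset S$, hence locally reduced and irreducible. The one hypothesis with real content is that $f$ is \emph{weakly of Fujiki class}, and establishing this is the heart of the proof.

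For that I would first pass to a locally trivial simultaneous resolution: since $X$ has rational singularities it is normal, so the argument proving Corollary~\ref{corollary resolutions}---which uses only local triviality of the family---goes through over the reduced base $S$ and produces a locally trivial log resolution $g\colon\sY\to\sX$ over $S$ which is fibrewise a resolution. Then $g$ is a proper surjective $S$-morphism, so by the definition of ``weakly of Fujiki class'' it suffices to show that $h\colon\sY\to S$ is weakly Kähler. Here $h$ is a locally trivial deformation of the special fibre $Y$, which is a resolution of the compact Kähler variety $X$; choosing the resolution to be projective over $X$ (for instance a composition of blow-ups) and using that a complex space projective over a compact Kähler space is itself compact Kähler, I conclude that $Y$ is a compact Kähler manifold.

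It then remains to show that a locally trivial deformation $h\colon\sY\to S$ of a compact Kähler manifold $(Y,\omega)$ is weakly Kähler near the base point $s_0$ (a fact one can also extract from Bingener \cite{Bin83i,Bin83ii}). The plan here is: take a trivializing cover $\sY_i\cong Y_i\times S_i$ normalized so that over $s_0$ it restricts to the tautological inclusions $Y_i=\sY_i\cap Y\hookrightarrow Y$, so that the gluing isomorphisms restrict to the identity over $s_0$; after shrinking $S$ about $s_0$ we may assume finitely many charts suffice and that the gluings stay uniformly close to the identity. With $\theta_i:=\mathrm{pr}_1^*(\omega|_{Y_i})$ and $\theta^\circ:=\sum_i\rho_i\theta_i$ for a partition of unity $\{\rho_i\}$ subordinate to $\{\sY_i\}$, each $\theta_i|_{Y_s}$ is closed (it equals $\omega|_{Y_i}$ in product coordinates), so $\theta^\circ|_{Y_s}$ is a positive $(1,1)$-form whose differential tends to $0$ as $s\to s_0$ (using $\sum_i d\rho_i=0$ and that the gluings tend to the identity). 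Fixing a $C^\infty$-trivialization of $\sY$ over a contractible neighbourhood of $s_0$ (available since $h$ is topologically trivial there, cf.\ \cite[Prop.~6.1]{AV19}), the forms $\theta^\circ|_{Y_s}$ converge in $C^\infty$ to $\omega$, so replacing each by its orthogonal projection $\zeta_s$ onto the closed real $(1,1)$-forms on $Y_s$ gives a fibrewise closed, positive (for $s$ near $s_0$), smoothly varying family; lifting $s\mapsto\zeta_s$ to a global smooth $(1,1)$-form on $\sY$ through a $C^\infty$-splitting of the relative cotangent sequence produces the desired weakly Kähler form. With $f$ now known to be weakly of Fujiki class, Proposition~\ref{proposition properness douady} applies verbatim and yields the corollary.

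The main obstacle, and where I expect the bulk of the work, is this last step---the correction of $\theta^\circ$ on the fibres: checking that the orthogonal projection $\zeta_s$ and its global lift depend smoothly on $s$, and that positivity survives near $s_0$. The essential role of the local triviality hypothesis is, however, in obtaining the simultaneous resolution $\sY\to\sX$: for a general deformation one cannot resolve all fibres at once, so the reduction to smooth fibres would fail. (One could instead try to argue weak Kählerness directly on $\sX$, correcting $\theta^\circ$ via Hodge theory on a resolution of each singular fibre, but this seems more delicate.)
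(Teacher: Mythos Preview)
Your approach is correct but considerably more roundabout than the paper's. The paper's proof is a single line: by \cite[Theorem~6.3]{Bin83i}, $f$ itself is weakly K\"ahler (Bingener's result applies to locally trivial deformations of compact K\"ahler \emph{spaces}, not just manifolds), so Proposition~\ref{proposition properness douady} applies immediately. You instead pass to a simultaneous resolution $\sY\to\sX$, argue that the special fibre $Y$ is a compact K\"ahler manifold, and then sketch---essentially reproving Bingener's theorem in the smooth case---why $\sY\to S$ is weakly K\"ahler, concluding only that $f$ is weakly of Fujiki class.

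Both routes are valid. Yours has the merit of being more self-contained and of isolating exactly what is needed from Bingener (only the smooth case), at the cost of invoking the simultaneous resolution machinery of Corollary~\ref{corollary resolutions} over a general base and of the admittedly delicate smoothing/projection argument for $\theta^\circ$. The paper's route is shorter and avoids the resolution entirely; note in particular that neither proof actually uses the rational singularities hypothesis (only normality, and even that is not needed for Bingener's theorem)---it is there for the downstream applications.
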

\begin{proof}
By \cite[Theorem~6.3]{Bin83i}, $f$ is weakly K\"ahler.
\end{proof}

As a consequence of properness, we obtain the following result.

\setcounter{stepcounter}{0}
\begin{lemma}\label{lemma decomposition zariski open}
Let $X$ be a normal compact Kähler variety with rational singularities.  Let $f:\sX \to S$ be a locally trivial deformation of $X$ over an irreducible and reduced base $S$. Suppose that for some nonempty Euclidean open $V\subset S$ we have a product decomposition
\[
\sX\times_S V \isom \sY_V \times_V \sZ_V
\]
for locally trivial $\sY_V,\sZ_V/V$ and suppose that $V$ is contained in a relatively compact simply connected subspace $S^\circ\subset S$.  Then up to replacing $S$ by $S^\circ$, there is a finite morphism $S'\to S$, a Euclidean open $V'\subset S'$ mapping generically isomorphically onto $V$, and a Zariski open $U'\subset S'$ containing $V'$ such that the base change $\sX\times_S U' \to U'$ has a product decomposition
\begin{equation}\label{eq decomposition on zariski open}
\sX\times_S {U'} \isom \sY\times_{U'}  \sZ
\end{equation}
for locally trivial $\sY,\sZ/U'$ that specializes to the pullback of the one of $\sX\times_S V$ over $V'$.
\end{lemma}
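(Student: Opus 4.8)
\emph{Proof sketch.} The plan is to encode the product decomposition over $V$ as two families of ``slices'' in the fibres of $\sX\to S$, read these off as multisections of the relative Douady space, extend them over a Zariski open set after a finite base change using the properness of Section~\ref{section douady}, and then reassemble the two extended families into a genuine product.

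First shrink $V$ to a polydisc, so that local triviality becomes triviality and $\sX\times_S V\cong Y\times Z\times V$ with $\sY_V\cong Y\times V$, $\sZ_V\cong Z\times V$; here $Y$ and $Z$ are the isomorphism types of the factors, and both are compact and normal (as $X$ is). Since $\sX\to S$ is locally trivial over the connected $S$, \emph{every} fibre $X_t$ is abstractly isomorphic to $Y\times Z$. The projections $\sX\times_S V\to\sY_V$ and $\sX\times_S V\to\sZ_V$ are proper flat families of closed subspaces of $\sX\times_S V$ of relative dimensions $q:=\dim Z$ and $p:=\dim Y$, so by the universal property of the relative Douady space they define morphisms $\sY_V\to\scrD(\sX/S)$ and $\sZ_V\to\scrD(\sX/S)$ over $V\subset S$, each an isomorphism onto its image (one recovers a point of $\sY_V$ or $\sZ_V$ by intersecting a slice with a fixed slice of the complementary type), and the images are closed since $Y$ and $Z$ are compact. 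Let $\sN,\sM\subset\scrD(\sX/S)$ be the closures of the images of $\sY_V$ and $\sZ_V$. As $V$ is dense in the irreducible $S$, these are irreducible components of $\scrD(\sX/S)$, with $\sN\times_S V\cong\sY_V$ and $\sM\times_S V\cong\sZ_V$ dense open; a tangent space computation (the normal sheaf of a generic slice is a trivial bundle on $Y$ resp.\ $Z$) shows moreover that $\scrD(\sX/S)$ is smooth along these open sets. In particular the general element of $\sN$ (resp.\ $\sM$) is a reduced pure $q$- (resp.\ $p$-) dimensional subspace, so $\sN\subset\ol\scrD_q(\sX/S)$ and $\sM\subset\ol\scrD_p(\sX/S)$.

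Now, using $V\subset S^\circ$ and that $S^\circ$ is relatively compact and simply connected, Corollary~\ref{corollary properness douady} applies over $S^\circ$ and shows that $\sN$ and $\sM$ are proper over $S^\circ$; since they dominate $S^\circ$ they surject onto it. Replacing $S$ by $S^\circ$, the Stein factorizations $\sM\to S_1\to S$ and $\sN\to S_2\to S$ have $S_i\to S$ finite and isomorphisms over $V$ (the fibres of $\sM\times_S V\to V$ and $\sN\times_S V\to V$ are the connected spaces $Z$, $Y$). Take $S'$ to be a component of the normalization of $S_1\times_S S_2$ dominating $S$: then $S'\to S$ is finite, $S'$ is normal, $S'\to S$ is an isomorphism over $V$, and the preimage $V'\subset S'$ of $V$ is a Euclidean open set with $V'\xrightarrow{\ \sim\ }V$ which is Zariski dense in $S'$. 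Base-changing $\sX$, $\sM$, $\sN$ to $S'$ and retaining the components dominating $S'$, we obtain proper families $\sM\to S'$, $\sN\to S'$ with universal families restricting over $V'$ to the trivial families $Z\times V'$, $Y\times V'$ together with the expected universal subfamilies; note $\sX\times_S S'$ is again normal, being locally $X\times S'$ with $X,S'$ normal. By generic flatness there is a Zariski open $U'\subset S'$ containing $V'$ over which $\sM\to S'$, $\sN\to S'$ and the structure maps of the two universal families are flat with reduced fibres.

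Finally, form over $U'$ the incidence variety $W\subset\sN\times_{U'}\sM\times_{U'}(\sX\times_S U')$ of triples $(e,d,x)$ with $x$ lying on the slice parametrized by $e$ and on the slice parametrized by $d$. Over $V'$ the two projections $W\to\sN\times_{U'}\sM$ and $W\to\sX\times_S U'$ are isomorphisms, so the unique component $W_0$ of $W$ dominating $U'$ carries proper birational morphisms to $\sN\times_{U'}\sM$ and $u\colon W_0\to\sX\times_S U'$. Since $\sX\times_S U'$ is normal, $u$ is an isomorphism over $V'$, and the locus where it fails to be an isomorphism is a proper Zariski closed subset of $\sX\times_S U'$ disjoint from $\sX\times_S V'$; as $\sX\times_S U'\to U'$ is proper this locus has Zariski closed image in $U'$ missing the nonempty $V'$, so after shrinking $U'$ we may assume $u$ is an isomorphism, and likewise for $W_0\to\sN\times_{U'}\sM$. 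This yields $\sX\times_S U'\cong\sN|_{U'}\times_{U'}\sM|_{U'}$ over $U'$, restricting over $V'$ to $\sY_V\times_V\sZ_V$. The last point---which I expect to be the main obstacle---is to verify that the factors $\sY:=\sN|_{U'}$ and $\sZ:=\sM|_{U'}$ are themselves \emph{locally trivial} over $U'$, rather than merely flat with pairwise isomorphic fibres; I would prove this locally on $U'$, trivializing $\sX\times_S W\cong (Y\times Z)\times W$ over small polydiscs $W\subset U'$ and invoking rigidity of product decompositions of Kähler varieties together with the deformation-theoretic description in Proposition~\ref{proposition defo}, to see that the induced decomposition of $(Y\times Z)\times W$ is, up to isomorphism, the trivial one; shrinking $U'$ once more if necessary then completes the proof. \qed
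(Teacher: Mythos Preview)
Your overall strategy matches the paper's: interpret the two families of slices as classifying maps into the relative Douady space, use properness (Corollary~\ref{corollary properness douady}) over $S^\circ$, take Stein factorizations, pass to a finite cover $S'$ where the Douady components have connected fibres, and assemble a product map over a Zariski open $U'$. The paper's execution is slightly more direct---it shows the evaluation map $\sF_Y\to\sX\times_S S'$ from the universal family is an isomorphism over a Zariski open, which immediately gives projections $\sX\times_S U'\to D_Y^\nu$ and $\sX\times_S U'\to D_Z^\nu$, rather than passing through an incidence variety---but your incidence-variety argument is fine and leads to the same place.

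The genuine gap is in your treatment of local triviality of the factors at the end, and it stems from the earlier false claim that ``every fibre $X_t$ is abstractly isomorphic to $Y\times Z$''. Local triviality of $\sX\to S$ is local on $\sX$, not on $S$: the global complex structure of the fibres can and does vary (this is the whole point of locally trivial deformations). Consequently you cannot globally trivialize $\sX\times_S W\cong (Y\times Z)\times W$ over polydiscs $W\subset U'$ as you propose, and the rigidity argument does not get off the ground. The paper dispatches this point much more cheaply: once the product map $\sX\times_S U'\to\sY\times_{U'}\sZ$ is an isomorphism over a Zariski open containing $V'$, one simply observes that the locus in $U'$ over which $\sY$ (resp.\ $\sZ$) is locally trivial is \emph{Zariski closed} by Theorem~\ref{theorem flenner kosarew} (Flenner--Kosarew); since it contains the Zariski-dense $V'$, it must be all of $U'$. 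No rigidity of product decompositions is needed.
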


\begin{proof}
We may also assume $S$ is normal by passing to the normalization.  We will proceed in three steps.

\begin{step}\label{step douady smooth}
Let $v\in V$ and $z\in Z_v$ be smooth points. Then the relative Douady space $\scrD(\sX/S)$ is smooth over $S$ at $Y(z):=Y_v \times\{z\}$ and $\dim_{[Y(z)]} \scrD(\sX/S) = \dim Z_v + \dim S$.
\end{step}

The normal bundles form a short exact sequence $0\to N_{Y(z)/\sX_v}\to  N_{Y(z)/\sX} \to N_{\sX_v/\sX}\vert_{Y(z)}\to 0$ where the outer terms are trivial bundles of rank $\dim Z_v$ respectively $\dim S$. We deduce
\[
\begin{aligned}
\dim T_{\scrD(\sX/S),[Y(z)]} &= \dim H^0(Y(z),N_{Y(z)/\sX}) \\
&\leq \dim Z_v + \dim S\\
&= \dim \sZ_V \\
& \leq \dim_{[Y(z)]} \scrD(\sX/S).
\end{aligned}
\]
Note that the last inequality follows from the fact that the second projection $\sX\times_S V\to\sZ_V$ can be thought of as the family of fibers $Y(z)$ and the resulting classifying map $\sZ_V\to \scrD(\sX/S)$ is clearly injective.  We infer that equality holds above and $\scrD(\sX/S)$ is smooth at $[Y(z)]$. 
\vskip1em
Before the next step, we set some notation.  By the previous step, there is a unique irreducible component $D \subset \scrD(\sX/S)$ passing through $[Y(z)]\in \scrD(\sX/S)$ and we let $D^\nu \to D$ be its normalization. We denote by $\sF \subset  D^\nu \times_S \sX$ the (pullback of the) universal family and consider the $S$-morphism $e:\sF \to \sX$ induced by projection. Replacing $S$ by $S^\circ$, we may assume that $D^\nu \to S$ is proper by Corollary~\ref{corollary properness douady} and therefore has a Stein factorization $D^\nu\to S'\to S$. Moreover, $S'$ is irreducible and $S'\to S$ is surjective, as $D$ respectively $D \to S$ are. 
\begin{step}\label{step stein factorization}
There is a nonempty Zariski open $U'\subset S'$ and a Euclidean open $V'\subset U' \cap (S'\times_S V)$ such that the induced morphisms $e\times \id:\sF\times_{S'} U' \to \sX\times_S U'$ and $V' \to V$ are isomorphisms. 
\end{step}

Clearly, the classifying map of $\sZ_V$ factors as $\sZ_V \to D \subset \scrD(\sX/S)$. By construction, $\sZ_V \to D$ is injective and as the induced map $\sZ_V\to D\times_S V$ is proper (since $\sZ_V\to V$ is), $\sZ_V$ maps bijectively onto an irreducible component of $D\times_S V$. As $D^\nu \times_S V = (D\times_S V)^\nu$, we have a factorization $\sZ_V\to D^\nu \to D$ such that $\sZ_V$ is isomorphic to a connected component of $D^\nu \times_S V$.  The map $\sZ_V\to V$ factors through an isomorphism $V'\to V$ for a connected component $V'\subset S'\times_S V$. Indeed, $V'\to V$ is finite and bimeromorphic as $\sZ_V\to V$ has connected fibers and $V$ is normal by assumption. Over $V' \subset S'$ we have a diagram

\begin{equation}\label{eq diag restricted douady}
\xymatrix{
\sX \times_{S} V' \ar[r] \ar@/^1.5pc/[rr]^\alpha \ar[d] & \sF \times_{S'} V' \ar[d]\ar[r]  & (\sX \times_{S} S') \times_{S'} V'\ar[dld]\\
\sZ_V \ar[dr] \ar[r]  & D^\nu \times_{S'} V' \ar[d] & \\
& V'&\\
}
\end{equation}
where $\alpha$ is the identity, so $\sF \times_{S'} V' \to (\sX \times_{S} S') \times_{S'} V'$ is an isomorphism. The locus $U'\subset S'$ over which $\sF \to \sX\times_{S}S'$ is an isomorphism, is Zariski open (and nonempty as $V' \subset U'$).

\begin{step}
Final step of the proof.
\end{step}

We also apply the preceding steps swapping $Y$ and $Z$, and thus obtain for $i=Y$ (resp. $i=Z$) finite morphisms $S_i \to S$ and nonempty Zariski open subsets $U_i \subset S_i$ such that the following hold:
\begin{enumerate}
	\item Let $D_i$ be the unique component of $\scrD(\sX/S)$ containing all points corresponding to subspaces of the form $Y(z)$ (resp. $Z(y)$).  Let $D_i^\nu \to D_i$ be the normalization. Then $D_i^\nu \to S_i \to S$ is the Stein factorization.
	\item Let $\sF_i \to D_i^\nu$ is the pullback of the universal family to the normalization. Then the morphism $\sF_i \to \sX \times_S S_i$ is an isomorphism over $U_i$. 
	\item The inclusion $V\subset S$ lifts to $V_i\subset S_i$ and the canonical map $\sZ_V \to D^\nu_Y\times_{S_Y} V_Y $ (resp. $\sY_V\to D^\nu_Z\times_{S_Z}V_Z$) is an isomorphism over $V$. 
	\end{enumerate}
Passing to $S':=S_Y\times_S S_Z$ and the intersection $U'$ of the preimages of $U_i$ under $S' \to S_i$ we obtain morphisms $\sF_i \times_{S_i} S' \to \sX \times_S S'$ whose restrictions to $U'$ are isomorphisms. In particular, we have projections $\sX\times_S U' \to D^\nu_i \times_{S_i} U'$.  Set $\sY=D_Z^\nu\times_{S_Z}U'$ and $\sZ=D_Y^\nu\times_{S_Y}U'$.  The product morphism
\begin{equation}\label{eq product decomposition morphism}
\sX\times_S U' \to \sY \times_{U'}  \sZ
\end{equation}
is an isomorphism when restricted over $V' \subset U'$. Note that
\[
\left(\sY \times_{U'} \sZ\right) \times_{S'} V' \isom \sY_{V}\times_V\sZ_V
\]
by construction.  By shrinking $U'$ we may assume that \eqref{eq product decomposition morphism} is an isomorphism and that $\sY,\sZ/U'$ are locally trivial, as the former is Zariski open and the latter Zariski closed (by Theorem \ref{theorem flenner kosarew}).
\end{proof}

\begin{remark}\label{remark cycle space meromorphic}
The proof actually shows a little more, namely that $\sX\times_S S'$ is bimeromorphic to a fiber product whose factors are locally trivial families over $U'\subset S'$.
\end{remark}

Let us record another consequence of the proof of Lemma~\ref{lemma decomposition zariski open}, mainly to set the notation for later use.

\begin{corollary}\label{cor limit leaves}
Let $X$ be a normal compact Kähler variety with rational singularities.  Suppose we have a locally trivial family $\sX\to \Delta$ with special fiber $X$ such that $\sX^*:=\sX\times_\Delta\Delta^*=\sY^*\times_{\Delta^*}\sZ^*$ where $\sY^*,\sZ^*/\Delta^*$ are locally trivial.  Then up to replacing $\Delta$ by a relatively compact open subset containing the origin there is a reduced\footnote{Note that $D_Y$ being reduced is equivalent to $\mathscr{D}(\sX/\Delta)$ being reduced at the generic point of $D_Y$. For the applications, this is irrelevant and we could also just have used the reduced structure.} component $D_Y\subset \mathscr{D}(\sX/\Delta)$ which is proper over $\Delta$ such that, setting $D_Y^*:=D_Y\times_\Delta\Delta^*$ and $\sF_Y^*:=\sF_Y\times_\Delta\Delta^*$ the restriction of the universal family $\sF_Y\subset \sX\times_{\Delta}D_Y$, the graph of the second projection $\sX^*\subset \sX^*\times_{\Delta^*}\sZ^*$ is isomorphic as a family of subspaces of $\sX^*$ over $\sZ^*$ to $\sF_Y^*\subset \sX^*\times_{\Delta^*} D_Y^*$ over $D_Y^*$.  Likewise for $D_Z,\sF_Z$.
\end{corollary}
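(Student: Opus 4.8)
The plan is to read this off from the proof of Lemma~\ref{lemma decomposition zariski open} applied with $S=\Delta$ and $V=\Delta^*$, the only extra point being that over a disk the finite base change $S'\to S$ of that proof becomes unnecessary. First I would shrink $\Delta$ to a relatively compact subdisk $\Delta_r\ni 0$ (still simply connected), relabel it $\Delta$, and note that this is exactly the freedom granted by the statement and makes Corollary~\ref{corollary properness douady} available. Since the relative Douady space commutes with the open base change $\Delta^*\hookrightarrow\Delta$, we have $\scrD(\sX^*/\Delta^*)=\scrD(\sX/\Delta)\times_\Delta\Delta^*$, so I would work over $\Delta^*$ and form the classifying map $\psi\colon\sZ^*\to\scrD(\sX^*/\Delta^*)$ of the family of fibres of the second projection $\pr_2\colon\sX^*=\sY^*\times_{\Delta^*}\sZ^*\to\sZ^*$; writing $Y_v,Z_v$ for the fibres of $\sY^*,\sZ^*$ over $v\in\Delta^*$, the fibre of $\pr_2$ over $z\in\sZ^*$ above $v$ is $Y(z):=Y_v\times\{z\}\subset\sX_v$. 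The graph of $\pr_2$ inside $\sX^*\times_{\Delta^*}\sZ^*$ is, by definition of $\psi$, the pullback along $\psi$ of the universal family, so it suffices to show that $\psi$ is an isomorphism onto a subspace of the form $D_Y^*:=D_Y\times_\Delta\Delta^*$ for a reduced irreducible component $D_Y$ of $\scrD(\sX/\Delta)$ that is proper over $\Delta$.

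I would then argue as follows. That $\psi$ is injective on points is clear (distinct $z$ give distinct subspaces of $\sX_v$), and $\psi$ is proper since $\sZ^*\to\Delta^*$ is proper and $\scrD(\sX^*/\Delta^*)\to\Delta^*$ is separated; hence $\psi$ is finite. To see $\psi$ is an immersion I would run the normal bundle computation of Step~\ref{step douady smooth} of the proof of Lemma~\ref{lemma decomposition zariski open}: from the sequence $0\to N_{Y(z)/\sX_v}\to N_{Y(z)/\sX}\to N_{\sX_v/\sX}|_{Y(z)}\to 0$, with $N_{Y(z)/\sX_v}$ the trivial bundle on $Y(z)\cong Y_v$ of fibre $T_zZ_v$ and $N_{\sX_v/\sX}|_{Y(z)}\cong\sO_{Y(z)}$, one reads off that the differential $d\psi_z\colon T_z\sZ^*\cong T_zZ_v\oplus\C\to H^0(Y(z),N_{Y(z)/\sX})$ is the evident inclusion; so $\psi$ is unramified, and being finite, injective and unramified it is a closed immersion. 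Finally, for $z\in Z_v^\reg$ the same computation gives $\dim T_{[Y(z)]}\scrD(\sX/\Delta)=\dim Z_v+1=\dim\sZ^*$, so $\scrD(\sX/\Delta)$ is smooth there of dimension $\dim\sZ^*$; since the closed irreducible image $\psi(\sZ^*)$ has dimension $\dim\sZ^*$ and contains such smooth points of the ambient space, it is an irreducible component of $\scrD(\sX^*/\Delta^*)$, hence $\psi(\sZ^*)=D_Y\times_\Delta\Delta^*$ for a unique reduced irreducible component $D_Y\subset\scrD(\sX/\Delta)$. Its general element is reduced and pure of dimension $q=\dim Y_v$, so $D_Y\subset\ol\scrD_q(\sX/\Delta)$ and $D_Y\to\Delta$ is proper by Corollary~\ref{corollary properness douady}. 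Thus $\psi\colon\sZ^*\isom D_Y^*$ carries the graph of $\pr_2$ to the restriction $\sF_Y^*\subset\sX^*\times_{\Delta^*}D_Y^*$ of the universal family, which is the claim; exchanging the roles of $\sY^*$ and $\sZ^*$ produces $D_Z$ and $\sF_Z$.

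The only content beyond the proof of Lemma~\ref{lemma decomposition zariski open} is the assertion that $\psi$ is an isomorphism onto the whole component $D_Y^*$, rather than merely — as in that proof over a general base — onto a component of its normalization after a finite base change; this is what makes the $S'\to S$ step disappear here. I expect the point to be careful about is precisely the injectivity of $d\psi_z$ at points $z$ lying over singular points of the fibres $Z_v$, where $\scrD(\sX/\Delta)$ itself is singular at $[Y(z)]$: one must check that the $T_zZ_v$-summand still maps isomorphically onto the (possibly large) space $H^0(Y(z),N_{Y(z)/\sX_v})$ of constant sections while the $\C$-summand maps nontrivially into $N_{\sX_v/\sX}|_{Y(z)}$, so that $d\psi_z$ remains injective and the closed-immersion argument goes through unchanged.
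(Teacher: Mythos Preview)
Your argument is correct but takes a different route from the paper's. You show $\psi:\sZ^*\to D_Y^*$ is an isomorphism by checking it is finite, injective on points, and unramified (via the normal-sheaf computation at every $z$, including singular $z\in Z_v$), hence a closed immersion, and then a dimension count forces the image to be the entire reduced component. The paper instead argues as follows: once $\psi$ is bijective onto $D_Y^*$ (using irreducibility of $D_Y$ over the disk), the evaluation map $e:\sF_Y^*\to\sX^*$ is inverse to the pullback map $\sX^*\to\sF_Y^*$, so $\sF_Y^*\cong\sX^*$; the Stein factorization of $\sF_Y^*\to D_Y^*$ is then $\sX^*\to\sZ^*\to D_Y^*$, and since $\sF_Y^*\to D_Y^*$ is flat (universal family over the Douady space) and $\sX^*\to\sZ^*$ is faithfully flat, flatness descends to $\psi$, forcing the finite bijection $\psi$ to be an isomorphism.

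The trade-off: your approach is more elementary and explicit, but hinges on the normal-sheaf computation $N_{Y(z)/\sX_v}\cong T_zZ_v\otimes\sO_{Y(z)}$ at singular $z$, which you correctly flag as the delicate point (it does hold, since the conormal sheaf of $Y_v\times\{z\}$ in $Y_v\times Z_v$ is $(\mathfrak{m}_z/\mathfrak{m}_z^2)\otimes\sO_{Y_v}$). The paper's approach sidesteps any pointwise tangent computation by exploiting the structural flatness of the Douady universal family, which makes it shorter and avoids the case analysis at singular points altogether.
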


\begin{proof}  By Corollary~\ref{corollary properness douady}, we can shrink $\Delta$ and find a component $D_Y$ of $\mathscr{D}(\sX/\Delta)$ which contains the generic fibers $Y_t\times\{z\}$ and which is proper over $\Delta$.  As in the proof of Lemma~\ref{lemma decomposition zariski open}, the projection $\sX^*\to\sZ^*$ yields a bijective classifying map $\sZ^*\to D_Y^*$ and $D_Y$ is smooth at points $Y_t\times\{z\}$ if $z\in \sZ^*$. In particular, the reducedness claim follows. The resulting pullback map $\sX^*\to\sF_Y^*$ is in fact an isomorphism, as the evaluation map $\sF_Y^*\to\sX^*$ provides an inverse.  It follows that the Stein factorization of $\sF_Y^*\to D_Y^*$ is identified with $\sX^*\to\sZ^*\to D_Y^*$, but as $\sX^*\to\sZ^*$ is faithfully flat and $\sF_Y^*\to D_Y^*$ is flat, it follows that $\sZ^*\to D_Y^*$ is flat hence an isomorphism.  
\end{proof}

%------------------------------------------------------------------------------------------
%------------------------------------------------------------------------------------------
%------------------------------------------------------------------------------------------
\section{Splittings of relative tangent sheaves}\label{section tangent splitting}
%------------------------------------------------------------------------------------------
%------------------------------------------------------------------------------------------
%------------------------------------------------------------------------------------------
In this section we show that given a product decomposition on the general fiber of a locally trivial family of $K$-trivial varieties, the induced splitting of the tangent bundle extends to the special fiber.

\begin{proposition}\label{proposition relative splitting}Let $\pi:\sX\to\Delta$ be a locally trivial family of $K$-trivial K\"ahler varieties with rational singularities.  Let $X$ be the special fiber.  Assume we have a product decomposition $\sX^*:=\sX\times_{\Delta}\Delta^*=\sY^*\times_{\Delta^*}\sZ^*$ for locally trivial families $\pi_1:\sY^*\to\Delta$ and $\pi_2:\sZ^*\to \Delta^*$.  Then there is a splitting
\[T_{\sX/\Delta}=A\oplus B\]
into foliations such that $A|_{\sX^*}=\pi_1^*T_{\sY^*/\Delta^*}$ and $B|_{\sY^*}=\pi_2^*T_{\sZ^*/\Delta^*}$ as subsheaves of $T_{\sX^*/\Delta^*}$. 

\end{proposition}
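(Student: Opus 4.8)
The strategy is to realise $A$ and $B$ as the sheaves of tangent directions to the two families of ``limit leaves'' produced by the Douady space, and then to check that they are complementary along the special fibre by a determinant computation that exploits $K$‑triviality and the polystability of $T_X$. Throughout I set $n:=\dim X$, $n_Y:=\dim Y_t$, $n_Z:=\dim Z_t$ for $t\in\Delta^*$.

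First I would invoke Corollary~\ref{cor limit leaves}: after shrinking $\Delta$ to a relatively compact neighbourhood of $0$, it furnishes reduced irreducible components $D_Y,D_Z\subset\scrD(\sX/\Delta)$, proper over $\Delta$, with (restrictions of) universal families $\sF_Y\subset\sX\times_\Delta D_Y$ and $\sF_Z\subset\sX\times_\Delta D_Z$ whose first projections $e_Y\colon\sF_Y\to\sX$, $e_Z\colon\sF_Z\to\sX$ are proper and restrict over $\Delta^*$ to isomorphisms $\sF_Y^*\cong\sX^*$, $\sF_Z^*\cong\sX^*$ that identify $\sF_Y^*\to D_Y^*$ with the projection $\sX^*=\sY^*\times_{\Delta^*}\sZ^*\to\sZ^*$, and $\sF_Z^*\to D_Z^*$ with the projection onto $\sY^*$. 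In particular $e_Y$ is bimeromorphic; replacing $\sF_Y$ by its normalisation $\sF_Y^\nu$ and using that $\sX$ is normal, for each codimension‑one point $\eta$ of $\sX$ the base change $\sF_Y^\nu\times_\sX\Spec\sO_{\sX,\eta}$ is irreducible, normal, proper and bimeromorphic over the discrete valuation ring $\sO_{\sX,\eta}$, hence finite over it and therefore equal to it; so $e_Y$ is an isomorphism over $\sX$ away from a closed set $W_Y$ of codimension $\ge2$, and necessarily $W_Y\subset X$ since $e_Y$ is already an isomorphism over $\sX^*$. Similarly one obtains $W_Z$.

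Over $\sX\setminus W_Y$ the inverse of $e_Y$ followed by $\sF_Y\to D_Y$ is an $\Delta$‑morphism $q_Y\colon\sX\setminus W_Y\to D_Y$, so $T_{(\sX\setminus W_Y)/D_Y}\subset T_{(\sX\setminus W_Y)/\Delta}$; I set $A$ to be the saturation inside $T_{\sX/\Delta}$ of $j_*T_{(\sX\setminus W_Y)/D_Y}$, where $j\colon\sX\setminus W_Y\hookrightarrow\sX$ (this lies in $T_{\sX/\Delta}$ because that sheaf is reflexive, hence $S_2$, and $\codim W_Y\ge2$), and I define $B$ the same way from $q_Z\colon\sX\setminus W_Z\to D_Z$. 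Restricting $q_Y$ to $\sX^*\subset\sX\setminus W_Y$ gives the projection onto $\sZ^*$, so $T_{(\sX\setminus W_Y)/D_Y}|_{\sX^*}=T_{\sX^*/\sZ^*}$ is the pullback of $T_{\sY^*/\Delta^*}$ and is already a direct summand of $T_{\sX^*/\Delta^*}$; hence $A|_{\sX^*}$ and $B|_{\sX^*}$ are exactly the two factors of the product decomposition, as required. Consequently $A\cap B$ (intersection inside $T_{\sX/\Delta}$) vanishes on the dense open $\sX^*$ and hence vanishes, so $A\oplus B\to T_{\sX/\Delta}$ is injective with $\rank A+\rank B=n$; and $A$, $B$ are foliations, since integrability holds over $\sX^*$ and the integrability obstruction is an $\sO_\sX$‑linear map into the torsion‑free sheaf $T_{\sX/\Delta}/A$ (resp.\ $T_{\sX/\Delta}/B$), so it vanishes on all of $\sX$.

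It then remains to prove that $A\oplus B\to T_{\sX/\Delta}$ is surjective, and this is the main obstacle. As $A\oplus B$ and $T_{\sX/\Delta}$ are both reflexive and the map is an isomorphism over $\sX\setminus X$, surjectivity is equivalent to surjectivity at the generic point $\eta_X$ of the prime divisor $X\subset\sX$, i.e.\ to the transversality $T_X=A|_X\oplus B|_X$ generically on $X$. Here the hypotheses enter. From the exact sequence $0\to A\oplus B\to T_{\sX/\Delta}\to\sQ\to0$ with $\sQ$ torsion supported on $X$, additivity of first Chern classes gives $c_1(A)+c_1(B)+k\,[X]=c_1(T_{\sX/\Delta})$ in $H^2(\sX,\Z)$ with $k\ge0$; but $[X]=c_1(\sO_\sX(X))=0$ since $X=\operatorname{div}(t)$ is principal, and $\sX$ deformation retracts onto $X$ with $c_1(T_{\sX/\Delta})$ restricting to $c_1(T_X)=-c_1(\omega_X)=0$ by $K$‑triviality, so $c_1(A|_X)+c_1(B|_X)=0$ in $H^2(X,\Q)$. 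Pairing with the $(n-1)$‑st power of a Kähler class and using that $T_X$ is polystable of slope zero for that class --- by \cite[Theorem~A(ii)]{GSS}, since $X$ has canonical, hence log terminal, singularities (Remark~\ref{remark flat bundle}) --- so that every subsheaf has non‑positive degree, forces $\deg A|_X=\deg B|_X=0$; hence the saturations of $A|_X$, $B|_X$ in $T_X$ are slope‑zero saturated subsheaves of the polystable sheaf $T_X$, and so are direct summands of complementary ranks $n_Y$, $n_Z$. The last and hardest point is to rule out $A|_X\cap B|_X\neq0$ at $\eta_X$: this genuinely uses the global geometry (it can fail for an arbitrary pair of foliations transverse only over $\sX^*$), and I expect to deduce it from the properness of $D_Y$ and $D_Z$ --- so that the finite transversal intersection of the compact generic leaves $Y_t\times\{z\}$ and $Z_t\times\{y\}$ is inherited by their limits through a general point of $X$ --- combined with the slope‑zero summand structure above and the decomposition $T_X=\bigoplus_iE_i$ into stable factors, which together should forbid $\det A\otimes\det B\to\det T_{\sX/\Delta}$ from acquiring a zero along $X$; making this precise is the crux of the argument. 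Once $A|_X\cap B|_X=0$ is established, $A\oplus B\hookrightarrow T_{\sX/\Delta}$ is an isomorphism in codimension one between reflexive sheaves, hence an isomorphism, giving $T_{\sX/\Delta}=A\oplus B$ with $A$ and $B$ foliations.
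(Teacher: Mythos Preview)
Your approach is genuinely different from the paper's, and it has a real gap that you yourself flag.

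The paper does not touch the Douady space here. It works with reflexive forms: the limit K\"unneth decomposition of $R^\bullet\pi_*\Q_\sX$ is upgraded to a decomposition of variations of mixed Hodge structures (Lemma~\ref{lemma limit kunneth}, Corollary~\ref{corollary mixed kunneth}), and from it one extracts relative reflexive forms $\tau_Y\in H^0(\sX,\Omega^{[m]}_{\sX/\Delta})$ and $\tau_Z\in H^0(\sX,\Omega^{[n]}_{\sX/\Delta})$ which over $\Delta^*$ are pullbacks of top-dimensional forms on the $Y_t$, $Z_t$ and which remain nonzero on $X_0$. One then sets $A:=\rad(\tau_Z)$ and $B:=\rad(\tau_Y)$. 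The entire difficulty you isolate---transversality on $X_0$---is dispatched in one line: the K\"unneth identification $F^{m+n}R^{m+n}\pi_*\C_\sX\cong F^mj_*R^m\pi_{1*}\C_{\sY^*}\otimes F^nj_*R^n\pi_{2*}\C_{\sZ^*}$ sends $\tau_Y\wedge\tau_Z$ to $\tau_Y\otimes\tau_Z$, so $\tau_Y\wedge\tau_Z$ is nonzero on $X_0$ and $A_0,B_0$ are generically complementary.

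Your argument, by contrast, never establishes $A|_X\cap B|_X=0$ at the generic point of $X$; you write ``making this precise is the crux of the argument'' and stop. The slope argument you sketch cannot close this: two slope-zero saturated subsheaves of complementary rank in a polystable sheaf need not be complementary (take $T_X=E_1\oplus E_2\oplus E_3$ with $E_1\cong E_2$ and consider $E_1\oplus E_3$ versus $E_2\oplus E_3$). And the other route you hint at---deducing transversality from finiteness of intersections of limit leaves via properness of $D_Y,D_Z$---is essentially what the paper carries out in Sections~\ref{section kaehler einstein}--\ref{section family splitting} (see Corollary~\ref{corollary one-to-one}), but that analysis is performed in Setting~\ref{setting relativeKE}, whose hypothesis~(v) \emph{is} the present proposition; invoking it here would be circular. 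The Hodge-theoretic definition of $A,B$ as radicals of forms is precisely what lets the paper bypass this issue.
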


Before the proof of Proposition \ref{proposition relative splitting} we make some preliminary remarks.  We start by observing that there is a limit K\"unneth decomposition on the level of cohomology.

\begin{lemma}\label{lemma limit kunneth} Assume the setup of Proposition \ref{proposition relative splitting}. Let $j:\Delta^*\to\Delta$ be the inclusion.  Then we have decompositions of local systems 
\begin{equation}\label{eq kunneth}R^k\pi_*\Q_{\sX}\cong \bigoplus_{r+s=k}j_*R^r\pi_{1*}\Q_{\sY^*}\otimes_\Q j_*R^s\pi_{2*}\Q_{\sZ^*}\end{equation}
extending the K\"unneth decompositions over $\Delta^*$.
\end{lemma}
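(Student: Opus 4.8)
The plan is to deduce \eqref{eq kunneth} from the constancy of the relevant local systems on the contractible disk $\Delta$ together with the naturality of the K\"unneth decomposition, applying $j_*$ only at the very end.

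First I would note that by local triviality $\pi$ is a topologically locally trivial fiber bundle over $\Delta$ (see \cite[Proposition~6.1]{AV19}), hence topologically a product since $\Delta$ is contractible; consequently $R^k\pi_*\Q_\sX$ is the constant local system $\underline{H^k(X,\Q)}$ on $\Delta$, and by proper base change its restriction to $\Delta^*$ agrees with $R^k\pi^\circ_*\Q_{\sX^*}$, where $\pi^\circ:=\pi|_{\sX^*}$. In particular $R^k\pi^\circ_*\Q_{\sX^*}$ has trivial monodromy. Over $\Delta^*$ the identification $\sX^*\cong\sY^*\times_{\Delta^*}\sZ^*$ is one of topologically locally trivial fiber bundles, so the fiberwise K\"unneth isomorphisms glue to an isomorphism of local systems
\[R^k\pi^\circ_*\Q_{\sX^*}\cong\bigoplus_{r+s=k}R^r\pi_{1*}\Q_{\sY^*}\otimes_\Q R^s\pi_{2*}\Q_{\sZ^*}\]
on $\Delta^*$, which is monodromy-equivariant by naturality of the K\"unneth map.

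The crux is to upgrade this to the statement that each factor on the right is itself a constant local system. Since $X_t\cong Y_t\times Z_t$ is connected, the fibers of $\pi_1$ and $\pi_2$ are connected, so $R^0\pi_{2*}\Q_{\sZ^*}\cong\Q_{\Delta^*}$; then within $R^r\pi^\circ_*\Q_{\sX^*}$ the K\"unneth summand indexed by $(r,0)$, namely $R^r\pi_{1*}\Q_{\sY^*}\otimes_\Q R^0\pi_{2*}\Q_{\sZ^*}\cong R^r\pi_{1*}\Q_{\sY^*}$, is a monodromy-stable direct summand of a trivial local system, hence trivial. Thus $R^r\pi_{1*}\Q_{\sY^*}$ is constant, and symmetrically so is each $R^s\pi_{2*}\Q_{\sZ^*}$. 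Finally, for a constant local system $\mathcal L$ on $\Delta^*$ the pushforward $j_*\mathcal L$ is the constant local system on $\Delta$ with the same stalk (in general $j_*\mathcal L$ has stalk at the origin the monodromy invariants), and $j_*$ commutes with finite direct sums and with tensor products of such constant sheaves; combining this with $R^k\pi_*\Q_\sX\cong j_*j^*R^k\pi_*\Q_\sX\cong j_*R^k\pi^\circ_*\Q_{\sX^*}$---which holds because $R^k\pi_*\Q_\sX$ is constant on $\Delta$---yields the decomposition \eqref{eq kunneth}, and restricting back to $\Delta^*$ recovers the K\"unneth decomposition there.

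I do not anticipate a genuine obstacle; the one step deserving care is the triviality of the monodromy on the individual K\"unneth factors, which rests on the monodromy-equivariance of K\"unneth, the vanishing of all monodromy on $R^\bullet\pi_*\Q_\sX$ forced by the contractibility of $\Delta$, and the identity $R^0\pi_{2*}\Q_{\sZ^*}\cong\Q_{\Delta^*}$.
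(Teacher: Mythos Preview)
Your argument is correct and follows essentially the same route as the paper's proof: both use the topological triviality of $\pi$ to conclude that $R^k\pi_*\Q_\sX$ has trivial monodromy, then isolate each $R^r\pi_{1*}\Q_{\sY^*}$ (resp.\ $R^s\pi_{2*}\Q_{\sZ^*}$) as the K\"unneth summand paired with the rank-one trivial factor $R^0\pi_{2*}\Q_{\sZ^*}\cong\Q_{\Delta^*}$ (resp.\ $R^0\pi_{1*}\Q_{\sY^*}\cong\Q_{\Delta^*}$) to force its monodromy to be trivial. The paper compresses this to a single sentence (``since for instance $\pi_{1*}\Q_{\sY^*}$ and $\pi_{2*}\Q_{\sZ^*}$ have no monodromy''), whereas you spell out the monodromy-equivariance of K\"unneth and the $j_*$ bookkeeping more carefully; but the substance is identical.
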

\begin{proof}As $\pi$ is topologically trivial, each $R^k\pi_*\Q_\sX$ has no local monodromy, and it follows that each $R^k\pi_{1*}\Q_{\sY^*}$ and $R^k\pi_{2*}\Q_{\sZ^*}$ also has no monodromy (since for instance $\pi_{1*}\Q_{\sY^*}$ and $\pi_{2*}\Q_{\sZ^*}$ have no monodromy).
\end{proof}
Note that the isomorphisms preserve the integral structure---that is, the torsion-free quotients of cohomology with integral coefficients---and that we also have the corresponding decomposition on the level of homology.  The decompositions are also compatible with cup and cap products.

Next we upgrade Lemma \ref{lemma limit kunneth} to a K\"unneth decomposition of variations of mixed Hodge structures.

\begin{lemma}Let $f:\sX\to S$ be a locally trivially family over a smooth base $S$.  Then for all $k$, $R^kf_*\Q_\sX$ underlies a variation of mixed Hodge structures
\end{lemma}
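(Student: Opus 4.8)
The underlying local system is the easy part. Since $f$ is locally trivial and, as in all our applications, proper with compact fibres, $f$ is topologically a fibre bundle over $S$, so $R^kf_*\Q_\sX$ is a $\Q$-local system with fibre $H^k(X_s,\Q)$ at $s\in S$. What must be produced is the weight filtration --- a filtration by sub-local-systems defined over $\Q$ --- and the Hodge filtration --- a holomorphic filtration of the flat bundle $R^kf_*\C\otimes_\C\sO_S$ by subbundles satisfying Griffiths transversality --- restricting over each $s$ to Deligne's mixed Hodge structure on $H^k(X_s,\Q)$. The strategy is to carry out Deligne's construction of the mixed Hodge structure on the cohomology of a proper (possibly singular) variety \emph{relatively over $S$}; this is possible precisely because every piece of auxiliary data needed can be chosen locally trivial over $S$.

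Concretely, I would first construct a \emph{locally trivial cubical hyperresolution} $a_\bullet\colon\sX_\bullet\to\sX$: an augmented cubical complex space in which each $\sX_\alpha\to S$ is locally trivial --- hence smooth and, after shrinking $S$, proper --- and for which the base change $\sX_\bullet\times_S\{s\}\to X_s$ is a cubical hyperresolution for every $s\in S$. This is obtained by the usual descending induction on the dimension of the fibres: one resolves $\sX$ itself by Corollary~\ref{corollary resolutions}, producing a locally trivial log resolution $\bar\pi\colon\sY\to\sX$ whose exceptional divisor $\sE$ is locally trivial over $S$ (recorded after Corollary~\ref{corollary resolutions}) and whose discriminant $\sW:=\bar\pi(\sE)\subset\sX$ is also locally trivial over $S$ --- automatically so, since in a local trivialization $\sX\cong X\times S$ of the total space every fibrewise-intrinsic subspace (singular locus, discriminant, multiple loci of a divisor, and so on) is a product --- and one then feeds the lower-dimensional locally trivial families $\sW/S$ and a locally trivial log resolution of $\sE/S$ (both existing by the same results, after passing to normalizations, which also commute with locally trivial base change) back into the induction, assembling the outcome into a cubical hyperresolution exactly as in the theory of cubical hyperresolutions. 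Because every intrinsic locus used is globally defined over $S$, so is $\sX_\bullet$; there is no patching issue on the base. Note also that each $\sX_\alpha$ lies in Fujiki class $\mathscr C$ (being bimeromorphic to a blow-up of a bundle over $S$ with class-$\mathscr C$ fibres), so its smooth proper fibres carry genuine pure Hodge structures.

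Granting $\sX_\bullet$, I would then run Deligne's machine with $S$ as an inert parameter: assemble the relative de Rham complexes $\Omega^\bullet_{\sX_\alpha/S}$ into the filtered complex on $S$ computing $R^\bullet f_*\C$, with Hodge filtration induced by the stupid filtrations $\sigma^{\geq p}$ and weight filtration induced by the cubical degree, exactly as over a point. Since each $\sX_\alpha/S$ is smooth proper, the relative Hodge--de Rham spectral sequence degenerates at $E_1$ and the sheaves $R^q(f_\alpha)_*\Omega^p_{\sX_\alpha/S}$ are locally free and compatible with arbitrary base change \cite{Del68}; hence the Hodge filtration on $R^kf_*\C\otimes_\C\sO_S$ is by subbundles, and Griffiths transversality holds by the standard argument that the Gauss--Manin connection raises the order of pole along the de Rham complex by at most one. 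The weight filtration is manifestly by $\Q$-sub-local-systems. Base changing $\sX_\bullet$ along $\{s\}\hookrightarrow S$ yields a cubical hyperresolution of $X_s$, so the fibre at $s$ of the resulting data is Deligne's mixed Hodge structure on $H^k(X_s,\Q)$; this identifies the weights and shows we obtain a graded-polarizable variation of mixed Hodge structures. Alternatively, one may invoke Saito's theory of mixed Hodge modules to see directly that $R^kf_*\Q_\sX$, being a local system on the smooth base $S$ that underlies a cohomology object of $f_*\Q^H_\sX\in D^b\MHM(S)$, carries an admissible variation of mixed Hodge structure.

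The main obstacle is the first step: producing the hyperresolution $\sX_\bullet$ \emph{within the locally trivial category over $S$}. Everything afterwards is Deligne's construction run uniformly in $S$, the only genuinely relative inputs being $E_1$-degeneration and base change for the smooth proper pieces $\sX_\alpha/S$, both classical. I expect the bookkeeping for the induction --- tracking normalizations and the compatibility of discriminant squares in families --- to be the only delicate point, and it is entirely controlled by the local triviality of fibrewise-intrinsic loci together with Corollary~\ref{corollary resolutions}.
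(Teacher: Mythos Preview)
Your proposal is correct and takes essentially the same approach as the paper: construct a locally trivial simplicial (the paper says ``semi-simplicial,'' you say ``cubical hyper-'') resolution over $S$ using Corollary~\ref{corollary resolutions}, then run Deligne's construction relatively. The paper's proof is a two-line sketch pointing to \cite[Theorem~5.26]{PS08} and Corollary~\ref{corollary resolutions}; your write-up is simply a fuller unpacking of that same strategy, including the observation that normalizations and discriminants stay locally trivial over $S$, which the paper also records (normalization is mentioned explicitly).
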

\begin{proof}

Note that because $S$ is smooth, the normalization of $\sX$ is locally trivial and specializes to the normalization fiberwise.  Following for instance the proof of \cite[Theorem~5.26]{PS08}, the family $\sX\to S$ then admits a locally trivial semisimplicial resolution over $S$ by Corollary~\ref{corollary resolutions}, and this is enough.  
\end{proof}
\begin{corollary}\label{corollary mixed kunneth}In the setup of Proposition \ref{proposition relative splitting}, the Hodge filtrations of $R^r\pi_{1*}\C_{\sY^*}\otimes_\C\sO_{\Delta^*}$ and $R^s\pi_{2*}\C_{\sZ^*}\otimes_\C\sO_{\Delta^*}$ extend so that \eqref{eq kunneth} holds as variations of mixed Hodge structures.  Moreover, the cup-product maps are morphisms of variations of Hodge structures.
\end{corollary}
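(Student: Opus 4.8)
The plan is to derive the statement from three inputs: the previous lemma, which endows each of $R^k\pi_*\Q_\sX$, $R^r\pi_{1*}\Q_{\sY^*}$, $R^s\pi_{2*}\Q_{\sZ^*}$ with the structure of a variation of mixed Hodge structures over $\Delta$, respectively $\Delta^*$; Lemma~\ref{lemma limit kunneth}, which extends the Künneth isomorphism to an isomorphism of $\Q$-local systems over all of $\Delta$; and the following soft rigidity principle, which I would isolate first: \emph{a morphism $\phi$ of $\Q$-local systems on $\Delta$ between local systems underlying variations of mixed Hodge structures, whose restriction to $\Delta^*$ is a morphism of variations, is itself a morphism of variations on $\Delta$.} Since $\Delta$ is contractible the ambient local systems are constant, so each $F^p$ is a genuine holomorphic subbundle of a trivial bundle; then $\phi(W_m\mathcal V)\subseteq W_m\mathcal W$ is a comparison of sub-local-systems that holds over the dense open $\Delta^*$ and hence everywhere, while $\phi(F^p\mathcal V)\subseteq F^p\mathcal W$ amounts to the vanishing of the induced map $F^p\mathcal V\to(\mathcal W\otimes_\C\sO_\Delta)/F^p\mathcal W$ into a locally free sheaf, which again is forced by its vanishing over $\Delta^*$. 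Applying this to $\phi$ and to $\phi^{-1}$ shows that an isomorphism of local systems which is an isomorphism of variations generically is a strict isomorphism of variations on $\Delta$.

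Granting the principle, I would argue as follows. Over $\Delta^*$ the fibers are honest products $X_t=Y_t\times Z_t$, so by the classical compatibility of the Künneth isomorphism with mixed Hodge structures the Künneth decomposition over $\Delta^*$ is a decomposition of variations and the associated projectors $p_{r,s}$ are morphisms of variations over $\Delta^*$. As $\Delta$ is simply connected these extend to flat idempotent endomorphisms of $R^k\pi_*\Q_\sX$, and the rigidity principle promotes them to morphisms of variations on $\Delta$; hence each $\img(p_{r,s})\subseteq R^k\pi_*\Q_\sX$ is a sub-variation. Taking $s=0$ — note $R^0\pi_{2*}\Q_{\sZ^*}=\Q$, as the fibers of $\sZ^*/\Delta^*$ are connected by topological triviality of $\pi$ — identifies $j_*R^r\pi_{1*}\Q_{\sY^*}$ with $\img(p_{r,0})$ and so equips it with a variation of mixed Hodge structures over $\Delta$ extending the given one; in particular this extends the Hodge filtration of $R^r\pi_{1*}\C_{\sY^*}\otimes_\C\sO_{\Delta^*}$ across the origin as a filtration by holomorphic subbundles. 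Symmetrically one extends the variation on $j_*R^s\pi_{2*}\Q_{\sZ^*}$.

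Finally, for each pair $r+s=k$ the Künneth map $j_*R^r\pi_{1*}\Q_{\sY^*}\otimes_\Q j_*R^s\pi_{2*}\Q_{\sZ^*}\to R^k\pi_*\Q_\sX$ is, by Lemma~\ref{lemma limit kunneth}, an isomorphism of $\Q$-local systems onto $\img(p_{r,s})$ which over $\Delta^*$ is an isomorphism of variations; since its source is the tensor product of the two variations constructed above, the rigidity principle upgrades it to an isomorphism of variations on $\Delta$, which is precisely \eqref{eq kunneth} as an identity of variations of mixed Hodge structures. The same reasoning applied to the cup-product maps $R^a\pi_{1*}\Q_{\sY^*}\otimes_\Q R^b\pi_{1*}\Q_{\sY^*}\to R^{a+b}\pi_{1*}\Q_{\sY^*}$ (and the analogues for $\pi_2$ and the external products), each of which is a morphism of variations over $\Delta^*$ and extends to a morphism of $\Q$-local systems over $\Delta$ compatibly with $j_*$, shows these are morphisms of variations on $\Delta$ as well. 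The one delicate point is the rigidity principle, and there the only content is that the Hodge bundles of a variation are subbundles, so that the failure of $\phi$ to preserve $F^\bullet$ would be witnessed by a nonzero section of a locally free sheaf — impossible once it vanishes on the dense open $\Delta^*$.
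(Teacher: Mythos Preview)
Your proof is correct and takes a genuinely different route from the paper's. The paper argues geometrically via period maps: trivializing the local systems, the period map $\Delta\to\prod_k\mathrm{Fl}(U_k)$ for $R^\bullet\pi_*\C_\sX$ lands, over $\Delta^*$, in the image of the closed embedding $\prod_r\mathrm{Fl}(V_r)\times\prod_s\mathrm{Fl}(W_s)\hookrightarrow\prod_k\mathrm{Fl}(U_k)$ given by graded tensor product of filtrations; since the image is closed and the period map is holomorphic, it lands there over all of $\Delta$, and the preimage supplies the extended Hodge filtrations on the factors. The cup-product claim is then dismissed as ``obvious as it is true fiber-wise.''

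Your approach instead isolates a rigidity principle for morphisms of local systems underlying variations and uses it to extend the K\"unneth projectors $p_{r,s}$ as morphisms of variations, then reads off the factor variations from $\img(p_{r,0})$ and $\img(p_{0,s})$. This is more categorical and has two mild advantages: it sidesteps the (not entirely trivial) verification that the tensor-product map of flag varieties is a closed embedding, and it gives a uniform justification of the cup-product compatibility rather than an appeal to fiberwise truth, which at $t=0$ is not literally available for the factors. The paper's argument, on the other hand, is shorter and makes the geometric content of the extension transparent: the period point at $0$ is a limit of decomposable filtrations, hence decomposable.
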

\begin{proof}Let $U_k,V_r,W_s$ be the special fibers of $R^k\pi_*\C_\sX, j_*R^r\pi_{1*}\C_{\sY^*},j_*R^s\pi_{2*}\C_{\sZ^*}$, respectively.  Now denoting by $\mathrm{Fl}$ the appropriate flag varieties, the period map $\Delta\to\prod_k \mathrm{Fl}(U_k)$ associated to $R^\bullet\pi_*\C_{\sX}$ maps $\Delta^*$ to the image of the closed embedding of $\prod_k\mathrm{Fl}(V_k)\times\prod_k\mathrm{Fl}(W_k)$ via taking the graded tensor product.  It therefore maps $\Delta$ into $\prod_k\mathrm{Fl}(V_k)\times\prod_k\mathrm{Fl}(W_k)$, and the first claim follows.  The second claim is obvious as it is true fiber-wise.
\end{proof}

For simplicity, in the following we denote the Hodge filtration on $R^kf_*\C_\sX\otimes_\C\sO_\Delta$ (resp. $j_*R^rg_*\C_{\sY^*}\otimes_\C\sO_\Delta$, $j_*R^sh_*\C_{\sZ^*}\otimes_\C\sO_\Delta$) by $F^\bullet R^kf_*\C_\sX$ (resp. $F^\bullet j_*R^rg_*\C_{\sY^*}$, $F^\bullet j_*R^sh_*\C_{\sZ^*}$).

\begin{proof}[Proof of Proposition \ref{proposition relative splitting}]  Let $m=\dim\sY^*-1$ and $n=\dim \sZ^*-1$.  By Corollary~\ref{corollary resolutions} there is a locally trivial resolution $f:\wt\sX\to\sX$.  By Kebekus--Schnell \cite[Corollary~1.8]{KS18} we have $f_*\Omega^p_{\wt\sX/\Delta}=\Omega^{[p]}_{\sX/\Delta}$ via the natural map, and likewise for $\sY^*$ and $\sZ^*$.  In particular, we naturally identify $F^{p}R^{p}\pi_*\C_{\sX}= \Omega^{[p]}_{\sX/\Delta}$ for each $p$.  Moreover, both $F^mR^m\pi_{1*}\C_{\sY^*}= \Omega^{[m]}_{\sY^*/\Delta^*}$ and $F^nR^n\pi_{2*}\C_{\sZ^*}=\Omega^{[n]}_{\sZ^*/\Delta^*}$ are line bundles as $\sY^*$ and $\sZ^*$ are both families of $K$-trivial varieties.  From Corollary \ref{corollary mixed kunneth} it follows that $F^mj_*R^m\pi_{1*}\C_{\sY^*}$ is a vector subbundle of $F^mR^m\pi_*\C_\sX$, and so there is a $\tau_Y\in H^0(\sX,\Omega^{[m]}_{\sX/\Delta})$ which specializes to the pullback of a nonzero top-dimensional reflexive form on the fibers of $\sY^*$ and which is nonzero on the special fiber of $\sX$.  Likewise for $\tau_Z\in H^0(\sX,\Omega^{[n]}_{\sX/\Delta})$ and~$\sZ^*$.  

We claim that $A=\rad(\tau_Z)$ and $B=\rad(\tau_Y)$ provides the desired splitting.  Note that the radical can only increase in rank under specialization, and that $A$ and $B$ are generically complementary.  The claim therefore follows provided $\tau_Y\wedge\tau_Z$ is nonzero on the special fiber.  But Corollary \ref{corollary mixed kunneth} implies the K\"unneth decomposition provides us with an identification
\[F^{m+n}R^{m+n}\pi_*\C_\sX\cong F^mj_*R^m\pi_{1*}\C_{\sY^*}\otimes F^nj_*R^n\pi_{2*}\C_{\sZ^*}\]
where $\tau_Y\wedge\tau_Z$ on the left-hand side is identified with $\tau_Y\otimes\tau_Z$ on the right-hand side, and the claim follows.
\end{proof}

%------------------------------------------------------------------------------------------
%------------------------------------------------------------------------------------------
%------------------------------------------------------------------------------------------
\section{Splittings of relative Kähler--Einstein metrics}\label{section kaehler einstein}
%------------------------------------------------------------------------------------------
%------------------------------------------------------------------------------------------
%------------------------------------------------------------------------------------------

By the results of the last section, given a locally trivial family which is generically a product, there is both a limit splitting of the cohomology of the special fiber and a limit splitting of the tangent sheaf. We show in this section that the decomposition of the Kähler--Einstein metrics carries over to the limit.

%------------------------------------------------------------------------------------------
\subsection{Forms and currents on singular spaces}\label{section forms and currents}
%------------------------------------------------------------------------------------------

The references for this section are  \cite{Dem85} or \cite[\S 4.6.1]{BEG}. Let $X$ be normal complex space. We introduce the following sheaves on $X$: 
\begin{itemize}
\item $L^{1}_X$ is the sheaf of locally integrable real-valued functions on $X$, 
\item $\mathcal C^{\infty}_X$ is the sheaf of real-valued functions which are locally the restriction of a smooth function under a local embedding $X\underset{\rm loc}{\hookrightarrow} \mathbb C^N$ ,
\item $\mathrm{PH}_X \subset \mathcal C^{\infty}_X$ is the subsheaf made of pluriharmonic functions. A pluriharmonic function can be equivalently defined as being locally the real part of a holomorphic function (which by definition comes from a local embedding) or a function in the kernel of the $dd^c$ operator.  
\end{itemize}
We recall the following definitions. 

\begin{enumerate}[label=$\--$]
	\item A $(1,1)$-form on $X$ is a smooth $(1,1)$-form on $X^{\rm reg}$ that extends smoothly under local embeddings.
	\item A $(1,1)$-form is said to have {\it local potentials} if it arises as a global section of $\Cx/\PHx$ via the $dd^c$ operator.  In particular, it is closed.
	\item A closed $(1,1)$-current {\it with local potentials} is a global section of  $L^1_X/\PHx$. 
	\item A \emph{plurisubharmonic} function (psh for short) on $X$ is a locally integrable function on $X$ which is the restriction of a psh function under a local embedding. If $\theta$ is a $(1,1)$-form with local potentials on $X$, a function $\varphi$ on $X$ is said to be \emph{$\theta$-psh} if it is locally the sum of a smooth and a psh function and it satisfies $\theta+dd^c \varphi \ge 0$ weakly. We denote by $\mathrm{PSH}(X,\theta)$ the set of $\theta$-psh functions on $X$. If we do not specify $\theta$, we also speak of \emph{quasi-psh functions}.
	\item A \emph{Kähler metric} on $X$ is a $(1,1)$-form with local potentials whose local potentials are strictly plurisubharmonic, i.e. they are the restriction of a (smooth) strictly psh function under a local embedding. 

\end{enumerate}

\begin{remark}
\label{pull back}
A closed $(1,1)$-current $T$ with local potentials on $X$ enjoys the important property that given a surjective morphism $f: Y \to X$ of complex varieties, one can define its pull-back by $f^*T$ by lifting its local potentials. The current $f^*T$ is again closed, of type $(1,1)$ and admits local potentials on $Y$. Moreover, $f^*T$ is positive if and only if $T$ is positive. 
\end{remark}

We have the following  exact sequences
\begin{equation}
\label{DR}
0 \longrightarrow \PHx \longrightarrow L^1_X \longrightarrow L^1_X/\PHx \longrightarrow 0
\end{equation}
and similarly with $\Cx$ in place of $L^1_X$, as well as 
\begin{equation}
\label{PH}
0 \longrightarrow \underline{\mathbb{R}}_X \overset{i\cdot}{\longrightarrow} \mathcal O_X \overset{\mathrm{Re}(\cdot)}{\longrightarrow} \PHx \longrightarrow 0
\end{equation}
Composing the exterior differential map $d:\mathcal O_X\to \Omega_X^1$ with the isomorphism $\PHx\isom \mathcal O_X /\underline{\mathbb R}_X$, and passing to cohomology, we get a natural map 
\begin{equation}
\label{diff map}
d:H^1(X,\PHx)\to H^1(X,\Omega_X^1).
\end{equation}
Next, the exact sequences \eqref{DR}-\eqref{PH} above yield exact sequences in cohomology
\begin{equation}\label{eq coh one}
H^0(X,L^1_X/\PHx)\overset{[\cdot]}{\longrightarrow} H^1(X,\PHx) \longrightarrow 0
\end{equation}
and
\begin{equation}
\label{eq coh two}
H^1(X,\PHx) \overset{\alpha}{\longrightarrow} H^2(X,\mathbb R) \overset{\beta}{\longrightarrow} H^2(X,\mathcal O_X)
\end{equation}
A class in $H^1(X,\mathrm{PH}_X)$ (resp. in $H^2(X,\mathbb R)$) is said to be Kähler if it is the image of a Kähler metric under the map $[\cdot]$ in \eqref{eq coh one} (resp. under the map $\alpha \circ [\cdot]$). According to \cite[Proposition~2.8]{BL18}, the Kähler class ends up in $H^{1,1}(X,\R):=F^1H^2(X,\C)\cap H^2(X,\R)$.
The following elementary result is very useful.

\begin{proposition}
\label{H1PH}
Let $X$ be a compact, normal variety of class $\scrC$ with rational singularities. Then $\alpha$ is injective and $\beta$ is surjective; i.e. we have an exact sequence
\begin{equation}\label{eq h1 pluriharmonic sequence}
0\longrightarrow H^1(X,\PHx) \overset{\alpha}{\longrightarrow} H^2(X,\mathbb R) \overset{\beta}{\longrightarrow} H^2(X,\mathcal O_X) \longrightarrow 0.
\end{equation}
\end{proposition}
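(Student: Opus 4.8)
The sequence \eqref{eq h1 pluriharmonic sequence} is a stretch of the long exact cohomology sequence attached to \eqref{PH}; exactness at $H^2(X,\R)$ is already contained in \eqref{eq coh two}, so only two things remain: that $\alpha$ is injective and that $\beta$ is surjective. The plan is to reduce both to Hodge theory on a resolution $\pi\colon\wt X\to X$. Since $X$ is of class $\scrC$, so is $\wt X$, hence each $H^k(\wt X,\C)$ carries a pure Hodge structure of weight $k$ and Hodge--de Rham degenerates. Two consequences of the hypotheses will be used: rational singularities give $R^q\pi_*\sO_{\wt X}=0$ for $q>0$, hence $H^k(X,\sO_X)\cong H^k(\wt X,\sO_{\wt X})$ for all $k$; and normality gives $\pi_*\Q_{\wt X}=\Q_X$, so $\pi^*\colon H^k(X,\Q)\to H^k(\wt X,\Q)$ is injective with image a sub-Hodge structure, and in particular $H^k(X,\Q)$ is pure of weight $k$.

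For the injectivity of $\alpha$, by the long exact sequence of \eqref{PH} it suffices that the map $\gamma_1\colon H^1(X,\C)\to H^1(X,\sO_X)$ induced by $\underline{\C}_X\to\sO_X$ be surjective on $H^1(X,\R)$, for then $H^1(X,\underline{\R}_X)\to H^1(X,\sO_X)$ is onto ($i\cdot$ being an automorphism) and so $H^1(X,\sO_X)\to H^1(X,\PHx)$ is zero. I would first note that rational singularities force $R^1\pi_*\Q_{\wt X}=0$: as $\pi$ is proper, bimeromorphic and surjective onto the normal $X$, one has $\pi_*\sO_{\wt X}=\sO_X$ and hence $\pi_*\sO_{\wt X}^*=\sO_X^*$, and pushing forward the exponential sequence of $\wt X$ together with $R^1\pi_*\sO_{\wt X}=0$ gives $R^1\pi_*\Z_{\wt X}=\coker(\sO_X\to[\exp]\sO_X^*)=0$, whence $R^1\pi_*\Q_{\wt X}=0$ after tensoring with $\Q$. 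Therefore $\pi^*\colon H^1(X,\Q)\cong H^1(\wt X,\Q)$, and together with $H^1(X,\sO_X)\cong H^1(\wt X,\sO_{\wt X})=H^{0,1}(\wt X)$ the map $\gamma_1$ becomes the Hodge projection $H^1(\wt X,\C)\to H^{0,1}(\wt X)$, which is already surjective on $H^1(\wt X,\R)$ (the assignment $\eta\mapsto\eta+\bar\eta$ is a section). Hence $\alpha$ is injective.

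For the surjectivity of $\beta$: since $i\cdot\colon\underline{\R}_X\to\sO_X$ factors through $\underline{\C}_X$, surjectivity of $\beta$ is equivalent to surjectivity of the restriction to $H^2(X,\R)$ of the map $\gamma\colon H^2(X,\C)\to H^2(X,\sO_X)$ induced by $\underline{\C}_X\to\sO_X$. Here I would invoke that $\coker(\pi^*\colon H^2(X,\Q)\hookrightarrow H^2(\wt X,\Q))$ is of Hodge type $(1,1)$ --- spanned by the classes of the $\pi$-exceptional divisors --- which is \cite[Proposition~2.8]{BL18}. It follows that $\pi^*$ identifies $H^2(X,\sO_X)\cong H^{0,2}(\wt X)$ with the $(0,2)$-part of the pure weight-$2$ structure $H^2(X,\C)$, and (using \cite[Corollary~1.8]{KS18}) $H^0(X,\Omega_X^{[2]})$ with the $(2,0)$-part, in such a way that $\gamma$ becomes projection onto the $(0,2)$-part. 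For $c\in H^2(X,\sO_X)\cong H^{0,2}(\wt X)$, the conjugate $\bar c$ lies in $H^{2,0}(\wt X)\subset\pi^*H^2(X,\C)$, so $c+\bar c$ is a conjugation-invariant element of $\pi^*H^2(X,\C)$, hence lies in $\pi^*H^2(X,\R)$, and it projects onto $c$. Therefore $\gamma|_{H^2(X,\R)}$, and with it $\beta$, is surjective.

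The one place I would not carry out the argument directly is the statement that $\coker\pi^*$ on $H^2$ is of type $(1,1)$ --- equivalently, that the natural map $H^2(X,\C)\to H^2(X,\sO_X)$ is surjective --- for which I would simply cite \cite[Proposition~2.8]{BL18}; everything else is a formal chase through the long exact sequence of \eqref{PH} and the Hodge decomposition on $\wt X$.
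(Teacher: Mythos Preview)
Your proof is correct. For the surjectivity of $\beta$ you do exactly what the paper does: use that $H^2(X,\C)$ carries a pure Hodge structure, identify $\beta$ with the projection onto the $(0,2)$-part, and observe that $c+\bar c$ is real for $c\in H^{0,2}$; you simply make explicit (via \cite[Proposition~2.8]{BL18}) why the $(0,2)$-part of $H^2(\wt X,\C)$ lies in the image of $\pi^*$, which the paper leaves implicit in the word ``pure''.

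For the injectivity of $\alpha$ the paper just cites \cite[Remark~3.2(2)]{GK20}, whereas you give a self-contained argument: push forward the exponential sequence to get $R^1\pi_*\Z_{\wt X}=0$, deduce $H^1(X,\C)\cong H^1(\wt X,\C)$, identify the map $H^1(X,\C)\to H^1(X,\sO_X)$ with the Hodge projection on the resolution, and then use the $\eta\mapsto\eta+\bar\eta$ section on real classes. This is a clean and direct replacement for the citation; the exponential-sequence trick for $R^1\pi_*\Z_{\wt X}=0$ is a nice touch that avoids invoking heavier machinery. The trade-off is that your write-up is longer, but it buys independence from the external reference.
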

\begin{proof}
The injectivity of $\alpha$ is proved in \cite[Remark~3.2 (2)]{GK20}. Surjectivity of $\beta$ follows because the Hodge structure on $H^2(X,\C)$ is pure and $\beta$ can be identified with the projection on the $(0,2)$-part. Clearly, for $\alpha \in H^{0,2}(X)$ the class $\alpha + \bar \alpha$ is real.
\end{proof}

\subsection{Continuity of the relative KE metric}
For the remainder of this section, we work with the following setup. 

\begin{set}\label{setting relativeKE}
Let $\sX$ be a normal complex space with canonical singularities equipped with a proper, holomorphic, surjective map  $\pi: \sX \to \Delta$ and a smooth hermitian positive definite $(1,1)$-form $\theta$ such that, setting $X_t:=\pi^{-1}(t)$ and $X=X_0$, one has
\begin{enumerate}[label=(\roman*)]
\item $\omega_{\sX/\Delta} \isom \sO_{\sX}$.
\item The map $\pi$ is locally trivial and $H^1(X_0,\mathbb R)=0$. 
\item The restriction $\theta_t:=\theta|_{X_t}$ is a Kähler form.
\item There is a splitting $\sX\times_\Delta\Delta^*\isom \sY^*\times_{\Delta^*}\sZ^*$ with $\pi_1:\sY^*\to\Delta^*$ and $\pi_2:\sZ^*\to\Delta^*$ locally trivial.
\item The relative tangent sheaf splits: $T_{\sX/\Delta}=A\oplus B$ and $A|_{\sX^*}=\pi_1^*T_{\sY^*/\Delta^*}$ and $B|_{\sY^*}=\pi_2^*T_{\sZ^*/\Delta^*}$ as subsheaves of $T_{\sX^*/\Delta^*}$.
\end{enumerate}
As mentioned before, this in particular implies that $\pi$ is topologically, even real analytically trivial by \cite[Proposition~6.1]{AV19}.
\end{set}

%------------------------------------------------------------------------------------------

%------------------------------------------------------------------------------------------
In Setting~\ref{setting relativeKE}, we can consider for any $t\in \Delta$ the unique singular Kähler--Einstein metric $\omega_t \in \{\theta_t\}$, provided by \cite[Theorem~A]{EGZ}. One can write $\omega_t$ in a unique way as
\[
\omega_t=\theta_t+dd^c \vphi_t
\]
where $\vphi_t$ is a $\theta_t$-psh function normalized by $\int_{X_t} \vphi_t \, \theta_t^n=0$. Moreover, it is known from {\it loc. cit.} that $\vphi_t \in L^{\infty}(X_t)$ and from \cite[Corollary~1.1]{Paun} that $\omega_t$ is a genuine Kähler form on $X_t^\reg$, the regular locus of $X_t$.  Up to shrinking $\Delta$, it follows from \cite[Theorem~F]{DGG20} that there exists a constant $C>0$ independent of $t\in \Delta$ such that 
\begin{equation}
\label{Linfty}
\sup_{X_t} \vphi_t-\inf_{X_t} \vphi_t \le C.
\end{equation}
Actually, the quoted result requires $\theta$ to be relatively Kähler (hence globally Kähler on  $X$, possibly after shrinking $\Delta$). However, taking a closer look at the proof reveals that the only (truly) global argument appears in Theorem~2.9 in {\it ibid.}, where one needs to control away from $0$ and $+\infty$ the masses $\int_{X_t} (\theta_t+dd^c\psi_t)\wedge \theta_t^{n-1}$ uniformly for any $\theta_t$-psh function $\psi_t$ and any $t\in \Delta$. In our situation, this property is clearly satisfied since $\theta_t$ is Kähler (hence closed) and $\int_{X_t} \theta_t^n$ is a continuous function of $t\in \Delta$.

Let us also observe that the normalization condition $\int_{X_t} \vphi_t \, \theta_t^n=0$ implies that $\sup_{X_t} \vphi_t \ge 0$ and $\inf_{X_t} \vphi_t \le 0$. The bound on the oscillation \eqref{Linfty} thus implies that with the same constant $C$, we have
 \begin{equation}
\label{Linfty2}
-C \le  \vphi_t \le C
\end{equation}
for all $t\in \Delta$.

Up to shrinking $\Delta$, by local triviality one can assume that there exists a finite covering $(U_\alpha)_{\alpha \in \Gamma}$ of $\sX$ by Euclidean open subsets such that if we set $U_{t,\alpha}:=U_\alpha \cap X_t$ for $t\in \Delta$, there exist biholomorphisms 
$F_\alpha: U_{\alpha,0}\times \Delta \to U_\alpha $
 over $\Delta$ and such that $F_\alpha|_{U_{\alpha,0}\times \{0\}}$ coincides with $\id_{U_{\alpha,0}}$ via the natural identification $U_{\alpha,0}\times \{0\} \isom U_{\alpha,0}$. In other words, restriction to a fiber gives a family of biholomorphisms
 \begin{equation}
 \label{local biholo}
 F_{\alpha,t}: U_{\alpha,0} \to U_{\alpha,t} 
 \end{equation}
depending holomorphically on $t$ such that $F_{\alpha,t}$ converges to $\id_{U_{\alpha,0}}$ when $t\to 0$. In the following, we will replace each $U_{\alpha}$ by a slightly smaller open set so that we can assume that the biholomorphisms $F_{\alpha,t}$ extend to a neighborhood of $\partial U_{\alpha, 0}$.

\begin{proposition}\label{proposition convergence}
For any $\alpha \in \Gamma$, the currents $F_{\alpha,t}^*\omega_t|_{U_{\alpha,t}}$ converge to $\omega_0|_{U_{\alpha,0}}$ when $t\to 0$, weakly on $U_{\alpha,0}$ and locally smoothly on the regular locus of that set. 
\end{proposition}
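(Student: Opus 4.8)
The plan is a compactness argument: extract subsequential limits of the $F_{\alpha,t}^*\omega_t$ and identify each with $\omega_0$ by the uniqueness of the singular Ricci-flat metric. Fix a nowhere-vanishing section $\eta\in H^0(\sX,\omega_{\sX/\Delta})$ and let $\Omega_t:=i^{n^2}\eta_t\wedge\bar\eta_t$ on $X_t$, where $\eta_t:=\eta|_{X_t}$; since $X_t$ has canonical singularities $\Omega_t$ has finite mass, and the Ricci-flat equation for $\omega_t$ reads $\omega_t^n=\lambda_t\Omega_t$ with $\lambda_t=[\theta_t]^n/\int_{X_t}\Omega_t$. As $\pi$ is topologically trivial, $t\mapsto[\theta_t]$ is a flat section of the local system $R^2\pi_*\R$, so $[\theta_t]^n$ is constant; since $\int_{X_t}\Omega_t$ varies continuously (cf.\ \cite{DGG20}), $\lambda_t\to\lambda_0$. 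Over $U_{\alpha,0}$ write $\omega_{\alpha,t}:=F_{\alpha,t}^*\omega_t=\theta_{\alpha,t}+dd^c\psi_{\alpha,t}$ with $\theta_{\alpha,t}:=F_{\alpha,t}^*\theta_t$ and $\psi_{\alpha,t}:=F_{\alpha,t}^*\varphi_t$; then $\omega_{\alpha,t}^n=\lambda_t\,F_{\alpha,t}^*\Omega_t$ as Bedford--Taylor measures, $\theta_{\alpha,t}$ and $F_{\alpha,t}^*\Omega_t$ converge to $\theta_0$ and $\Omega_0$ locally smoothly on $U_{\alpha,0}^{\reg}$ (and in $L^1_{\mathrm{loc}}$ on $U_{\alpha,0}$) because $F_{\alpha,t}\to\id$ locally smoothly, and $-C\le\psi_{\alpha,t}\le C$ by \eqref{Linfty2}.

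On any compact $K\Subset U_{\alpha,0}^{\reg}$, the equation $\omega_{\alpha,t}^n=\lambda_t F_{\alpha,t}^*\Omega_t$, the uniform bound on $\psi_{\alpha,t}$, and the locally uniform smooth convergence of the data yield uniform-in-$t$ estimates for $\omega_{\alpha,t}$ in every $C^k(K)$, via the Aubin--Yau second-order (Chern--Lu) estimate, Evans--Krylov, and Schauder bootstrapping; these are interior and hence insensitive to $X_0^{\sing}$, this being in essence the interior regularity of degenerate Calabi--Yau metrics, cf.\ \cite{Paun}. Thus any sequence $t_\nu\to0$ has a subsequence along which, for all $\alpha$ at once (the cover being finite), $\omega_{\alpha,t}$ converges in $C^\infty_{\mathrm{loc}}(U_{\alpha,0}^{\reg})$ and $\psi_{\alpha,t}$ converges in $L^1_{\mathrm{loc}}(U_{\alpha,0})$ to a bounded $\theta_0$-psh function $\psi_{\alpha,\infty}$; set $\omega_{\alpha,\infty}:=\theta_0+dd^c\psi_{\alpha,\infty}$, which agrees on $U_{\alpha,0}^{\reg}$ with the smooth limit. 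An elementary observation used repeatedly: if biholomorphisms $g_t$ converge locally smoothly to the identity and closed positive $(1,1)$-currents $T_t$ with uniformly bounded local potentials converge weakly to $T$, then $g_t^*T_t\to T$ weakly (pull back potentials; use continuity of $L^1$ under maps close to the identity). Applying this with $g_t=F_{\beta,t}^{-1}\circ F_{\alpha,t}$ shows the $\omega_{\alpha,\infty}$ agree on overlaps, hence glue to a global closed positive $(1,1)$-current $\omega_\infty$ on $X_0$ with bounded local potentials; applying it with $g_t=F_{\alpha,t}^{-1}\circ\Phi_t$ for a global $C^\infty$-trivialization $\Phi$ of $\pi$ shows moreover $\Phi_t^*\omega_t\to\omega_\infty$ weakly on $X_0$.

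It remains to show $\omega_\infty=\omega_0$. The class of $\Phi_t^*\omega_t$ is $\Phi_t^*[\theta_t]=[\theta_0]$, because $\Phi$ realizes the parallel transport of $R^2\pi_*\R$ under which $[\theta_t]$ is flat, and de Rham classes persist in the weak limit; hence $[\omega_\infty]=[\theta_0]$ in $H^2(X_0,\R)$. Since $H^1(X_0,\R)=0$ forces $H^1(X_0,\sO_{X_0})=0$, the sequences \eqref{eq coh one}--\eqref{eq coh two} show every de Rham-exact closed $(1,1)$-current with bounded local potentials is globally $dd^c$ of a bounded function, so $\omega_\infty=\theta_0+dd^c\varphi_\infty$ for some $\varphi_\infty\in L^\infty(X_0)$. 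Moreover $\omega_\infty^n=\lambda_0\Omega_0$ globally: on $U_{\alpha,0}^{\reg}$ this is the smooth limit of $\omega_{\alpha,t}^n=\lambda_t F_{\alpha,t}^*\Omega_t$, and neither measure charges the pluripolar set $U_{\alpha,0}^{\sing}$---the right side because $F_{\alpha,t}^*\Omega_t$ has uniformly small mass near it (change of variables and $F_{\alpha,t}^*\eta_t\to\eta_0$ locally uniformly), the left because $\omega_{\alpha,\infty}$ has bounded potential---so $\omega_{\alpha,t}^n\to\omega_{\alpha,\infty}^n$ weakly on $U_{\alpha,0}$. Thus $\varphi_\infty$ and $\varphi_0$ are bounded solutions of the same complex Monge--Ampère equation in the Kähler class $[\theta_0]$, hence differ by a constant by the uniqueness part of \cite{EGZ}; that is, $\omega_\infty=\omega_0$. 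As $\omega_0$ is the only possible subsequential limit, $F_{\alpha,t}^*\omega_t\to\omega_0|_{U_{\alpha,0}}$ weakly, and the uniform $C^k$-bounds on $U_{\alpha,0}^{\reg}$ promote this to locally smooth convergence there. The main obstacle is the interior a priori estimate together with the mass control near $X_0^{\sing}$ needed to pass to the limit in the Monge--Ampère equation; the cohomological bookkeeping (the role of $H^1(X_0,\R)=0$ and the flat-section argument) is then routine.
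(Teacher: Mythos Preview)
Your overall strategy---extract subsequential limits using the uniform $L^\infty$ bound and interior higher-order estimates, then identify the limit with $\omega_0$ via uniqueness of the singular Ricci-flat metric in the class $[\theta_0]$---is exactly the paper's, and your handling of the interior $C^k$ estimates via P\u{a}un's argument matches the paper's use of Tsuji's trick on a simultaneous resolution.

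Your gluing step, however, takes a detour that introduces avoidable difficulties. You glue the \emph{currents} $\omega_{\alpha,\infty}$ and then invoke a global $C^\infty$-trivialization $\Phi$ together with a cohomological argument (using $H^1(X_0,\R)=0$) to produce a global bounded $\theta_0$-potential. Two issues arise. First, the claim that $t\mapsto[\theta_t]$ is a \emph{flat} section of $R^2\pi_*\R$ is false: $\theta$ is only a smooth relative form, so $[\theta_t]$ varies continuously (which is all you actually need for $\lambda_t\to\lambda_0$ and for the limiting class), but not constantly. Second, pulling back the singular current $\omega_t$ by the non-holomorphic diffeomorphism $\Phi_t$ and tracking its de Rham class through a weak limit on the \emph{singular} space $X_0$ is delicate; your ``elementary observation'' was stated for biholomorphisms and does not directly apply to $F_{\alpha,t}^{-1}\circ\Phi_t$.

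The paper sidesteps all of this with one line: since $\psi_{\alpha,t}=\varphi_t\circ F_{\alpha,t}$ is the local pullback of the \emph{global} function $\varphi_t$, on an overlap $U_{\alpha\beta,0}$ one has $\psi_{\alpha,t}=\psi_{\beta,t}\circ(F_{\beta,t}^{-1}\circ F_{\alpha,t})$ with $F_{\beta,t}^{-1}\circ F_{\alpha,t}\to\id$, so any subsequential limits satisfy $\psi_{\alpha,0}=\psi_{\beta,0}$ on $U_{\alpha\beta,0}$ and patch directly to a global $\psi_0\in\mathrm{PSH}(X_0,\theta_0)\cap L^\infty(X_0)$. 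No $\Phi$, no cohomology, no use of $H^1(X_0,\R)=0$ in this proposition. From there the Monge--Amp\`ere equation on $X_0^{\reg}$ and uniqueness from \cite{EGZ} finish the proof exactly as you outline.
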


\begin{proof}
Let us write $F_{\alpha,t}^*\omega_t = \theta_{\alpha,t}+dd^c \psi_{\alpha,t}$ on $U_{\alpha,0}$ where $\theta_{\alpha, t}:=F_{\alpha,t}^*\theta_t$ and $\psi_{\alpha,t}:=\vphi_t \circ  F_{\alpha,t}$. First observe that $\psi_{\alpha,t}$ is a  $\theta_{\alpha,t}$-psh function.  We are going to show at the same time the following three assertions, which altogether prove the proposition. 
\begin{itemize}
\item The family $(\psi_{\alpha, t})$ is precompact in the $L^1_{\rm loc}$ topology.
\item Any weakly convergent subsequence $(\psi_{\alpha, t_j})$ actually converges locally smoothly on $U_{\alpha, 0}^{\rm reg}$.
\item The only cluster value for $(\psi_{\alpha, t})$ when $t\to 0$ is ${\varphi_0}|_{U_{\alpha,0}}$.
\end{itemize}
Note that even though the results above are local (i.e. on $U_{\alpha, 0}$), we need a global argument to identify the sequential limits and obtain the last item. 

The first item follows directly from classical pluripotential theory given the estimate
\begin{equation}
\label{Linfty3}
\| \psi_{\alpha, t}\|_{L^{\infty}(U_{\alpha,0})} \le C
\end{equation}
 infered by \eqref{Linfty2} and the fact that $\theta_{\alpha,t}$ converges (smoothly) to $\theta_0$ on $U_{\alpha_0}$.

We now proceed to prove the remaining two assertions. Let us first observe that if $\alpha,\beta$ are two indices such that $U_{\alpha\beta, 0}:=U_{\alpha, 0}\cap U_{\beta, 0} \neq \emptyset$ and $\psi_{\alpha, t_j}$ converges to a quasi-psh function $\psi_{\alpha,0}$ on $U_{\alpha, 0}$, then any sequential limit $\psi_{\beta,t_j'}\to \psi_{\beta,0}$ for a subsequence $(t_j')$ of $(t_j)$ will satisfy $ \psi_{\alpha,0}=\psi_{\beta, 0}$ on $U_{\alpha\beta,0}$. 
 This follows from the fact that $(F_{\alpha,t}|_{U_{\alpha\beta,0}})\circ (F_{\beta,t}|_{U_{\alpha\beta,0}})^{-1}$ converges to $\id_{U_{\alpha\beta,0}}$ when $t\to 0$. Since the set $\Gamma$ of indices is finite, we can iterate this construction and out of any sequence $t_j \to 0$, one can find a subsequence $(t_j')$ and a function $\psi_0\in \mathrm{PSH}(X_0,\theta_0)\cap L^{\infty}(X_0) $ such that for any $\alpha \in \Gamma$, the current $F_{\alpha, t_j'}^*\omega_{t_j'}$ converges to $\theta_0+dd^c \psi_0$ on $U_{\alpha, 0}$ in the weak topology.

In order to finish the proof of the proposition, we will show the following.
\begin{itemize}
\item  The weak convergence $F_{\alpha, t_j'}^*\omega_{t_j'}\to \theta_0+dd^c \psi_0$ is locally smooth over $X_0^\reg$;
\item  The current $\theta_0+dd^c \psi_0$ coincides with $\omega_0$.
\end{itemize}
Since $\pi$ is locally trivial, the functorial resolutions $p_t:\wt X_t\to X_t$ can be patched together to induce a simultaneous resolution $p:\wt \sX\to \sX$ that restricts to $p_t$ over $X_t$ for any $t\in \Delta$, cf. \cite[Lemma~4.8]{BL18}. As the simultaneous resolution is a projective morphism, one can reproduce verbatim the argument in \cite[\S~3]{Paun} (see also \cite[Appendix~B]{BBEGZ}) relying on Tsuji's trick \cite{Tsuji88} to get laplacian estimates 
\begin{equation}
\label{laplacian}
(\tr_{\theta_t} \omega_t)|_{K_t} \le C_K 
\end{equation}
for any compact subset $K\Subset \sX^\reg$ where $K_t:=K\cap X_t$ and $C_K>0$ is a constant independent of $t$. From \eqref{laplacian}, one can get higher order estimates 
\begin{equation}
\label{Ck}
\|\varphi_t\|_{\mathcal C^k(K_t)}  \le C_{K,k} 
\end{equation}
for any integer $k\ge 0$ using standard results (Evans-Krylov and Schauder estimates, cf. e.g. \cite[\textsection 5.6]{Bl2} and \cite[\textsection 2.3]{SzGTM}). 

Now, let $\Omega \in H^0(\sX, \omega_{\sX/\Delta})$ be a trivialization. We know from e.g. the proof of \cite[Theorem~6.1]{DGG20} that there exists a constant $c_t>0$ uniformly bounded away from $0$ and $+\infty$ when $t\in \Delta$ such that 
\begin{equation}\label{eq MAt}
(\theta_t+dd^c \varphi_t)^n = c_t \cdot i^{n^2} \Omega_t \wedge \overline \Omega_t \quad \mbox{on } X_t.
\end{equation} 
By \eqref{Ck}, the function $\psi_{\alpha,t}$ is locally bounded for any $\mathcal C^k$-norm on $U_{\alpha, 0}^\reg:=U_{\alpha,0}\cap X_0^\reg$. Therefore, the Arzela--Ascoli theorem guarantees that one can find a subsequence $(t_j'')$ of $(t_j')$ such that $\psi_{\alpha, t_j''}$ converges locally smoothly on $U_{\alpha, 0}^\reg$. Since $\psi_{\alpha, t_j''}$ already converges weakly to $\psi_0$ on $U_{\alpha, 0}$, we have $\psi_{\alpha,0}|_{U_{\alpha,0}^\reg}=\psi_0|_{U_{\alpha, 0}^\reg}$. This shows that $\psi_{\alpha,t}$ converges locally smoothly to $\psi_0$ on $U_{\alpha, 0}^\reg$ when $t\to 0$. Applying $F_{\alpha, t_j''}^*$ to \eqref{eq MAt} and passing to the limit on each $U_{\alpha, 0}^\reg$, one finds that 
\begin{equation}\label{eq MAt2}
(\theta_0+dd^c \psi_0)^n  = c_0 \cdot i^{n^2} \Omega_0 \wedge \overline \Omega_0 \quad \mbox{on } X_0^\reg.
\end{equation} 
As $\psi_0 \in L^{\infty}(X_0)$, the uniqueness of the Kähler--Einstein metric \cite[Proposition~1.4]{EGZ} guarantees that $\theta_0+dd^c \psi_0=\omega_0$. 
 \end{proof}

%------------------------------------------------------------------------------------------
\subsection{Decomposition of the KE metric on $X_0$}\label{sec:dec}
%------------------------------------------------------------------------------------------

We now prove the splitting of the K\"ahler-Einstein metric. 
\begin{proposition}
\label{proposition decompKE}
In Setting~\ref{setting relativeKE}, one can decompose the Kähler-Einstein metric $\om_t$ on $X_t^\reg$ as 
\begin{equation}
\label{eq decompKE}
\om_t=\omega_{A_t}+\omega_{B_t}
\end{equation}
where
\begin{enumerate}
\item\label{list 1} The forms $\omega_{A_t},\omega_{B_t}$ are closed, semipositive smooth $(1,1)$-forms on $X_t^\reg$.
\item\label{list 2} One has $\mathrm{ker} \, \omega_{A_t} =  B_t$ and  $\mathrm{ker} \, \omega_{B_t} = A_t$.
\item\label{list 3} When $t\to 0$, we have local smooth convergence $\omega_{A_t}\to \omega_{A_0}$ (resp. $\omega_{B_t}\to \omega_{B_0}$) on $X_0^\reg$ under any local trivialization of the family. 
\item\label{list 4} If $t\neq 0$, then $\om_{A_t}$ (resp. $\omega_{B_t}$) is the pull-back of a KE metric on $Y_t$ (resp. $Z_t$). In particular, they extend to $X_t$ as positive currents with bounded local potentials satisfying the identity~\eqref{eq decompKE}.
\end{enumerate}
\end{proposition}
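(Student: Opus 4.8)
The plan is to build the decomposition first on the general fibre, where it reflects the genuine product structure, and then to transport it to the central fibre using the smooth convergence established in Proposition~\ref{proposition convergence}.

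\textbf{The fibres $X_t$ with $t\neq0$.} Fix $t\neq0$ and write $X_t=Y_t\times Z_t$ as in Setting~\ref{setting relativeKE}(iv), with projections $\pr_1\colon X_t\to Y_t$ and $\pr_2\colon X_t\to Z_t$. Since $\pi$ is locally trivial, $Y_t$ and $Z_t$ are compact Kähler varieties with canonical singularities, and restricting $\omega_{\sX/\Delta}\iso\sO_\sX$ to slices shows $\omega_{Y_t}\iso\sO_{Y_t}$ and $\omega_{Z_t}\iso\sO_{Z_t}$. As $\pi$ is topologically trivial and $H^1(X_0,\R)=0$, also $H^1(X_t,\R)=0$, so the Künneth formula gives $\{\theta_t\}=\pr_1^*\alpha_t+\pr_2^*\beta_t$ with $\alpha_t,\beta_t$ Kähler (test $\theta_t$ on slices through smooth points). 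I will then take the singular Ricci-flat metrics $\eta_t\in\alpha_t$, $\mu_t\in\beta_t$ of \cite[Theorem~A]{EGZ}: the current $\pr_1^*\eta_t+\pr_2^*\mu_t$ on $X_t$ is closed and positive with bounded local potentials by Remark~\ref{pull back}, is smooth and Ricci-flat on $X_t^\reg=Y_t^\reg\times Z_t^\reg$, and there solves the same Monge--Ampère equation as $\om_t$ (because $\pr_1^*\Omega_{Y_t}\wedge\pr_2^*\Omega_{Z_t}$ trivialises $\omega_{X_t}$ and $(\pr_1^*\eta_t+\pr_2^*\mu_t)^{m+n}=\binom{m+n}{m}(\pr_1^*\eta_t)^m\wedge(\pr_2^*\mu_t)^n$). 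By the uniqueness in \cite[Proposition~1.4]{EGZ} this forces $\om_t=\pr_1^*\eta_t+\pr_2^*\mu_t$, and I set $\om_t^1:=\pr_1^*\eta_t$, $\om_t^2:=\pr_2^*\mu_t$. This already gives \eqref{list 1}, \eqref{list 4}, and \eqref{list 2} for $t\neq0$: by Setting~\ref{setting relativeKE}(v) one has $\ker\om_t^1=\ker d\pr_1=B_t$ and $\ker\om_t^2=\ker d\pr_2=A_t$.

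\textbf{Reformulation and passage to the limit.} Let $p_A,p_B$ be the projectors of the splitting $T_{\sX/\Delta}|_{\sX^\reg}=A|_{\sX^\reg}\oplus B|_{\sX^\reg}$ of Setting~\ref{setting relativeKE}(v), and $p_{A,t},p_{B,t}$ their fibrewise restrictions on $X_t^\reg$. Since $\om_t$ is block-diagonal with respect to $A_t\oplus B_t$ for $t\neq0$, the forms constructed above satisfy $\om_t^1=\om_t(p_{A,t}\cdot,p_{A,t}\cdot)$ and $\om_t^2=\om_t(p_{B,t}\cdot,p_{B,t}\cdot)$ on $X_t^\reg$. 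The key point is that $A|_{\sX^\reg}$ is a \emph{holomorphic} direct summand of the locally free sheaf $T_{\sX/\Delta}|_{\sX^\reg}$, so pulling it back along the trivialisations $F_\alpha$ of \eqref{local biholo} produces a holomorphic, hence smooth, family of subbundles of $T_{U_{\alpha,0}^\reg}$; therefore $F_{\alpha,t}^*p_{A,t}\to p_{A,0}$ and $F_{\alpha,t}^*p_{B,t}\to p_{B,0}$ smoothly on $U_{\alpha,0}^\reg$ as $t\to0$ (they are the identity at $t=0$ since $F_{\alpha,0}=\id$). Combined with the smooth convergence $F_{\alpha,t}^*\om_t\to\om_0$ of Proposition~\ref{proposition convergence}, the forms $F_{\alpha,t}^*\om_t^i$ converge smoothly on $U_{\alpha,0}^\reg$; the limits agree on overlaps (the transition maps $F_{\alpha,t}\circ F_{\beta,t}^{-1}$ tend to $\id$) and glue to smooth $(1,1)$-forms $\om_0^1,\om_0^2$ on $X_0^\reg$ with $\om_0=\om_0^1+\om_0^2$ and $\om_0^1=\om_0(p_{A,0}\cdot,p_{A,0}\cdot)$, $\om_0^2=\om_0(p_{B,0}\cdot,p_{B,0}\cdot)$. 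This is \eqref{list 3}. Each $\om_0^i$ is closed and semi-positive as a smooth limit of closed semi-positive forms, which yields \eqref{list 1} at $t=0$; and from the intrinsic formula $B_0=\ker p_{A,0}\subseteq\ker\om_0^1$, while $\om_0^1|_{A_0}=\om_0|_{A_0}$ is positive definite because $\om_0$ is a genuine Kähler form on $X_0^\reg$ by \cite[Corollary~1.1]{Paun}; since $T_{X_0}|_{X_0^\reg}=A_0\oplus B_0$ this forces $\ker\om_0^1=B_0$, and symmetrically $\ker\om_0^2=A_0$, giving \eqref{list 2} at $t=0$.

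\textbf{Expected main difficulty.} The real content is the limit step: one must reconcile the analytic convergence of the Kähler--Einstein metrics (Proposition~\ref{proposition convergence}) with the variation of the \emph{algebraic} splitting $T_{\sX/\Delta}=A\oplus B$, which works precisely because that splitting is holomorphic over $\sX^\reg$ and hence varies smoothly in any local trivialisation. A secondary subtlety is that $\ker\om_0^i$ could a priori jump up in the limit; this is excluded by the positivity of $\om_0$ restricted to $A_0$ and $B_0$, read off from the intrinsic formula. By contrast, the case $t\neq0$ is essentially the classical facts that a product of Ricci-flat metrics is Ricci-flat and that the singular Kähler--Einstein metric is unique.
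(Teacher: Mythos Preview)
Your proof is correct and, for items \eqref{list 3} and \eqref{list 4}, essentially identical to the paper's: you identify the K\"ahler--Einstein metric on a product fibre with the product of the factor KE metrics via K\"unneth and uniqueness (the paper packages the K\"unneth step as Lemma~\ref{product} in $H^1(\mathrm{PH})$, you do it in $H^2(\,\cdot\,,\R)$ and restrict to slices---both are fine), and you pass to the limit using the projector formula $\omega_t^1=\omega_t(p_{A,t}\cdot,p_{A,t}\cdot)$ together with Proposition~\ref{proposition convergence}, exactly as the paper does.

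The one genuine difference is in how items \eqref{list 1}--\eqref{list 2} are obtained at $t=0$. The paper treats all $t$ uniformly by invoking \cite[Theorem~8.1.2]{GGK19}: the summands $A_t,B_t$ are \emph{parallel} for the Ricci-flat metric, so $\omega_t$ is block-diagonal and the blocks are automatically closed. You instead deduce block-diagonality, closedness, semipositivity and the kernel statement at $t=0$ purely from the smooth limit (closed and semipositive survive smooth limits; the intrinsic formula $\omega_0^1=\omega_0(p_{A,0}\cdot,p_{A,0}\cdot)$ plus K\"ahlerness of $\omega_0$ on $X_0^\reg$ pin down the kernel). Your route is slightly more elementary and self-contained---it avoids the external reference to the parallel-subbundle theorem---while the paper's route is cleaner in that it gives the decomposition for every $t$ in one stroke and makes the closedness of $\omega_0^i$ conceptually transparent (parallel $\Rightarrow$ closed) rather than a limit artifact. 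Either way the conclusions coincide, since the block decomposition with respect to $A_0\oplus B_0$ is the same object however one arrives at it.
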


\begin{proof}\ 

\noindent
{\it Items \eqref{list 1} and \eqref{list 2}.} The decomposition of the tangent sheaf $T_{X_t}=A_t\oplus B_t$ induces a decomposition of $T_{X_t^\reg}$ into {\it parallel} subbundles with respect to $\omega_t$ by \cite[Theorem~8.1.2]{GGK19}. That result was stated for projective varieties but the proof of that particular result is purely analytic and does not rely on the projectivity assumption, cf. \cite[Remark~3.5]{CGGN}. 

This implies that the Kähler--Einstein metric $\omega_t$ splits canonically on $X_t^\reg$ as follows
\begin{equation*}
\omega_t=\omega_{A_t}+\omega_{B_t}
\end{equation*}
where $\omega_{A_t}$ (resp. $\omega_{B_t}$) is a smooth, closed semipositive $(1,1)$-form on $X_t^\reg$ whose kernel is $B_t|_{X_t^\reg}$ (resp $A_t|_{X_t^\reg}$) and which is positive definite in restriction to $A_t|_{X_t^\reg}$ (resp. $B_t|_{X_t^\reg}$). \\

\noindent
{\it Item \eqref{list 3}.} This is an easy consequence of the local smooth convergence of $\omega_t$ to $\omega_0$ on $X_0^\reg$ via the local biholomorphisms $F_{\alpha,t}$ shown in Proposition~\ref{proposition convergence} combined with the definition of $\omega_{A_t},\omega_{B_t}$ and the fact that the summand $A_t$ of $T_{X_t}$ coincides with the restriction of the subbundle $A\subset T_{\sX/\Delta}$ to $X_t^\reg$.\\

\noindent
{\it Item \eqref{list 4}.} From now on, we assume $t\neq 0$. We will need the following
 
 \begin{lemma}
 \label{product}
For any $t\in \Delta^*$, the natural map
 \[H^1(Y_t,\mathrm{PH}_{Y_t}) \oplus  H^1(Z_t,\mathrm{PH}_{Z_t}) \longrightarrow H^1(X_t,\mathrm{PH}_{X_t})\]
 is an isomorphism.
 \end{lemma}
 
\begin{proof}
Recall from Proposition~\ref{H1PH} that $H^1(X_t,\mathrm{PH}_{X_t})$ is the kernel of the natural map $H^2(X_t,\mathbb R)\to H^2(X_t, \mathcal O_{X_t})$. We claim that this map is given by the sum map $H^2(Y_t,\mathbb R)\oplus H^2(Z_t,\mathbb R)\to H^2(Y_t, \mathcal O_{X_t})\oplus H^2(Z_t, \mathcal O_{X_t})$, from which the lemma follows.
 
 Now, since $\pi$ is locally trivial and $H^1(X_0, \mathbb R)=0$, we have $H^1(X_t, \mathbb R)=0$. Since $X_t$ has rational singularities, this implies that $H^1(X_t, \mathcal O_{X_t})=0$ as well. The claim now follows from Künneth decomposition formula.
 \end{proof}

 Therefore, one can decompose $[\theta_t]\in H^{1}(X_t,\mathrm{PH}_{X_t})$ as 
 $$[\theta_t]=\mathrm{pr}_{t,1}^*\alpha_t+\mathrm{pr}_{t,2}^*\beta_t$$
  for some classes $\alpha_t$ and $\beta_t$ on $Y_t$ and $Z_t$ respectively. Since $[\theta_t]$ is Kähler, so are $\alpha_t$ and $\beta_t$.  By \cite{EGZ}, there exists a unique singular Kähler--Einstein metric $\omega_{Y_t}\in \alpha_t$ (resp. $\omega_{Z_t} \in \beta_t$). Since $\mathrm{pr}_{t,1}^*\omega_{Y_t}+\mathrm{pr}_{t,2}^*\omega_{Z_t} \in [\theta_t]$ is smooth, Kähler and Ricci-flat on $X_t^\reg$ and has bounded potentials, it coincides with $\omega_t$. In particular, one has $\omega_{A_t}=\mathrm{pr}_{t,1}^*\omega_{Y_t}$ and  $\omega_{B_t}=\mathrm{pr}_{t,1}^*\omega_{Z_t}$ on $X_t^\reg$, hence everywhere since none of these currents puts any mass on $X_t^{\rm sing}$. 
 \end{proof}

%------------------------------------------------------------------------------------------
\subsection{Properties of the currents $\omega_{A_0}$ and $\omega_{B_0}$}
%------------------------------------------------------------------------------------------
In this section, we fix a functorial, simultaneous resolution $p:\wt\sX\to  \sX$ inducing fiber-wise resolutions $p_t: \wt X_t\to X_t$, we set $\wt \pi:=\pi \circ p$. Since $p_*T_{\wt \sX}=T_{\sX}$ and $\sX\to \Delta$ is locally trivial, one can find a cover $(U_\alpha)$ of $\sX$ and holomorphic vector fields $v_\alpha$ on $U_\alpha$ such that 
\begin{itemize}
\item $\pi_*v_\alpha=\frac{\partial}{\partial t}$.
\item $v_\alpha=p_*\wt v_\alpha$ for some holomorphic vector field $\wt v_\alpha$ on $p^{-1}(U_\alpha)$. 
\end{itemize}
Using a partition of unity $(\chi_\alpha)$ subordinate to $(U_\alpha)$, one can construct a smooth vector field $v$ on $\sX$ (resp. $\wt v$ on $\wt \sX$) such that $p_*\wt v= v$ and $\pi_*v=\frac{\partial}{\partial t}$.  The flow of these vector fields yields diffeomorphisms $F: X_0\times \Delta \to \sX$ and $\widetilde F: \wt X_0\times \Delta \to \wt \sX$ commuting with $p$; i.e. $p \circ \wt F = F \circ (p_0 \times \id_{\Delta})$. With a slight abuse, we denote by $F_t$ (resp. $\wt F_t$) the restriction $F|_{X_0\times \{t\}}: X_0\to X_t$ (resp. $\wt F|_{\wt X_0\times \{t\}}: \wt X_0\to \wt X_t$). 

For $t\neq 0$, the currents $\omega_{A_t},\omega_{B_t}$ have local potentials by Proposition~\ref{proposition decompKE}, hence they induce cohomology classes $\alpha_t,\beta_t \in H^2(X_t, \mathbb R)$   such that 
\begin{equation}
\label{sum}
\alpha_t+\beta_t= [\theta_t].
\end{equation}
\begin{lemma}\label{lemma alpha}
With the notation above, there exist $\alpha_0, \beta_0\in H^2(X_0, \mathbb R)$ such that $F_t^*\alpha_t \to \alpha_0$ (resp. $F_t^*\beta_t \to \beta_0$) when $t\to 0$.
\end{lemma}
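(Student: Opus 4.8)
The plan is to prove the slightly stronger statement that $F_t^*\alpha_t$ and $F_t^*\beta_t$ are \emph{independent} of $t\in\Delta^*$. The idea is to recognize $\alpha_t$ and $\beta_t$ as the components of the flat section $t\mapsto[\theta_t]$ under the Künneth decomposition of local systems furnished by Lemma~\ref{lemma limit kunneth}.

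First I would record that $t\mapsto[\theta_t]$ is a flat section of the local system $R^2\pi_*\R$ on $\Delta$; equivalently, $F_t^*[\theta_t]=[\theta_0]$ for all $t$. Indeed $\theta$ is a global smooth form on $\sX$, the inclusions $X_t\hookrightarrow\sX$ are all homotopic (as $\pi$ is locally trivial and $\Delta$ is contractible), and the $C^\infty$-trivialization $F:X_0\times\Delta\to\sX$ with $F_0=\id$ restricts over any path joining $0$ to $t$ to an isotopy realizing parallel transport in $R^2\pi_*\R$; thus a section of $R^2\pi_*\R$ is flat exactly when it is $F_t^*$-constant. Next, Lemma~\ref{lemma limit kunneth} provides a decomposition of local systems on $\Delta$
\[
R^2\pi_*\R\;\cong\;j_*R^2\pi_{1*}\R\;\oplus\;\bigl(j_*R^1\pi_{1*}\R\otimes_\R j_*R^1\pi_{2*}\R\bigr)\;\oplus\;j_*R^2\pi_{2*}\R,
\]
which over $\Delta^*$ restricts fibrewise to the ordinary Künneth decomposition of $H^2(X_t,\R)=H^2(Y_t\times Z_t,\R)$. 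Since $H^1(X_0,\R)=0$, topological triviality of $\pi$ gives $H^1(X_t,\R)=0$ for every $t$, hence $H^1(Y_t,\R)=H^1(Z_t,\R)=0$ by Künneth on $X_t=Y_t\times Z_t$, so the middle summand above vanishes.

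Finally, fix $t\neq0$. By Proposition~\ref{proposition decompKE} we have $\omega_t=\omega_t^1+\omega_t^2$ with $\omega_t^1=\mathrm{pr}_{t,1}^*\omega_{Y_t}$ and $\omega_t^2=\mathrm{pr}_{t,2}^*\omega_{Z_t}$, so on cohomology $\alpha_t=[\omega_t^1]$ lies in the summand $(j_*R^2\pi_{1*}\R)_t$, $\beta_t=[\omega_t^2]$ lies in $(j_*R^2\pi_{2*}\R)_t$, and $\alpha_t+\beta_t=[\omega_t]=[\theta_t]$. Therefore $\alpha_\bullet$ (resp.\ $\beta_\bullet$) is the image of the flat section $[\theta_\bullet]$ under the projection of $R^2\pi_*\R$ onto its sub-local-system $j_*R^2\pi_{1*}\R$ (resp.\ $j_*R^2\pi_{2*}\R$), hence is itself a flat section of $R^2\pi_*\R$ over $\Delta$. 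Setting $\alpha_0,\beta_0\in H^2(X_0,\R)$ to be the values of these sections at $0$ (they satisfy $\alpha_0+\beta_0=[\theta_0]$ and lie in $(j_*R^2\pi_{1*}\R)_0$, $(j_*R^2\pi_{2*}\R)_0$ respectively), flatness then gives $F_t^*\alpha_t=\alpha_0$ and $F_t^*\beta_t=\beta_0$ for all $t\in\Delta^*$, which is in particular the asserted convergence. The only point requiring care is the standard identification of $F_t^*$ with parallel transport in $R^2\pi_*\R$, turning flatness into $F_t^*$-constancy; granting Lemma~\ref{lemma limit kunneth}, everything else is formal.
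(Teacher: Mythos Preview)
Your approach is essentially the same as the paper's: use the K\"unneth splitting of $R^2\pi_*\R_\sX$ from Lemma~\ref{lemma limit kunneth} (with the middle summand killed by $H^1(X_0,\R)=0$), recognize $\alpha_t,\beta_t$ as the components of $[\theta_t]$ in that splitting, and conclude that each component extends across $0$ because their sum does.

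However, your stronger claim that $t\mapsto[\theta_t]$ is a \emph{flat} section of $R^2\pi_*\R_\sX$ is not justified. In Setting~\ref{setting relativeKE}, $\theta$ is only assumed to be a smooth hermitian positive definite $(1,1)$-form on $\sX$ whose restriction to each fiber is K\"ahler; it is \emph{not} assumed to be closed on the total space. Without closedness there is no class $[\theta]\in H^2(\sX,\R)$ to restrict, and your homotopy argument for the inclusions $X_t\hookrightarrow\sX$ does not apply. (On a trivial family $X_0\times\Delta$, any $\theta$ restricting to $\omega_0+f(t)\eta$ on $X_0\times\{t\}$ with $\omega_0,\eta$ closed, $[\eta]\neq 0$, and $f$ a nonconstant smooth real function gives fiberwise K\"ahler restrictions but non-flat $t\mapsto[\theta_t]$.)

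The fix is immediate and is exactly what the paper does: one only needs that $t\mapsto[\theta_t]$ extends to a \emph{smooth} section of $(R^2\pi_*\R_\sX)\otimes\mathcal C^\infty_\Delta$ over all of $\Delta$, which is clear since $\theta$ is globally smooth. Projecting onto the two K\"unneth summands---which are sub-local-systems over $\Delta$, hence $\mathcal C^\infty$-subbundles---then shows that $t\mapsto\alpha_t$ and $t\mapsto\beta_t$ extend smoothly across $0$, giving the asserted convergence (but not constancy in general).
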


\begin{proof}
From Lemma \ref{lemma limit kunneth} we have $R^2\pi_*\R_\sX=j_*R^2\pi_{1*}\R_{\sY^*}\oplus j_*R^2\pi_{2*}\R_{\sZ^*}$, using (ii) of Setting~\ref{setting relativeKE}.  The sections $t\mapsto \alpha_t$ (resp. $t\mapsto \beta_t$) of the vector bundle $(R^2\pi_{1*}\R_{\sY^*})\otimes \mathcal C_{\Delta^*}^{\infty}$ (resp. $(R^2\pi_{2*}\R_{\sZ^*})\otimes \mathcal C_{\Delta^*}^{\infty}$) over $\Delta^*$ are such that their sum extends to a smooth section of $(R^2\pi_*\R_\sX)\otimes \mathcal C_{\Delta}^{\infty}$. As each section lives in a different direct summand, both extend separately across the origin, proving the claim.
\end{proof}

Another helpful result is the following 

\begin{lemma}\label{H11}
In Setting~\ref{setting relativeKE}, there is a sub-vector bundle $\mathcal H \subset R^2\pi_*{\mathbb R}_{\mathcal X} \otimes_\R \mathcal C^{\infty}_{\Delta}$ and the inclusion is fiberwise canonically identified with $H^1(X_t, \mathrm{PH}_{X_t})\subset H^2(X_t, \mathbb R)$.
\end{lemma}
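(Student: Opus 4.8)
I want to produce a sub-vector bundle $\mathcal H \subset R^2\pi_*\mathbb R_{\mathcal X}\otimes_\R \mathcal C^\infty_\Delta$ whose fiber at each $t$ is $H^1(X_t,\mathrm{PH}_{X_t})$. The starting point is Proposition~\ref{H1PH}, which identifies $H^1(X_t,\mathrm{PH}_{X_t})$ with the kernel of the natural map $\beta_t\colon H^2(X_t,\mathbb R)\to H^2(X_t,\mathcal O_{X_t})$, and the complement of this kernel with the image, which is $H^{0,2}(X_t)$ (via $\alpha\mapsto \alpha+\bar\alpha$, the $(0,2)$-part being the pure Hodge piece since $X_t$ has rational singularities and is of Fujiki class $\mathscr C$). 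So the first step is to set up the family version of this: by the Lemma preceding Corollary~\ref{corollary mixed kunneth}, $R^2\pi_*\mathbb Q_\sX$ underlies a variation of mixed Hodge structures, which by the remarks there (and since everything here is pure in the relevant degree, $X_t$ having rational singularities) is in fact a variation of pure Hodge structures of weight $2$. Complexifying, we get a holomorphic Hodge filtration $F^\bullet \subset R^2\pi_*\mathbb C_\sX\otimes_\mathbb C\sO_\Delta$ with $F^1$ a holomorphic subbundle, and the quotient $(R^2\pi_*\mathbb C_\sX\otimes\sO_\Delta)/F^1$ restricts fiberwise to $H^2(X_t,\mathcal O_{X_t})$ by the degeneration of the Hodge–de Rham spectral sequence (via the simultaneous resolution from Corollary~\ref{corollary resolutions} and Kebekus–Schnell).

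**Second step.** Define $\mathcal H$ fiberwise by $\mathcal H_t := \ker\bigl(\beta_t\colon H^2(X_t,\mathbb R)\to H^2(X_t,\mathcal O_{X_t})\bigr)$. To see this is a (smooth, real) subbundle, note that $H^2(X_t,\mathcal O_{X_t}) = \overline{F^2_t}$ -- the complex conjugate of $F^2 = H^{2,0}$ -- sitting inside $H^2(X_t,\mathbb C)$, again because the Hodge structure is pure of weight $2$ with $h^{1,1}$ possibly large but $F^2 = H^{2,0}$, $\overline{F^2} = H^{0,2}$. Concretely, the composite
\[
R^2\pi_*\mathbb R_\sX \otimes_\R \mathcal C^\infty_\Delta \;\hookrightarrow\; R^2\pi_*\mathbb C_\sX\otimes_\R\mathcal C^\infty_\Delta \;\twoheadrightarrow\; \bigl(R^2\pi_*\mathbb C_\sX\otimes_\C\sO_\Delta\bigr)/F^1 \otimes_{\sO_\Delta}\mathcal C^\infty_\Delta
\]
is a morphism of smooth complex vector bundles on $\Delta$; its fiberwise rank is constant (equal to $h^{0,2}(X_t) = h^2(X_t,\sO_{X_t})$, which is constant in the family since Hodge numbers are locally constant -- Corollary~\ref{cor hodge numbers} -- and $h^{0,2} = h^{2,0}$). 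A morphism of smooth vector bundles of locally constant rank has kernel a smooth subbundle, so $\mathcal H := \ker$ of this composite is the desired subbundle, and by Proposition~\ref{H1PH} its fiber at $t$ is exactly $H^1(X_t,\mathrm{PH}_{X_t})\subset H^2(X_t,\mathbb R)$.

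**The main obstacle** will be making precise that the bundle map above has locally constant rank, i.e.\ that the whole structure (Hodge filtration bundle, the quotient identified with $R^2\pi_*\sO$, and the degeneration statement) is genuinely a family statement and not just a fiberwise one. This is where the simultaneous resolution $\wt\sX\to\sX$ of Corollary~\ref{corollary resolutions} is essential: on $\wt\sX$ we can appeal to the standard Hodge theory of the smooth proper family $\wt\pi\colon\wt\sX\to\Delta$ (the Hodge bundles $R^q\wt\pi_*\Omega^p_{\wt\sX/\Delta}$ are locally free and base-change compatible, and the Hodge–de Rham spectral sequence degenerates), and then push down using $p_*\Omega^2_{\wt\sX/\Delta} = \Omega^{[2]}_{\sX/\Delta}$ from Kebekus–Schnell together with local triviality to control $R^0$ and the piece of $R^2$ that computes $F^2$, and hence, by conjugation and the purity of the weight-$2$ Hodge structure on the fibers, the quotient $H^2(X_t,\sO_{X_t})$. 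Once the flatness/local-freeness of the relevant Hodge bundles over $\Delta$ is in hand, the rest is the short exact sequence argument above; I expect no further difficulty. One should also remark that $\mathcal H$ is independent of $t_0$-trivialization and that the Kähler classes $\alpha_0,\beta_0$ produced in Lemma~\ref{lemma alpha} lie in $\mathcal H_0$ -- this last point follows because over $\Delta^*$ the classes $\alpha_t,\beta_t$ come from Kähler classes on $Y_t,Z_t$, hence land in $H^{1,1}$, hence in the kernel of $\beta_t$, and $\mathcal H$ being closed this persists in the limit.
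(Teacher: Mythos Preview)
Your proposal is correct and follows essentially the same approach as the paper: both define $\mathcal H$ as the kernel of the smooth bundle map $R^2\pi_*\R_\sX\otimes\mathcal C^\infty_\Delta\to F^0/F^1$ coming from the variation of (pure) Hodge structures on $R^2\pi_*\R_\sX$, and identify the fiber at $t$ with $H^1(X_t,\mathrm{PH}_{X_t})$ via Proposition~\ref{H1PH}. Your discussion of the ``main obstacle'' (constancy of the rank of $\beta_t$) is more detailed than what the paper writes, but the paper simply takes this for granted once the VHS is in place; your final paragraph about $\alpha_0,\beta_0$ landing in $\mathcal H_0$ is not part of this lemma but rather anticipates the proof of Proposition~\ref{boundedpotentials}.
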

\begin{proof}
By topological triviality and purity of the weight two Hodge structure, the family $\sX \to  \Delta$ gives rise to a variation of Hodge structures with underlying $\R$-local system $R^2\pi_*{\mathbb R}_{\mathcal X}$. We denote $\sV:= R^2\pi_*{\mathbb \R}_{\mathcal X} \tensor \sC^{\infty}_{\Delta}$ and by $F^p \subset \sV \tensor \C$ the $\mathcal{C}^\infty$-sections of the Hodge filtration. Then the base change of $\sV \to F^0/F^1$ to $t\in \Delta$ is the surjection $\beta$ from \eqref{eq h1 pluriharmonic sequence}, so if we define $\sH:=\ker\left(\sV \to F^0/F^1\right)$, we have canonical identifications $\sH_t = H^1(X_t,\PH_{X_t})$.
\end{proof}

\begin{proposition}
\label{boundedpotentials}
With the notation of Proposition~\ref{proposition decompKE}, then the currents $\omega_{A_0},\omega_{B_0}$ extend to positive currents on $X_0$ having local bounded potentials and satisfying~\eqref{eq decompKE} on $X_0$. 
\end{proposition}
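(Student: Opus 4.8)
The plan is to obtain the extensions $\hat\omega_0^i$ of $\omega_0^i$ by a combination of a soft extension argument on $X_0$ and a limiting argument from the general fibres. First, since $\omega_0=\omega_0^1+\omega_0^2$ on $X_0^{\reg}$ with both $\omega_0^i$ semipositive, one has $0\le\omega_0^i\le\omega_0$ there; as $\omega_0$ is the Kähler--Einstein current $\theta_0+dd^c\varphi_0$ with $\varphi_0\in L^\infty(X_0)$, it has bounded local potentials, hence locally finite mass near $X_0^{\sing}$, and therefore so does each $\omega_0^i$. By the Skoda--El~Mir extension theorem the trivial extension $\hat\omega_0^i$ to $X_0$ is then a closed positive $(1,1)$-current. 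I will fix a finite cover $(U_\alpha)$ of $\sX$ by contractible Stein charts adapted to the local triviality, with local biholomorphisms $F_{\alpha,t}\colon U_{\alpha,0}\to U_{\alpha,t}$ as in Proposition~\ref{proposition convergence}, shrunk so that $\theta_0|_{U_{\alpha,0}}=dd^c h_\alpha$ with $h_\alpha$ smooth and $\omega_0|_{U_{\alpha,0}}=dd^c g_\alpha$ with $g_\alpha:=h_\alpha+\varphi_0$ bounded psh; on such charts $H^1(U_{\alpha,0},\mathrm{PH}_{U_{\alpha,0}})=0$, so it suffices to produce bounded psh functions $u^i_{\alpha,0}$ on $U_{\alpha,0}$ with $dd^c u^i_{\alpha,0}=\hat\omega_0^i$ and $u^1_{\alpha,0}+u^2_{\alpha,0}=g_\alpha$. (Along the way, using Lemma~\ref{lemma alpha} together with Lemma~\ref{H11}, one records that $[\hat\omega_0^i]$ is the limit class $\alpha_0$, resp.\ $\beta_0$, in $H^1(X_0,\mathrm{PH}_{X_0})$, with $\alpha_0+\beta_0=[\theta_0]$.)

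The core of the proof is a uniform estimate on the general fibres. For $t\ne0$ the current $\omega_t^1=\mathrm{pr}_{t,1}^*\omega_{Y_t}$ is the pull-back of the Kähler--Einstein metric of $Y_t$ by Proposition~\ref{proposition decompKE}\eqref{list 4}; restricting the normalized potential $\varphi_t$ of $\omega_t$ to a product slice $Y_t\times\{z\}$ with $z\in Z_t^{\reg}$ exhibits $\omega_{Y_t}$ with a local potential whose oscillation is at most $\operatorname{osc}_{X_t}\varphi_t\le C$ by \eqref{Linfty2}. Pulling this back by $\mathrm{pr}_{t,1}$ and reading it off in the chart $U_{\alpha,0}$ via $F_{\alpha,t}$ should give $F_{\alpha,t}^*\omega_t^1=dd^c u^1_{\alpha,t}$ with $\operatorname{osc}_{U_{\alpha,0}}u^1_{\alpha,t}\le C'$ uniformly in $t$; normalizing $\sup_{U_{\alpha,0}}u^1_{\alpha,t}=0$ then gives $\|u^1_{\alpha,t}\|_{L^\infty(U_{\alpha,0})}\le C'$, and since $F_{\alpha,t}^*\omega_t=dd^c(\psi_{\alpha,t}+h_{\alpha,t})$ with $\psi_{\alpha,t}+h_{\alpha,t}$ uniformly bounded by \eqref{Linfty3} (where $dd^c h_{\alpha,t}=F_{\alpha,t}^*\theta_t$ and $h_{\alpha,t}\to h_\alpha$ smoothly), the complementary potential $u^2_{\alpha,t}:=\psi_{\alpha,t}+h_{\alpha,t}-u^1_{\alpha,t}$ of $F_{\alpha,t}^*\omega_t^2$ is uniformly bounded as well. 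I expect this uniformity to be the main obstacle: the product decomposition $X_t=Y_t\times Z_t$ degenerates as $t\to0$, so one must check that the reference forms entering the estimate (such as $\mathrm{pr}_{t,1}^*(\theta_t|_{Y_t\times\{z\}})$, read in the chart via $F_{\alpha,t}$) stay bounded as $t\to0$; this is where the limit tangent splitting $T_{\sX/\Delta}=A\oplus B$ of Setting~\ref{setting relativeKE}(v) is used, to choose the cover $(U_\alpha)$ and the slices compatibly with the degenerating product structure.

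Finally, from these uniform bounds a subsequence $t_j\to0$ makes each $u^i_{\alpha,t_j}$ converge in $L^1_{\mathrm{loc}}(U_{\alpha,0})$ to a bounded psh function $u^i_{\alpha,0}$ (diagonalizing over the finitely many $\alpha$), and by Proposition~\ref{proposition decompKE}\eqref{list 3} the currents $dd^c u^i_{\alpha,t_j}=F_{\alpha,t_j}^*\omega_{t_j}^i$ converge to $\omega_0^i$ locally smoothly on $U_{\alpha,0}^{\reg}$; hence $dd^c u^i_{\alpha,0}=\omega_0^i$ on $U_{\alpha,0}^{\reg}$ and therefore $dd^c u^i_{\alpha,0}=\hat\omega_0^i$ on all of $U_{\alpha,0}$, since neither current charges $U_{\alpha,0}^{\sing}$. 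Passing to the limit in $u^1_{\alpha,t}+u^2_{\alpha,t}=\psi_{\alpha,t}+h_{\alpha,t}$ and using that $\psi_{\alpha,t}\to\varphi_0$ on $U_{\alpha,0}$ (Proposition~\ref{proposition convergence}) gives $u^1_{\alpha,0}+u^2_{\alpha,0}=g_\alpha$, whence $\hat\omega_0^1+\hat\omega_0^2=dd^c g_\alpha=\omega_0$ on $U_{\alpha,0}$, and this is the identity \eqref{eq decompKE} on $X_0$. Since the chart transitions $F_{\alpha,t}^{-1}\circ F_{\beta,t}$ tend to the identity, the differences $u^i_{\alpha,0}-u^i_{\beta,0}$ are pluriharmonic on the overlaps, so the $u^i_{\alpha,0}$ are genuine bounded local potentials for $\hat\omega_0^i$. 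This completes the proof.
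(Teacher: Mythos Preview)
Your argument has a genuine gap at precisely the point you flag as the ``main obstacle,'' and the proposed fix via the tangent splitting does not close it.  To obtain the uniform bound $\|u^1_{\alpha,t}\|_{L^\infty(U_{\alpha,0})}\le C'$ you need to control the smooth reference form $F_{\alpha,t}^*\,\mathrm{pr}_{t,1}^*(\theta_t|_{Y_t\times\{z\}})$ as $t\to 0$.  But the projections $\mathrm{pr}_{t,1}$ are defined using the global product structure $X_t=Y_t\times Z_t$, which exists only for $t\neq 0$ and is not known to be compatible with the local trivializations $F_{\alpha,t}$; there is no reason for $F_{\alpha,t}(U_{\alpha,0})\subset Y_t\times Z_t$ to be a product of opens, nor for $\mathrm{pr}_{t,1}\circ F_{\alpha,t}$ to converge in any sense.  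The relative tangent splitting $T_{\sX/\Delta}=A\oplus B$ is only infinitesimal information: it gives the limiting distributions $\ker\omega_t^i$, but not a limit of the integrated leaves or of the projection maps (indeed, producing such limits is the content of Sections~\ref{section family splitting}--\ref{section decomp proof}, which rely on the very proposition you are proving).  So the chain of inequalities leading to the uniform oscillation bound breaks down.

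The paper's proof sidesteps this difficulty by decoupling the two issues.  First, it establishes that $\omega_0^1$ merely has \emph{some} (possibly unbounded) local psh potential on $X_0$: one passes to a simultaneous resolution $p\colon\wt\sX\to\sX$, extracts a weak limit $T$ of $\wt F_{t_k}^*p_{t_k}^*\omega_{t_k}^1$ on the compact K\"ahler manifold $\wt X_0$ by a global mass bound (no uniform potential estimate needed), shows $[T]\in p_0^*H^1(X_0,\mathrm{PH}_{X_0})$ via Lemmas~\ref{lemma alpha} and~\ref{H11}, and descends $T$ to $X_0$ using constancy of psh functions on the fibers of $p_0$.  Second, once both $\omega_0^i$ are known to have local psh potentials $v_i$, boundedness follows from an elementary sandwich: on a Stein chart, $v-(v_1+v_2)$ is pluriharmonic (hence locally bounded), and since each $v_i\le 0$ after normalization one gets $v_i\ge v-h$ bounded below.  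This second step is the key observation you are missing; it makes the delicate uniform-in-$t$ estimate unnecessary.
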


\begin{proof}
Let us first prove that $\omega_{A_0},\omega_{B_0}$ can be extended across $X_0^{\rm sing}$ with local potentials. We prove the claim for $\omega_{A_0}$, since the argument for $\omega_{B_0}$ is entirely similar. First of all, since the singularities of $X_t$ are rational, $p$ realizes $R^2\pi_*\mathbb R_{\sX} \otimes \mathcal C^{\infty}_{\Delta}$ as a sub-vector bundle of $R^2\wt \pi_*\mathbb R_{\wt \sX} \otimes \mathcal C^{\infty}_{\Delta}$, see e.g. \cite[Lemma~2.1]{BL16}. In particular, Lemma~\ref{H11} provides a sub-vector bundle
\[p^*\mathcal H \hookrightarrow R^2\wt \pi_*\mathbb R_{\wt \sX} \otimes \mathcal C^{\infty}_{\Delta}\]
whose fiber at $t$ is $p^*H^1(X_t,\mathrm{PH}_{X_t}) \subset H^2(\wt X_t, \mathbb R)$. 

By Lemma~\ref{lemma alpha}, the section $t\mapsto p_t^*\alpha_t$ is a continuous section of $R^2\wt \pi_*\mathbb R_{\wt \sX} \otimes \mathcal C^{\infty}_{\Delta}$ whose image lies in $p^*\mathcal H$ over $\Delta^*$. It follows that the latter holds across the origin too. In particular, we get 
\begin{equation}
\label{alpha0}
p_0^*\alpha_0  \in p_0^*\mathcal H_0.
\end{equation}

Moreover, the arguments laid out in \cite[Appendix]{AH19} show that one can find a sequence $t_k\to 0$ and a closed, positive $(1,1)$-current $T$ on $\wt X_0$ such that $\wt F_{t_k}^*p_{t_k}^*\omega_{A_{t_k}}$ converges to $T$ weakly when $t_k\to 0$. Since $\wt X_0$ is a smooth, compact Kähler manifold, one can write $T=\wt \gamma+dd^c\wt  v$ where $\wt \gamma$ is a smooth, closed $(1,1)$-form on $\wt X_0$ and $\wt v\in L^1(\wt X_0)$. Since the cohomology class in $H^2(\wt X_0, \mathbb R)$ of a closed current depends continuously on it (with respect to the weak topology), we have $[\wt \gamma] \in p_0^*\mathcal H_0$. Said otherwise, the local potentials of $\wt \gamma$ actually come from $X_0$ via $p_0$ modulo a global function on $\wt X_0$, i.e. one can write $\wt \gamma = p_0^*\gamma+dd^c \wt w$ where $\gamma$ is a $(1,1)$-form on $X_0$ with potentials and $\wt w\in L^1(\wt X_0)$.  Set $\wt u:=\wt v+\wt w$ so that $T=p_0^*\gamma+dd^c \wt u$. 

Given a fiber $F$ of $p_0$, $\widetilde u|_F$ is either identically $-\infty$ or it is a psh function. Since $F$ is connected and compact, $u|_F$ has to be constant. That is, there exists a $\gamma$-psh function $u$ on $X_0$ such that $\wt u=\gamma^*u$ or, equivalently, $T=p_0^*(\gamma+dd^c u)$ on $\wt X_0$.

Now, recall from Proposition~\ref{proposition decompKE} that $F_t^* \omega_{A_t}\to \omega_{A_0}$ locally smoothly on $X_{0}^{\rm reg}$ when $t\to 0$. By uniqueness of the limit, this shows that $T$ agrees with $p_0^*\omega_{A_0}$ on $\wt X_0 \setminus \mathrm{Exc}(p_0)$. Therefore, we have $\omega_{A_0}=\gamma+dd^c u$ on $X_0^{\rm reg}$. This shows that $\omega_{A_0}$ extends to a closed positive $(1,1)$-current with local potentials on $X_0$. 

In the following, the extensions just constructed to $X_0$ will still be denoted by $\omega_{A_0}$ (resp. $\omega_{B_0}$), and we aim to prove that their local potentials are bounded. Pick a Stein open subset $U\subset X_0$ where $\omega_{A_0}|_{U}=dd^c v_1$ (resp. $\omega_{B_0}|_{U}=dd^c v_2$)  and $\omega_0|_U=dd^c v$ where $v,v_i\in \mathrm{PSH}(U)$ and $v$ is bounded on $U$, say $\|v\|_{L^{\infty}(U)} \le C_1$.  Up to shrinking $U$ slightly and subtracting a constant, one can assume that the upper semicontinuous functions $v_i$ are nonpositive on $U$. The function $v-(v_1+v_2)$ coincides almost everywhere with a pluriharmonic function $h$ on $U$. Up to shrinking $U$, one can assume that $h$ is bounded; i.e. $\|h\|_{L^{\infty}(U)} \le C_2$. Then, one has $0 \ge v_i \ge -(C_1+C_2)$ almost everywhere on $U$, hence everywhere on $U$ since $v_i$ is psh.  Finally, the closed $(1,1)$-current $\om_0-(\omega_{A_0}+\omega_{B_0})$ is supported on $X_0^{\rm sing}$ while it has local bounded potentials, so it vanishes everywhere thanks to the support theorem applied on $\widetilde X$, which is legitimate by Remark~\ref{pull back}. Indeed, $p_0^*(\om_0-(\omega_{A_0}+\omega_{B_0}))$ would have to be a current of integration along a divisor, which violates the boundedness of its potentials unless it vanishes identically. 
\end{proof}

%------------------------------------------------------------------------------------------
%------------------------------------------------------------------------------------------
%------------------------------------------------------------------------------------------
\section{Splittings of locally trivial families}\label{section family splitting}
%------------------------------------------------------------------------------------------
%------------------------------------------------------------------------------------------
%------------------------------------------------------------------------------------------

In this section, we show using the results of the previous section that the limit tangent splitting of Section \ref{section tangent splitting} induces a product decomposition.  The key technical result of the previous section is the existence of a splitting of the K\"ahler--Einstein metric as a positive current with bounded potentials. 

The {\it boundedness} of the local potentials of a closed, positive $(1,1)$-current $T$ on $X$ allows us to do two things that we cannot do with closed positive $(1,1)$-currents in general, even when they admit local potentials. 
\begin{enumerate}
\item One can define the Bedford-Taylor Monge-Ampère operator $T^n$, which is a positive measure satisfying $\int_X T^n = [T]^n$ where the right-hand side is computed in cohomology, cf. \cite{BT76,BedT82} or \cite[\S~2]{Dem85}.
\item If $Z\subset X$ is a closed analytic subvariety of $X$, then one can define $T|_Z:=\theta|_Z+dd^c u|_Z$, which is again a closed, positive $(1,1)$-current with bounded local potentials on $Z$. 
\end{enumerate}

%------------------------------------------------------------------------------------------
We assume we are in Setting~\ref{setting relativeKE} and adopt the notation of Corollary \ref{cor limit leaves}.  Thus, after shrinking $\Delta$ we have the reduced irreducible components $D_Y,D_Z$ of $\mathscr{D}(\sX/\Delta)$ which are proper over $\Delta$, generically parametrize fibers $Y_t\times\{z\}$ and $\{y\}\times Z_t$, respectively, and furthermore $D_Y\times_{\Delta}\Delta^*=\sZ^*$ and $D_Z\times_{\Delta}\Delta^*=\sY^*$.  Let $(D_Y)_0, (D_Z)_0$ be the fibers over $0$ (endowed with the reduced structure), and set $m=\dim \sY^*-1$, $n=\dim \sZ^*-1$.  Let $\sF_Y,\sF_Z$ be the restrictions of the universal families to $D_Y,D_Z$. 

The map $\sF_Z\overset{e}{\longrightarrow}\sX$ is an isomorphism over $\Delta^*$, hence it is surjective by properness. For dimensional reasons, $X\not \subset e(\mathrm{Exc}(e))$ so one can consider the proper transform $(\sF_Z)_0^+$ of $X$ by $e$. This is the unique component of the special fiber $(\sF_Z)_0$ for which the map to $X$ is surjective and generically one-to-one. We endow $(\sF_Z)_0^+$ with the reduced structure\footnote{One can actually prove that $(\sF_Z)_0^+$ is reduced with its given structure, i.e. the one for which the inclusion in the special fiber is an isomorphism at the generic point of $(\sF_Z)_0^+$. Just notice that $\sF_Z$ is reduced by \cite[Lemma~1.4]{Fuj78} and that $e$ is an isomorphism at the generic point of $(\sF_Z)_0^+$. We will however not need this.}.

 We have a diagram
\begin{equation}\label{eq ev}\begin{tikzcd}
(\sF_Z)_0^+\ar[d,"p"']\ar[r,"e"]&X.\\
(D_Z)_0
\end{tikzcd}\end{equation}
Let $(D_Z)_0^+=p((\sF_Z)_0^+)$.  Note that for any $[V_0]\in (D_Z)_0^+$, we have that $V_0\cap X^\reg$ is either empty or tangent to $B|_{X^\reg}$, since by choosing the germ of a curve $C$ in $D_Z$ passing through $[V_0]$ and dominating $\Delta$, we obtain a flat family $\sV/C$ specializing to $V_0$, embedded as $\sV\subset \sX':=\sX\times_\Delta C$, and generically tangent to $B$.  Likewise we define $(\sF_Y)_0^+$ and $(D_Y)_0^+$.
  
\begin{lemma}\label{lemma irred}Let $X$ be as in Setting~\ref{setting relativeKE}.  Then for any point $[V_0]\in (D_Z)_0^+$ (resp. $[U_0]\in (D_Y)_0^+$) we have that $V_0$ (resp. $U_0$) is irreducible and generically reduced. 
\end{lemma}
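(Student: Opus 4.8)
The plan is to exploit the fact that a general fiber parametrized by $(D_Z)_0^+$ is an honest product factor $Y_t\times\{z\}$, together with the properness of $D_Z$ over $\Delta$ and the splitting of the Kähler--Einstein metric into positive currents with \emph{bounded} local potentials established in Proposition~\ref{boundedpotentials}. The key numerical input is that each $V_0$ with $[V_0]\in(D_Z)_0^+$ is a limit of subvarieties of dimension $m=\dim\sY^*-1$, so its cohomology class $[V_0]\in H_{2m}(X,\Q)$ equals the limit class $[Y_t\times\{z\}]$ under the identifications of Lemma~\ref{lemma limit kunneth}; in particular, under the Künneth-type decomposition this class is ``primitive'' in the sense that it pairs nontrivially with $\alpha_0^m=[\omega_0^1]^m$ (the limit of $[\omega_{Y_t}]^m$) and annihilates $\beta_0=[\omega_0^2]$. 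The strategy is to show that if $V_0$ were reducible or non-reduced, the intersection-theoretic computation of this pairing against the currents $\omega_0^1,\omega_0^2$ would be forced to exceed what the total class allows.

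First I would set up the intersection numbers: using that $\omega_0^i$ extend to closed positive currents with bounded local potentials on $X$, hence pull back (Remark~\ref{pull back}) to $\widetilde X$ and restrict to any closed subvariety $Z\subset X$, the quantities $\int_{V_0}(\omega_0^1)^a\wedge(\omega_0^2)^b$ are well-defined for $a+b=\dim V_0=m$ and depend only on the homology class $[V_0]$ and the classes $\alpha_0,\beta_0$. Since $[V_0]=\lim[Y_t\times\{z\}]$ and on $Y_t\times\{z\}$ we have $\omega_t^2=0$ identically (its kernel is $A_t$, which is everything tangent to the $Y_t$ factor), continuity of these pairings gives $\int_{V_0}(\omega_0^1)^m=\int_{Y_t}\omega_{Y_t}^m>0$ while $\int_{V_0}(\omega_0^1)^a\wedge(\omega_0^2)^b=0$ whenever $b\ge 1$. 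The same reasoning on a curve $C\subset D_Z$ through $[V_0]$ dominating $\Delta$ shows $V_0\cap X^\reg$ is tangent to $B|_{X^\reg}$ — this is already observed in the excerpt — hence $\omega_0^2$ restricted to $V_0$ vanishes on the smooth locus of $V_0$ lying over $X^\reg$; combined with boundedness of potentials (so no mass is carried on proper analytic subsets), $\omega_0^2|_{V_0}=0$ as a current, so $\int_{V_0}(\omega_0^1)^m$ computes the total volume $\int_{V_0}\omega_0^m$ of $V_0$ with respect to the Kähler--Einstein metric.

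Next I would run the contradiction. Write $V_0=\sum_k m_k V_{0,k}$ as a cycle, with $m_k\ge 1$ the multiplicities and $V_{0,k}$ the distinct reduced irreducible components (all of dimension $m$, since $(D_Z)_0$ parametrizes pure-dimensional cycles along the relevant component, cf. Corollary~\ref{cor limit leaves} and the definition of $\ol\scrD_m$). Each $V_{0,k}\cap X^\reg$ is tangent to $B|_{X^\reg}$, so on $X^\reg$ it is an integral leaf-closure of the foliation $B$; but on $X^\reg$ the splitting $T_{X^\reg}=A_t|_{X^\reg}\oplus B_t|_{X^\reg}$ is parallel for the Ricci-flat metric $\omega_0$ (by the cited \cite[Theorem~8.1.2]{GGK19}), so through a general point $x\in X^\reg$ there is a unique such leaf, of dimension exactly $m=\operatorname{rk}B$; hence there is exactly \emph{one} component $V_{0,k_0}$ meeting $X^\reg$, and it does so in (the closure of) that leaf, forcing $V_{0,k_0}$ to be generically reduced along it and its class to be at least $[\,\overline{\text{leaf}}\,]$. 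Since $V_0\cap X^\reg$ as a cycle equals this single leaf closure with multiplicity one (it is the limit of the reduced varieties $Y_t\times\{z\}$, which all meet $X^\reg$ in a dense open reduced to a single copy), we get $m_{k_0}=1$ and that all other components $V_{0,k}$ are contained in $X^\sing$. Finally, a component $V_{0,k}\subset X^\sing$ would have dimension $m$ but be contained in $X^\sing$, which has dimension $\le \dim X-2=m+n-2<m$ unless $n\le 1$ — in the problematic boundary case $n=1$ one argues instead via the volume pairing: $\int_{V_0}\omega_0^m=\int_{Y_t}\omega_{Y_t}^m$ is accounted for entirely by the main component $V_{0,k_0}$ (which already has this volume, being the leaf closure with the induced Kähler--Einstein restriction, using that $\omega_0^1$ is the pullback of $\omega_{Y_t}$ in the limit and has no mass on $X^\sing$), so any extra component or multiplicity would strictly increase $\int_{V_0}\omega_0^m$, a contradiction. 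Hence $V_0$ is irreducible and generically reduced; the same argument with $Y,Z$ swapped handles $[U_0]\in(D_Y)_0^+$.

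The main obstacle I anticipate is the dimension bookkeeping when $X^\sing$ is large relative to the factor dimensions — i.e.\ making the ``no extra components supported on $X^\sing$'' step rigorous without a clean codimension bound. The robust fix is to push everything through the volume/mass computation: boundedness of the potentials of $\omega_0^1,\omega_0^2$ is exactly what licenses the identity $\int_{V_0}(\omega_0^1)^m=[\,V_0\,]\cdot\alpha_0^m$ computed in cohomology and its continuity in $t$, and it guarantees neither current charges $X^\sing$; so any component or multiplicity beyond the single leaf closure would contribute strictly positive extra mass, contradicting equality with the fixed limit value $\int_{Y_t}\omega_{Y_t}^m$. Getting the bounded-potential restriction $\omega_0^i|_{V_0}$ to behave well (continuity of mass under the degeneration, no concentration on $\operatorname{Exc}(p)$ or on $V_0\cap X^\sing$) is where the care is needed, but it is precisely the kind of statement that boundedness of potentials is designed to yield, via pullback to the smooth model $\widetilde X$ and the support theorem as in the proof of Proposition~\ref{boundedpotentials}.
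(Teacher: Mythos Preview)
Your overall strategy---feed the positivity of the split K\"ahler--Einstein currents $\omega_0^i$ (with bounded local potentials) into an intersection-number computation governed by the limit K\"unneth decomposition---is the paper's strategy as well, but the execution in your third paragraph has real gaps. (There is also some notational drift: in the paper's conventions $(D_Z)_0^+$ parametrizes limits of $\{y\}\times Z_t$, which have dimension $n$ and are tangent to $B$; you treat them as $Y$-fiber limits of dimension $m$, yet still assert tangency to $B$. This is harmless for the logic but worth cleaning up.)

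The substantive problem is the leaf-uniqueness step. You argue: each component $V_{0,k}$ meeting $X^\reg$ is tangent to the foliation and has dimension equal to its rank, hence is a leaf closure; since through a general point there is a unique leaf, only one component can meet $X^\reg$. But at this point we do \emph{not} know that leaves of $B|_{X^\reg}$ are closed, nor even that their closures are compact analytic---proving a product structure is precisely the goal of the paper. An irreducible $n$-dimensional subvariety tangent to $B$ need not be a single leaf closure, and distinct components could pass through the same general point. Likewise, ``flat limit of reduced subspaces is generically reduced'' is false in general, so $m_{k_0}=1$ does not follow. Your fallback volume argument then needs the main component alone to already carry the full limit volume $\int_{Y_t}\omega_{Y_t}^m$, but you have no independent computation of that volume without presupposing the product structure at $t=0$ (your phrase ``$\omega_0^1$ is the pullback of $\omega_{Y_t}$ in the limit'' is exactly what has not yet been established).

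The paper avoids all leaf geometry by an integrality trick you are missing. The limit class $v_0=[V_0]$ is, via Lemma~\ref{lemma limit kunneth}, of the form $\eta\otimes\pt$ where $\eta$ is an \emph{integral generator} of the top homology of the $Z$-factor. For any effective splitting $[V_0]=[V']+[V'']$ one writes $[V']=\lambda' v_0+w_0$, $[V'']=\lambda'' v_0-w_0$ with $\lambda',\lambda''\in\Z$ summing to $1$ and $w_0$ in the complementary K\"unneth piece $W_0$. The vanishing $\beta_0^n\cdot W_0=0$ (by continuity from $t\neq 0$) together with positivity $\int_{V'}(\omega_0^2)^n\geq 0$ gives $\lambda'\geq 0$, and likewise $\lambda''\geq 0$; hence one of them is $0$. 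If $\lambda''=0$ then $[V'']=-w_0$, and combining the further vanishings $\alpha_0^r\beta_0^{n-r}\cdot v_0=0$ for $r>0$ with positivity of the mixed integrals $\int_{V'}(\omega_0^1)^r\wedge(\omega_0^2)^{n-r}\geq 0$ yields $\int_{V''}\omega_0^n\leq 0$, whence $[V'']=0$. This integrality of the K\"unneth splitting is the ingredient that turns your volume heuristic into a proof without any appeal to the structure of leaves.
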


 \begin{proof}
 Its enough to prove the claim for $V_0$, as the one for $U_0$ is the same after switching factors.  With $\sV/C$ as above, the argument of \cite[Lemma~19.1.3]{Fulton98} yields an integral section $v$ of $(R^{2n}\pi_*\Q_{\sX'})^\vee$ with $v_t=[V_t]$.  Using the decomposition of Lemma \ref{lemma limit kunneth}, we in fact have an integral generator $\eta$ of $(j_*R^{2n}\pi_{1*}\Q_{\sZ^{\prime*}})^\vee$ and $v=\eta\otimes \mathrm{pt}$, where $\mathrm{pt}$ is the integral generator of $(j_*\pi_{2*}\Q_{\sY^{\prime*}})^\vee$.  Set
\[
\Gamma:=\bigoplus_{\substack{r+s=2n\\r>0}}(j_*R^r\pi_{1*}\Q_{\sY^{\prime*}})^\vee\otimes (j_*R^s\pi_{2*}\Q_{\sZ^{\prime*}})^\vee.\]

Now, note that $V_0$ is pure-dimensional, and assume that we have an effective decomposition $[V_0]=[V']+[V'']$. In $H_{2n}(X,\Q)$ one has an integral decomposition $[V']=\lambda' v_0+\gamma_0$ (resp. $[V'']=\lambda'' v_0-\gamma_0$) for $\lambda',\lambda'' \in \mathbb Z$ with $1=\lambda'+\lambda''$ and $\gamma_0\in \Gamma_0$, where $\Gamma_0$ is the fiber of $\Gamma$ over the point $0$. Note that the coefficients $\lambda, \lambda'$ are integral since $v_0$ is a primitive vector in the integral cohomology of $X_0$. Generic reducedness is equivalent to the multiplicity of the associated cycle being equal to one, so proving the lemma amounts to showing that either $[V']=0$ or $[V'']=0$.

By Propositions~\ref{proposition decompKE} and~\ref{boundedpotentials}, the Kähler--Einstein metric $\omega_t\in [\theta_t]$ can be decomposed as $\omega_t=\omega_{A_t}+\omega_{B_t}$, and we set $\alpha_t:=[\omega_{A_t}]\in H^2(X_t,\R)$ (resp. $\beta_t=[\omega_{B_t}]\in H^2(X_t,\R)$).  Recall that these currents have bounded local potentials, hence one can take their wedge products, cf. the beginning of this section.  We claim that the cap product 
\begin{equation}
\label{beta}
\beta_0^n \cdot u_0=0 \quad \mbox{for any } u_0\in \Gamma_0.
\end{equation}
Indeed, any such $u_0$ can be written as limit of elements $u_t\in \Gamma_t$ but for $t\neq 0$ we have $\beta_t^n\cdot u_t=0$ almost by definition and $t\mapsto \beta_t^n\cdot u_t$ is continuous by Lemma~\ref{lemma alpha}.  Similarly, we have 
\begin{equation}\label{eq alpha}
\alpha_0^r\beta_0^{n-r}\cdot v_0 =0 \quad \mbox{for any } r>0.
\end{equation}
By Proposition~\ref{boundedpotentials} we have
\[0\leq \int_{V'}(\omega_{B_0})^n=\lambda' \beta_0^n\cdot v_0\]  
and so $0\leq \lambda'$.  By the same argument, we also have $0\leq \lambda''$, and therefore without loss of generality we have $\lambda''=0$; i.e. $[V'']=-\gamma_0$. Combining \eqref{beta} and \eqref{eq alpha}, we find
\begin{align*}
\int_{V''}\om_0^n &= -\underbrace{\beta_0^n\cdot \gamma_0}_{=0} - \sum_{r>0} \alpha_0^r\beta_0^{n-r}\cdot v_0 \\
&=\underbrace{-\sum_{r>0} \int_{V'} (\omega_{A_0})^r \wedge (\omega_{B_0})^{n-r}}_{\le 0}+\underbrace{\sum_{r>0}\alpha_0^r\beta_0^{n-r}\cdot v_0}_{=0}.
\end{align*}
hence $\int_{V''}\omega_0^n=0$, so $[V'']=0$. 
\end{proof}

\begin{proposition}\label{prop restriction}
Let $X$ be as in Setting~\ref{setting relativeKE}, and let $[V_0]\in (D_Z)_0^+$ and $[U_0]\in (D_Y)_0^+$. Then we have
\[ \omega_{A_0}|_{V_0} \equiv 0\quad \mbox{and } \quad  \omega_{B_0}|_{U_0}\equiv 0.\] 
In particular, $\om_0|_{U_0\cap V_0}\equiv 0$. 
\end{proposition}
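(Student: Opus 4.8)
The plan is to establish the two vanishings $\omega_0^1|_{V_0}\equiv 0$ and $\omega_0^2|_{U_0}\equiv 0$; the last assertion of the proposition then follows at once, since restricting the identity $\omega_0=\omega_0^1+\omega_0^2$ of Proposition~\ref{boundedpotentials} to $U_0\cap V_0$ gives $\omega_0|_{U_0\cap V_0}=(\omega_0^1|_{V_0})|_{U_0\cap V_0}+(\omega_0^2|_{U_0})|_{U_0\cap V_0}=0$. By the symmetry that exchanges $Y$ with $Z$ (equivalently $A$ with $B$, $\omega^1$ with $\omega^2$, and $m$ with $n$), it suffices to treat $\omega_0^1|_{V_0}$. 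Here $V_0$ is irreducible and generically reduced by Lemma~\ref{lemma irred}, so $n:=\dim V_0$ is well defined; and $\omega_0^1|_{V_0}$ makes sense because $\omega_0^1$ has bounded local potentials on $X_0$ by Proposition~\ref{boundedpotentials}, so its restriction to the closed subvariety $V_0$ is a closed positive $(1,1)$-current with bounded local potentials, of cohomology class $\alpha_0|_{V_0}$. Note also that $\theta_0|_{V_0}$ is a Kähler form on $V_0$, of class $(\alpha_0+\beta_0)|_{V_0}$.

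The heart of the matter is a mass computation for $\omega_0^1|_{V_0}$ against $(\theta_0|_{V_0})^{n-1}$. Since only cohomology classes enter such intersection numbers (e.g. after pulling back to a resolution of $V_0$, where one factor has bounded potentials and the others are closed smooth forms), I would compute, writing $v_0:=[V_0]$,
\[
\int_{V_0}\big(\omega_0^1|_{V_0}\big)\wedge\big(\theta_0|_{V_0}\big)^{n-1}
=\alpha_0\cdot(\alpha_0+\beta_0)^{n-1}\cdot v_0
=\sum_{r=0}^{n-1}\binom{n-1}{r}\,\alpha_0^{\,r+1}\,\beta_0^{\,n-1-r}\cdot v_0 ,
\]
and every summand vanishes by \eqref{eq alpha}, the exponent of $\alpha_0$ being at least $1$. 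Therefore the positive measure $\big(\omega_0^1|_{V_0}\big)\wedge\big(\theta_0|_{V_0}\big)^{n-1}$ has total mass $0$, hence is identically zero. On the smooth locus $V_0^{\rm reg}$ the form $\theta_0|_{V_0}$ is a genuine Kähler metric, and a positive $(1,1)$-current whose trace with respect to a Kähler metric vanishes must itself vanish; so $\omega_0^1|_{V_0}$ is zero on $V_0^{\rm reg}$, i.e. it is supported on the proper analytic subset $V_0^{\rm sing}$. It then vanishes identically by the support theorem applied on a resolution $\nu\colon\widetilde{V_0}\to V_0$ (legitimate by Remark~\ref{pull back}): otherwise $\nu^*(\omega_0^1|_{V_0})$ would be a nonzero current of integration along a divisor, contradicting the boundedness of its local potentials — exactly the argument at the end of the proof of Proposition~\ref{boundedpotentials}.

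The only substantive input is the cohomological vanishing $\alpha_0^{\,r}\beta_0^{\,n-r}\cdot v_0=0$ for $r\ge 1$, which is \eqref{eq alpha} and was already obtained in the proof of Lemma~\ref{lemma irred}: for $t\neq 0$ the class $\alpha_t=[\omega_t^1]$ is pulled back from $Y_t$ and so restricts to $0$ on the fibres $\{y\}\times Z_t$ (whose limiting cycle is $v_0$), and this passes to the limit $t\to 0$ by the continuity of the relevant sections of local systems, cf. Lemmas~\ref{lemma limit kunneth} and~\ref{lemma alpha}. I do not expect a genuine obstacle; the one point that requires a little care is the bookkeeping with restrictions of currents with bounded potentials, the positivity/trace argument, and the reduction to a resolution of the possibly singular $V_0$ — but these are precisely the operations recalled at the beginning of this section and already used in Lemma~\ref{lemma irred} and Proposition~\ref{boundedpotentials}.
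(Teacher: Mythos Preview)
Your argument is correct and takes a genuinely different route from the paper's proof. The paper argues in two stages: first it treats the case $V_0\cap X_0^\reg\neq\varnothing$ by a pointwise differential-geometric observation --- since $\ker\omega_0^1=B_0$ on $X_0^\reg$ (Proposition~\ref{proposition decompKE}) and $V_0\cap X_0^\reg$ is tangent to $B_0$, one has $\omega_0^1|_{V_0\cap X_0^\reg}\equiv 0$ directly --- and then extends to an arbitrary $[V_0]\in(D_Z)_0^+$ by moving $V_0$ inside $(D_Z)_0$ and appealing to a pluripotential continuity statement (Lemma~\ref{lem psh}). Your approach bypasses both the case split and Lemma~\ref{lem psh}: you compute the total mass $\int_{V_0}\omega_0^1\wedge\theta_0^{\,n-1}$ purely cohomologically via \eqref{eq alpha}, and conclude by the trace-versus-K\"ahler positivity argument. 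This is slicker and uses only inputs already established in the proof of Lemma~\ref{lemma irred}; the paper's approach, by contrast, exploits the geometric content of the splitting $\ker\omega_0^1=B_0$ and the psh lemma, which are of independent interest but not strictly needed here. Both proofs finish the same way, via the support theorem on a resolution.
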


\begin{proof}
Since the $\omega_{A_0}$ and $\omega_{B_0}$ have bounded local potentials by Proposition~\ref{boundedpotentials}, it makes sense to restrict them to $U_0$ or $V_0$. The last statement follows from the main one thanks to Proposition~\ref{boundedpotentials}.

First assume $V_0\cap X^\reg\neq \varnothing$.  Set $V_0^\circ:=V_0 \cap X^\reg$ which is a dense Zariski open subset of $V_0$. Since $\omega_{A_0}|_{V_0^\circ} \equiv 0$ by Proposition~\ref{proposition decompKE}, it follows that the restriction $\omega_{A_0}|_{V_0}$ is a positive $(1,1)$-current with bounded potentials on $V_0$ supported on $V_0\setminus V_0^\circ$. By Lemma \ref{lemma irred} and the support theorem applied on a resolution say (cf. Remark~\ref{pull back}), the current $\omega_{A_0}|_{V_0}$ has to be a nonnegative combination of currents of integration along the codimension one components of $V_0\setminus V_0^\circ$ which is only possible if $\omega_{A_0}|_{V_0}$ vanishes identically since it has bounded potentials. 

Now for $V_0$ arbitrary, take the germ of a smooth pointed curve $(C,0)$ with a nonconstant morphism $C\to (D_Z)_0$ sending $0$ to $[V_0]$ and consider the pullback diagram
$$\begin{tikzcd}
\sF \ \arrow[r, "e"] \arrow[d,"p"']  & X \\
C &
\end{tikzcd}$$
where we abusively use the same letters $e,p$ for the restrictions of the corresponding maps in \eqref{eq ev}.  By the previous paragraph, $e^*\omega_{A_0}$ vanishes in restriction to $F_t$ for $t$ general. Since $p$ is smooth at a generic point $x_0$ of $F_0=V_0$ by Lemma \ref{lemma irred} and $e^*\omega_{A_0}$ has local bounded potentials, Lemma~\ref{lem psh} below implies that $e^*\omega_{A_0}$ vanishes in restriction to a neighborhood of $x_0$ in $V_0$, hence $e^*\omega_{A_0}|_{G_0}$ is supported on a proper analytic subset. Since its local potentials are bounded, this implies that $e^*\omega_{A_0}|_{G_0}\equiv 0$ as before.
\end{proof}

In the course of the proof above, we used the following lemma about psh functions on the total space of a trivial fibration that are pluriharmonic on all but one slice.

 \begin{lemma}
 \label{lem psh}
 Let $\varphi$ be a psh function on $\Delta^n\times \Delta $. Assume that $\varphi$ is pluriharmonic in restriction to each slice $D_t:=\Delta^n\times \{t\}$ for $t\in \Delta^*$. Then, either $\varphi|_{D_0} \equiv -\infty$ on $D_0$ or $\varphi|_{D_0}$ is pluriharmonic. 
 \end{lemma}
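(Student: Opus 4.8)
The plan is to reduce to the one-dimensional case $n=1$ and there to compare the circular means of $\varphi$ along the fibres $D_t$.

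First I would dispose of trivial cases: if $\varphi\equiv-\infty$, or more generally if $\varphi|_{D_0}\equiv-\infty$, the first alternative holds. Otherwise $\varphi\in L^1_{\mathrm{loc}}(\Delta^n\times\Delta)$ and $u:=\varphi|_{D_0}$ is a genuine psh function on $\Delta^n$ (the restriction of a psh function to a complex submanifold is psh or $\equiv-\infty$), and I must show $u$ is pluriharmonic. Since $u$ is already psh it suffices to show $u$ is harmonic on $\Delta^n\subset\mathbb R^{2n}$: vanishing of the Euclidean Laplacian means $\sum_j\partial_{z_j}\bar\partial_{z_j}u=0$, and for the positive $(1,1)$-current $dd^cu$ this forces the (non-negative) diagonal coefficients to vanish and then, by Cauchy--Schwarz for the positive semidefinite Hermitian matrix of measures, all coefficients to vanish, so $dd^cu=0$. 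Fixing an index $j$ and applying Fubini to the positive measure $\partial_{z_j}\bar\partial_{z_j}u$, it is enough to know that for almost every value $z'$ of the remaining variables the slice $z_j\mapsto u(z_j,z')$ is harmonic; but this slice is the restriction to $\{t=0\}$ of the psh function on the bidisc $(z_j,t)\mapsto\varphi(z_1^0,\dots,z_j,\dots,z_n^0,t)$, which is harmonic on the slices $t\neq0$. So everything reduces to the case $n=1$.

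For $n=1$ I would argue as follows. Assume $\varphi$ is psh and $\not\equiv-\infty$ on $\Delta_z\times\Delta_t$, harmonic on $\Delta_z\times\{t\}$ for every $t\neq0$, and $u=\varphi(\cdot,0)\not\equiv-\infty$. Fix $z_0\in\Delta_z$ and a small radius $r$ with $\overline{D(z_0,r)}\subset\Delta_z$, and set $\Lambda(t):=\frac1{2\pi}\int_0^{2\pi}\varphi(z_0+re^{i\theta},t)\,d\theta$. I claim: (i) $\Lambda$ is subharmonic on $\Delta_t$, being an average of the subharmonic functions $t\mapsto\varphi(z_0+re^{i\theta},t)$ --- upper semicontinuity comes from the reverse Fatou lemma (valid since $\varphi$ is locally bounded above) and the sub-mean value inequality from Fubini; (ii) for $t\neq0$ the slice $\varphi(\cdot,t)$ is harmonic, hence finite, and its mean value property gives $\Lambda(t)=\varphi(z_0,t)$; (iii) $g(t):=\varphi(z_0,t)$ is subharmonic on $\Delta_t$ as the restriction of $\varphi$ to the complex line $\{z=z_0\}$, and by (ii) it is $\not\equiv-\infty$. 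Then $\Lambda$ and $g$ are subharmonic functions on $\Delta_t$ agreeing off the Lebesgue-null set $\{0\}$, hence agreeing everywhere (a subharmonic function is the a.e.\ limit of its solid averages, so it is determined by its a.e.\ class). In particular $\Lambda(0)=g(0)$, i.e. $\frac1{2\pi}\int_0^{2\pi}\varphi(z_0+re^{i\theta},0)\,d\theta=\varphi(z_0,0)$. Since $z_0$ and $r$ are arbitrary, $u$ satisfies the mean value equality at every point, so it is harmonic, completing the $n=1$ case and with it the lemma.

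The main obstacle is the limit interchange hidden in steps (i)--(iii) of the $n=1$ argument: one must be sure that $\Lambda$, \emph{defined by an integral}, is genuinely subharmonic (equal to its upper semicontinuous regularisation) so that the identity $\Lambda\equiv g$ really computes the honest value $\Lambda(0)$ as $\varphi(z_0,0)$ --- this is precisely what the reverse Fatou lemma together with the local upper bound on $\varphi$ provides. The rest is a routine assembly of standard facts about (pluri)subharmonic functions: restriction to complex submanifolds, Fubini for Riesz measures, and the characterisation of pluriharmonicity of a psh function by the vanishing of the trace of $dd^c$.
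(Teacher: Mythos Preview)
Your proof is correct and follows essentially the same strategy as the paper: both define a $t$-dependent mean of $\varphi$ in the $z$-variables, show it is subharmonic in $t$ via Fatou and Fubini, observe it agrees with $t\mapsto\varphi(\mathrm{center},t)$ on $\Delta^*$ by the pluriharmonicity hypothesis, and conclude equality at $t=0$ since subharmonic functions agreeing almost everywhere agree everywhere. The only cosmetic differences are that the paper uses solid rather than circular means and works directly with one-dimensional complex disks in $\Delta^n$ (checking pluriharmonicity on complex lines) instead of your explicit reduction to $n=1$ via the trace of $dd^c u$.
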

 
 \begin{proof}
 Assume that $\varphi|_{D_0} \not \equiv -\infty$. We use the coordinates $(z_1, \ldots, z_n)$ for the first factor $\Delta^n$ and $(t)$ for the second factor $\Delta$. The lemma is local, and it is sufficient to prove that for every one-dimensional disk $\mathbb D_r^z \subset \Delta^n$ centered at $\underline 0\in \Delta^n$ of small enough radius  $r$, we have 
\begin{equation}
\label{phi 0}
\varphi(0,0)=\fint_{\mathbb D^z_r}\varphi(z,0) dV(z). 
\end{equation}
 For that purpose, we introduce the function $u: \Delta \to \mathbb R \cup \{-\infty\}$ defined by 
 $$u(t):=\fint_{\mathbb D_r^z} \varphi(z,t)dV(z).$$
The function $\underline \varphi(t):=\varphi(0,t)$ is a psh function on $\Delta$ that satisfies 
 \begin{equation}
 \label{u phi}
 u=\underline \varphi \quad \mbox{on } \Delta^*
 \end{equation}
  since $\varphi|_{D_t}$ is pluriharmonic for any $t\neq 0$. To show \eqref{phi 0}, we thus have to extend the identity \eqref{u phi} across the origin. This will be achieved if we show that $u$ is psh on $\Delta$ since 
  psh functions are determined by their value almost everywhere and $\underline \varphi$ is psh. 
  
  First of all, $u$ is upper semicontinuous by Fatou's lemma since $\varphi$ is upper semicontinuous as well (one can assume that $\varphi \le 0$ without loss of generality). Next, if $t_0\in \Delta$ and $s$ is small enough, setting $\mathbb D^t_s:=\{t\in \Delta \mid |t-t_0|<s\}$ we have
  \begin{align*}
  u(t_0)&=\fint_{\mathbb D_r^z} \varphi(z,t_0)dV(z) \\
  &\le \fint_{\mathbb D_r^z}\fint_{\mathbb D_s^t}  \varphi(z,t)dV(t)dV(z)\\
  &=\fint_{\mathbb D_s^t} u(t)dV(t)
  \end{align*}
  where the inequality follows from $\varphi(z,\cdot)$ being psh while the last identity is a simple application of Fubini's theorem. This shows that $u$ is psh, and concludes the proof of the lemma.
 \end{proof}

We will specifically need the following corollary of the previous proposition.  Let $I:=(\sF_Y)_0^+\times_{X}(\sF_Z)_0^+$ be the universal intersection, and consider the resulting diagram
\begin{equation}\label{eq int}\begin{tikzcd}
I^+\ar[d,"g"']\ar[r,"f"]&X\\
(D_Y)^+_0\times (D_Z)^+_0
\end{tikzcd}\end{equation}
where $I^+$ is the unique irreducible component of $I$ dominating $X$, endowed with the reduced structure.

\begin{corollary}\label{corollary one-to-one}In \eqref{eq int}, $f$ is surjective and generically one-to-one, and $g$ is finite, surjective, and generically one-to-one.
 \end{corollary}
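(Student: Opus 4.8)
The plan is to settle the statements about $f$ first and then deduce those about $g$; the only substantial point is the finiteness of $g$, which is where Proposition~\ref{prop restriction} and the boundedness of the potentials of $\om_0$ enter. By its very definition, $(\sF_Y)^+_0\to X$ (resp.\ $(\sF_Z)^+_0\to X$) is proper, surjective and generically one-to-one, hence birational onto $X$. Thus over the dense Zariski-open $X^\circ\subset X^\reg$ on which both of these maps restrict to isomorphisms, the fibre product $I=(\sF_Y)^+_0\times_X(\sF_Z)^+_0$ is isomorphic to $X^\circ$; since $I^+$ dominates $X$ by construction it contains $f^{-1}(X^\circ)$ as a dense open subset, and so $f$ is proper, surjective and birational. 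In particular $\dim I^+=\dim X=m+n$, and, since the general fibres of $(\sF_Y)^+_0\to(D_Y)^+_0$ and of $(\sF_Z)^+_0\to(D_Z)^+_0$ have dimensions $m$ and $n$ respectively (these being the dimensions of the subspaces $U_0$, $V_0$ parametrized generically), $\dim\big((D_Y)^+_0\times(D_Z)^+_0\big)=n+m=\dim I^+$.

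Next I would show $g$ is finite. It is proper, so it suffices to bound its fibres. Over each point of a relative Douady space the evaluation morphism is the inclusion of the parametrized subspace into the ambient fibre; hence for a point $(a,b)$ of $g^{-1}([U_0],[V_0])\subset I^+$ both $a$ and $b$ are recovered from $f(a,b)$, so $f$ is injective on $g^{-1}([U_0],[V_0])$ and maps it into $U_0\cap V_0\subset X$. It is therefore enough to prove that $U_0\cap V_0$ is finite for every pair $([U_0],[V_0])\in(D_Y)^+_0\times(D_Z)^+_0$. If $W$ were a positive-dimensional irreducible component of the compact analytic set $U_0\cap V_0$, then $\om_0|_W\equiv 0$ by Proposition~\ref{prop restriction}; but the local potentials of $\om_0$ are bounded (Proposition~\ref{boundedpotentials}), so $[\om_0|_W]=[\theta_0|_W]$ in $H^2(W,\R)$, while $[\theta_0|_W]\neq 0$ since $\theta_0$ is Kähler and $\int_W(\theta_0|_W)^{\dim W}>0$. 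This contradiction forces $\dim(U_0\cap V_0)=0$, so $U_0\cap V_0$ is finite and $g$ is finite.

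It then follows formally that $g$ is surjective: being finite, $g$ has closed irreducible image of dimension $\dim I^+=m+n$, and $(D_Y)^+_0\times(D_Z)^+_0$ is irreducible of the same dimension (each factor being the image of the irreducible space $(\sF_Y)^+_0$, resp.\ $(\sF_Z)^+_0$; see also Lemma~\ref{lemma irred}), so the image is all of it. For generic injectivity, observe that in the limit Künneth decomposition of Lemma~\ref{lemma limit kunneth} a general $[U_0]$ (resp.\ $[V_0]$) is the pure class $[Y_t]\boxtimes[\pt]$ (resp.\ $[\pt]\boxtimes[Z_t]$), so $[U_0]\cdot[V_0]=[\pt]$ has degree $1$; since moreover a general $U_0$ meets a general $V_0$ transversally in $X^\reg$, the set $U_0\cap V_0$ is a single point, which lies in $X^\circ$ because as $([U_0],[V_0])$ varies the intersection point dominates $X$. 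As $g^{-1}([U_0],[V_0])$ is nonempty (by surjectivity) and $f$ maps it injectively into $U_0\cap V_0$, it is a single point, so $g$ is generically one-to-one.

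The main obstacle is the middle step: excluding positive-dimensional components of $U_0\cap V_0$. Dimension counting and the properness of the Douady--Barlet spaces handle everything else, but this finiteness seems to require exactly the input of Proposition~\ref{prop restriction} together with the fact that the limit Kähler--Einstein current has bounded local potentials, so that its class on a subvariety agrees with that of the ambient Kähler form.
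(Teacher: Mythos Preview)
Your argument for $f$ and for the finiteness and surjectivity of $g$ is correct and essentially the same as the paper's; your surjectivity argument via dimension count is arguably cleaner than the paper's one-line remark. The substantive difference is in showing $g$ is generically one-to-one.

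The paper first observes that $f^{-1}(X^{\sing})$ cannot dominate $(D_Y)_0^+\times(D_Z)_0^+$ (since $f$ is birational), so for generic $([U_0],[V_0])$ one has $U_0\cap V_0\subset X^{\reg}$. It then runs a degeneration: choosing a smooth pointed curve $(C,0)$ with nonconstant maps to $D_Y$ and $D_Z$ through $[U_0],[V_0]$ and dominating $\Delta$, one gets families $\sF_1,\sF_2\subset\sX':=\sX\times_\Delta C$; every component of $\sF_1\cap\sF_2\cap\sX'^{\reg}$ has dimension at least $1$ by the codimension bound for the regularly embedded diagonal, and since the fibres over $C\setminus\{0\}$ are single points, there can be at most one point of $U_0\cap V_0\cap X^{\reg}=U_0\cap V_0$.

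Your route via the K\"unneth intersection number has a gap as written. There is no intersection product on $H_*(X,\Q)$ for singular $X$ (no Poincar\'e duality), so the equation ``$[U_0]\cdot[V_0]=[\pt]$'' has no a priori meaning on $X$, nor does it directly compare with the geometric count of points. You also assert without argument that a general $U_0$ meets a general $V_0$ \emph{in} $X^{\reg}$ and \emph{transversally}; the tangent splitting $T_X=A\oplus B$ only yields transversality at points where both $U_0$ and $V_0$ are smooth, which is not automatic for limit leaves. If you first establish $U_0\cap V_0\subset X^{\reg}$ as the paper does and then appeal to deformation invariance of the (now well-defined, positive) local intersection multiplicities from $t\neq 0$, your argument goes through---but at that point it is essentially the paper's degeneration argument in different clothing.
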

 \begin{proof}  The statements for $f$ follow from the fact that the natural maps $(\sF_Y)_0^+\to X$ and $(\sF_Z)_0^+\to X$ are generically one-to-one and surjective by definition and the surjectivity of $g$ is clear as $(\sF_Y)_0^+\to (D_Y)_0^+$ and $(\sF_Z)_0^+\to (D_Z)_0^+$ are surjective. The fibers of $g$ are the intersections $U_0\times_X V_0$ in the notation of Proposition~\ref{prop restriction}, so the finiteness is immediate from there and the K\"ahlerness of $[\omega_0]$.
 
 For the rest of the claim, as $f^{-1}(X^\sing)$ does not dominate $(D_Y)_0^+\times(D_Z)_0^+$, for generic $[U_0]\in (D_Y)_0^+$ and $[V_0]\in (D_Y)_0^+$ we have $U_0\cap V_0\subset X^\reg$.  We may take the germ of a smooth pointed curve $(C,0)$ with a nonconstant map $C\to D_Z$ sending $0$ to $[V_0]$ and dominating $\Delta$, as well as a nonconstant map $C\to D_Y$ sending $0$ to $[U_0]$ and dominating $\Delta$.  If we let $\sF_2$ (resp. $\sF_1$) be the pullback of $\sF_Z$ (resp. $\sF_Y$) to $C$, then both $\sF_1$ and $\sF_2$ are naturally embedded in $\sX':=\sX\times_\Delta C$.  Every component\footnote{When we take intersections we mean the intersections of the reductions.} of $\sF_1\cap\sF_2\cap\sX^{\prime\reg}$ has dimension at least $\dim \sF_1-\codim_{\sX'}\sF_2=(m+1)-m=1$, as for example the intersection is equal to the intersection of $\sF_1\times\sF_2\subset \sX'\times \sX'$ with the diagonal embedding of $\sX'^\reg$, which is regularly embedded in $\sX'^\reg\times \sX'^\reg$.  The fact that $(\sF_1)_t\cap (\sF_2)_t$ is a single point for $t\neq 0$ implies that there is at most one zero-dimensional component of $(\sF_1)_0\cap(\sF_2)_0\cap X^\reg=U_0\cap V_0\cap X^\reg$, which is zero-dimensional.  The claim then follows.
 \end{proof}
 
The following two lemmas will be key to obtain the main result of this section, Proposition~\ref{proposition is product}. This first result below states that, under some reasonable assumptions, a compact Kähler space which is bimeromorphically covered by a nontrivial product is already a product itself. Similarly flavored results have been obtained in the algebraic setting, see e.g. \cite[Proposition~18]{kollar_larsen} and \cite[Lemma~4.6]{Dru18}.

 \begin{lemma}\label{lemma birational product}
Let $X,Y,Z$ be normal compact complex varieties with a bimeromorphic morphism $f:Y\times Z\to X$ and assume $X$ is K\"ahler with rational singularities and $H^1(X,\sO_X)=0$.  Then $X\cong Y'\times Z'$ where $Y'$ (resp. $Z'$) is the normalization of the image of a general fiber $Y\times\{z\}$ (resp. $\{y\}\times Z$).
\end{lemma}
\begin{proof}  By taking resolutions we may assume $Y$ and $Z$ are smooth.  Observe that from the rationality of the singularities and the exact sequence
\[0\to H^1(X,\sO_X)\to H^1(Y\times Z,\sO_{Y\times Z})\to H^0(X,R^1f_*\sO_{Y\times Z})\]
coming from the Leray spectral sequence, we obtain 
\begin{equation}\label{eq product irregularity}
H^1(Y\times Z,\sO_{Y\times Z})=H^1(Y,\sO_Y)\oplus H^1(Z,\sO_Z)=0.
\end{equation}

For a general fiber $Y_z:=Y\times \{z\}$ let $Y''=f(Y_z)$ and $n_Y:Y'\to Y''$ the normalization; likewise for $n_Z:Z'\to Z''$.  Note that the map $g:Y\times Z\to Y'\times Z'$ is birational.  The situation is summarized in the diagram below

$$\begin{tikzcd}
Y \times Z  \arrow[r, "f"]\arrow{d}[swap]{g} & X \\
Y'\times Z' \arrow[dashrightarrow]{ru}&  
\end{tikzcd}$$

For the claim it is enough to show that $f$ factors through $g$ and vice versa; note that such a factorization is obviously unique provided it exists.  As both $f$ and $g$ are proper with connected fibers and map to normal targets, by the rigidity lemma it is sufficient to check the following claim:
\begin{claim} $f$ and $g$ have the same fibers.
\end{claim}
\begin{proof}Let $\omega\in H^2(X,\R)$ be a K\"ahler class $X$.  By \eqref{eq product irregularity} we have that $h^1(Y,\sO_Y)=0$, so the K\"unneth decomposition yields
\begin{equation}
\label{equality kahler}
f^*\omega=\mathrm{pr}_Y^*\alpha+\mathrm{pr}_Z^*\beta
\end{equation}
where $\alpha \in H^2(Y,\mathbb R)$ and $\beta\in H^2(Z,\mathbb R)$. Identifying $Y_z$ with $Y$ (likewise for $Z$), we find that $\alpha=(f^*\om)|_{Y_z}=f^*\omega|_{Y''}$ and $\beta=f^*(\omega|_{Z''})$. Since we have a factorization

$$\begin{tikzcd}
Y_z  \arrow[r, "f|_{Y_z}"]\arrow{d}[swap]{g|_{Y_z}} & Y'' \\
Y' \arrow{ru}[swap]{n_Y}&
\end{tikzcd}$$
we find that $f^*\omega|_{Y''}=g^*n_Y^*\omega|_{Y''}$ and $f^*\omega|_{Z''}=g^*n_Z^*\omega|_{Z''}$. In the end, \eqref{equality kahler} becomes 
\begin{equation}
\label{equality kahler 2}
f^*\omega=g^*(\om_1+\om_2)
\end{equation}
 where $\omega_1=n_Y^*\omega|_{Y''}$ and $\omega_2=n_Z^*\omega|_{Z''}$.  Note that $\omega_1$ and $\omega_2$ are K\"ahler classes by \cite[Theorem~1]{Vaj96}, see also \cite[Proposition~3.6]{GK20}.

Since $\omega$ (resp. $\omega_1+\omega_2$) is a Kähler class, a subvariety $V\subset Y\times Z$ is contracted to a point by $f$ (resp. $g$)  if and only if $f^*\omega|_V=0$ (resp. $g^*(\omega_1+\omega_2)|_V=0$).  The claim now follows from \eqref{equality kahler 2}.
\end{proof}

As explained above, the lemma follows from the claim.\end{proof}

The next lemma allows one to spread a product structure of a given variety to its locally trivial deformations.

\begin{lemma}\label{lemma product persists}
Let $Y,Z$ be compact irreducible and reduced varieties with $H^1(Y,\sO_Y)=0=H^1(Z,\sO_{Z})$.  Let $X=Y\times Z$.  Then the natural map $\Def^\lt(Y)\times\Def^\lt(Z)\to\Def^\lt(X)$ is an isomorphism.  In particular, the universal locally trivial deformation of $X$ is the product of the pullbacks of the universal families of $Y$ and $Z$. 
\end{lemma}

Note that $H^1(X,\R)=0$ in our case so that the hypotheses on $Y$ and $Z$ are fulfilled. 

\begin{proof}  
It suffices to show the map on tangent spaces is an isomorphism and that the map on obstruction spaces is injective.  By the K\"unneth decomposition, the map on tangent spaces is the natural map
\[
H^1(Y,T_Y)\oplus H^1(Z,T_Z)\to H^1(Y,T_Y)\otimes H^0(Z,\sO_Z) \oplus H^0(Y,\sO_Y)\otimes H^1(Z,T_Z)
\]
since the other K\"unneth factors vanish:
\[H^1(Y,\sO_Y)\otimes H^0(Z,T_Z)=0=H^0(Y,T_Y)\otimes H^1(Z,\sO_Z).\]
 In particular, the map on tangent spaces is an isomorphism.  The map on obstruction spaces is likewise identified via the K\"unneth decomposition with the inclusion of the K\"unneth factors
\[H^2(Y,T_Y)\otimes H^0(Z,\sO_Z) \oplus H^0(Y,\sO_Y)\otimes H^2(Z,T_Z)\]
and is therefore injective.
\end{proof}

We are now ready to prove the main result of this section:

\begin{proposition}\label{proposition is product}
Assume Setting~\ref{setting relativeKE}.  Then after shrinking $\Delta$ there is a splitting $\sX=\sY\times_\Delta\sZ$ for locally trivial $\sY,\sZ/\Delta$ restricting to the given splitting over $\Delta^*$.
\end{proposition}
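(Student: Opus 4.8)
The plan is to build the decomposition out of the relative Douady components $D_Y,D_Z$ of Corollary~\ref{cor limit leaves} together with their universal families. Let $\sF_Y\subset\sX\times_\Delta D_Y$ and $\sF_Z\subset\sX\times_\Delta D_Z$ be the universal families, with evaluation maps $e_Y,e_Z$ to $\sX$; over $\Delta^*$ both $e_Y$ and $e_Z$ are isomorphisms, and composing with the structure maps to $D_Y$, $D_Z$ recovers the two projections of $\sX^*\cong\sY^*\times_{\Delta^*}\sZ^*$. I would form $\sF_Y\times_\sX\sF_Z$ and let $\sI^+$ be its unique irreducible component dominating $\sX$. Since the universal families are closed in $\sX\times_\Delta D_Y$ resp.\ $\sX\times_\Delta D_Z$, there is a closed embedding $\sI^+\hookrightarrow\sX\times_\Delta D_Y\times_\Delta D_Z$; write $f\colon\sI^+\to\sX$ and $g\colon\sI^+\to D_Y\times_\Delta D_Z$ for the two projections. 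Both are proper (as $\sX\to\Delta$ and $D_Y\times_\Delta D_Z\to\Delta$ are) and restrict to isomorphisms over $\Delta^*$, where $g\circ f^{-1}$ is precisely the given product decomposition. Thus it suffices to prove that $f$ is an isomorphism: then $g\circ f^{-1}\colon\sX\to D_Y\times_\Delta D_Z$ is a morphism, finite onto the component of $D_Y\times_\Delta D_Z$ dominating $\Delta$ and birational, hence an isomorphism onto that component after normalizing the target, and this exhibits $\sX$ as a fibre product $\sY\times_\Delta\sZ$ of (normalizations of components of) $D_Z$ and $D_Y$, restricting over $\Delta^*$ to the given decomposition.

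Since $f$ is proper and birational and $\sX$ is normal, one has $f_*\sO_{\sI^+}=\sO_\sX$ and the fibres of $f$ are connected, so $f$ is an isomorphism as soon as it is quasi-finite. Over $\Delta^*$ this is clear, so the whole issue is concentrated over $0\in\Delta$, and there the relevant maps are those of diagram~\eqref{eq int}: on the special fibre $f$ restricts to $I^+\to X$, surjective and generically one-to-one, and $g$ restricts to $I^+\to(D_Y)_0^+\times(D_Z)_0^+$, which by Corollary~\ref{corollary one-to-one} is finite, surjective and generically one-to-one. So everything will come down to showing that $f\colon I^+\to X$ is finite.

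To prove $f\colon I^+\to X$ is finite I would argue by contradiction: if $f^{-1}(x_0)$ were positive-dimensional for some $x_0\in X$, then, being connected, it would contain a complete curve, and via the embedding $I^+\hookrightarrow X\times(D_Y)_0^+\times(D_Z)_0^+$ and the finiteness of $g$ this curve has the form $\{x_0\}\times\Gamma$ with $\Gamma\subset(D_Y)_0^+\times(D_Z)_0^+$ a complete curve. Relabelling the factors if necessary, $\Gamma$ projects non-constantly to $(D_Y)_0^+$, giving a complete curve $\Gamma_Y$ of limit leaves all passing through $x_0$. Restricting $\sF_Y$ over $\Gamma_Y$ gives a flat family $\sG\to\Gamma_Y$ with evaluation $e\colon\sG\to X$ and a section $\sigma$ over $\Gamma_Y$ with $e\circ\sigma\equiv x_0$. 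Now $\omega_0^2|_U\equiv0$ for every $[U]\in(D_Y)_0^+$ by Proposition~\ref{prop restriction}, and $\omega_0^2$ has bounded local potentials by Proposition~\ref{boundedpotentials}; hence $e^*\omega_0^2$ is a closed positive $(1,1)$-current with bounded local potentials on $\sG$ whose restriction to every fibre of $\sG\to\Gamma_Y$ vanishes. By the vanishing-along-fibres principle for such currents (in the spirit of Lemma~\ref{lem psh}) it is pulled back from $\Gamma_Y$, and restricting along $\sigma$ forces it to vanish identically. Consequently $\omega_0^2$ vanishes on $e(\sG)$, so $e(\sG)\cap X^\reg$ is tangent to $\ker\omega_0^2=A_0$ by Proposition~\ref{proposition decompKE}, whence $\dim e(\sG)\le\rank A$; but a general leaf $[U]\in\Gamma_Y$ meeting $X^\reg$ embeds into $e(\sG)$ with image of exactly that dimension, so the reduced leaves $U^{\red}$ are all equal to one fixed subvariety, the closure of a single leaf of $A|_{X^\reg}$. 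Together with Lemma~\ref{lemma irred} and the flatness of $\sG$ this contradicts the fact that $\Gamma_Y\hookrightarrow\scrD(X)$ is an embedding (a flat family over a reduced connected curve whose general member is irreducible, generically reduced and has fixed reduction has constant general member). \emph{Ruling out such a contracted curve of limit leaves through a point is the main obstacle, and it is exactly where the restriction formula of Proposition~\ref{prop restriction} and the boundedness of the potentials of the split Kähler--Einstein currents are used.}

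Granting this, $f\colon I^+\to X$ is finite, hence an isomorphism, and the same argument carried out over $\Delta$ (applied to every component of $(\sI^+)_0$) shows $\sI^+\to\sX$ is quasi-finite, hence an isomorphism. As in the first paragraph this gives a product decomposition $\sX\cong\sY\times_\Delta\sZ$ over $\Delta$, restricting over $\Delta^*$ to the given one and specializing on the central fibre to $X\cong\widetilde{(D_Z)_0^+}\times\widetilde{(D_Y)_0^+}$. Finally, since $\sX\to\Delta$ is locally trivial with irreducible reduced fibres, the central fibres $\sY_0,\sZ_0$ are irreducible and reduced; as $\sY\to\Delta$ and $\sZ\to\Delta$ are flat with $\sY|_{\Delta^*},\sZ|_{\Delta^*}$ locally trivial, Theorem~\ref{theorem flenner kosarew} (closedness of the locally trivial locus in the Kuranishi space) shows that after shrinking $\Delta$ they are locally trivial over $\Delta$, and this local triviality is automatically compatible with the splitting $T_{\sX/\Delta}=A\oplus B$ of Setting~\ref{setting relativeKE}(v).
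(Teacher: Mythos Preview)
Your approach is different from the paper's, and there is a real gap.

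\textbf{What the paper does.} The paper does not attempt to show directly that $\sI^+\to\sX$ (or its central fibre $I^+\to X$) is an isomorphism. Instead, from Corollary~\ref{corollary one-to-one} the map $g\colon I^+\to(D_Y)_0^+\times(D_Z)_0^+$ is an isomorphism on normalizations, and composing with $f$ gives a bimeromorphic morphism from a product onto $X$. A separate cohomological lemma (comparing K\"ahler classes via the K\"unneth decomposition) then shows that any bimeromorphic morphism $Y\times Z\to X$ with $X$ K\"ahler with rational singularities and $H^1(X,\sO_X)=0$ forces $X$ itself to be a product of the normalized images of general fibres. Having split the central fibre, the paper spreads the product structure over $\Delta$ by a deformation-theoretic computation ($\Def^\lt(Y)\times\Def^\lt(Z)\cong\Def^\lt(Y\times Z)$ when $h^1(\sO_Y)=h^1(\sO_Z)=0$), and identifies the resulting splitting with the given one over $\Delta^*$ via the rigidity lemma. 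No finiteness of $f$ is ever needed.

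\textbf{The gap.} Your final step --- ``a flat family over a reduced connected curve whose general member is irreducible, generically reduced and has fixed reduction has constant general member'' --- is false. Lemma~\ref{lemma irred} only gives that the limit leaves are irreducible and \emph{generically} reduced; they may carry embedded structure along $U_t\cap X^{\sing}$, and that structure can move in a flat family. A model already in $\A^2$: the ideals $I_t=(y^2,\,y(x-t))$ cut out the line $\{y=0\}$ with an embedded point at $(t,0)$; every member is irreducible and generically reduced with the same reduction, the family is flat over $\A^1_t$, yet the classifying map to the Hilbert scheme is injective. So even granting $\omega_0^2\vert_{e(\sG)}\equiv 0$ and hence that all $U_t^{\red}$ coincide, you cannot conclude that $\Gamma_Y$ is a point of $\scrD(X)$, and the contradiction does not close.

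A secondary issue: you identify the special fibre of $\sI^+\to\Delta$ with $I^+$, but $(\sI^+)_0$ may well have components besides the dominant one, and a positive-dimensional fibre of $f$ could sit entirely inside such a component; Lemma~\ref{lemma irred} and Proposition~\ref{prop restriction} are only stated for points of $(D_Y)_0^+$ and $(D_Z)_0^+$.
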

\begin{proof}
The map $g$ of Corollary~\ref{corollary one-to-one} is an isomorphism on normalizations, and so $X$ is a product by Lemma~\ref{lemma birational product}. Now, thanks to Lemma~\ref{lemma product persists}, after shrinking $\Delta$ we have a product decomposition $\sX \isom \sY'\times_\Delta \sZ'$ where $\sY', \sZ'/\Delta$ are locally trivial. Let us denote $Y:=\sY'_0$ and $Z:=\sZ'_0$. By construction, the subspaces $Y \times\{z\}$ and $\{y\}\times Z$ for general $y\in Y$, $z\in Z$ are obtained as fibers of the families $\sF_Y \to D_Y$ respectively $\sF_Z \to D_Z$ which over $\Delta^*$ coincide with $\sX^*\to \sZ^*$ and $\sX^*\to \sY^*$. As one fiber of $\sF_Y \to D_Y$ is contracted under the projection $\sF_Y \to[e]\sX \to \sZ'$, so are all the fibers thanks to the rigidity lemma. In other words, $\sY^* \isom {\sY'}^*$ and likewise for $\sZ^*$.
\end{proof}

%------------------------------------------------------------------------------------------
%------------------------------------------------------------------------------------------
%------------------------------------------------------------------------------------------
\section{Proof of the decomposition theorem}\label{section decomp proof}
%------------------------------------------------------------------------------------------
%------------------------------------------------------------------------------------------
%------------------------------------------------------------------------------------------
In this section we prove Theorem \ref{theorem decomposition}.  Let $X$ be a numerically $K$-trivial K\"ahler variety with log terminal singularities.  By \cite[Corollary~4.2]{CGGN}, there exists a quasi-étale cover $\qeX\to X$ that splits as $\qeX=T\times Y$ where $T$ is a torus and $Y$ is $K$-trivial with vanishing augmented irregularity, $\widetilde q(Y)=0$.  Replacing $X$ with $Y$, we may thus assume $X$ is a $K$-trivial K\"ahler variety with canonical singularities and vanishing augmented irregularity.  Moreover, by Corollary \ref{cor mqe} and \cite[Theorem~C \& Proposition 6.9]{CGGN}, we also know $T_X$ has a splitting  
\begin{equation}
T_{X}=\bigoplus _{i\in I}C_{i} \oplus \bigoplus_{j\in J}S_j\label{decomp sp}
\end{equation}
into foliations such that with respect to some (hence any) singular Ricci-flat metric on $X$ the sheaves $C_i|_{X^{\rm reg}}$ (resp. $S_j|_{X^{\rm reg}}$) are parallel with holonomy $\mathrm{SU}$ (resp. $\mathrm{Sp}$). We may assume \eqref{decomp sp} has at least two factors (or $X$ is IHS or ICY and there is nothing to prove) and at least one symplectic factor (or $X$ is already projective and we apply \cite[Theorem~1.5]{HP19}).  It suffices to show by induction that there is a quasi-\'etale cover of $X$ which splits as a product.

Given Proposition~\ref{proposition is product}, it is enough to show we are in Setting~\ref{setting relativeKE}. We obtain the weakly Kähler metric $\theta$ by \cite[Theorem~6.3]{Bin83i}, see also \cite[Proposition~5]{Nam01b}. It thus remains to show the following:
\begin{claim}
After replacing $X$ with a quasi-\'etale cover, there is a locally trivial deformation $\sX\to\Delta$ of $X$ such that we have a nontrivial product decomposition $\sX^*=\sY^*\times_{\Delta^*}\sZ^*$ over $\Delta^*$ for locally trivial families $\sY^*,\sZ^*/\Delta^*$ and a splitting $T_{\sX/\Delta}=A\oplus B$ compatible with the decomposition over $\Delta^*$.
\end{claim}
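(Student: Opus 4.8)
The plan is to feed the algebraic approximation of Theorem~\ref{theorem peternell} into the projective decomposition theorem \cite[Theorem~1.5]{HP19}, propagate the resulting product decomposition back to $X$ by the spreading-out results of Section~\ref{section douady}, and then read off the tangent splitting from Proposition~\ref{proposition relative splitting}. First I would invoke Theorem~\ref{theorem peternell} to obtain a strong locally trivial algebraic approximation $\scrX\to S$ of $X$; since projective fibers accumulate at $s_0$, I can choose a holomorphic disk $\Delta\to S$ with $0\mapsto s_0$ along which some $t_1$ maps to a projective fiber, and pull back to get a locally trivial family $\pi\colon\sX\to\Delta$ with $X_0=X$ and $X_{t_1}$ projective. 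By Corollary~\ref{cor irred constant}, $\wt q(X_t)=0$ for all $t$, and---since $X$ is neither ICY nor IHS, as the splitting \eqref{decomp sp} has at least two factors---no fiber is; in particular $X_{t_1}$ is a projective $K$-trivial variety with canonical singularities, $\wt q(X_{t_1})=0$, which is neither ICY nor IHS.

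Next I would apply \cite[Theorem~1.5]{HP19} to $X_{t_1}$: it admits a quasi-\'etale cover which is a product of a complex torus, irreducible Calabi--Yau and irreducible holomorphic symplectic varieties; vanishing of $\wt q$ kills the torus factor, and not being ICY or IHS forces at least two of the remaining factors, so after grouping I obtain a \emph{nontrivial} product $Y_1\times Z_1$. This quasi-\'etale cover is classified by a finite-index subgroup of $\pi_1(X_{t_1}^\reg)$, which is isomorphic to $\pi_1(\sX^\reg)$ by topological triviality of $\pi$ (as in the proof of Lemma~\ref{cover}); hence it extends to a quasi-\'etale cover $\sX^{qe}\to\sX$ over $\Delta$, locally trivial by Lemma~\ref{cover2}, with fiber $Y_1\times Z_1$ over $t_1$ and special fiber a quasi-\'etale cover $X^{qe}$ of $X$. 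Replacing $X$ by $X^{qe}$ and $\sX$ by $\sX^{qe}$---this is the one quasi-\'etale cover allowed in the Claim---I would then use $H^1(Y_1,\sO_{Y_1})=H^1(Z_1,\sO_{Z_1})=0$ (which follows from $H^1(X,\R)=0$, itself a consequence of $\wt q(X)=0$, together with topological triviality of $\pi$), Lemma~\ref{lemma product persists}, and miniversality to conclude that $\sX\to\Delta$ is already a product over some Euclidean neighborhood $V$ of $t_1$.

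Having a product over a Euclidean open, I would apply Lemma~\ref{lemma decomposition zariski open} (after shrinking $\Delta$ so that $V$ lies in a relatively compact simply connected subdisk) to spread it over a Zariski-open $U'\subset S'$ of a finite, normal---hence smooth---curve $S'\to\Delta$ that contains a Euclidean open over $V$. Choosing a holomorphic disk $\Delta'\subset S'$ through a preimage $0'$ of $0$ which meets $U'$ (possible since $S'\setminus U'$ is a proper analytic subset of the irreducible curve $S'$), and shrinking, either $0'\in U'$---in which case $X$ is itself a nontrivial product and there is nothing more to prove---or $\Delta'\cap U'=\Delta'^*$. In the latter case, relabelling $\Delta:=\Delta'$ and $\sX:=\scrX\times_{S'}\Delta'$ (local triviality of the two product factors is preserved, being Zariski closed by Theorem~\ref{theorem flenner kosarew}), I obtain a locally trivial family $\pi\colon\sX\to\Delta$ with special fiber $X$ and a nontrivial product decomposition $\sX^*:=\sX\times_\Delta\Delta^*\cong\sY^*\times_{\Delta^*}\sZ^*$ over the punctured disk. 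Finally, the fibers of $\pi$ are $K$-trivial K\"ahler varieties with canonical, hence rational, singularities, so Proposition~\ref{proposition relative splitting} yields the splitting $T_{\sX/\Delta}=A\oplus B$ into foliations with $A|_{\sX^*}=\pi_1^*T_{\sY^*/\Delta^*}$ and $B|_{\sX^*}=\pi_2^*T_{\sZ^*/\Delta^*}$, which is precisely the compatibility asked for. This proves the Claim and, together with the weakly K\"ahler metric $\theta$ and the vanishing $H^1(X,\R)=0$ already recorded, places us in Setting~\ref{setting relativeKE}.

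I expect the main obstacle to be the step ensuring that a projective fiber becomes, after at most one quasi-\'etale cover, a \emph{nontrivial} product: this requires combining Theorem~\ref{theorem peternell} (to exhibit a projective fiber near $X$), Corollary~\ref{cor irred constant} (to transfer ``not ICY, not IHS'' and $\wt q=0$ from $X$ to that fiber), and \cite[Theorem~1.5]{HP19}, and it requires checking that the pertinent quasi-\'etale covers of the fibers all descend from a single quasi-\'etale cover of the total space via the locally constant fundamental group of the regular locus. By contrast, spreading out the product, passing to a disk, and producing the tangent splitting are straightforward invocations of Lemma~\ref{lemma decomposition zariski open}, Theorem~\ref{theorem flenner kosarew}, and Proposition~\ref{proposition relative splitting}.
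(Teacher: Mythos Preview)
Your proof is correct and follows essentially the same route as the paper's: algebraic approximation via Theorem~\ref{theorem peternell}, extension of the quasi-\'etale cover via Lemmas~\ref{cover}--\ref{cover2}, nontriviality of the product via Corollary~\ref{cor irred constant}, persistence of the product via Lemma~\ref{lemma product persists}, spreading out via Lemma~\ref{lemma decomposition zariski open}, and the tangent splitting via Proposition~\ref{proposition relative splitting}. The only cosmetic differences are that the paper passes to a disk \emph{after} applying Lemma~\ref{lemma decomposition zariski open} (rather than before), and invokes Corollary~\ref{cor irred constant} \emph{after} taking the quasi-\'etale cover---the latter ordering is slightly cleaner for the nontriviality step, since it lets you argue directly that the new special fiber inherits the splitting \eqref{decomp sp} with $\geq 2$ factors and hence is not ICY/IHS.
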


\begin{proof}
Taking $E=\bigoplus_{j\in J}S_j$ and $P=\bigoplus_{i\in i}C_i$, then the family $\sX\to S$ guaranteed by Corollary \ref{corollary DefE germ} is a strong locally trivial approximation of $X$ by Lemma \ref{lemma approx crit}.  By Corollary \ref{cor irred constant} every fiber has vanishing augmented irregularity.  A projective fiber has a BB decomposition on some quasi-\'etale cover \cite[Theorem 1.5]{HP19}, so by Lemmas~\ref{lem:cover} and~\ref{lem:cover2} we may assume a fiber $X_t$ of $\sX\to S$ has a BB decomposition which is nontrivial by Corollary \ref{cor irred constant} and which has no torus factor.  By Lemma \ref{lemma product persists} this product decomposition persists in a neighborhood of $t$, and by Lemma \ref{lemma decomposition zariski open} after taking a finite base-change $S'\to S$ and choosing a curve $
\Delta\to S'$ through the special point we have a locally trivial family $\sX\to\Delta$ with a locally trivial product decomposition over $\Delta^*$.  By Proposition \ref{proposition relative splitting} the claim follows.
\end{proof}
The proof of Theorem~\ref{theorem decomposition} is now complete.
\qed

%------------------------------------------------------------------------------------------

%------------------------------------------------------------------------------------------

\bibliography{literatur}
\bibliographystyle{alpha}

%-------------------------------------
\end{document}